\newtheorem{theorem}{Theorem}[section]
\newtheorem{proposition}[theorem]{Proposition}
\newtheorem{lemma}[theorem]{Lemma}
\newtheorem{corollary}[theorem]{Corollary}
\newtheorem{question}[theorem]{Question}
\newtheorem{example}[theorem]{Example}
\theoremstyle{definition}
\newtheorem{definition}[theorem]{Definition}
\theoremstyle{remark}
\newtheorem{remark}[theorem]{Remark}
\begin{document}
\title[Some properties of Pre-topological groups]
{Some properties of Pre-topological groups}

\author{Fucai Lin*}
\address{Fucai Lin: 1. School of mathematics and statistics,
Minnan Normal University, Zhangzhou 363000, P. R. China; 2. Fujian Key Laboratory of Granular Computing and Application, Minnan Normal University, Zhangzhou 363000, China}
\email{linfucai@mnnu.edu.cn; linfucai2008@aliyun.com}

\author{Ting Wu}
\address{Ting Wu: 1. School of mathematics and statistics,
Minnan Normal University, Zhangzhou 363000, P. R. China}
\email{473207095@qq.com}

\author{Yufan Xie}
\address{Yufan Xie: 1. School of mathematics and statistics,
Minnan Normal University, Zhangzhou 363000, P. R. China}
\email{981553469@qq.com}

\author{Meng Bao}
\address{Meng Bao: College of Mathematics, Sichuan University, Chengdu 610064, China}
\email{mengbao95213@163.com}

\thanks{The first author is supported by the Key Program of the Natural Science Foundation of Fujian Province (No: 2020J02043) and the NSFC (No. 11571158).}
\thanks{* Corresponding author}

\keywords{pre-topology; pre-topological group; almost topological group; strongly -topological group; symmetrically pre-topological group; $\tau$-narrow; precompact.}
\subjclass[2020]{22A99; 54A05; 54A25; 54B05; 54C08; 54D05; 54H11; 54H99.}

\date{\today}
\begin{abstract}
In this paper, we pose the concepts of pre-topological groups and some generalizations of pre-topological groups. First, we systematically investigate some basic properties of pre-topological groups; in particular, we prove that each $T_{0}$ pre-topological group is regular and every almost topological group is completely regular which extends A.A. Markov's theorem to the class of almost topological groups. Moreover, it is shown that an almost topological group is $\tau$-narrow if and only if it can be embedded as a subgroup of a pre-topological product of almost topological groups of weight less than or equal to $\tau$. Finally, the cardinal invariant, the precompactness and the resolvability are investigated in the class of pre-topological groups.
\end{abstract}

\maketitle

\section{Introduction}
Since from 20th century, many topologists and algebraists have contributed to topological algebra. A.D. Alexandroff, N. Bourbaki, E. van Kampen, A.A. Markov and L.S. Pontryagin were among the first contributors to the theory of topological groups. In 2008, the book "Topological Groups and Related Structures" was published, which points out the direction for the research of topological groups. It mainly studied the generic questions in topological algebra is how the relationship between topological properties depend on the underlying algebraic structure. As we all known, a topological group, that is, a group $G$ is endowed with a topology such that the binary operation $G\times G\rightarrow G$ is jointly continuous and the inverse mapping $In: G\rightarrow G$, i.e. $x\rightarrow x^{-1}$, is also continuous. The the properties of topological groups have been widely used in the study of topology, analysis and category, see \cite{A2002,AR2005,BT, LF31,LF32,LF33,LF34,LF35,LF36,S2000,Y2002}. For more details about topological groups, the reader see \cite{AT}. Moreover, the topologies on some kinds of weaker algebra structures than groups are posed and investigated, such as (strongly) topological gyrogroups and rectifiable spaces. Hence it is natural to consider extending some well known results of topological groups to these weaker structures, see \cite{BL,BL2,BL3,BZX,BX,LF,LF1,LF2,LF3}.

In 1999, Doignon and Falmagne 1999 introduced the theory of knowledge spaces (KST) which is regarded as a mathematical framework for the assessment of knowledge and advices for further learning \cite{doignon1985spaces,falmagne2011learning}. KST makes a dynamic evaluation process; of course, the accurate dynamic evaluation is based on individuals' responses to items and the quasi-order on domain $Q$\cite{doignon1985spaces}. In 2009, Danilov discussed the knowledge spaces based on the topological point of view. Indeed, the notion of a knowledge space is a generalization of topological spaces \cite{2009Danilov}, that is, a {\it generalized topology} on a set $Z$ is a subfamily $\mathscr{T}$ of $2^{Z}$ such that $\mathscr{T}$ is closed under arbitrary unions.
Cs\'{a}sz\'{a}r (2002) in \cite{Generalized2002} introduced the notions of generalized topological spaces and then investigated some properties of generalized topological spaces, see \cite{Generalized2002,Generalized2004, Generalized2008,Generalized2009}. Further, J. Li first discuss the pre-topology (that is, the subbase for the topology) with the applications in the theory of  rough sets, see \cite{lijinjin2004,lijinjin2006,lijinjin2007}, and then D. Liu in \cite{liudejin2011,liudejin2013} discuss some properties of pre-topology. Recently, Lin, Cao and Li \cite{LCL} systematically investigated some properties of pre-topology. Based on the theory of pre-topology, we can consider the pre-topology on groups and pose the concept of pre-topological groups, then we can systematically investigate some basic properties of pre-topological groups.

This paper is organized as follows. In Section 2, we introduce the
necessary notation and terminology which are used in
the paper. In Sections 3 and 4, some basic properties of pre-topological groups and relationships among (quasi, para, almost) pre-topological groups are investigated. In Section 5, we mainly study quotient spaces of pre-topological groups and give the three isomorphisms of quotient spaces. In Section 6, we extend A.A. Markov's theorem to almost topological group and show that every almost topological group is completely regular. In Section 7, some cardinal invariants of pre-topological groups are studied. In particular, the well-known Guran's Theorem is extended, that is, an almost topological group is $\tau$-narrow if and only if it can be embedded as a subgroup of a pre-topological product of almost topological groups of weight less than or equal to $\tau$. In Section 8, some properties about precompactness and resolvability are investigated.

\smallskip
\section{Introduction and preliminaries}
Denote the sets of real number, rational number, positive integers, the closed unit interval and all non-negative integers by $\mathbb{R}$, $\mathbb{Q}$, $\mathbb{N}$, $I$ and $\omega$, respectively. Readers may refer
\cite{Eng, LCL} for terminology and notations not
explicitly given here.

\begin{definition}\cite{Generalized2002,lijinjin2004, 2009Danilov}
A {\it pre-topology} on a set $Z$ is a subfamily $\mathscr{T}$ of $2^{Z}$ such that $\bigcup\mathscr{T}=Z$ and $\bigcup\mathscr{T}^{\prime}\in\mathscr{T}$  for any $\mathscr{T}^{\prime}\subseteq\mathscr{T}$. Each element of $\mathscr{T}$ is called an {\it open set} of the pre-topology.
\end{definition}

The name of pre-topology is given in \cite{LCL}. Next, we list some definitions of pre-topological spaces which are introduced in \cite{LCL}; of course, these definitions have important roles in our discussion of pre-topological groups.

\begin{definition}\cite{LCL}
Let $(G, \tau)$ be a pre-topological space and $\mathcal{B}\subseteq \tau$. If for each $U\in\tau$ there exists a subfamily  $\mathcal{B}^{\prime}$ of $\mathcal{B}$ such that $U=\bigcup\mathcal{B}^{\prime}$, then we say that $\mathcal{B}$ is a {\it pre-base} of $(G, \tau)$.
\end{definition}

Let $Z$ be a pre-topological space. For each $z\in Z$, the infimum of the set $$\{|\mathcal{B}(z)|: \mathcal{B}(z)\ \mbox{is a pre-base at}\ z\}$$
is called the {\it character of a point $z$ } \cite{LCL} in $Z$, which is denoted by $\chi(z, Z)$. The supremum of the set $\{\chi(z, Z): z\in Z\}$ is called the {\it character of a pre-topological space $(Z, \tau)$}, which is denoted by $\chi(Z)$. Each set of cardinal numbers being well-ordered by $<$. Then the infimum of the set $$\{|\mathcal{B}|: \mathcal{B}\ \mbox{is a pre-base for}\ Z\}$$ is said to be the {\it weight} of $Z$ \cite{LCL} and is denoted by $w(Z)$.

\begin{definition}\cite{LCL}
Let $h: Y\rightarrow Z$ be a mapping between two pre-topological spaces $(Y, \tau)$ and $(Z, \upsilon)$. The mapping $h$ is {\it pre-continuous} from $Y$ to $Z$ if $h^{-1}(W)\in \tau$ for each $W\in\upsilon$.
\end{definition}

\begin{definition}\cite{LCL}
Let $h: Y\rightarrow Z$ be a bijection between two pre-topological spaces $Y$ and $Z$. If $h$ and $h^{-1}: Z\rightarrow Y$ are all pre-continuous, then we say that $h$ is a {\it pre-homeomorphic mapping}. We also say that $Y$ and $Z$ are {\it pre-homeomorphic}.
\end{definition}

\begin{definition}\cite{LCL}
A pre-topological space $(Z, \tau)$ is called a {\it $T_{0}$-space} if for any $y, z\in Z$ with $y\neq z$ there exists $W\in\tau$ such that $W\cap \{y, z\}$ is exact one-point set.
\end{definition}

\begin{definition}\cite{LCL}
A pre-topological space $(Z, \tau)$ is called a {\it $T_{1}$-space} if for any $y, z\in Z$ with $y\neq z$ there are $V, W\in\tau$ so that $V\cap\{y, z\}=\{y\}$ and $W\cap\{y, z\}=\{z\}$.
\end{definition}

\begin{definition}\cite{LCL}
A pre-topological space $(Z, \tau)$ is called a {\it $T_{2}$-space}, or a {\it Hausdorff space}, if for any $y, z\in Z$ with $y\neq z$ there are $V, W\in\tau$ so that $y\in V$, $z\in W$ and $V\cap W=\emptyset$.
\end{definition}

\begin{definition}\cite{LCL}
Let $Z$ be a $T_{1}$ pre-topological space. We say that $Z$ is a {\it $T_{3}$ pre-topological space}, or a {\it regular space}, if for every $z\in Z$ and every closed set $A$ of $Z$ with $z\not\in A$ there are open subsets $V$ and $O$ such that $V\cap O=\emptyset$, $z\in V$ and $A\subseteq O$.
\end{definition}

\begin{definition}\cite{LCL}
Let $Z$ be a $T_{1}$ pre-topological space. Then $Z$ is  a {\it $T_{3\frac{1}{2}}$ pre-topological space}, or a {\it completely regular pre-topological space}, or a {\it Tychonoff pre-topological space}, provide for each $z\in Z$ and each closed subset $C\subseteq Z$ with $z\not\in C$ there exists a pre-continuous mapping $r: Z\rightarrow I$ so that $r(z)=0$ and $r(x)=1$ for each $x\in C$.
\end{definition}

\begin{definition}\cite{LFC1}
Let $\mu$ be a family of non-empty subsets of $X\times X$ such that the following conditions are satisfied

\smallskip
(U1) for any $U\in\mu$, $\triangle\subseteq U$;

\smallskip
(U2) if $U\in\mu$, then $U^{-1}\in\mu$;

\smallskip
(U3) if $U\in\mu$, then there exist $V, W\in\mu$ such that $V\circ W\subseteq U$;

\smallskip
(U4) if $U\in\mu$ and $U\subset V\subseteq X\times X$, then $V\in\mu$;

\smallskip
(U5) if $\bigcap\mu=\bigtriangleup$.

\smallskip
The family $\mu$ is a {\it pre-uniform structure} of $X$ if $\mu$ satisfies (U1)-(U5), the pair $(X, \mu)$ is a {\it pre-uniform space} and the members of $\mu$ are called {\it entourage}.
\end{definition}

\begin{definition}\cite{LFC1}
Let $f$ be a mapping from a pre-uniform space $(X, \mu)$ to a pre-uniform space $(Y, \nu)$. We say that $f$ is {\it pre-uniformly continuous} if for each $F\in\nu$ there exists $M\in\mu$ such that $\phi(M)\subseteq F$, where $\phi: X\times X\rightarrow Y\times Y$ defined by $\phi(x, z)=(f(x), f(y))$ for each $(x, z)\in X\times X$.
\end{definition}

Let $G$ be a group, with the neutral element $e$, and let $N$ be a real-valued function on $G$. We say that $N$ is a {\it prenorm} on $G$ if the following conditions are satisfied for all $x, y\in G$:

\smallskip
(a) $N(e)=0$;

\smallskip
(b) $N(xy)\leq N(x)+N(y)$;

\smallskip
(c) $N(x^{-1})= N(x)$.

\smallskip
\section{Basic properties of pre-topological groups}
In this section, we give the concept of pre-topological groups and discuss some basic properties of pre-topological groups.

\begin{definition}
A {\it pre-topological group} $G$ is a group which is also a pre-topological space such that the multiplication mapping of $G\times G$ into $G$ sending $x\times y$ into
$x\cdot y$, and the inverse mapping of $H$ into $G$ sending $x$ into $x^{-1}$, are pre-continuous mappings.
\end{definition}

Now we give some examples of pre-topological group which are all not topological groups.

\begin{example}\label{d0}
(1) There exists a $T_{2}$ finite pre-topological group. Indeed, let $G=\{\overline{0}, \overline{1}, \overline{2}, \overline{3}, \overline{4}, \overline{5}\}$ be the set of surplus class with respect to module 6. We endow $G$ with a pre-topology as follows:

$$\tau=\{\emptyset, \{\overline{0}, \overline{3}\}, \{\overline{1}, \overline{4}\}, \{\overline{2}, \overline{5}\}, \{\overline{0}, \overline{2}, \overline{4}\},
\{\overline{1}, \overline{3}, \overline{5}\}, \{\overline{0}, \overline{1}, \overline{3}, \overline{4}\}, \{\overline{0}, \overline{2}, \overline{3}, \overline{5}\},
\{\overline{0}, \overline{2}, \overline{3}, \overline{4}\},$$$$\{\overline{0}, \overline{1}, \overline{3}, \overline{5}\}, \{\overline{1}, \overline{2}, \overline{4},
\overline{5}\}, \{\overline{0}, \overline{1}, \overline{2}, \overline{4}\}, \{\overline{1}, \overline{3}, \overline{4}, \overline{5}\}, \{\overline{0}, \overline{2},
\overline{4}, \overline{5}\}, \{\overline{1}, \overline{2}, \overline{3}, \overline{5}\}, \{\overline{0}, \overline{1}, \overline{2}, \overline{3}, \overline{4}\}, $$$$
\{\overline{0}, \overline{1}, \overline{3}, \overline{4}, \overline{5}\}, \{\overline{0}, \overline{2}, \overline{3}, \overline{4}, \overline{5}\}, \{\overline{0},
\overline{1}, \overline{2}, \overline{3}, \overline{5}\}, \{\overline{0}, \overline{1}, \overline{2}, \overline{4}, \overline{5}\}, \{\overline{1}, \overline{2},
\overline{3}, \overline{4}, \overline{5}\}, G\}.$$

It easily check that $(G, \tau)$ is a $T_{2}$ pre-topological group; however, it is not a topological group since $\{\overline{1}, \overline{4}\}\cap \{\overline{0},
\overline{2}, \overline{4}\}=\{4\}$ is not open in $G$.

(2) Let $G$ be the group $(\mathbb{R}, +)$ and endowed with a pre-topology which has a pre-base as follows:
$$\mathscr{B}=\{(-\infty, a): a\in\mathbb{R}\}\cup\{(b, +\infty): b\in\mathbb{R}\}.$$ Then $(\mathbb{R}, \tau)$ is a pre-topological group but not a topological group.

(3) Let $G$ be a group, and let $\tau_{1}$ and $\tau_{2}$ be two group topologies on $G$ such that $\tau_{1}\nsubseteq\tau_{2}$ and $\tau_{2}\nsubseteq\tau_{1}$. Let $\tau$
be the pre-topological group generated on $G$ by the family $\tau_{1}\cup\tau_{2}$. Then $(G, \tau)$ is a pre-topological group which is not a topological group.
\end{example}

In order to discuss the properties of pre-topological groups, we give some definitions.

\begin{definition}
Let $\varphi: G\rightarrow H$ be a mapping, where $G$ and $H$ are pre-topological groups. We say $\varphi$ is a {\it morphism} if $\varphi$ is pre-continuous and group
homomorphism. Further, we say that $\varphi$ is a {\it pre-topological isomorphism} if $\varphi$ is pre-homeomorphism and group homomorphism.
\end{definition}

\begin{definition}
A pre-topological space $X$ is said to be {\it pre-homogeneous} if for any $x, y\in X$ there exists a pre-homeomorphism $\varphi: X\rightarrow X$ such that $\varphi(x)=y$.
\end{definition}

The following two propositions and corollary are obvious.

\begin{proposition}
Let $G$ be a pre-topological group and $g\in G$. Then the left (right) translation mapping $L_{g} (R_{g}): G\rightarrow G$, defined by $L_{g}(x)=gx (R_{g}(x)=xg)$, is a
pre-homeomorphism.
\end{proposition}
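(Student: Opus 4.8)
The plan is to verify directly that $L_g$ is a bijection whose inverse is $L_{g^{-1}}$, and that both $L_g$ and $L_{g^{-1}}$ are pre-continuous; the case of $R_g$ will be entirely symmetric, with inverse $R_{g^{-1}}$. Bijectivity is purely algebraic: by associativity and the group axioms, $L_{g^{-1}}(L_g(x)) = g^{-1}(gx) = x$ and $L_g(L_{g^{-1}}(x)) = g(g^{-1}x) = x$ for every $x \in G$, so $L_g$ is a bijection with $L_g^{-1} = L_{g^{-1}}$.

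For pre-continuity, I would first record the routine fact that a composition of pre-continuous maps is pre-continuous: if $h\colon Y\to Z$ and $k\colon Z\to W$ are pre-continuous and $O$ is open in $W$, then $(k\circ h)^{-1}(O)=h^{-1}(k^{-1}(O))$ is open in $Y$. Next consider the section map $\iota_g\colon G\to G\times G$ given by $\iota_g(x)=(g,x)$, where $G\times G$ carries the product pre-topology with pre-base $\{U\times V: U,V\in\tau\}$. For a pre-basic open set $U\times V$ one has $\iota_g^{-1}(U\times V)=V$ when $g\in U$ and $\iota_g^{-1}(U\times V)=\emptyset$ otherwise; in both cases the result lies in $\tau$. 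Since an arbitrary open subset $O$ of $G\times G$ is a union of pre-basic sets and $\iota_g^{-1}$ commutes with unions, $\iota_g^{-1}(O)\in\tau$, so $\iota_g$ is pre-continuous.

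Now $L_g=m\circ\iota_g$, where $m\colon G\times G\to G$ is the multiplication map, which is pre-continuous by the definition of a pre-topological group; hence $L_g$ is pre-continuous. Applying the same argument to $g^{-1}$ in place of $g$ shows that $L_{g^{-1}}=L_g^{-1}$ is pre-continuous, so $L_g$ is a pre-homeomorphism. The identical reasoning, using instead the section $x\mapsto(x,g)$, gives that $R_g$ is a pre-homeomorphism with $R_g^{-1}=R_{g^{-1}}$.

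There is no genuine obstacle here; the only point needing a little care is spelling out the product pre-topology on $G\times G$ and checking that substituting a fixed element into one coordinate of $m$ produces a pre-continuous map, which is precisely why I isolate $\iota_g$ rather than reason about $L_g$ directly.
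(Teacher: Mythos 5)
Your proof is correct. The paper itself offers no argument here (it lists this proposition among results declared obvious), and your factorization $L_g = m\circ\iota_g$ with the section map $\iota_g(x)=(g,x)$ is exactly the standard reasoning being taken for granted: the pre-base check for $\iota_g$ suffices because pre-topologies are closed under arbitrary unions, and the empty preimage in the case $g\notin U$ is open since $\bigcup\emptyset=\emptyset$ belongs to every pre-topology, so all steps go through.
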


\begin{corollary}
Each pre-topological group is a pre-homogeneous space.
\end{corollary}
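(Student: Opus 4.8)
The plan is to reduce pre-homogeneity directly to the preceding proposition on translations. Given two points $x, y \in G$, the goal is to produce a pre-homeomorphism of $G$ onto itself carrying $x$ to $y$, and the natural candidate is a left (or right) translation by a suitable group element.

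First I would set $g = yx^{-1} \in G$ and consider the left translation $L_{g} \colon G \to G$. By the preceding proposition, $L_{g}$ is a pre-homeomorphism of $G$ onto itself. Then I would simply compute $L_{g}(x) = gx = (yx^{-1})x = y$, using only the group axioms, so that $L_{g}$ witnesses the condition in the definition of a pre-homogeneous space for the pair $(x,y)$. Since $x$ and $y$ were arbitrary, this shows that $G$ is pre-homogeneous.

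There is essentially no obstacle here: the whole substance of the statement is carried by the previous proposition, namely that translations are pre-homeomorphisms (which itself follows from pre-continuity of the multiplication together with the observation that $L_{g^{-1}}$ is a pre-continuous two-sided inverse of $L_{g}$). One could equally well use the right translation $R_{g}$ with $g = x^{-1}y$, giving $R_{g}(x) = xg = x(x^{-1}y) = y$; either choice finishes the proof.
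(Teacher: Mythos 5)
Your argument is correct and is exactly the one the paper intends: the corollary is stated as an immediate consequence of the preceding proposition that translations are pre-homeomorphisms, and your choice of $L_{g}$ with $g=yx^{-1}$ (or $R_{g}$ with $g=x^{-1}y$, as the paper itself does for the analogous corollary on right/left pre-topological groups) is precisely that reduction. Nothing is missing.
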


Therefore, we have the following proposition.

\begin{proposition}\label{t1}
Let $G$ be a pre-topological group. If $\mathscr{B}$ is a local pre-base at $e$, then for each $g\in G$ the local pre-base at $g$ is equal to $\mathscr{B}_{g}=\{gU:
U\in\mathscr{B}\}$ or $\mathscr{B}_{g}=\{Ug: U\in\mathscr{B}\}$.
\end{proposition}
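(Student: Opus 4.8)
The plan is to deduce this directly from the fact (already recorded in Proposition before it) that every left translation $L_g$ and right translation $R_g$ is a pre-homeomorphism of $G$ onto itself. Fix $g\in G$ and let $\mathscr{B}$ be a local pre-base at $e$. First I would recall what a local pre-base at a point $x$ should mean in a pre-topological space: a family $\mathscr{B}(x)$ of open sets each containing $x$ such that for every open $W\ni x$ there is a subfamily $\mathscr{B}'\subseteq\mathscr{B}(x)$ with $x\in\bigcup\mathscr{B}'\subseteq W$ (equivalently, some member of $\mathscr{B}(x)$ lying inside $W$ and containing $x$, since arbitrary unions of open sets are open). The key observation is that a pre-homeomorphism carries a local pre-base at a point to a local pre-base at the image point.

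The main step is therefore to verify this key observation for the translation $L_g$ (the argument for $R_g$ is identical). Since $L_g$ is a pre-homeomorphism, it maps open sets to open sets and $L_g^{-1}=L_{g^{-1}}$ does as well. Take any open $W$ with $g\in W$. Then $L_{g^{-1}}(W)=g^{-1}W$ is open and contains $e$, so there is $\mathscr{B}'\subseteq\mathscr{B}$ with $e\in\bigcup\mathscr{B}'\subseteq g^{-1}W$. Applying $L_g$ and using that $L_g$ commutes with unions, we get $g\in\bigcup\{gU:U\in\mathscr{B}'\}\subseteq W$, and each $gU$ is open and contains $g$. Hence $\mathscr{B}_g=\{gU:U\in\mathscr{B}\}$ is a local pre-base at $g$. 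Replacing $L_g$ by $R_g$ throughout gives that $\{Ug:U\in\mathscr{B}\}$ is also a local pre-base at $g$.

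I do not anticipate a serious obstacle here; the statement is essentially a translation-invariance bookkeeping lemma, and the only thing to be careful about is getting the definition of local pre-base right (in particular that we only need membership in the union, not that a single basic set sits between the point and $W$, although in a pre-topology the union of the chosen subfamily is itself open so the two formulations coincide). I would also note explicitly that the words ``is equal to'' in the statement are slightly loose: what is meant is that each of $\mathscr{B}_g=\{gU:U\in\mathscr{B}\}$ and $\{Ug:U\in\mathscr{B}\}$ is a local pre-base at $g$, not that these two families coincide with each other. If one wanted a cleaner write-up, I would phrase the observation as a one-line corollary of the preceding proposition: ``since $L_g$ and $R_g$ are pre-homeomorphisms sending $e$ to $g$, and pre-homeomorphisms preserve local pre-bases, the claim follows,'' and then include the short union-chasing verification above as justification of the italicized clause.
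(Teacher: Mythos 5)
Your argument is correct and is exactly the route the paper intends: the paper states this proposition without proof, presenting it as an immediate consequence of the preceding proposition that left and right translations are pre-homeomorphisms, which is precisely the translation/union-chasing verification you spell out (your remark that ``is equal to'' should be read as ``each family is a local pre-base at $g$'' is also the right reading).
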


The following two theorems give the properties of an open base at the neutral element $e$ of $G$, which play important roles in the study of pre-topological groups.

\begin{theorem}\label{t0}
Let $G$ be a pre-topological group and $\mathscr{B}_{e}$ be a pre-base at the neutral element $e$ of $G$. Then the following statements hold:

\smallskip
(1) For each $U\in \mathscr{B}_{e}$, there exist $V, W\in\mathscr{B}_{e}$ such that $VW\subseteq U$.

\smallskip
(2) For each $U\in \mathscr{B}_{e}$, there exists $V\in\mathscr{B}_{e}$ such that $V^{-1}\subseteq U$.

\smallskip
(3) For each $U\in \mathscr{B}_{e}$ and any $g\in U$, there exists $V\in\mathscr{B}_{e}$ such that $Vg\subseteq U$.

\smallskip
(4) For each $U\in \mathscr{B}_{e}$ and any $g\in G$, there exists $V\in\mathscr{B}_{e}$ such that $gVg^{-1}\subseteq U$.
\end{theorem}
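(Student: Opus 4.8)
The plan is to reduce all four items to three facts already on the table: the multiplication $m\colon G\times G\to G$ is pre-continuous, the inversion $In\colon G\to G$ is pre-continuous, and (by the proposition above on translations) each $L_g,R_g\colon G\to G$ is a pre-homeomorphism. In every case the pattern is the classical one for topological groups: pull the given $U\in\mathscr{B}_e$ back along a suitable pre-continuous map to obtain an open neighbourhood of $e$ (or of $(e,e)$), and then shrink it inside the pre-base $\mathscr{B}_e$ using the defining property of a pre-base at a point.

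For (2), since $In$ is pre-continuous the set $In^{-1}(U)=\{x\in G:x^{-1}\in U\}$ is open, and $e\in In^{-1}(U)$ because $e^{-1}=e\in U$; choosing $V\in\mathscr{B}_e$ with $V\subseteq In^{-1}(U)$ yields $V^{-1}\subseteq U$. For (3), the right translation $R_g$ is a pre-homeomorphism, so $R_g^{-1}(U)=Ug^{-1}$ is open; the hypothesis $g\in U$ gives $e=gg^{-1}\in Ug^{-1}$, so any $V\in\mathscr{B}_e$ with $V\subseteq Ug^{-1}$ satisfies $Vg\subseteq U$. For (4), the conjugation $x\mapsto gxg^{-1}$ is the composition $L_g\circ R_{g^{-1}}$ of pre-homeomorphisms, hence a pre-homeomorphism, and its preimage of $U$ is the open set $g^{-1}Ug$, which contains $e$ since $geg^{-1}=e\in U$; any $V\in\mathscr{B}_e$ with $V\subseteq g^{-1}Ug$ then gives $gVg^{-1}\subseteq U$.

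Item (1) is the one requiring genuine two-variable information, and is where I expect the only subtlety. Pre-continuity of $m$ makes $m^{-1}(U)$ an open subset of $G\times G$ with $(e,e)\in m^{-1}(U)$, since $e\cdot e=e\in U$. To extract factors one invokes the definition of the product pre-topology on $G\times G$, a pre-base of which may be taken to be $\{A\times B:A,B\in\tau\}$: because a pre-topology is closed only under arbitrary unions, $m^{-1}(U)$ is a union of such rectangles, so $(e,e)$ lies in one of them, say $V'\times W'\subseteq m^{-1}(U)$; equivalently $e\in V'$, $e\in W'$ and $V'W'\subseteq U$. Finally refine each coordinate separately inside $\mathscr{B}_e$: pick $V,W\in\mathscr{B}_e$ with $V\subseteq V'$ and $W\subseteq W'$, so that $VW\subseteq V'W'\subseteq U$. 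This last step uses only that $\mathscr{B}_e$ is a pre-base, and crucially it forms no finite intersections of pre-base elements, which is exactly why the argument survives the passage from topologies to pre-topologies.

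So the main obstacle is purely the product step in (1): one must be sure that an open neighbourhood of $(e,e)$ in $G\times G$ contains a rectangular neighbourhood $V'\times W'$ of $(e,e)$. This is legitimate precisely because the product pre-topology is built from the rectangular pre-base; with a coarser choice the statement would degenerate. Everything else is formal bookkeeping.
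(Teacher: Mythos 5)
Your proof is correct and follows essentially the same route as the paper: (1) and (2) from pre-continuity of multiplication and inversion at $e$, (3) from the right translation $R_g$, and (4) from conjugation as a composition of translations. The only place you add substance beyond the paper's sketch is the explicit appeal to the rectangular pre-base of the product pre-topology in (1), and that step is handled correctly.
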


\begin{proof}
Assume $G$ is a pre-topological group. Properties (1) and (2) follow from the pre-continuity of the multiplication mappings $(x,y)\mapsto xy$ and $x\mapsto x^{-1}$ at the neutral element $e$, respectively. Property (3) follows from the pre-continuity of right translation mapping $R_{g}: x\mapsto xg$ in $G$. Property (4) follows from the  pre-continuity of right translation mapping $R_{g^{-1}}: x\mapsto xg^{-1}$ and left translation mapping $L_{g}: xg^{-1}\mapsto gxg^{-1}$ are pre-homeomorphism of $G$.
\end{proof}

\begin{theorem}\label{t2022}
Let $G$ be a group, and let $\mathscr{U}$ be a family of subsets of $G$ satisfying conditions (1)-(4) of Theorem~\ref{t0}. Then the family $\mathcal{B}_{\mathscr{U}}=\{Ua:
a\in G, U\in\mathscr{U}\}$ is a pre-base for a pre-topology $\tau$ on $G$ such that $(G, \tau)$ is a pre-topological group.
\end{theorem}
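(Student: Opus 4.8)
The plan is to take $\tau$ to be $\{\emptyset\}$ together with the family of all unions of subfamilies of $\mathcal{B}_{\mathscr{U}}$, and then to verify in turn that (i) $\tau$ is a pre-topology having $\mathcal{B}_{\mathscr{U}}$ as a pre-base, (ii) for each $g\in G$ the set $\{Ug:U\in\mathscr{U}\}$ is a local pre-base at $g$, (iii) the inversion map is pre-continuous, and (iv) the multiplication map is pre-continuous. Throughout I will use the elementary fact that forming preimages commutes with unions, so that a map into $G$ (or into $G\times G$ with its product pre-topology) is pre-continuous as soon as preimages of the members of a fixed pre-base are open. I will also use that $e\in U$ for every $U\in\mathscr{U}$ — this is implicit in $\mathscr{U}$ being intended as a pre-base at $e$, and it forces in particular $\mathscr{U}\neq\emptyset$ and each $U\neq\emptyset$.

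For (i): $\tau$ is closed under arbitrary unions by construction, and $\bigcup\tau=G$ since, fixing any $U\in\mathscr{U}$, we have $\bigcup_{a\in G}Ua=UG=G$. For (ii), fix $g\in G$ and an open set $O$ with $g\in O$; writing $O$ as a union of members of $\mathcal{B}_{\mathscr{U}}$ gives $g\in Va$ for some $V\in\mathscr{U}$ and $a\in G$, so $ga^{-1}\in V$, and condition (3) of Theorem~\ref{t0} applied to $V$ and the point $ga^{-1}$ produces $W\in\mathscr{U}$ with $W(ga^{-1})\subseteq V$, whence $g\in Wg\subseteq Va\subseteq O$. This proves (ii) (and agrees with Proposition~\ref{t1}), and it yields the working criterion I will use in (iii) and (iv): a set $O\subseteq G$ lies in $\tau$ if and only if for every $g\in O$ there is $U\in\mathscr{U}$ with $Ug\subseteq O$.

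For (iii) it suffices, by the pre-base remark, to show that each set $i^{-1}(Ua)=(Ua)^{-1}=a^{-1}U^{-1}$ is open. Let $g\in(Ua)^{-1}$, i.e.\ $g^{-1}\in Ua$; the criterion gives $V\in\mathscr{U}$ with $Vg^{-1}\subseteq Ua$, condition (4) applied to $V$ and $g^{-1}$ gives $W\in\mathscr{U}$ with $g^{-1}Wg\subseteq V$, and condition (2) applied to $W$ gives $U_{0}\in\mathscr{U}$ with $U_{0}^{-1}\subseteq W$. Then $g^{-1}U_{0}^{-1}g\subseteq g^{-1}Wg\subseteq V$, so $g^{-1}U_{0}^{-1}\subseteq Vg^{-1}\subseteq Ua$, hence $U_{0}g\subseteq(Ua)^{-1}$; as $g\in U_{0}g$ this gives $(Ua)^{-1}\in\tau$. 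For (iv), combining (ii) coordinatewise shows that a set $P\subseteq G\times G$ is open if and only if for each $(x,y)\in P$ there are $U,V\in\mathscr{U}$ with $Ux\times Vy\subseteq P$, so it is enough to check this for $P=m^{-1}(Ua)$, where $m(x,y)=xy$. Fix $(x,y)$ with $xy\in Ua$; the criterion gives $W\in\mathscr{U}$ with $W(xy)\subseteq Ua$, condition (1) applied to $W$ gives $W_{1},W_{2}\in\mathscr{U}$ with $W_{1}W_{2}\subseteq W$, and condition (4) applied to $W_{2}$ and $x$ gives $V\in\mathscr{U}$ with $xVx^{-1}\subseteq W_{2}$. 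Then $(W_{1}x)(Vy)=W_{1}\,(xVx^{-1})\,(xy)\subseteq W_{1}W_{2}\,(xy)\subseteq W(xy)\subseteq Ua$, so $(x,y)\in W_{1}x\times Vy\subseteq m^{-1}(Ua)$, and $m^{-1}(Ua)$ is open.

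I expect the obstacles to be organizational rather than deep. Since $\mathcal{B}_{\mathscr{U}}$ is built from right translates $Ua$, one must consistently thicken a point $g$ on the left, as $Ug$, in both the openness criterion and the local pre-bases; mixing the sides breaks the inclusions. Step (iii) is the least mechanical: conditions (4) and (2) must be applied in that order and the conjugate $g^{-1}U_{0}^{-1}g$ handled with care. In step (iv) the essential point is that condition (4) lets one rewrite $(W_{1}x)(Vy)$ as a single left thickening $W_{1}(xVx^{-1})(xy)$ of $xy$ — in the pre-topological setting there is no intersecting of ``boxes'', so one must land inside a single pre-basic set $Ua$ directly, and this is exactly what the conjugation-invariance condition provides.
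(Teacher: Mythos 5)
Your proof is correct and follows essentially the same route as the paper: define $\tau$ from the right translates $Ua$, establish the pointwise openness criterion (the paper's Claims 1--2), and verify multiplication via conditions (1) and (4) and inversion via (2)--(4). The only cosmetic difference is that you show $(Ua)^{-1}$ open directly with an extra application of condition (4), whereas the paper first proves left translates $bV$ open (its Claim 4) and then reduces to $U^{-1}\in\tau$; both arguments rest on the implicit assumption $e\in U$ for every $U\in\mathscr{U}$, which you rightly make explicit.
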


\begin{proof}
Let $\mathscr{U}$ be a family of subsets of $G$ satisfying conditions (1)-(4) of Theorem~\ref{t0}, and let
$$\tau=\{W\subset G: \mbox{for each}\ x\in W\ \mbox{there exists}\ U\in\mathscr{U}\ \mbox{such that}\ Ux\subseteq W\}.$$In order to prove that $(G, \tau)$ is a pre-topological group, we divide the proof into the following five claims.

\smallskip
{\bf Claim 1:} $\tau$ is a pre-topology on $G$.

\smallskip
Clearly, we have $\emptyset$, $G\in \tau$. Take any non-empty subfamily $\gamma$ of $\tau$. Pick any $x\in \bigcup \gamma$; then there exists $W\in \gamma$ such that $x\in W$, hence we can find $U\in \mathscr{U}$ so that $Ux\subseteq W$, that is, $Ux\subseteq W\subseteq \bigcup \gamma$, then $\bigcup\gamma\in \tau$. Therefore, $\tau$ is a pre-topology on $G$.

\smallskip
{\bf Claim 2:} The family $\mathcal{B}_{\mathscr{U}}=\{Ua:a\in G, U\in\mathscr{U}\}$ is a pre-base for the pre-topology $\tau$ on $G$.

\smallskip
We first prove that $\mathcal{B}_{\mathscr{U}}$ is a subfamily of $\tau$. It suffices to verify that for any $a\in G$ and $U\in \mathscr{U}$ we have $Ua\in \tau$. Take any $y\in Ua$; then $ya^{-1}\in U$. From condition (3), there exists $V\in \mathscr{U}$ such that $Vya^{-1}\subseteq U$, that is, $Vy\subseteq Ua$. Hence, $Ua\in \tau$. Now we prove that $\mathcal{B}_{\mathscr{U}}$ is a pre-base for $\tau$. Indeed, take any $W \in \tau$ and $a\in W$. Since $W\in\tau$, there exists $U\in \mathscr{U}$ such that $Ua\subseteq W$. Therefore, $\mathcal{B}_{\mathscr{U}}$ is a pre-base for $\tau$.

\smallskip
{\bf Claim 3:} The multiplication of $G$ is pre-continuous with respect to the pre-topology $\tau$.

\smallskip
Take any $x,y\in G$, and let $O$ be any element of $\tau$ such that $ab\in O$. Since $O\in\tau$, there exists $W\in \mathscr{U}$ such that $Wab\subseteq O$. In order to prove the pre-continuity of multiplication, it suffices to find $U, V\in \mathscr{U}$ such that $UaVb\subseteq Wab$, which is equivalent to  $UaVa^{-1}\subseteq W$. By condition(1), there exist $U, U^{\prime}\in \mathscr{U}$ such that  $UU^{\prime}\subseteq W$; then from condition(4) there exists $V\in \mathscr{U}$ such that $aVa^{-1}\subseteq U^{\prime}$, which implies that $UaVa^{-1}\subseteq UU^{\prime}\subseteq W$, thus $UaVb\subseteq Wab\subseteq O$. Hence, the multiplication in $G$ is pre-continuous with respect to the pre-topology $\tau$.

\smallskip
{\bf Claim 4:} For any $b\in G$ and $V\in \mathscr{U}$, we have $bV\in \tau$.

\smallskip
Take any $y\in bV$, we have to find $U\in \mathscr{U}$ such that $Uy\subseteq bV$. Clearly, we have $b^{-1}y\in V$. By condition (3), there exists $W\in \mathscr{U}$ such that $Wb^{-1}y\subseteq V$. Then from condition(4) there is $U\in \mathscr{U}$ such that $b^{-1}Ub\subseteq W$. Hence $b^{-1}Ubb^{-1}y\subseteq Wb^{-1}y\subseteq V$, that is, $b^{-1}Uy\subseteq V$, thus $Uy\subseteq bV$. Therefore, $bV\in \tau$.

\smallskip
{\bf Claim 5:} The inverse mapping $In$ of $G$ onto $G$ given by $In(x)=x^{-1}$ is pre-continuous with respect to the pre-topology $\tau$.

\smallskip
By Claims 2 and~4, it suffices to prove that $U^{-1} \in \tau$ for any $U\in \mathscr{U}$. Take any $x\in U^{-1}$; then $x^{-1}\in U$. From condition (3), there exists $V\in \mathscr{U}$ such that $Vx^{-1}\subseteq U$. Then from condition (2), there exists $W\in \mathscr{U}$ such that $W^{-1}\subseteq V$, hence we have $W^{-1}x^{-1}\subseteq Vx^{-1}\subseteq U$, that is, $xW\subseteq U^{-1}$. By Claim 4 again, $xW$ is an open neighbourhood of $x$ in $\tau $. Hence, $U^{-1}\in \tau$.

Therefore, we prove that $(G, \tau)$ is a pre-topological group and the family $\mathcal{B}_{\mathscr{U}}=\{Ua: a\in U, U\in\mathscr{U}\}$ is a pre-base of the pre-topology $\tau$.
\end{proof}

\begin{definition}
Let $G$ be a pre-topological space and $U$ be an open neighborhood of a point $b$ of $G$. We say that $U$ is an {\it atom} at $b$ if $W=U$ for any open neighborhood $W$ of $b$ with $W\subseteq U$.
\end{definition}

For a finite pre-topological group, we have the following proposition.

\begin{proposition}\label{p0}
Let $G$ be a finite pre-topological group and $\mathscr{B}_{e}$ be a pre-base at the identity $e$ of $G$. Then the following statements hold:

\smallskip
(1) For each $U\in \mathscr{B}_{e}$, there exist $V\in\mathscr{B}_{e}$ such that $V^{2}\subseteq U$.

\smallskip
(2) If $U\in \mathscr{B}_{e}$ is an atom at $e$, then $U^{-1}$ is an atom at $e$ and $U^{n}=U$ for each $n\in\mathbb{N}$.
\end{proposition}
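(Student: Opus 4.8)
The plan is to avoid the usual topological-group argument (where a suitable $V$ is obtained by \emph{intersecting} two neighbourhoods of $e$, a step unavailable here because a pre-topology need not be closed under finite intersections) and instead route everything through \emph{atoms} at $e$. I will first record two observations. \emph{Observation 1}: in a finite pre-topological space every open neighbourhood $W$ of a point $b$ contains an atom at $b$, since the family $\{O\in\tau: b\in O\subseteq W\}$ is finite and non-empty and a minimal element of it is, by minimality, an atom at $b$. \emph{Observation 2}: if $A$ is an atom at $e$, then $A\in\mathscr{B}_{e}$; indeed $A$ is open and contains $e$, so there is $B\in\mathscr{B}_{e}$ with $e\in B\subseteq A$, and atomicity of $A$ forces $B=A$.

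The heart of the proof is the claim that \emph{every atom at $e$ is a subgroup of $G$}. Let $A$ be an atom at $e$. By Observation 2, $A\in\mathscr{B}_{e}$, so Theorem~\ref{t0}(1) applied to $A$ yields $V,W\in\mathscr{B}_{e}$ with $VW\subseteq A$. Since $e\in W$ we get $V=Ve\subseteq VW\subseteq A$, and since $e\in V$ we get $W\subseteq VW\subseteq A$; thus $V$ and $W$ are open neighbourhoods of $e$ sitting inside the atom $A$, which forces $V=W=A$, hence $AA\subseteq A$. As $e\in A$ we also have $A\subseteq AA$, so $AA=A$. A non-empty finite subset of a group closed under the multiplication is a subgroup (for $a\in A$ the left translation $L_{a}$ is an injection $A\to AA=A$, hence a bijection of $A$ onto itself, which gives $e\in A$ and then $a^{-1}\in A$), so $A$ is a subgroup of $G$; in particular $A^{-1}=A$ and $A^{n}=A$ for every $n\in\mathbb{N}$.

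Both parts now follow at once. For (2): if $U\in\mathscr{B}_{e}$ is an atom at $e$ then, by the claim, $U$ is a subgroup, so $U^{-1}=U$ is again an atom at $e$ and $U^{n}=U$ for each $n\in\mathbb{N}$. For (1): given $U\in\mathscr{B}_{e}$, Observation 1 provides an atom $A$ at $e$ with $A\subseteq U$; by Observation 2, $A\in\mathscr{B}_{e}$, and by the claim $A^{2}=A\subseteq U$, so $V:=A$ is the required member of $\mathscr{B}_{e}$. I expect the only genuine difficulty to be isolating the right lemma — recognising that an atom at $e$ must be closed under multiplication and inversion, hence a subgroup; finiteness enters only through Observation 1 and through the passage from a multiplicatively closed finite set to a subgroup, and everything else is formal manipulation with Theorem~\ref{t0}(1).
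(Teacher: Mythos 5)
Your proof is correct, and it reorganizes the argument around a single lemma the paper never isolates: every atom at $e$ is a subgroup of $G$. For part (1) the two arguments essentially coincide --- both use finiteness to extract an atom inside the given $U$, apply Theorem~\ref{t0}(1) to that atom, and use atomicity to force both factors to equal the atom, giving $V^{2}\subseteq U$. The real divergence is in part (2): the paper invokes Theorem~\ref{t0}(2) (pre-continuity of inversion) to get $V$ with $V^{-1}\subseteq U$, concludes $V^{-1}=U$ by atomicity, and then obtains $U^{2}=U$ by re-running part (1); you never touch Theorem~\ref{t0}(2), deriving closure under inversion purely from finiteness via the standard fact that a non-empty finite subset of a group closed under multiplication is a subgroup. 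This buys a sharper conclusion ($U^{-1}=U$, not merely that $U^{-1}$ is an atom) from only the multiplicative half of Theorem~\ref{t0}, and it sidesteps the compressed step in the paper's proof of (2) where ``$V^{2}\subseteq V\subseteq U$'' is asserted with $V=U^{-1}$; your cancellation-by-finiteness argument is moreover exactly in the spirit of the paper's own proof of Theorem~\ref{r0}. The price is the small extra algebraic lemma together with your Observations 1 and 2 (every open neighbourhood of $e$ contains an atom, and every atom at $e$ belongs to $\mathscr{B}_{e}$), both routine; note that, like the paper, you implicitly use that members of a local pre-base at $e$ contain $e$, which is the intended reading of the definition.
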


\begin{proof}
(1) For each $U\in \mathscr{B}_{e}$, it follows from the finiteness of $G$ that there exists $V\in\mathscr{B}_{e}$ such that $V$ is an atom at $e$ and $V\subseteq U$.
Since $G$ is a pre-topological group, there exist $V_1, V_2\in\mathscr{B}_{e}$ such that $V_1V_2\subseteq V\subseteq U$.
Then, $V_1= V_2=V$ since $V$ is an atom at $e$. Hence, $V^{2}\subseteq U$.

(2) Take any $U\in \mathscr{B}_{e}$ such that $U$ is an atom at $e$. It follows from (2) of Theorem~\ref{t0} that there exists $V\in\mathscr{B}_{e}$ such that $V^{-1}\subseteq U$.
 Since $V^{-1}\in \mathscr{B}_{e}$ and $U\in \mathscr{B}_{e}$ is an atom at $e$, we have $V^{-1}=U$, that is, $U^{-1}=V$. Moreover, it easily verify that $V$ is an atom at $e$. Finally, we check that $U^{n}=U$ for each $n\in\mathbb{N}$.
From (1), it easily see that $V^{2}\subseteq V\subseteq U$,
then we have $V=U$ and $U^{2}=U$. By induction, assume that $U^{k}=U$ for $2\leq k\leq n$. For $k=n+1$, we have $U^{n+1}=U^{n}U=UU=U$. Hence, $U^{n}=U$ for each $n\in\mathbb{N}$.
\end{proof}

By Theorem~\ref{t0} and Proposition~\ref{p0}, it is natural to consider the following classes of pre-topological groups.

\begin{definition}
Let $G$ be a pre-topological and $\mathscr{B}_{e}$ be a pre-base at the identity $e$. Then

\smallskip
$\bullet$ we say that $G$ is {\it a symmetrically pre-topological group} if $\mathscr{B}_{e}$ is symmetric;

\smallskip
$\bullet$ we say that $G$ is {\it a strongly pre-topological group} if $\mathscr{B}_{e}$ satisfies (1) in Proposition~\ref{p0};

\smallskip
$\bullet$ we say that $G$ is {\it an almost topological group} if $G$ is a symmetrically and strongly pre-topological group.
\end{definition}

Clearly, each topological group is an almost topological group, each almost topological group is both a strongly pre-topological group and a symmetrically pre-topological group, and each strongly pre-topological group or symmetrically pre-topological group is a pre-topological group. The following example shows the inverse relations among them.

\begin{example}\label{eee}
(1) There exists a pre-topological group which is not a strongly pre-topological group.

\smallskip
(2) There exists a strongly pre-topological group which is not a symmetrically pre-topological group.

\smallskip
(3) There exists an almost topological group which is not a topological group.
\end{example}

\begin{proof}
(1) Let $G$ be the group $(\mathbb{R}, +)$ and endowed with a pre-topology which has a pre-base as follows:
$$\mathscr{B}=\{(-\infty, a): a\in\mathbb{Z}\}\cup\{(b, +\infty): b\in\mathbb{Z}\}.$$ Then $(\mathbb{R}, \tau)$ is a pre-topological group; however, it is easily checked that it not a strongly pre-topological group.

(2) Let $\mathbb{R}$ be the real number with usual addition `+'. Let $$\mathscr{U}=\{[a, b): a, b\in\mathbb{R}, b>a\}\cup\{(b, a]:
a, b\in\mathbb{R}, a>b\}.$$ Then $\mathscr{U}$ satisfies (1)-(4) of Theorem~\ref{t0} and (1) of Proposition~\ref{p0}. Hence the pre-topology $\tau$ generated by the family $\mathscr{U}$ is a strongly pre-topological
groups on $\mathbb{R}$. However, $G$ is not
a symmetrically pre-topological group.

(3) The pre-topological group in (1) of Example~\ref{d0} is just our requirement.
\end{proof}

However, the following question is still unknown for us.

\begin{question}
 Is each symmetrically pre-topological group a strongly pre-topological group?
\end{question}

The following proposition shows that the properties of an almost topological group is very similar to a topological group.

\begin{proposition}
Let $G$ be a pre-topological group. Then $G$ is an almost topological group if and only if for each open neighborhood $U$ of $e$ there exists an open neighborhood $V$ of $e$ such that $VV^{-1}\subseteq U$.
\end{proposition}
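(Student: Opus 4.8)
The plan is to prove the equivalence by unpacking both definitions in terms of a pre-base at $e$. Recall that $G$ is an almost topological group precisely when some (equivalently, by Proposition~\ref{t1}, every) pre-base $\mathscr{B}_{e}$ at $e$ is both symmetric (so $U^{-1}\in\mathscr{B}_{e}$ whenever $U\in\mathscr{B}_{e}$) and satisfies the squaring condition (1) of Proposition~\ref{p0} (for each $U\in\mathscr{B}_{e}$ there is $V\in\mathscr{B}_{e}$ with $V^{2}\subseteq U$). So the proof reduces to showing: $G$ has a symmetric pre-base at $e$ closed under the squaring condition if and only if for every open neighborhood $U$ of $e$ there is an open neighborhood $V$ of $e$ with $VV^{-1}\subseteq U$.

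For the forward direction, I would fix an open neighborhood $U$ of $e$ and a symmetric pre-base $\mathscr{B}_{e}$ at $e$ satisfying the squaring condition. Shrink $U$ to some $U_{0}\in\mathscr{B}_{e}$ with $U_{0}\subseteq U$, then apply the squaring condition to get $V\in\mathscr{B}_{e}$ with $V^{2}\subseteq U_{0}\subseteq U$. Since $\mathscr{B}_{e}$ is symmetric, $V^{-1}\in\mathscr{B}_{e}$, and by definition of pre-base the set $V\cup V^{-1}$ is open (a union of pre-base elements) and contains $e$; call it $W$. Then $W$ is a symmetric open neighborhood of $e$ contained in... wait — $V\cup V^{-1}$ need not be contained in $U_{0}$. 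Instead I should proceed more carefully: intersect is not available, so the cleaner route is to first note that $V$ symmetric would give $VV^{-1}=V^{2}\subseteq U$; to get a symmetric neighborhood inside $V$, use the squaring condition again together with symmetry of $\mathscr{B}_{e}$. Explicitly, pick $V_{1}\in\mathscr{B}_{e}$ with $V_{1}^{2}\subseteq U_{0}$; then $V_{1}^{-1}\in\mathscr{B}_{e}$, and applying the squaring condition to $V_{1}^{-1}$ gives $V_{2}\in\mathscr{B}_{e}$ with $V_{2}^{2}\subseteq V_{1}^{-1}$. Now one has to massage $V_{1}, V_{2}$ into a single $V$ with $VV^{-1}\subseteq U$; the trick is that from $V_{2}^{2}\subseteq V_{1}^{-1}$ and symmetry one gets $(V_{2}^{-1})^{2}\subseteq V_{1}$, and then building $V$ appropriately from $V_2$ and $V_2^{-1}$ — but without finite intersections this is delicate. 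The honest and robust argument is: take $V = V_{1}$ where $V_1 \in \mathscr{B}_e$ is chosen (using symmetry of the pre-base, so that we may take $V_1$ itself symmetric after possibly passing to $V_1 \cup V_1^{-1}$... ) — in fact the cleanest statement uses that in a symmetrically pre-topological group we may assume WLOG that pre-base elements come in symmetric pairs, so pick $V\in\mathscr{B}_{e}$ with $V=V^{-1}$ and $V^{2}\subseteq U$ directly, whence $VV^{-1}=V^{2}\subseteq U$.

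For the converse, suppose the $VV^{-1}$-condition holds; I would show $G$ is symmetrically and strongly pre-topological by exhibiting a suitable pre-base at $e$. Define $\mathscr{B}_{e}$ to be the collection of all sets of the form $V\cap$... again no intersections; instead, let $\mathscr{B}_{e}$ be the family of all open neighborhoods $V$ of $e$ such that $V=V^{-1}$. First one checks this is a pre-base at $e$: given any open neighborhood $U$ of $e$, the hypothesis yields an open $V\ni e$ with $VV^{-1}\subseteq U$, and then $e\in V\subseteq VV^{-1}\subseteq U$ (using $e\in V^{-1}$); but $V$ itself may fail to be symmetric, so one passes to $V\cup V^{-1}$, which is open, symmetric, contains $e$, but may escape $U$ — so one instead uses $V V^{-1}$ directly: note $VV^{-1}$ is symmetric (since $(VV^{-1})^{-1}=VV^{-1}$) and, if additionally $VV^{-1}$ is open, it is the desired member of $\mathscr{B}_{e}$ inside $U$. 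Openness of $VV^{-1}$ follows because $VV^{-1}=\bigcup_{g\in V^{-1}}Vg$ and each $Vg$ is open by Theorem~\ref{t0}(3) applied appropriately (each translate of an open neighborhood-type set). Then symmetry of $\mathscr{B}_{e}$ is built in, and the squaring condition follows by applying the hypothesis twice: given symmetric open $U\ni e$, get open $W\ni e$ with $WW^{-1}\subseteq U$, then get open $W'\ni e$ with $W'W'^{-1}\subseteq W\cap W^{-1}$...

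I expect the main obstacle to be exactly this recurring point: pre-topologies are not closed under finite intersections, so the standard topological-group manipulations (pass to $U\cap V$, symmetrize by intersecting with the inverse) are unavailable. The fix, which I would present carefully, is to always symmetrize by forming products $VV^{-1}$ (automatically symmetric) rather than intersections $V\cap V^{-1}$, and to verify openness of such products via the translation lemma Theorem~\ref{t0}(3) together with the union axiom of a pre-topology. Once that technique is in place, both implications are short: forward is "shrink, then square, using that the pre-base is symmetric"; backward is "the symmetric open neighborhoods form a pre-base, and iterating the hypothesis gives the squaring condition." I would organize the final writeup as two short paragraphs, one per implication, with a preliminary remark establishing that $VV^{-1}$ is open whenever $V$ is an open neighborhood of $e$.
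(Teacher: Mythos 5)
Your overall plan for the sufficiency direction is the paper's plan --- symmetrize by forming products $WW^{-1}$ (open, being unions of right translates of an open set) instead of intersections --- but you stop exactly where the work is. The pre-base part is fine; the verification of the squaring condition is never actually carried out: the only concrete step you write is ``get open $W'\ni e$ with $W'W'^{-1}\subseteq W\cap W^{-1}$,'' which is illegitimate because $W\cap W^{-1}$ need not be open in a pre-topology, and the argument then trails off. The missing step is short and needs no intersections at all: given open $U\ni e$, apply the hypothesis twice to get open $V,W\ni e$ with $VV^{-1}\subseteq U$ and $WW^{-1}\subseteq V$, and put $O=WW^{-1}$. Then $O$ is an open neighborhood of $e$ with $O=O^{-1}$, and from $O\subseteq V$ together with $O=O^{-1}\subseteq V$ (hence $O\subseteq V^{-1}$) one gets $O^{2}\subseteq VV^{-1}\subseteq U$. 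This single computation produces a symmetric pre-base satisfying condition (1) of Proposition~\ref{p0} simultaneously, which is exactly how the paper concludes. (A small separate point: the openness of each $Vg$ should be justified by the fact that right translations are pre-homeomorphisms, not by Theorem~\ref{t0}(3), which only shrinks a neighborhood inside a given pre-base element.)

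The necessity direction, as you wrote it, rests on a misreading of ``symmetric pre-base.'' You take it to mean closure under inverses ($U^{-1}\in\mathscr{B}_{e}$ whenever $U\in\mathscr{B}_{e}$); under that reading the implication is false: the pre-base of Example~\ref{eee}(2) (half-open intervals and their inverses) is closed under inverses and satisfies the squaring condition, yet no open $V\ni 0$ satisfies $VV^{-1}\subseteq[0,1)$, since $VV^{-1}$ is a symmetric set strictly larger than $\{0\}$. In the paper ``symmetric'' means that each member satisfies $V=V^{-1}$ (this is how it is used, e.g., in the proofs of Theorems~\ref{t202210} and~\ref{t20221}), and with that reading your closing sentence --- pick $V\in\mathscr{B}_{e}$ with $V=V^{-1}$ and $V^{2}\subseteq U$, so $VV^{-1}=V^{2}\subseteq U$ --- is the entire proof; but it does not follow from ``pre-base elements come in symmetric pairs,'' so as written that step is a non sequitur. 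Fix the definition, delete the detours about $V\cup V^{-1}$ and $V_{2}^{2}\subseteq V_{1}^{-1}$, and the necessity becomes the one-liner you ended with, matching the paper's ``the necessity is obvious.''
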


\begin{proof}
The necessity is obvious. In order to prove the sufficiency, it suffices to shows that $G$ has a symmetric pre-base. Take any $U$ in $\mathscr{B}_{e}$. Then we can find $V\in \mathscr{B}_{e}$ such that $VV^{-1}\subseteq U$, hence there exists $W\in \mathscr{B}_{e}$ such that $WW^{-1}\subseteq V$. Put $O=WW^{-1}$. Then $O$ is an open neighbour of $e$ and $O=O^{-1}$. Moreover, $O^{2}=O^{-1}O=(W^{-1}W)^{-1}(W^{-1}W)\subseteq VV^{-1}\subseteq U$. Hence, $G$ has a symmetric pre-base.
\end{proof}

The following theorem shows that the separation of $T_{0}$ is equivalent to the regularity in a pre-topological group.

\begin{theorem}\label{t2022111}
Each $T_{0}$ pre-topological group is regular.
\end{theorem}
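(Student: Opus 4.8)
The plan is to follow the classical topological-group argument for the implication $T_0 \Rightarrow T_3$, adapted to the pre-topological setting. First I would establish that a $T_0$ pre-topological group is in fact $T_1$: given $x \neq y$, homogeneity (Corollary after Proposition~\ref{t1}) reduces this to separating $e$ from an arbitrary $g \neq e$, and the $T_0$ axiom gives an open set $W$ containing exactly one of $e, g$. If $e \in W$ and $g \notin W$, then translating by $g$ (Proposition preceding the Corollary, left/right translations are pre-homeomorphisms) and using the algebraic manipulations of Theorem~\ref{t0} I can produce an open neighborhood of $g$ missing $e$; the symmetric case is analogous using the pre-continuity of the inverse map. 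Actually the cleanest route: in the $T_0$ pre-topological group, show that $\{e\}$ is closed, i.e. for every $g \neq e$ there is $U \in \mathscr{B}_e$ with $g \notin Ug$ equivalently $e \notin$ some neighborhood, then homogeneity makes every singleton closed, giving $T_1$.

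Next, the core of the argument. I would show: for every $U \in \mathscr{B}_e$ there exists $V \in \mathscr{B}_e$ such that $\overline{V} \subseteq U$, where the closure is taken in the pre-topology. This is the one genuinely substantive step. The standard proof runs: by Theorem~\ref{t0}(1) and (2), pick $W \in \mathscr{B}_e$ with $WW \subseteq U$, and then $V \in \mathscr{B}_e$ with $V^{-1} \subseteq W$ and (shrinking further, again by (1)) $V \subseteq W$. Then I claim $\overline{V} \subseteq VV^{-1} \subseteq WW \subseteq U$: if $x \in \overline{V}$, then every open neighborhood of $x$ meets $V$; the neighborhood $Vx$ — which is open by (the argument in) Claim~2/Claim~4 of Theorem~\ref{t2022}, or directly by Proposition~\ref{t1} since $\mathscr{B}_x = \{Vx : V \in \mathscr{B}_e\}$ is a local pre-base at $x$ — must meet $V$, so there exist $v_1, v_2 \in V$ with $v_1 x = v_2$... wait, $v_1 \in V$ with $v_1 x \in V$, hence $x \in v_1^{-1} V \subseteq V^{-1} V \subseteq WW$. (I need to be slightly careful about whether the local pre-base at $x$ uses left or right translates; Proposition~\ref{t1} allows either, so I will fix the right-translate version $\{Vx\}$ throughout and use the left-invariant variant of Theorem~\ref{t0}, or simply note that the symmetric statement holds by applying the inverse map.) The key subtlety, and the place I expect the argument to require the most care, is that in a pre-topological space arbitrary unions of open sets are open but \emph{finite intersections need not be}, so I must never invoke intersection-stability of the pre-topology; the entire argument only uses that translates of pre-base elements form local pre-bases, which is exactly what Proposition~\ref{t1} provides.

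Finally I assemble the pieces. Let $x \in G$ and let $A$ be a closed set with $x \notin A$. By homogeneity (translating by $x^{-1}$) it suffices to treat $x = e$, so $G \setminus A$ is an open neighborhood of $e$ and contains some $U \in \mathscr{B}_e$. Take $V \in \mathscr{B}_e$ with $\overline{V} \subseteq U \subseteq G \setminus A$, as in the previous paragraph. Then $V$ is an open neighborhood of $e$, the set $O := G \setminus \overline{V}$ is open (complement of a closed set), $A \subseteq G \setminus U \subseteq G \setminus \overline{V} = O$, and $V \cap O = \emptyset$ since $V \subseteq \overline{V}$. Translating back by $x$ gives open sets separating $x$ from $A$ in the general case. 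Together with the $T_1$ property established first, this shows $G$ is a $T_3$ (regular) pre-topological space, completing the proof. The main obstacle, to reiterate, is verifying the closure-shrinking step $\overline{V} \subseteq U$ using only pre-topological tools — specifically, expressing closure via local pre-bases of translates rather than via a neighborhood filter closed under intersections.
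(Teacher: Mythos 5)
Your overall skeleton is the same as the paper's: first upgrade $T_0$ to $T_1$, then show that every open neighborhood $U$ of $e$ contains the closure of some open neighborhood of $e$, and finish by pre-homogeneity and taking the complement of that closure. However, the execution of the key closure-shrinking step has a genuine gap. You begin by choosing a single $W\in\mathscr{B}_e$ with $WW\subseteq U$, citing Theorem~\ref{t0}(1); but (1) only produces two possibly different sets $V,W$ with $VW\subseteq U$. Passing from $VW\subseteq U$ to a single set whose square lies in $U$ is exactly the extra condition defining a \emph{strongly} pre-topological group (Proposition~\ref{p0}(1)), which the paper is careful to distinguish from the general case (cf.\ Example~\ref{eee}(1)); the usual way to merge the two sets is to replace them by $V\cap W$, i.e.\ precisely the intersection-stability that a pre-topology lacks and that you yourself flagged as forbidden. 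The same problem recurs a line later, where you want one $V$ satisfying simultaneously $V^{-1}\subseteq W$ and $V\subseteq W$: condition (2) gives the first inclusion, but nothing in Theorem~\ref{t0} lets you shrink that $V$ inside $W$ without intersecting neighborhoods. So, as written, your argument proves the theorem only for strongly pre-topological (indeed almost topological) groups, not for arbitrary $T_0$ pre-topological groups.

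The repair is short and is what the paper does: keep the two sets. Given $U$, take $V,W\in\mathscr{B}_e$ with $VW\subseteq U$ and claim directly that $\overline{V}\subseteq U$. For $z\in\overline{V}$, the set $zW^{-1}$ is an open neighborhood of $z$ (the inverse map makes $W^{-1}$ open and left translation is a pre-homeomorphism), so $zW^{-1}\cap V\neq\emptyset$; writing $zw^{-1}=v$ gives $z=vw\in VW\subseteq U$. This uses only translates of single pre-base elements, never an intersection, and then your final assembly (complement of $\overline{V}$, translation back to the general point, together with the $T_1$ part, which is fine) goes through unchanged.
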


\begin{proof}
Let $G$ be a $T_{0}$ pre-topological group. We first prove that $G$ is $T_{1}$. Indeed, take two distinct points $x$ and $y$. Since $G$ is $T_{0}$, without loss of
generality, we may assume that there exists an open neighborhood $U$ of $e$ such that $y\not\in xU$, hence $x\not\in yU^{-1}$. From (2) of Theorem~\ref{t0}, there exists an
open neighborhood $V$ of $e$ such that $V\subseteq U^{-1}$, then $x\not\in yV$. Therefore, $G$ is $T_{1}$. Next we prove that it is regular.

From the pre-homogeneous, it suffices to prove that for each open neighborhood $U$ of $e$ there exists an open neighborhood $V$ of $e$ such that $\overline{V}\subseteq U$.
From (1) of Theorem~\ref{t0}, there exist $V, W\in\mathscr{B}_{e}$ such that $VW\subseteq U$. We claim that $\overline{V}\subseteq U$. Indeed, take any $z\in \overline{V}$.
Then $zW^{-1}\cap V\neq\emptyset$, which implies that $z\in VW\subset U.$ Therefore, $\overline{V}\subseteq U$. Thus $G$ is regular.
\end{proof}

In the theory of topological groups, it is well known that any discrete subgroup in a topological group is closed. However, the situation is different in the theory of pre-topological groups. Indeed, the subgroup $H=\{\overline{0}, \overline{1}, \overline{5}\}$ is a discrete subgroup of the pre-topological group $G$ of (1) in Example~\ref{d0}. However, $H$ is not closed since $G=\overline{H}\neq H$. Further, $G$ is also an almost topological group. It is natural to pose the following question.

\begin{question}\label{q1}
Let $(G, \tau)$ be a pre-topological group. If $H$ is a discrete subgroup $G$, under what condition is $H$ closed in $G$?
\end{question}

In order to give some partial answers to Question~\ref{q1}, we introduce the following concepts.

\begin{definition}
Let $(G, \tau)$ be a pre-topological group. Then we say that $(G, \tau^{\star})$ is a {\it co-reflexion group topology} of $\tau$ if $\tau^{\star}$ is the coarsest topology on $G$ such that $\tau\subset\tau^{\star}$ and $(G, \tau^{\star})$ is a topological group; we say that $(G, \tau_{\star})$ is a {\it reflexion group topology} of $\tau$ if $\tau_{\star}$ is the strongest topology on $G$ such that $\tau_{\star}\subset\tau$ and $(G, \tau_{\star})$ is a topological group.
\end{definition}

Note that the reflexion group topologies of a pre-topological group are not unique. Moreover, we have the following proposition.

\begin{proposition}\label{p20221}
Let $(G, \tau)$ be a pre-topological group. Then the family $$\mathscr{B}=\{\bigcap\mathscr{F}: \mathscr{F}\subseteq\tau_{e}, |\mathscr{F}|<\omega\}$$ is an open neighborhood base at $e$ for the co-reflexion group topology $(G, \tau^{\star})$.
\end{proposition}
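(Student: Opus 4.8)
The plan is to show that $\mathscr{B}=\{\bigcap\mathscr{F}: \mathscr{F}\subseteq\tau_{e},\ |\mathscr{F}|<\omega\}$ is the neighborhood filter base at $e$ for a group topology $\tau^{\star}$ that refines $\tau$, and then argue it is the coarsest such. First I would verify that $\mathscr{B}$ satisfies the classical Pontryagin-type axioms for a neighborhood base at the identity of a topological group (the topological analogues of conditions (1)--(4) of Theorem~\ref{t0}, together with the filter-base property). The filter-base property is where the intersection-closure is designed to help: given $B_1=\bigcap\mathscr{F}_1$ and $B_2=\bigcap\mathscr{F}_2$ with $\mathscr{F}_1,\mathscr{F}_2$ finite subfamilies of $\tau_e$, the set $B_1\cap B_2=\bigcap(\mathscr{F}_1\cup\mathscr{F}_2)$ is again in $\mathscr{B}$, so $\mathscr{B}$ is closed under finite intersections and hence a genuine filter base. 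Each $B\in\mathscr{B}$ contains $e$, since every member of $\tau_e$ does.

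Next I would check the algebraic axioms. For axiom (1): given $B=\bigcap_{i=1}^n U_i$ with $U_i\in\tau_e$, apply (1) of Theorem~\ref{t0} to each $U_i$ to get $V_i\in\tau_e$ with $V_iV_i\subseteq U_i$; then $V=\bigcap_{i=1}^n V_i\in\mathscr{B}$ satisfies $VV\subseteq B$, because $VV\subseteq V_iV_i\subseteq U_i$ for each $i$. For axiom (2): apply (2) of Theorem~\ref{t0} to each $U_i$ to obtain $V_i\in\tau_e$ with $V_i^{-1}\subseteq U_i$; then $V=\bigcap_i V_i$ gives $V^{-1}=\bigcap_i V_i^{-1}\subseteq\bigcap_i U_i=B$. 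Axiom (3) (the point-openness needed to generate a topology from translates) follows similarly by applying (3) of Theorem~\ref{t0} coordinatewise, and axiom (4) (invariance under conjugation, needed so that left and right translates generate the same topology) follows by applying (4) of Theorem~\ref{t0} to each $U_i$ and intersecting. In all four cases the key observation is that a finite intersection of the $V_i$'s works simultaneously for the finite intersection of the $U_i$'s — this is exactly why finite, rather than arbitrary, intersections are the right closure. Having verified these axioms, the standard construction yields a group topology $\tau^{\star}$ on $G$ with $\mathscr{B}$ as a local base at $e$, and since $\tau_e\subseteq\mathscr{B}$ (each $U\in\tau_e$ is the intersection of the singleton family $\{U\}$) and $\tau$ is pre-homogeneous, $\tau\subseteq\tau^{\star}$.

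Finally I would prove minimality. Let $\sigma$ be any group topology on $G$ with $\tau\subseteq\sigma$; it suffices to show $\tau^{\star}\subseteq\sigma$, for which it is enough that every $B\in\mathscr{B}$ is a $\sigma$-neighborhood of $e$. But $B=\bigcap_{i=1}^n U_i$ with each $U_i\in\tau_e\subseteq\sigma$, so $B$ is a finite intersection of $\sigma$-open sets each containing $e$, hence $\sigma$-open and a $\sigma$-neighborhood of $e$. Since both $\tau^{\star}$ and $\sigma$ are group topologies and $\tau^{\star}$ has local base $\mathscr{B}$ at $e$, this gives $\tau^{\star}\subseteq\sigma$, so $\tau^{\star}$ is indeed the coarsest group topology refining $\tau$, i.e.\ the co-reflexion group topology. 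I expect the main obstacle to be bookkeeping in the verification of the group-topology axioms for $\mathscr{B}$ — in particular making sure that axiom (3) is phrased and used correctly so that the translates of members of $\mathscr{B}$ actually form a base (rather than merely a subbase) for $\tau^{\star}$, and confirming that the topology generated does not depend on whether one uses left or right translates, which is where axiom (4) is essential. The rest is routine.
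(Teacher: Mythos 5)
The paper states this proposition without proof, so your job here is to supply the routine argument, and your overall strategy (verify the Pontryagin-type neighborhood axioms for $\mathscr{B}$, obtain a group topology with $\mathscr{B}$ as local base at $e$, then show minimality) is the right one and works. One step, however, is wrong as written: you claim that condition (1) of Theorem~\ref{t0} yields, for each $U_i\in\tau_e$, a single $V_i\in\tau_e$ with $V_iV_i\subseteq U_i$. Theorem~\ref{t0}(1) only gives two sets $V_i, W_i\in\tau_e$ with $V_iW_i\subseteq U_i$; the stronger statement you use is precisely the defining property of a \emph{strongly} pre-topological group, and it fails in general pre-topological groups (Example~\ref{eee}(1) in the paper, $\mathbb{R}$ with the pre-base of integer half-lines, is an explicit counterexample: no $\tau$-open $V\ni 0$ satisfies $V+V\subseteq(-\infty,1)$). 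The repair is immediate and uses exactly the finite-intersection mechanism you emphasize: put $V=\bigcap_{i=1}^{n}(V_i\cap W_i)$, which lies in $\mathscr{B}$ (not necessarily in $\tau_e$, but that is all the construction requires), and then $VV\subseteq V_iW_i\subseteq U_i$ for every $i$, so $VV\subseteq B$. With that correction the verification of axioms (2)--(4) goes through coordinatewise as you describe, and the members of $\mathscr{B}$ are automatically $\tau^{\star}$-open since they are finite intersections of sets in $\tau\subseteq\tau^{\star}$, which justifies the phrase ``open neighborhood base.''

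Two smaller remarks. First, your minimality argument is fine but uses more than needed: if $\sigma$ is any topology (group topology or not) with $\tau\subseteq\sigma$, then every $B\in\mathscr{B}$ and every translate $xB=\bigcap_i xU_i$ is a finite intersection of $\tau$-open, hence $\sigma$-open, sets, so the topology with base $\{xB: x\in G,\ B\in\mathscr{B}\}$ is contained in $\sigma$; in other words $\tau^{\star}$ is just the topology generated by $\tau$ as a subbase, and the real content of the proposition is that this generated topology is a group topology. Second, your construction also settles the existence of the co-reflexion group topology, which the paper's definition tacitly presupposes; that is a genuine (if small) bonus of writing the proof out.
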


\begin{proposition}
Let $G$ be a pre-topological group. If $H$ is a discrete subgroup in $(G, \tau_{\star})$, then $H$ is closed in $G$.
\end{proposition}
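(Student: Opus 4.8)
The plan is to route the statement through the topological group $(G,\tau_{\star})$. By the definition of a reflexion group topology, $(G,\tau_{\star})$ is a topological group with $\tau_{\star}\subseteq\tau$, and by hypothesis $H$ is a discrete subgroup of $(G,\tau_{\star})$. Hence it suffices to establish two things: (i) a discrete subgroup of a topological group is closed in that group, so that $H$ is $\tau_{\star}$-closed; and (ii) a $\tau_{\star}$-closed set is $\tau$-closed. Step (ii) is immediate, since $H$ being $\tau_{\star}$-closed means $G\setminus H\in\tau_{\star}\subseteq\tau$, i.e.\ $H$ is closed in $(G,\tau)$.

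For step (i) I would run the standard argument for topological groups (or quote it from \cite{AT}). Discreteness of $H$ provides a $\tau_{\star}$-open $U\ni e$ with $U\cap H=\{e\}$, and since $\tau_{\star}$ is a genuine group topology we choose a symmetric $\tau_{\star}$-open $W\ni e$ with $W^{2}\subseteq U$. One first checks that $\overline{H}^{\tau_{\star}}\cap W=\{e\}$: if some $z\in\overline{H}^{\tau_{\star}}\cap W$ were different from $e$, then an open set $O$ with $z\in O$ and $e\notin O$ makes $O\cap Wz$ an open neighbourhood of $z$ which avoids $e$ yet must still meet $H$, yielding $h^{\prime}\in H\setminus\{e\}$ with $h^{\prime}\in Wz\subseteq W^{2}\subseteq U$, contradicting $U\cap H=\{e\}$. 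Granting this, take any $x\in\overline{H}^{\tau_{\star}}$; the open neighbourhood $xW$ of $x$ meets $H$, say in $h$, so $h^{-1}x\in W^{-1}=W$, and since $h\in H$ the left translation $L_{h^{-1}}$ is a homeomorphism carrying $H$ onto $H$, hence it maps $\overline{H}^{\tau_{\star}}$ onto itself, so $h^{-1}x\in W\cap\overline{H}^{\tau_{\star}}=\{e\}$, whence $x=h\in H$. Thus $\overline{H}^{\tau_{\star}}=H$ and the proof is complete.

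The delicate point, which is the real obstacle, is the existence of the separating open set $O$ used in step (i): it requires $(G,\tau_{\star})$ to be $T_{1}$, equivalently $T_{0}$ (equivalently Hausdorff, since $(G,\tau_{\star})$ is a topological group). This hypothesis cannot be dropped: in the almost topological group $G$ of Example~\ref{d0}(1), the family of unions of cosets of the subgroup $\{\overline{0},\overline{3}\}$ is a group topology contained in $\tau$, and it is maximal among such, hence it qualifies as a reflexion group topology $\tau_{\star}$; in $(G,\tau_{\star})$ the subgroup $\{\overline{0},\overline{1},\overline{5}\}$ is discrete yet has closure $G$, so it is not closed in $(G,\tau)$ either. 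Accordingly I would add to the statement the standing assumption that $(G,\tau_{\star})$ be $T_{0}$; with that assumption in place the two steps above carry through with no further difficulty.
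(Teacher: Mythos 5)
Your argument follows the same route as the paper's own proof: that proof consists precisely of your steps (i) and (ii), with step (i) not reproved but quoted as \cite[Corollary 1.4.18]{AT} and step (ii) being the one-line observation that $\tau_{\star}\subseteq\tau$. Your written-out argument for (i) is the standard one and is correct once $(G,\tau_{\star})$ is $T_{1}$ (for topological groups $T_{0}$ suffices), and your separation caveat is genuine: neither the statement nor the paper imposes any separation on $(G,\tau_{\star})$, while the classical fact being invoked really does fail without it. Indeed $\{e\}$ is always a discrete subgroup of $(G,\tau_{\star})$, so the proposition as printed forces $G\setminus\{e\}\in\tau$; for instance, with the indiscrete pre-topology $\tau=\{\emptyset,G\}$ on any group with more than one element one has $\tau_{\star}=\tau$, the subgroup $H=\{e\}$ is discrete in $(G,\tau_{\star})$, yet it is not closed in $(G,\tau)$. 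So the paper's citation-based proof shares exactly the gap you flag, and your $T_{0}$ amendment repairs it.

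However, the concrete counterexample you offer does not work. First, $\{\overline{0},\overline{1},\overline{5}\}$ is not a subgroup of $\mathbb{Z}_{6}$ (e.g.\ $\overline{1}+\overline{1}=\overline{2}$ lies outside it); this slip is inherited from the paper's own discussion preceding Question~\ref{q1}, where the same set is called a discrete subgroup. Second, even setting that aside, Example~\ref{d0}(1) yields no counterexample to the proposition: since every group topology on the finite group $\mathbb{Z}_{6}$ is the coset topology of an open subgroup, the maximal group topologies contained in $\tau$ are exactly the coset topologies of $\{\overline{0},\overline{3}\}$ and of $\{\overline{0},\overline{2},\overline{4}\}$; with the first choice of $\tau_{\star}$ the subgroups that are discrete in $(G,\tau_{\star})$ are $\{\overline{0}\}$ and $\{\overline{0},\overline{2},\overline{4}\}$, with the second they are $\{\overline{0}\}$ and $\{\overline{0},\overline{3}\}$, and all of these have $\tau$-open complements (namely $\{\overline{1},\overline{2},\overline{3},\overline{4},\overline{5}\}$, $\{\overline{1},\overline{3},\overline{5}\}$ and $\{\overline{1},\overline{2},\overline{4},\overline{5}\}$), hence are closed in $(G,\tau)$. (The set $\{\overline{0},\overline{1},\overline{5}\}$ is indeed discrete and dense in $(G,\tau_{\star})$ and non-closed in $(G,\tau)$, but being no subgroup it refutes nothing.) Replace that witness by the indiscrete example above, or by any pre-topological group in which $G\setminus\{e\}$ is not open; with that substitution and the added $T_{0}$ hypothesis your proof is complete and coincides in substance with the paper's.
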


\begin{proof}
Since $(G, \tau_{\star})$ is a topological group and $H$ is discrete in $(G, \tau_{\star})$, it follows from \cite[Corollary 1.4.18]{AT} that $H$ is closed in $(G, \tau_{\star})$, hence $H$ is closed in $(G, \tau)$ since $\tau_{\star}\subseteq\tau$.
\end{proof}

The following proposition is obvious.

\begin{proposition}
Let $G$ be a pre-topological group. If $H$ is a discrete subgroup in $(G, \tau)$, then $H$ is closed in $(G, \tau^{\star})$.
\end{proposition}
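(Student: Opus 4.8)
The plan is to reduce the statement to the classical fact, already used just above, that every discrete subgroup of a topological group is closed (\cite[Corollary 1.4.18]{AT}). Once that reduction is in place, the only genuine content is the remark that the discreteness of $H$ is not destroyed when one enlarges $\tau$ to the associated group topology $\tau^{\star}$.

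First I would recall, straight from the definition of the co-reflexion group topology, that $\tau\subseteq\tau^{\star}$ and that $(G,\tau^{\star})$ is a topological group; its existence --- and hence the very meaningfulness of the statement --- is furnished by Proposition~\ref{p20221}. Since $H$ is a subgroup of the abstract group $G$, it is in particular a subgroup of the topological group $(G,\tau^{\star})$. Next I would check that $H$ is still a discrete subspace of $(G,\tau^{\star})$: for each $h\in H$ the discreteness of $H$ in $(G,\tau)$ provides some $U\in\tau$ with $U\cap H=\{h\}$, and because $\tau\subseteq\tau^{\star}$ this same $U$ belongs to $\tau^{\star}$, so $\{h\}$ is open in the subspace $H$ of $(G,\tau^{\star})$. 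Therefore $H$ is a discrete subgroup of the topological group $(G,\tau^{\star})$, and \cite[Corollary 1.4.18]{AT} immediately yields that $H$ is closed in $(G,\tau^{\star})$.

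I do not anticipate any real obstacle; the proposition is as easy as the authors advertise. The one point that must be oriented correctly is the inclusion $\tau\subseteq\tau^{\star}$: the co-reflexion topology is \emph{finer} than $\tau$, and it is precisely this direction that lets discreteness be carried upward to a genuine group topology. By contrast, no analogous conclusion holds in $(G,\tau)$ itself, as the example discussed before Question~\ref{q1} shows, so the passage to $\tau^{\star}$ is essential rather than cosmetic.
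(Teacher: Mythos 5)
Your argument is correct and is exactly the reasoning the paper intends: the authors state this proposition without proof as ``obvious,'' and the evident justification is the one you give, namely that discreteness of $H$ passes from $\tau$ to the finer group topology $\tau^{\star}$ and then \cite[Corollary 1.4.18]{AT} applies in the topological group $(G,\tau^{\star})$, mirroring the proof of the preceding proposition. Your remark on the orientation of the inclusion $\tau\subseteq\tau^{\star}$ is the right point to single out, and nothing further is needed.
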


Assume that $U$ is a neighborhood of the neutral element of a pre-topological group $G$. A subset $B$ of $G$ is called {\it $U$-disjoint} if $b\not\in aU$, for any distinct $a, b\in B$.

\begin{lemma}\label{r1}
Let $G$ be an almost topological group, and let $U$ and $V$ be two open neighborhoods of the neutral element in $G$ such that $V^{4}\subset U$ and $V^{-1}=V$. If a subset $B$ of $G$ is $U$-disjoint, then the family of open sets $\{aV: a\in B\}$ is discrete in $G$.
\end{lemma}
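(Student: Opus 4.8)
The plan is to verify the definition of a discrete family of sets directly: I will exhibit, for an arbitrary point $x\in G$, an open neighbourhood of $x$ that intersects at most one member of the family $\{aV:a\in B\}$, and the natural candidate is $xV$ itself.

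The first step is to check that $xV$ is a legitimate open neighbourhood of $x$. Since $V$ is open with $e\in V$, and since the left translation $L_{x}$ is a pre-homeomorphism of $G$, the image $xV=L_{x}(V)$ is open and contains $x=L_{x}(e)$; so this step is immediate from the proposition asserting that translations are pre-homeomorphisms.

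The second and main step is the coset computation. Suppose $a,b\in B$ with $xV\cap aV\neq\emptyset$ and $xV\cap bV\neq\emptyset$. Using $V^{-1}=V$, the first relation gives $a\in xVV^{-1}=xV^{2}$ and the second gives $b\in xV^{2}$, hence $b^{-1}a\in (V^{2})^{-1}V^{2}=V^{4}\subseteq U$; equivalently $a\in bU$. Because $B$ is $U$-disjoint, and the $U$-disjointness condition (applied to the distinct pair $(b,a)$) forbids $a\in bU$ whenever $a\neq b$, we conclude $a=b$. Thus $xV$ meets at most one set of the family, and as $x$ was arbitrary the family $\{aV:a\in B\}$ is discrete in $G$.

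I do not expect a serious obstacle: the argument is the pre-topological analogue of the classical fact for topological groups. The only points needing care are (i) the legitimacy of translating open sets, which holds in any pre-topological group since translations are pre-homeomorphisms, and (ii) bookkeeping of inverses so that $b^{-1}a$ genuinely lands inside $V^{4}$ rather than some longer word — this is exactly where the hypotheses $V^{-1}=V$ and $V^{4}\subseteq U$ are consumed. The ``almost topological group'' assumption is not needed for the implication itself; it is what guarantees that, for a prescribed neighbourhood $U$ of $e$, a symmetric $V$ with $V^{4}\subseteq U$ exists, so that the lemma can actually be applied.
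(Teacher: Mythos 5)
Your proof is correct and follows essentially the same route as the paper: take the neighbourhood $xV$ of an arbitrary point $x$, and use the symmetry of $V$ together with $V^{4}\subseteq U$ to show that two members $aV$, $bV$ meeting $xV$ force $b^{-1}a\in V^{4}\subseteq U$, contradicting $U$-disjointness. Your bookkeeping (placing $a,b\in xV^{2}$ rather than factoring $b^{-1}a=(b^{-1}x)(x^{-1}a)$ as the paper does) is an equivalent rearrangement, and your remark that the almost-topological-group hypothesis is only needed to guarantee the existence of such a $V$ is accurate.
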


\begin{proof}
It suffices to prove that, for every $x\in G$, the open neighbourhood $xV$ of $x$ intersects at most one element of the family $\{aV: a\in B\}$. Suppose not, then there exist $x\in G$ and distinct elements $a$, $b\in B$ such that $xV\cap aV \neq \varnothing$ and $xV\cap bV \neq \varnothing$. Then $x^{-1}a\in V^{2}$ and $b^{-1}x\in V^{2}$, hence $b^{-1}a =(b^{-1}x)(x^{-1}a)\in V^{4} \subseteq U$, thus $a\in bU$, which is a contradiction.
\end{proof}

Let $(G, \tau)$ be a pre-topological space. If each locally finite family of open subsets is finite, then we say that $G$ is {\it feebly compact}.

\begin{theorem}\label{s2}
Each discrete subgroup $H$ of a feebly compact almost topological group $G$ is finite.
\end{theorem}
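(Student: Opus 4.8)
The plan is to argue by contradiction, exploiting the feeble compactness together with Lemma~\ref{r1} and the basic properties of an almost topological group recorded in Theorem~\ref{t0} and the preceding definitions. First I would suppose that $H$ is an infinite discrete subgroup of $G$. Since $H$ is discrete, there is an open neighborhood $W$ of the neutral element $e$ in $G$ such that $W\cap H=\{e\}$; more precisely, discreteness gives an open set containing $e$ whose intersection with $H$ is $\{e\}$, and by translating (using that left translations are pre-homeomorphisms) every point of $H$ is isolated in $H$ via a translate of $W$.

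Next I would produce, using the axioms (1), (2) of Theorem~\ref{t0} and the ``almost topological'' hypothesis, an open neighborhood $V$ of $e$ with $V^{-1}=V$ and $V^{4}\subseteq W$: starting from $W$, repeatedly apply property (1) to get $V_{1}$ with $V_{1}^{2}\subseteq W$ and then $V_{2}$ with $V_{2}^{2}\subseteq V_{1}$, so $V_{2}^{4}\subseteq W$; then, since $G$ is symmetrically pre-topological (indeed almost topological), we may shrink $V_{2}$ to a symmetric open neighborhood $V$ of $e$ with $V\subseteq V_{2}$, whence $V^{-1}=V$ and $V^{4}\subseteq W$. Now observe that $H$ is $W$-disjoint as a subset of $G$: if $a,b\in H$ are distinct and $b\in aW$, then $a^{-1}b\in W\cap H=\{e\}$, forcing $a=b$, a contradiction. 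Therefore $B:=H$ is a $W$-disjoint set, and Lemma~\ref{r1} (with $U=W$) applies to show that the family $\{aV:a\in H\}$ is discrete in $G$.

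Then I would invoke feeble compactness. The family $\{aV:a\in H\}$ is a discrete family of nonempty open subsets of $G$; in particular it is locally finite, so by feeble compactness it must be finite. But distinct elements $a,b\in H$ give distinct sets $aV$ and $bV$: if $aV=bV$ then $a\in aV=bV$, so $a^{-1}b\in V^{-1}=V\subseteq W$, giving $a^{-1}b\in W\cap H=\{e\}$ and hence $a=b$. Thus $a\mapsto aV$ is injective on $H$, so $|H|=|\{aV:a\in H\}|<\omega$, contradicting the assumption that $H$ is infinite. Hence $H$ is finite.

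The only genuinely delicate point is the very first step: extracting from the discreteness of the \emph{subspace} $H$ a single neighborhood $W$ of $e$ in $G$ with $W\cap H=\{e\}$, and then knowing this $W$ can be taken from a pre-base at $e$ so that the chain-of-halvings argument via Theorem~\ref{t0}(1) goes through. This is routine in the topological-group setting but must be checked carefully here, since ``discrete subgroup'' means discrete as a subspace and one needs the interplay of pre-homogeneity (every point of $G$ looks like $e$) to transport the isolating neighborhood of the identity in $H$ to all of $H$ uniformly; after that, the rest is a direct combination of Lemma~\ref{r1} with the definition of feeble compactness.
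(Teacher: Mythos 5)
Your proof is correct and follows essentially the same route as the paper's: isolate $e$ in $H$ by an open neighborhood $W$ with $W\cap H=\{e\}$, observe that $H$ is $W$-disjoint, produce a symmetric $V$ with $V^{4}\subseteq W$, apply Lemma~\ref{r1}, and invoke feeble compactness. In fact you are slightly more careful than the paper, which omits spelling out the symmetry of $V$ required by Lemma~\ref{r1} and the injectivity of $a\mapsto aV$ needed to pass from finiteness of the family $\{aV:a\in H\}$ to finiteness of $H$.
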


\begin{proof}
Since $K$ is discrete in $G$, there exists an open neighbourhood $U$ of the neutral element in $G$ such that $U\cap H= \{e\}$. Then there is an open neighbourhood $V$ of $e$ such that $V^{4} \subseteq U$ because $G$ is an almost topological group. We claim that $H$ is $U$-disjoint. Indeed, take any distinct elements $h_{1}, h_{2}\in H$; then $h_{1}^{-1}h_{2}\notin \{e\}=U\cap H$. Since $h_{1}^{-1}h_{2}\in H$, it follows that $h_{1}^{-1}h_{2}\notin U$, that is, $h_{2}\notin h_{1}U$. Therefore, $H$ is $U$-disjoint. By Lemma~\ref{r1}, the family $\eta =\{hV: h\in H\}$ is discrete in $G$. Since $G$ is feebly compact, it follows that $H$ is finite.
\end{proof}

\begin{corollary}
Each infinite feebly compact almost topological group $G$ contains a non-closed countable subset.
\end{corollary}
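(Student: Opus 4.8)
The plan is to prove the statement by contradiction: assume that \emph{every} countable subset of $G$ is closed, and deduce that $G$ is finite, contradicting the hypothesis. The whole argument is then a short reduction to Theorem~\ref{s2}.

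First, since $G$ is infinite, I would pick a countably infinite set $S=\{g_{n}:n\in\mathbb{N}\}$ of pairwise distinct points of $G$ and let $H=\langle S\rangle$ be the subgroup of $G$ generated by $S$. Because $H$ consists of all finite products of members of $S\cup S^{-1}$, it is countable, and since $S\subseteq H$ it is infinite. Under the standing assumption the set $H\setminus\{e\}$, being countable, is closed in $G$; hence $O:=G\setminus(H\setminus\{e\})$ is an open neighbourhood of $e$ with $O\cap H=\{e\}$. (If one wants discreteness at every point of the subspace $H$: for $h\in H$ the set $hO$ is open since left translations are pre-homeomorphisms, and $hO\cap H=h(O\cap H)=\{h\}$.) In either formulation, $H$ is a discrete subgroup of the feebly compact almost topological group $G$.

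Now Theorem~\ref{s2} applies directly and tells us that $H$ is finite, contradicting the fact that $H\supseteq S$ is infinite. Hence the assumption fails, i.e.\ some countable subset of $G$ is not closed, which is exactly the assertion. I do not expect any real obstacle here; the only points needing a little care are checking that ``every subset of $H$ closed in $G$'' genuinely produces the neighbourhood $O$ of $e$ witnessing the discreteness hypothesis of Theorem~\ref{s2}, and confirming that the countably generated subgroup $H$ is infinite (it is, since it contains $S$). Everything else is immediate from Theorem~\ref{s2}.
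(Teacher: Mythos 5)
Your proof is correct and follows essentially the same route as the paper: generate a countably infinite subgroup $H$ from a countable infinite subset, invoke Theorem~\ref{s2} to rule out discreteness of $H$, and observe that closedness of the countable set $H\setminus\{e\}$ would force $e$ (and by translation every point of $H$) to be isolated. The paper just runs the argument directly, exhibiting $H\setminus\{e\}$ as the non-closed countable set, whereas you phrase it as a contradiction; the content is identical.
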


\begin{proof}
Take any infinite countable subset $A$ of $G$, and let $H$ be the subgroup of $G$ algebraically generated by $A$. Then $H$ is countable and infinite. Therefore, by Theorem~\ref{s2}, $H$ cannot be discrete. Therefore, the subset $B=H\setminus \{e\}$ of $H$ is not closed in $H$, thus $B$ not closed in $G$.
\end{proof}

Finally, we discuss the connectedness \cite{LCL} of pre-topological groups. Let $G$ be pre-topological group with the neutral element $e$. The {\it connected component} of $G$ is the union of all connected subsets of $G$ containing $e$. It follows from \cite[Theorem 24]{LCL} that the connected component of $G$ can be described as the biggest connected subspace of $G$ containing $e$.

\begin{proposition}
The connected component $H$ of any pre-topological group $G$ is a closed invariant subgroup of $G$.
\end{proposition}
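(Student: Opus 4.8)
The plan is to mimic the classical proof for topological groups, adapting each step to the pre-topological setting and using only the tools available in the excerpt, namely Theorem~\ref{t0}, the pre-homogeneity of $G$ (so that it suffices to work with neighbourhoods of $e$), and basic facts about connected components of pre-topological spaces from \cite[Theorem 24]{LCL}. Write $H$ for the connected component of $e$. There are three things to check: that $H$ is a subgroup, that $H$ is closed, and that $H$ is normal (invariant) in $G$.

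First I would show $H$ is a subgroup. For closure under multiplication: fix $a\in H$; the left translation $L_a$ is a pre-homeomorphism by Proposition on translations, so $L_a(H)=aH$ is connected and contains $a\cdot e=a\in H$, hence $aH\subseteq H$ by maximality of the connected component through $e$ (here I use that $H$ and $aH$ are both connected and share the point $a$, so their union is connected and lies in $H$). Since this holds for every $a\in H$, we get $HH\subseteq H$. For inverses: the inversion map $In$ is a pre-homeomorphism of $G$ (it is pre-continuous with pre-continuous inverse, itself), so $In(H)=H^{-1}$ is connected and contains $e^{-1}=e$, whence $H^{-1}\subseteq H$. Thus $H$ is a subgroup.

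Next, normality. For each $g\in G$ the inner automorphism $x\mapsto gxg^{-1}$ is a composition of a left and a right translation, hence a pre-homeomorphism of $G$; it maps $H$ onto a connected set $gHg^{-1}$ containing $geg^{-1}=e$, so $gHg^{-1}\subseteq H$ by maximality. Therefore $H$ is invariant. Finally, closedness: by \cite[Theorem 24]{LCL} (the analogue of the standard fact that connected components are closed), the component $H$ is already closed in $G$, which finishes the proof; alternatively, if one wants to argue directly, one shows that the closure $\overline{H}$ of a connected set is connected in a pre-topological space and contains $e$, so $\overline{H}\subseteq H$, giving $H=\overline{H}$.

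The only genuinely delicate point is making sure the connectedness machinery of \cite{LCL} behaves as expected: specifically, that a union of connected subsets with a common point is connected, that the continuous (here pre-continuous, and in fact pre-homeomorphic) image of a connected set is connected, and that closures of connected sets are connected. These are exactly the lemmas one expects, and the excerpt explicitly grants \cite[Theorem 24]{LCL} describing the component as the biggest connected subspace containing $e$, which is precisely the maximality property the argument leans on; so the main obstacle is merely bookkeeping rather than any real difficulty, and the proof should go through almost verbatim from the topological-group case.
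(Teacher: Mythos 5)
Your proposal is correct and takes essentially the same route as the paper: connected images under translations/inversion plus maximality of the component through $e$, with closedness quoted from the connectedness results of \cite{LCL}. If anything, your element-by-element argument that $aH\subseteq H$ for each $a\in H$ is slightly more careful than the paper's direct claim that $H^{2}$ is connected (which implicitly relies on connectedness of the image of $H\times H$), but the overall approach is the same.
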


\begin{proof}
Since pre-continuity preserves the connectedness, it follows that $H^{2}$ and $H^{-1}$ are connected, hence $H$ is a group. From \cite[Theorem 25]{LCL}, $H$ is also closed. Now we only prove that $H$ is invariant. Since the left and right translations are pre-homeomorphism of $G$ onto itself, it follows that $aH$ and $Ha$ are connected for any $a\in G$, hence $aHa^{-1}$ and $a^{-1}Ha$ are also connected. For each $a\in G$, since $e\in aHa^{-1}\cap a^{-1}Ha$ and $H$ is the biggest connected subset of $G$ containing $e$, it follows that $aHa^{-1}\subseteq H$ and $a^{-1}Ha\subseteq H$, which implies that $H\subseteq aHa^{-1}$, hence $H=aHa^{-1}$.
\end{proof}

\begin{proposition}\label{z1}
Let $U$ be an arbitrary open neighborhood of the neutral $e$ of a connected pre-topological group $G$. Then $G=\bigcup_{n=1}^{\infty}(U\cup U^{-1})^{n}$.
\end{proposition}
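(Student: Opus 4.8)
The plan is to reduce the statement to the standard fact that a nonempty clopen subset of a connected space must be the whole space. First I would put $V = U\cup U^{-1}$. The inverse mapping $\mathrm{In}\colon G\to G$ is a bijection which, together with its own inverse $\mathrm{In}^{-1}=\mathrm{In}$, is pre-continuous, hence a pre-homeomorphism; therefore $U^{-1}=\mathrm{In}(U)$ is open, and $V$ is an open neighbourhood of $e$ with $V=V^{-1}$. Since $\bigcup_{n\ge 1}(U\cup U^{-1})^{n}=\bigcup_{n\ge 1}V^{n}$, it suffices to show that $H:=\bigcup_{n=1}^{\infty}V^{n}$ equals $G$.

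Next I would verify that $H$ is a clopen subgroup. It is a subgroup: $e\in V\subseteq H$, it is closed under multiplication because $V^{m}V^{n}=V^{m+n}$, and it is closed under inversion because $(V^{n})^{-1}=(V^{-1})^{n}=V^{n}$ by symmetry of $V$. It is open: given $h\in H$, choose $n$ with $h\in V^{n}$; since the left translation $L_{h}$ is a pre-homeomorphism of $G$, the set $hV=L_{h}(V)$ is open, and $h=he\in hV\subseteq V^{n}V=V^{n+1}\subseteq H$, so $H$ is a union of open sets and hence open (here one uses only that a pre-topology is closed under arbitrary unions). Finally $H$ is closed: $G\setminus H=\bigcup\{gH : g\in G\setminus H\}$ is a union of left cosets, and each $gH=L_{g}(H)$ is open because $L_{g}$ is a pre-homeomorphism and $H$ is open. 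Thus $H$ is a nonempty clopen subset of the connected space $G$, whence $H=G$, which is the desired conclusion.

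I do not expect a genuine obstacle here; the argument is the classical one for topological groups. The only point that requires the pre-topological framework specifically is the assertion that $H$ is open, where one must argue via \emph{arbitrary} unions of open sets rather than finite intersections; since closure under arbitrary unions is exactly the defining axiom of a pre-topology, this costs nothing. The thing to be careful about throughout is to deduce openness of $V$, $hV$, $gH$, and $H$ only from this union axiom and from the fact (established earlier) that translations and inversion are pre-homeomorphisms, never from a (possibly unavailable) closure under finite intersections.
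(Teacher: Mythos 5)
Your proposal is correct and follows essentially the same route as the paper: the paper also observes that $H=\bigcup_{n\ge 1}(U\cup U^{-1})^{n}$ is an open subgroup, invokes the fact that open subgroups of a (left/right) pre-topological group are closed (its Theorem on open subgroups, proved exactly by your coset-translation argument), and concludes $H=G$ by connectedness. You merely spell out the details (openness of $U^{-1}$, of $hV$, and of the cosets) that the paper delegates to earlier results.
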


\begin{proof}
Let $U$ be an arbitrary open neighborhood of the neutral $e$. Then $H=\bigcup_{n=1}^{\infty}(U\cup U^{-1})^{n}$ is an open subgroup of $G$. According to Proposition~\ref{s1} below, $H$ is closed in $G$. Since $G$ is connected, we have $H=G$.
\end{proof}

\begin{theorem}\label{ttttt3}
Let $K$ be a discrete invariant subgroup of a connected pre-topological group $G$. If, for any $x\in G$ and open set $U$ with $x\in U$, there exists an open neighborhood $V$ of $e$ such that $VxV\subset U$, then $K$ is contained in the center of the pre-topological group $G$.
\end{theorem}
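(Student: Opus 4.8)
The plan is to mimic the classical argument that a discrete normal subgroup of a connected topological group is central, adapting it to the weaker pre-topological setting by exploiting the hypothesis $VxV\subseteq U$. First I would fix $k\in K$ and consider the conjugation behaviour of $k$. Since $K$ is discrete, choose an open neighborhood $W$ of the neutral element $e$ with $W\cap K=\{e\}$. The key idea is to produce, for this $k$, an open neighborhood $O$ of $e$ such that $x k x^{-1} k^{-1}\in W$ for all $x\in O$; since $xkx^{-1}k^{-1}\in K$ (because $K$ is invariant), this forces $xkx^{-1}k^{-1}=e$, i.e. $x$ commutes with $k$, for all $x$ in a neighborhood of $e$.

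To build such an $O$, I would use the hypothesis applied at the point $k$: since $k\in Wk$ and $Wk$ is open (left translations are pre-homeomorphisms, Proposition~\ref{t1}), there is an open neighborhood $V$ of $e$ with $VkV\subseteq Wk$, hence $VkVk^{-1}\subseteq W$. Using Theorem~\ref{t0}(2) shrink $V$ so that $V^{-1}\subseteq V$ as well (replace $V$ by $V\cap V^{-1}$, still open and containing $e$), and then for $x\in V$ we get $xkx^{-1}\in xk V k^{-1}\subseteq$ ... — here I need the second occurrence of the neighborhood to be on the correct side, so more carefully I would apply the hypothesis to get $VkV\subseteq Wk$ and separately shrink so $xkx^{-1}k^{-1}$ lands in $W$; writing $xkx^{-1}k^{-1}=(xk)(x^{-1}k^{-1})$ and noting $x^{-1}\in V$ gives $xkx^{-1}\in VkV\subseteq Wk$, so $xkx^{-1}k^{-1}\in W$. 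Thus every $x\in V$ commutes with $k$: the centralizer $C_G(k)=\{x\in G: xk=kx\}$ contains the open neighborhood $V$ of $e$.

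Next I would show $C_G(k)$ is all of $G$. The centralizer $C_G(k)$ is a subgroup of $G$, and it is open since it contains $V$ together with all its translates $gV$ for $g\in C_G(k)$ (for $g\in C_G(k)$ and $x\in V$, $gx$ commutes with $k$ because both $g$ and $x$ do). An open subgroup of a pre-topological group is also closed — this is the standard argument (its complement is a union of cosets, each open), and it is invoked elsewhere in the paper as Proposition~\ref{s1}. Since $G$ is connected and $C_G(k)$ is a nonempty clopen subgroup, $C_G(k)=G$. As $k\in K$ was arbitrary, every element of $K$ is central, i.e. $K$ is contained in the center of $G$.

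The main obstacle is purely bookkeeping: getting the two-sided neighborhood condition $VkV\subseteq Wk$ to yield $xkx^{-1}k^{-1}\in W\cap K=\{e\}$ with the inverses on the correct sides, which requires replacing $V$ by a symmetric shrinking via Theorem~\ref{t0}(2) and being careful that conjugation, not just left multiplication, is what we control. Everything else (subgroup structure of the centralizer, openness, the clopen-subgroup-plus-connectedness conclusion) is routine and parallels the topological-group case.
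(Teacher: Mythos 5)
The one genuine gap is the symmetrization step. In a pre-topological space open sets are closed under arbitrary unions but not under finite intersections, so ``replace $V$ by $V\cap V^{-1}$, still open and containing $e$'' is unjustified: $V^{-1}$ is open (pre-continuity of the inverse), but $V\cap V^{-1}$ need not be open, and need not contain any open neighborhood of $e$ at all. Concretely, in the strongly pre-topological group of Example~\ref{eee}(2) the set $V=[0,1)$ is open while $V\cap V^{-1}=\{0\}$ is not, and no symmetric open neighborhood of $0$ fits inside $[0,1)$; Theorem~\ref{t0}(2) does not help, since it only yields some open $V'$ with $(V')^{-1}\subseteq V$, which controls $x^{-1}$ but loses control of $x$ itself. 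Consequently your argument establishes only that the centralizer $C_G(k)$ contains $V\cap V^{-1}$, and everything after that point --- $C_G(k)$ contains an open neighborhood of $e$, hence is an open subgroup, hence closed by Theorem~\ref{s1}, hence equal to $G$ by connectedness --- is left hanging, because it needs an open set around $e$ inside $C_G(k)$ (the step $gV\subseteq C_G(k)$ for $g\in C_G(k)$ has the same dependence on $V\subseteq C_G(k)$). Note the theorem is stated for arbitrary pre-topological groups, not for symmetrically or almost topological ones, so symmetric neighborhoods are simply not part of the hypotheses.

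Apart from this, your skeleton parallels the paper's. The paper chooses $U$ with $U\cap K=\{x\}$, gets $V$ with $VxV\subseteq U$, asserts $yxy^{-1}\in VxV$ for every $y\in V$ (exactly the left/right difficulty you noticed, stated there without any symmetrization), concludes that $x$ commutes with every element of $V\cup V^{-1}$ --- a union, hence open --- and finishes with Proposition~\ref{z1}, $G=\bigcup_{n}(V\cup V^{-1})^{n}$, whose proof is the same ``open subgroup is closed plus connectedness'' argument you invoke through the centralizer. So your ending is essentially equivalent to the paper's; the point where you genuinely diverge is the attempted repair by intersecting $V$ with $V^{-1}$, and that repair is exactly what a pre-topology does not provide. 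To make your route work you would need every $x\in V$ (not merely every $x\in V\cap V^{-1}$) to commute with $k$, which is precisely the containment the paper asserts.
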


\begin{proof}
If $K={e}$, then it is obvious. Therefore, suppose that the subgroup $K$ is not trivial. Take any $x\in K\setminus\{e\}$. Since the group $K$ is discrete, there is an open neighbourhood $U$ of $x$ such that $U\cap K=\{x\}$. Then it follows from our assumption that there exists an open neighborhood $V$ of $e$ such that $VxV\subseteq U$. For any $y\in V$, since $K$ is an invariant subgroup of $G$, we have that $yxy^{-1}\in K$; it is obvious that $yxy^{-1}\in VxV\subseteq U$. Thus, $yxy^{-1}\in U\cap K=\{x\}$, that is, $yxy^{-1}=x$. Thus $yx=xy$ and $y^{-1}x=xy^{-1}$ for each $y\in V$. This shows that $x$ commutes with every element of $V\cup V^{-1}$. Next we prove that $x$ commutes with every element of $G$.

Since $G$ is connected, it follows from Proposition~\ref{z1} that $G=\bigcup_{n\in\mathbb{N}}(V\cup V^{-1})^{n}$. Then, for every $g\in G$, $g$ can be written in the form $g=y_{1}y_{2}\cdots y_{n}$, where $y_{1}y_{2}\cdots y_{n}\in V\cup V^{-1}$ and $n\in \mathbb{N}$. Since $x$ commutes with every element of $V\cup V^{-1}$, we conclude that $$gx=y_{1}y_{2}\cdots y_{n}x=y_{1}y_{2}\cdots y_{n-1}xy_{n-1}=\cdots=xy_{1}y_{2}\cdots y_{n}=xg.$$ Since $x$ is an arbitrary element of $K$, we conclude that the center of $G$ contains $K$.
\end{proof}

From Theorem~\ref{ttttt3}, we have the following question.

\begin{question}
Let $G$ be a strongly pre-topological group (almost topological group). For any $x\in G$ and open set $U$ with $x\in U$, does there exist an open neighborhood $V$ of $e$ such that $VxV\subset U$?
\end{question}

\section{Generalizations of pre-topological groups}

In this section, we give some generalizations of pre-topological group, such as, semi-pre-topological group, quasi-pre-topological group and para-pre-topological group. Moreover, we study some basic properties of them.

\begin{definition}
Let $\tau$ be a pre-topology on a group $G$. We say that $(G, \tau)$ is a

\smallskip
$\bullet$ right (left) pre-topological group if for each $a\in G$, the right (left) action $R_a$ $(L_a)$ of $a$ on $G$ is a pre-continuous mapping of the space $G$ to itself.

\smallskip
$\bullet$ semi-pre-topological group if $(G, \tau)$ are both right pre-topological group and left pre-topological group.

\smallskip
$\bullet$ quasi-pre-topological group if $(G, \tau)$ is a semi-pre-topological group such that the inverse mapping $In:G\rightarrow G$ is pre-continuous.

\smallskip
$\bullet$ para-pre-topological group if the multiplication mapping $G \times G\rightarrow G $ is pre-continuous, where $G \times G$ is given the product pre-topology.
\end{definition}

\begin{definition}
Let $G$ be a semigroup, and let $(G, \tau)$ be a pre-topological space. We say that $(G, \tau)$ is a

\smallskip
$\bullet$ right (left) pre-topological semigroup if for each $a\in G$, the right (left) action $R_a$ $(L_a)$ of $a$ on $G$ is a pre-continuous mapping of the space $G$ onto itself.

\smallskip
$\bullet$ semi-pre-topological semigroup if $(G, \tau)$ both right pre-topological semigroup and left pre-topological semigroup.

\smallskip
$\bullet$ pre-topological semigroup if the multiplication mapping $G \times G\rightarrow G$  is pre-continuous, where $G \times G$ is endowed with the product pre-topology.
\end{definition}

The following examples shows that the class of pre-topological groups is strictly contained in the class of quasi-pre-topological groups and the class of para-pre-topological groups respectively.

\begin{example}\label{e0}
There exists a finite quasi-pre-topological group $G$ such that $G$ is not a pre-topological group.
\end{example}

\begin{proof}
Indeed, let $G=\{\overline{0}, \overline{1}, \overline{2}, \overline{3}\}$ be the set of surplus class with respect to module 4. We endow $G$ with a pre-topology as follows:
$$\tau=\{\emptyset, \{\overline{0}, \overline{1}, \overline{3}\}, \{\overline{0}, \overline{1}, \overline{2}\}, \{\overline{1}, \overline{2}, \overline{3}\}, \{\overline{0},
\overline{2}, \overline{3}\}, G\}.$$It easily check that $(G, \tau)$ is a quasi-pre-topological group. However, $(G, \tau)$ is not a pre-topological group since the
multiplication of $(G, \tau)$ is not jointly pre-continuous (Indeed, for any open neighborhoods $U, V$ of $e$ we have $\overline{0}\in UV$, hence $UV\nsubseteq\{\overline{0},
\overline{1}, \overline{3}\}$).
\end{proof}

\begin{example}
There exists a para-pre-topological group $G$ such that $G$ is not a pre-topological group and para-topological group.
\end{example}

\begin{proof}
Let $\mathbb{R}$ be the real number with usual addition `+'. Let $\tau_{1}$ be the topology generated by the family $\{[x, x+y): x\in\mathbb{R}, y\in\mathbb{R}^{+}\}$ (that
is, Sorgenfrey line), and let $\tau_{2}$ be the topology generated by the family $\{\{x\}\cup \{m+x: m\geq n\}: n\in\mathbb{N}\}$. It easily check that both $(G, \tau_{1})$
and $(G, \tau_{2})$ are para-topological groups on $\mathbb{R}$. Clearly, $\tau_{1}\nsubseteq\tau_{2}$ and $\tau_{2}\nsubseteq\tau_{1}$. Let $(G, \tau)$ be the pre-topology
which is generated by the family $\tau_{1}\cup\tau_{2}$ as follows
$$\tau=\left\{U\subset \mathbb{R}:\ \mbox{there exist}\ \mathcal{U}_{1}\subseteq\tau_{1}\ \mbox{and}\ \mathcal{U}_{2}\subseteq\tau_{2}\ \mbox{such that}\
U=(\bigcup\mathcal{U}_{1})\cup\bigcup(\mathcal{U}_{2})\right\}.$$ Since $\{0\}$ is not open in $(G, \tau)$, it follows that $(G, \tau)$ is not a para-topological group.
Moreover, for any open neighborhood $W$ of $e$ in $\tau$, we have $-W\nsubseteq [0, 1)$, hence $G$ is not a pre-topological group.
\end{proof}

Indeed, we have the following proposition which is a machine to generate a para-pre-topological group from a para-topological group.

\begin{proposition}
Let $(G, \tau)$ be a para-topological group which is not a topological group. Then the pre-topology $\sigma$ on $G$, which has a pre-base $\tau\cup\tau^{-1}$, is a para-pre-topological group and is not a para-topological group, where $\tau^{-1}=\{U^{-1}: U\in \tau\}$.
\end{proposition}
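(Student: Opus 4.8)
The plan is to verify the two assertions separately: first that the multiplication on $(G,\sigma)$ is pre-continuous, and then that $\sigma$ cannot be a topology, which already prevents $(G,\sigma)$ from being a para-topological group. I would begin with the preliminary observation that, since $\tau$ and $\tau^{-1}=\{U^{-1}:U\in\tau\}$ are each closed under arbitrary unions and $G\in\tau$, the pre-topology $\sigma$ generated by the pre-base $\tau\cup\tau^{-1}$ is exactly $\{A\cup B:A\in\tau,\ B\in\tau^{-1}\}$, with $\tau\cup\tau^{-1}$ a pre-base of $(G,\sigma)$.

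For pre-continuity of the multiplication $m\colon(G,\sigma)\times(G,\sigma)\to(G,\sigma)$, since preimages commute with unions and every pre-topology is closed under unions, it is enough to check that $m^{-1}(W)$ is open in the product pre-topology for each $W$ in the pre-base $\tau\cup\tau^{-1}$. For $W\in\tau$ this is immediate: $(G,\tau)$ is a para-topological group, so $m^{-1}(W)$ is open in $\tau\times\tau$, and writing it as a union $\bigcup_i A_i\times B_i$ with $A_i,B_i\in\tau\subseteq\sigma$ exhibits it as open in the product pre-topology. For $W=U^{-1}$ with $U\in\tau$, I would use $m^{-1}(U)=\bigcup_i A_i\times B_i$ (again open in $\tau\times\tau$) together with the equivalence $xy\in U^{-1}\iff(y^{-1},x^{-1})\in m^{-1}(U)$ to obtain $m^{-1}(U^{-1})=\bigcup_i B_i^{-1}\times A_i^{-1}$, a union of rectangles with all factors in $\tau^{-1}\subseteq\sigma$, hence again open in the product pre-topology. (Alternatively, one first notes that $(G,\tau^{-1})$ is itself a para-topological group, $\tau^{-1}$ being a topology, so that $m^{-1}(U^{-1})$ is $\tau^{-1}\times\tau^{-1}$-open and a fortiori $\sigma\times\sigma$-open.) This shows that $(G,\sigma)$ is a para-pre-topological group.

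For the second assertion I would argue by contradiction. Suppose $(G,\sigma)$ is a para-topological group; then $\sigma$ is a topology, in particular closed under finite intersections. Since $(G,\tau)$ is a para-topological group but not a topological group, inversion is discontinuous at some point $a\in G$, so there is $U\in\tau$ with $a^{-1}\in U$ such that $U^{-1}$ (which contains $a$) contains no $\tau$-open neighbourhood of $a$. Fix an arbitrary $\tau$-open neighbourhood $V$ of $a$. Then $V\cap U^{-1}\in\sigma$ and $a\in V\cap U^{-1}$, so, since $\tau\cup\tau^{-1}$ is a pre-base, there is $M\in\tau\cup\tau^{-1}$ with $a\in M\subseteq V\cap U^{-1}$. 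We cannot have $M\in\tau$, for then $M$ would be a $\tau$-open neighbourhood of $a$ contained in $U^{-1}$; hence $M=N^{-1}$ with $N\in\tau$, so that $a^{-1}\in N$ and $N^{-1}\subseteq V$, i.e.\ $N\subseteq V^{-1}$, which makes $V^{-1}$ a $\tau$-neighbourhood of $a^{-1}$. Since $V$ was an arbitrary $\tau$-open neighbourhood of $a$, inversion is continuous at $a$ on $(G,\tau)$, contradicting the choice of $a$. Therefore $(G,\sigma)$ is not a para-topological group.

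The routine portions are the identification $\sigma=\{A\cup B:A\in\tau,B\in\tau^{-1}\}$ and the bookkeeping with rectangles in the product pre-topology. The step I expect to be the crux is the second half: one must recognise that the obstruction to $\sigma$ being a topology is precisely the discontinuity of inversion, and the decisive manoeuvre is to intersect the fixed ``bad'' set $U^{-1}$ with an arbitrary $\tau$-neighbourhood $V$ of $a$ and extract, from a pre-base element of $V\cap U^{-1}$ containing $a$, that $V^{-1}$ must be a $\tau$-neighbourhood of $a^{-1}$.
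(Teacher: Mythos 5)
Your first half is correct and supplies exactly the details the paper dismisses as ``easily checked'': reducing pre-continuity of multiplication to the pre-base $\tau\cup\tau^{-1}$, the identification $\sigma=\{A\cup B: A\in\tau,\ B\in\tau^{-1}\}$, and the computation $m^{-1}(U^{-1})=\bigcup_i B_i^{-1}\times A_i^{-1}$ (equivalently, that $(G,\tau^{-1})$ is itself a para-topological group) are all sound.

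The second half, however, has a genuine gap in its closing step. After extracting $M\in\tau\cup\tau^{-1}$ with $a\in M\subseteq V\cap U^{-1}$ and ruling out $M\in\tau$, what you have proved is: for every $\tau$-open $V\ni a$ there is a $\tau$-open $N\ni a^{-1}$ with $N^{-1}\subseteq V$. That is continuity of inversion at the point $a^{-1}$, not at $a$: continuity at $a$ would require, for every $\tau$-open $W\ni a^{-1}$, some $\tau$-open $V\ni a$ with $V^{-1}\subseteq W$, and this is precisely what your witness $U$ asserts is impossible. So the sentence ``inversion is continuous at $a$, contradicting the choice of $a$'' does not follow from what precedes it, and when $a\neq a^{-1}$ no contradiction has actually been reached. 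The repair is short and standard: since $In=L_{a^{-1}}\circ In\circ R_{a^{-1}}$ and translations in a para-topological group are homeomorphisms, continuity of inversion at $e$ (indeed at any single point) implies continuity everywhere; hence discontinuity of inversion may be assumed to occur at $a=e$, where $a=a^{-1}$ and your case analysis does close the contradiction (alternatively, keep your $a$ and invoke the same translation fact to pass from continuity at $a^{-1}$ to continuity at $a$). With that fix, your argument is essentially a pointwise version of the paper's, which works at $e$ with the symmetric set $U\cap U^{-1}$: there both cases $M\in\tau$ and $M\in\tau^{-1}$ immediately yield a $\tau$-open neighbourhood of $e$ inside $U^{-1}$, so the symmetry does the bookkeeping your version must supply by hand.
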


\begin{proof}
It easily check that $(G, \sigma)$ is a para-pre-topological group. However, since $(G, \tau)$ is not a topological group, there exists $U\in\tau_{e}$ such that $U\cap U^{-1}\not\in\tau\cup\tau^{-1}$, hence $(G, \sigma)$ is not a para-topological group.
\end{proof}

Hence it is natural to pose the following question.

\begin{question}\label{q0}
When is a para-pre-topological group (or quasi-pre-topological group) pre-topological group?
\end{question}

The next theorem gives a partial answer to Question~\ref{q0}. First, we introduce a concept.

\begin{definition}
Let $(X, \tau)$ be a pre-topological space. The space $X$ is {\it locally finite} \cite{KE} if each point of $X$ has an open neighborhood which is a finite set.
\end{definition}

Clearly, each finite pre-topological space is locally finite.

\begin{theorem}\label{r0}
If $(G, \tau)$ is a locally finite para-pre-topological group, then $G$ is an almost topological group.
\end{theorem}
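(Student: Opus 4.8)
The plan is to show that a locally finite para-pre-topological group $(G,\tau)$ satisfies the two defining conditions of an almost topological group, namely that $G$ has a symmetric pre-base at $e$ (the symmetrically pre-topological condition) and that for each $U\in\mathscr{B}_e$ there is $V\in\mathscr{B}_e$ with $V^2\subseteq U$ (the strongly pre-topological condition of Proposition~\ref{p0}(1)). The key leverage is local finiteness: every point of $G$ has a finite open neighborhood, so in particular $e$ has a \emph{smallest} open neighborhood $W$, obtained as the intersection of the (finitely many) open sets contained in some fixed finite open neighborhood of $e$; since $\tau$ is closed under arbitrary unions but here the relevant family is finite, $W\in\tau$ and $W$ is an atom at $e$. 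Every open neighborhood of $e$ contains $W$, so $\{W\}$ (together with the other neighborhoods) behaves very rigidly, and it suffices to analyze $W$ itself.

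First I would establish that $W=W^{-1}$ and $W\cdot W=W$. For the inverse: since $In\colon G\to G$ is pre-continuous (being part of the multiplication data of a para-pre-topological group — or I would first extract pre-continuity of $In$ from joint pre-continuity of multiplication restricted appropriately, using that $(x,y)\mapsto xy$ pre-continuous forces $x\mapsto x^{-1}$ to be pre-continuous in a group with identity available), $In^{-1}(W)$ is an open neighborhood of $e$, hence contains $W$, giving $W^{-1}\subseteq W$; applying this to $W^{-1}$ (also an atom, being the homeomorphic image of $W$) gives equality. For the product: by joint pre-continuity of multiplication at $(e,e)$ and the fact that $W$ is the minimal open neighborhood of $e$, there are open $V_1,V_2$ with $V_1V_2\subseteq W$, but $V_1,V_2\supseteq W$, so $W\cdot W\subseteq W$; the reverse inclusion $W\subseteq W\cdot W$ holds since $e\in W$. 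This immediately yields $W^2=W\subseteq U$ for every $U\in\mathscr{B}_e$, verifying the strongly pre-topological condition, and $W=W^{-1}$ with $W$ contained in every neighborhood of $e$ shows that the family of all open neighborhoods of $e$ can be replaced by a symmetric local pre-base (indeed one may take the pre-base to consist of symmetric sets by intersecting each with its inverse and noting such intersections are still open neighborhoods containing $W$, or more cleanly just observe $W$ symmetric suffices as a one-element local pre-base if $W$ is actually the whole of a minimal neighborhood — care is needed here because the local pre-base should generate $\tau$ after translation, so I would keep the full collection but replace each member by a symmetric open neighborhood).

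The one genuinely delicate point — and the main obstacle — is getting a \emph{symmetric pre-base}, not merely a symmetric atom at $e$: the definition of symmetrically pre-topological group asks for a pre-base $\mathscr{B}_e$ at $e$ every member of which is symmetric. Here I would argue that for each open neighborhood $U$ of $e$, the set $U\cap U^{-1}$ is again an open neighborhood of $e$ (open because $U$ and $U^{-1}$ are open and, crucially, $U\cap U^{-1}$ is a union of atoms — in a locally finite space every open set is a union of atoms, so intersections of open sets, while not open in general, \emph{are} open when each point of the intersection carries its atom inside; one checks $x\in U\cap U^{-1}$ implies the atom at $x$, namely $Wx$ after translating, lies in both $U$ and $U^{-1}$ since $U\supseteq Wx$ and $U^{-1}=U^{-1}$ is a union of atoms containing $Wx$). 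Then $\{U\cap U^{-1}: U\in\mathscr{B}_e\}$ is a symmetric local pre-base. Combining this with the $W^2=W$ computation gives that $G$ is both symmetrically and strongly pre-topological, hence an almost topological group. I expect the bookkeeping around "open sets in locally finite spaces are unions of atoms, and which intersections remain open" to be where all the real care goes; everything else is a direct translation argument via Proposition~\ref{t1}.
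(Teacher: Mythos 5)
Your proposal has two genuine gaps, and the first one is fatal to the whole strategy. A para-pre-topological group assumes only that the multiplication $G\times G\to G$ is jointly pre-continuous; pre-continuity of the inverse mapping is \emph{not} part of the data, and it cannot be ``extracted'' from joint pre-continuity of multiplication (already for ordinary topologies the Sorgenfrey line is a paratopological group whose inversion is discontinuous). Your derivation of $W^{-1}\subseteq W$ via ``$In^{-1}(W)$ is an open neighborhood of $e$'' therefore assumes exactly what the theorem is supposed to produce: the entire content of Theorem~\ref{r0} is that symmetry appears for free under local finiteness. The paper gets it by a finiteness/pigeonhole argument that your proposal never makes: for a minimal open neighborhood (atom) $U$ at $e$ one first shows $U^{2}=U$ using only joint pre-continuity (as in the proof of Proposition~\ref{p0}(1): if $V_{1}V_{2}\subseteq U$ with $V_{1},V_{2}$ open neighborhoods of $e$, then $V_{1}\subseteq V_{1}V_{2}\subseteq U$ and $V_{2}\subseteq V_{1}V_{2}\subseteq U$ force $V_{1}=V_{2}=U$ by minimality), and then for $x\in U$ one has $xU\subseteq U$ with $|xU|=|U|<\infty$, hence $xU=U$, so $e\in xU$ and $x^{-1}\in U$; thus $U=U^{-1}$. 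That counting step is the missing idea in your write-up.

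The second gap is your claim that $e$ has a \emph{smallest} open neighborhood $W$, obtained by intersecting the finitely many open sets inside a finite open neighborhood. A pre-topology is closed under unions only; finite intersections of open sets need not be open, and there is in general no smallest open neighborhood, only (possibly several incomparable) minimal ones. The paper's own Example~\ref{d0}(1) already refutes your claim: there the identity $\overline{0}$ has the two incomparable atoms $\{\overline{0},\overline{3}\}$ and $\{\overline{0},\overline{2},\overline{4}\}$, and $\{\overline{0}\}$ is not open, even though this is a finite (hence locally finite) pre-topological group. Consequently your later steps ``$V_{1},V_{2}\supseteq W$'' and ``$U\cap U^{-1}$ is open'' have no support. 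The correct bookkeeping is the paper's: every open neighborhood of $e$ contains an atom, each atom $U$ satisfies $U^{2}=U$ and $U=U^{-1}$ by the argument above, and the family of atoms at $e$ is then a symmetric pre-base witnessing both the symmetric and the strong conditions, so $G$ is an almost topological group.
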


\begin{proof}
Clearly, since $G$ is locally finite, each open set in $G$ must contain a minimally open set. By the proof of (1) in Proposition~\ref{p0}, it suffices to prove that $U=U^{-1}$ for any minimally open set $U$ at $e$. If $G$ is discrete, then it is obvious. Hence we assume that $G$ is not discrete.
Take any minimally open set $U$ at $e$, and pick any $x\in U\setminus\{e\}\neq\emptyset$. Clearly, $U$ is finite. Then it follows from the pre-continuous of the multiplication of $G$ that we have
$U^{2}=U$, hence $xU\subset U$. Enumerate $U$ as $\{e, x_{1}, x_{2},\ldots ,x_{n}\}$. Since $|xU|=|U|$ and $xU\subset U$, we conclude that $xU=U$, then there exists $i\leq n$
such that $xx_{i}=e$, which implies that $x^{-1}=x_{i}\in U$. Therefore, $U=U^{-1}$. Thus $G$ is an almost topological group.
\end{proof}

\begin{remark}
From Example~\ref{e0}, there exists a finite quasi-pre-topological group which is not a pre-topological group. Moreover, R. EIlis proved that each locally compact Hausdorff
semi-topological group is a topological group. It natural to pose the following question.
\end{remark}

A pre-topological space $G$ is said to be {\it compact} if each open cover of $G$ has a finite subcover; $G$ is said to be {\it locally compact} if for each $x\in G$ there exists a compact neighborhood $U$ of $x$.

\begin{question}
If $G$ is a (locally) compact Hausdorff para-pre-topological group, then is $G$ a pre-topological group?
\end{question}

\begin{proposition}\label{s0}
Let $(G, \tau)$ be a right pre-topological group and $g$ be any element of $G$. Then the following statements hold:

\smallskip
(1) The right translation $r_{g}$ of $G$ by $g$ is a pre-homeomorphism of the pre-topological space $G$ onto itself.

\smallskip
(2) For any pre-base $\mathcal{B}_{e}$ of $G$ at $e$, the family $\mathcal{B}_{g}=\{Ug: U\in\mathcal{B}_{e}\}$ is a pre-base of $G$ at $g$.
\end{proposition}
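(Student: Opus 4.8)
The plan is to verify the two assertions of Proposition~\ref{s0} directly from the definition of a right pre-topological group, using that the right action $R_g$ is pre-continuous for every $g\in G$. The key observation is that $R_g$ is a bijection of $G$ with inverse $R_{g^{-1}}$, so both statements reduce to showing that $R_g$ and $R_{g^{-1}}$ are pre-continuous, which is immediate from the hypothesis.

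For part (1), I would argue as follows. Since $G$ is a right pre-topological group, the right translation $R_g\colon x\mapsto xg$ is a pre-continuous mapping of $G$ onto itself, and likewise $R_{g^{-1}}\colon x\mapsto xg^{-1}$ is pre-continuous. A direct computation shows that $R_g\circ R_{g^{-1}}=\mathrm{id}_G=R_{g^{-1}}\circ R_g$, so $R_g$ is a bijection whose inverse $R_g^{-1}=R_{g^{-1}}$ is also pre-continuous. Hence $R_g$ is a pre-homeomorphism of $G$ onto itself, which is exactly the claim.

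For part (2), let $\mathcal{B}_e$ be a pre-base of $G$ at $e$ and put $\mathcal{B}_g=\{Ug:U\in\mathcal{B}_e\}$. Each $Ug$ is a pre-open neighbourhood of $g$ because $R_{g^{-1}}$ is pre-continuous (so $R_{g^{-1}}^{-1}(U)=Ug$ is pre-open whenever $U$ is), and $e\in U$ gives $g\in Ug$. Conversely, given any open neighbourhood $W$ of $g$, the set $R_g^{-1}(W)=Wg^{-1}$ is an open neighbourhood of $e$ by the pre-continuity of $R_g$, so there is $U\in\mathcal{B}_e$ with $U\subseteq Wg^{-1}$, i.e. $Ug\subseteq W$ with $Ug\in\mathcal{B}_g$. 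Therefore $\mathcal{B}_g$ is a pre-base of $G$ at $g$.

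There is no real obstacle here; the only point requiring a modicum of care is to track that ``pre-base at a point'' means a family of open neighbourhoods of that point which refines every open neighbourhood of it, and to check that the bijection $R_g$ carries such a family at $e$ to such a family at $g$ precisely because $R_g$ and its inverse are pre-continuous. The argument is entirely parallel to the classical statement for (right) topological groups, with ``pre-continuous'' replacing ``continuous'' throughout, and it does not use any axiom of pre-topology beyond the ones already invoked in Proposition~\ref{t1} and Theorem~\ref{t0}.
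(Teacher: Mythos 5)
Your proof is correct and follows essentially the same route as the paper: part (1) is established exactly as there, by noting that $R_g$ and $R_{g^{-1}}$ are pre-continuous bijections composing to the identity, so $R_g$ is a pre-homeomorphism. Your part (2) merely spells out the translation argument that the paper compresses into ``(2) follows from (1)'', so there is no substantive difference.
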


\begin{proof}
Clearly, (2) follows from (1). Hence we only need to prove (1). For any $g\in G$, $r_{g}$ and $r_{g^{-1}}$ are all pre-continuous bijection; moreover, since $r_{g}r_{g^{-1}}=e$, we have $r_{g^{-1}}={r_{g}}^{-1}$. Therefore, $r_{g}$ and $(r_{g})^{-1}$ are all pre-continuous bijection, hence $r_{g}$ is a pre-homeomorphism of the pre-topological space $G$ onto itself.

\end{proof}

\begin{corollary}
In each semi-pre-topological group $G$ all, right and left, translations are pre-homeomorphism.
\end{corollary}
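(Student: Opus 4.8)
The plan is to deduce this directly from Proposition~\ref{s0} together with its obvious left-handed mirror image. By definition a semi-pre-topological group $G$ is simultaneously a right pre-topological group and a left pre-topological group. Applying Proposition~\ref{s0}(1) to $G$ viewed as a right pre-topological group immediately yields that every right translation $r_{g}$ is a pre-homeomorphism of $G$ onto itself.

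Next I would establish the left-handed counterpart of Proposition~\ref{s0}(1): in any left pre-topological group, each left translation $\ell_{g}\colon x\mapsto gx$ is a pre-homeomorphism. The argument is the verbatim mirror of the proof of Proposition~\ref{s0}(1): for every $g\in G$ the maps $\ell_{g}$ and $\ell_{g^{-1}}$ are pre-continuous bijections, and since $\ell_{g}\circ\ell_{g^{-1}}=\ell_{g^{-1}}\circ\ell_{g}=\mathrm{id}_{G}$ we get $\ell_{g^{-1}}=(\ell_{g})^{-1}$, so both $\ell_{g}$ and $(\ell_{g})^{-1}$ are pre-continuous; hence $\ell_{g}$ is a pre-homeomorphism. (Alternatively one can invoke Proposition~\ref{s0} applied to the opposite group $G^{\mathrm{op}}$, under which left translations of $G$ become right translations of $G^{\mathrm{op}}$.) Since $G$ is also a left pre-topological group, this applies to it, so every left translation of $G$ is a pre-homeomorphism as well.

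Combining the two observations gives the corollary. I expect no genuine obstacle here; the only thing requiring a remark is the symmetric restatement of Proposition~\ref{s0}, which is a routine consequence of replacing ``right'' by ``left'' throughout, and the observation that the defining condition of a semi-pre-topological group supplies both halves at once.
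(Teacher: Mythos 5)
Your proposal is correct and matches the paper's intended argument: the corollary is stated without proof precisely because it follows by applying Proposition~\ref{s0}(1) to $G$ as a right pre-topological group and its verbatim left-handed mirror to $G$ as a left pre-topological group, which is exactly what you do.
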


\begin{corollary}
If a subgroup $H$ of a right (or left) pre-topological group $G$ contains a non-empty open subset of $G$, then $H$ is open in $G$.
\end{corollary}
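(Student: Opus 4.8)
The statement to prove is: if a subgroup $H$ of a right (or left) pre-topological group $G$ contains a non-empty open subset of $G$, then $H$ is open in $G$. The plan is to reduce "$H$ is open" to the standard criterion "$H$ contains an open neighbourhood of each of its points", and then to transport a single open set that sits inside $H$ to each point of $H$ using the translation pre-homeomorphisms supplied by Proposition~\ref{s0}.

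Concretely, I would first fix a non-empty open set $V\subseteq H$ and pick a point $v\in V$; note $v\in H$. Now take an arbitrary $h\in H$. In the right pre-topological group case, consider the right translation $r_{v^{-1}h}$ by the element $v^{-1}h\in H$. By Proposition~\ref{s0}(1) this map is a pre-homeomorphism of $G$ onto itself, so $r_{v^{-1}h}(V)=V(v^{-1}h)=Vv^{-1}h$ is an open subset of $G$. It contains the point $vv^{-1}h=h$, so it is an open neighbourhood of $h$. Moreover $V\subseteq H$ and $v^{-1}h\in H$, so $Vv^{-1}h\subseteq HH\subseteq H$ because $H$ is a subgroup. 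Thus every $h\in H$ has an open neighbourhood contained in $H$. Since a pre-topology is closed under arbitrary unions, $H=\bigcup_{h\in H}(Vv^{-1}h)$ is open in $G$. The left pre-topological group case is symmetric: replace $V$ by a left translate, i.e. use $\ell_{hv^{-1}}(V)=hv^{-1}V$, which is open by the left-translation analogue of Proposition~\ref{s0}(1), contains $h$, and lies in $H$.

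I do not anticipate a genuine obstacle here; the only point requiring a little care is making sure the translation used is a translation by an element of $H$ (so that the translate of $V$ stays inside $H$), which is why one translates $V$ to $h$ through the auxiliary point $v\in V\cap H$ rather than attempting to translate $v$ to $h$ directly by an element that need not lie in $H$. One should also note explicitly that Proposition~\ref{s0} only guarantees right translations are pre-homeomorphisms in a right pre-topological group (and dually), so the two cases must be handled with the matching one-sided translation; mixing them would invoke a pre-homeomorphism that is not available. Everything else is the routine observation that pre-topologies, being closed under arbitrary unions, admit the usual "open $=$ neighbourhood of each point" description.
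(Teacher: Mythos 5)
Your proposal is correct and follows essentially the same route as the paper: translate the open set $V\subseteq H$ by elements of $H$ via the one-sided translation pre-homeomorphisms of Proposition~\ref{s0} and write $H$ as the union of these open translates. Your write-up merely makes explicit the covering step (choosing $a=v^{-1}h$ so that $h\in Va\subseteq H$) that the paper's shorter proof leaves implicit.
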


\begin{proof}
Suppose $U$ is a non-empty open subset of $G$ such that $U\subseteq H$. By Proposition~\ref{s0}, the set $R_a(U)=Ua$ is also open in $G$ for each $a\in H$. Hence, the set $H=\bigcup_{a\in H}Ua$ is open in $G$.
\end{proof}

\begin{proposition}
If $f: G\rightarrow H$ is a homomorphism of left (right) pre-topological groups and $f$ is pre-continuous at the neutral element $e$ of $G$, then $f$ is pre-continuous.
\end{proposition}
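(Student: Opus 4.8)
The plan is to reduce pre-continuity of $f$ to a pointwise statement and then transport the hypothesis at $e$ to an arbitrary point $x\in G$ using translations and the homomorphism identity, exactly as in the classical theory of topological groups.

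First I would record the elementary fact that a map $g$ between pre-topological spaces is pre-continuous if and only if for every point $x$ and every open set $W\ni g(x)$ there is an open set $V$ with $x\in V\subseteq g^{-1}(W)$. The forward implication is trivial (take $V=g^{-1}(W)$), and for the converse one writes $g^{-1}(W)=\bigcup\{V_{x}:x\in g^{-1}(W)\}$; since a pre-topology is closed under arbitrary unions, $g^{-1}(W)$ is open. Applying this to $f$, it suffices to prove: for each $x\in G$ and each open neighborhood $W$ of $f(x)$ in $H$, there is an open neighborhood $V$ of $x$ in $G$ with $f(V)\subseteq W$.

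Now fix such $x$ and $W$, and treat the left case (the right case being symmetric, using Proposition~\ref{s0} in place of its left-handed analog). Put $W'=f(x)^{-1}W$. Because $H$ is a left pre-topological group, the left translation $L_{f(x)}$ is pre-continuous, and $W'=L_{f(x)}^{-1}(W)$, so $W'$ is open; moreover $e_{H}=f(e_{G})\in W'$. By pre-continuity of $f$ at $e$, choose an open neighborhood $U$ of $e_{G}$ with $f(U)\subseteq W'$. Set $V=xU$; since $L_{x^{-1}}$ is pre-continuous in $G$ and $V=L_{x^{-1}}^{-1}(U)$, the set $V$ is open, and $x=xe_{G}\in V$. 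Finally, since $f$ is a group homomorphism, $f(V)=f(xU)=f(x)f(U)\subseteq f(x)W'=f(x)f(x)^{-1}W=W$, which is what was needed.

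The only genuinely load-bearing step is the first reduction, and it is precisely there that the defining closure of a pre-topology under arbitrary unions is invoked; everything else is the standard translation argument, which survives the weakening from topologies to pre-topologies unchanged because it uses only pre-continuity of one-sided translations (built into the definition of a left/right pre-topological group) and the identity $f(ab)=f(a)f(b)$, never any finite intersection of open sets. I therefore do not expect a real obstacle, only the need to make the pointwise-to-global passage explicit, since in the pre-topological setting it cannot be taken for granted from habit but does hold for the stated reason.
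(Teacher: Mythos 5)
Your proof is correct and follows essentially the same route as the paper: translate the target neighbourhood of $f(x)$ back to the neutral element of $H$ using a left (right) translation, apply pre-continuity of $f$ at $e$, and push the resulting neighbourhood of $e_G$ forward to $x$ via the homomorphism identity $f(xU)=f(x)f(U)$. The only difference is that you make explicit the pointwise-to-global reduction via closure of a pre-topology under arbitrary unions, which the paper's proof leaves tacit; that is a welcome clarification but not a different argument.
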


\begin{proof}
Take any $x\in G$, and let $O$ be an open neighbourhood of $y=f(x)$ in $H$. Since the left translation $L_{y}$ is a pre-homeomorphism of $H$, there exists an open neighbourhood $V$ of the neutral element $e_{H}$ in $H$ such that $yV\subseteq O$. By the pre-continuity of $f$ at $e_{G}$, it follows that there exists an open neighbourhood $U$ of $e_{G}$ in $G$ such that $f(U)\subseteq V$. Since $L_{x}$ is a pre-homeomorphism of $G$ onto itself, the set $xU$ is an open neighbourhood of $x$ in $G$; therefore, we can conclude that $f(x)\in f(xU)=f(x)f(U)=yf(U)\subseteq yV\subseteq
O$. Hence, $f$ is pre-continuous.
\end{proof}

\begin{theorem}\label{s1}
Each open subgroup $H$ of a right (left) pre-topological group $G$ is closed in $G$.
\end{theorem}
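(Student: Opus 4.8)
The statement is the classical fact that an open subgroup is automatically closed, transplanted to the pre-topological setting, and the standard argument should carry over essentially verbatim. The plan is to write $G$ as a disjoint union of the left cosets of $H$ (here I use left cosets; the right-translation version is symmetric and covered by Proposition~\ref{s0}), observe that each coset is a translate of the open set $H$, hence open, and conclude that the complement of $H$ is a union of open sets.

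\begin{proof}
Let $H$ be an open subgroup of a right pre-topological group $G$ (the left case is analogous, using left translations in place of right ones). For each $g\in G$, the right coset $Hg$ equals $R_{g}(H)$, and by Proposition~\ref{s0}(1) the right translation $R_{g}$ is a pre-homeomorphism of $G$ onto itself; since $H$ is open, $Hg$ is open in $G$. The family $\{Hg: g\in G\}$ of right cosets of $H$ partitions $G$, so
$$G\setminus H=\bigcup\{Hg: g\in G,\ Hg\neq H\}.$$
The right-hand side is a union of open subsets of $G$, and in a pre-topological space an arbitrary union of open sets is open; hence $G\setminus H$ is open, i.e. $H$ is closed in $G$.
\end{proof}

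The only point that needs the pre-topological (rather than topological) framework is that arbitrary unions of open sets are open, which holds by the very definition of a pre-topology, and that right translations are pre-homeomorphisms, which is Proposition~\ref{s0}(1); no separation axiom or joint continuity of the multiplication is required. I do not anticipate a genuine obstacle here — the main thing to be careful about is bookkeeping with left versus right cosets so that the coset map that is applied is exactly the translation shown to be a pre-homeomorphism for the flavour (right or left) of pre-topological group under consideration.
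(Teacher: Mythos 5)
Your proof is correct and follows essentially the same route as the paper: decompose $G$ into the right cosets $Hg=R_{g}(H)$, which are open because right translations are pre-homeomorphisms (Proposition~\ref{s0}), and observe that the complement of $H$ is a union of such cosets, hence open since arbitrary unions of open sets are open in a pre-topology. Your explicit remark about matching the coset side to the flavour of translation available is a sensible precaution, but otherwise the argument coincides with the paper's.
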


\begin{proof}
The family $\gamma=\{Ha: a\in G\}$ is all the right cosets of $H$ in $G$. Since each right translation is pre-homeomorphism mapping, it follows that $Ha$ is open in $G$ for each $a\in G$. Then the family $\gamma$ is a disjoint open cover of $G$, this implies that $H=G\setminus(\bigcup_{a\in G\setminus\{e\}}Ha)$. Thus, $H$ is closed in $G$.
\end{proof}

\begin{corollary}
Each right (left) pre-topological group $G$ is a pre-homogeneous pre-topological space.
\end{corollary}

\begin{proof}
Take any elements $x$ and $y$ in $G$ and put $z=x^{-1}y$; then $R_{z}(x)=xz=xx^{-1}y=y$. Hence each right pre-topological group $G$ is a pre-homogeneous pre-topological space. A similar argument can be applied in the case of left pre-topological group.
\end{proof}

\begin{proposition}
If $G$ is a left (right) pre-topological group and $U$ is an open subset of $G$, then the set $AU$ (resp., $UA$) is open in $G$ for any subset $A$ of $G$.
\end{proposition}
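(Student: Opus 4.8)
The plan is to reduce the statement to two facts already available: a pre-topology is closed under arbitrary unions (the defining property), and each one-sided translation of a left (right) pre-topological group is an open self-map. First I would treat the left case and write
$$AU=\bigcup_{a\in A}aU=\bigcup_{a\in A}L_{a}(U),$$
where $L_{a}\colon G\to G$ denotes the left translation by $a$; the right case is the mirror identity $UA=\bigcup_{a\in A}Ua=\bigcup_{a\in A}R_{a}(U)$.

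Next I would verify that each $L_{a}$ is a pre-homeomorphism of $G$ onto itself. This is the left-handed counterpart of Proposition~\ref{s0}(1): for every $a\in G$ both $L_{a}$ and $L_{a^{-1}}$ are pre-continuous by the very definition of a left pre-topological group, and since $L_{a}\circ L_{a^{-1}}=\mathrm{id}_{G}=L_{a^{-1}}\circ L_{a}$ we get $(L_{a})^{-1}=L_{a^{-1}}$, so $L_{a}$ is a pre-continuous bijection with pre-continuous inverse. In particular $L_{a}$ carries open sets to open sets, so $aU=L_{a}(U)$ is open in $G$ for every $a\in A$. In the right case one invokes Proposition~\ref{s0}(1) verbatim to conclude that each $Ua=R_{a}(U)$ is open.

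Finally, since $AU$ (respectively $UA$) has now been exhibited as the union of the family $\{aU:a\in A\}$ (respectively $\{Ua:a\in A\}$) of open subsets of $G$, and a pre-topology is by definition closed under arbitrary unions, $AU$ (respectively $UA$) is open in $G$.

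I do not expect any genuine obstacle: the argument is a two-line union computation. The only point that deserves an explicit sentence is the observation that a left (right) pre-topological group really does have all its left (right) translations as pre-homeomorphisms, which is immediate because pre-continuity of $L_{a}$ is postulated for \emph{every} $a\in G$, inverses included, so there is no asymmetry to worry about.
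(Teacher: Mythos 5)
Your proof is correct and is essentially the paper's own argument: write $AU$ (resp.\ $UA$) as the union $\bigcup_{a\in A}L_{a}(U)$ (resp.\ $\bigcup_{a\in A}R_{a}(U)$), note that the relevant one-sided translations are pre-homeomorphisms (Proposition~\ref{s0} and its left-handed analogue), and conclude from the fact that a pre-topology is closed under arbitrary unions. The only cosmetic difference is that you spell out the left-translation case explicitly, which the paper dispatches with ``by a similar method.''
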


\begin{proof}
By (1) of Proposition~\ref{s0}, every right translation of $G$ is pre-homeomorphism of the pre-topological space $G$ onto itself, then $UA=\bigcup_{a\in A}Ua=\bigcup_{a\in A}r_{a}(U)$. Hence $UA$ is open in $G$. By a similar method, we can prove that $AU$ is open in $G$ when $G$ is a left pre-topological group.
\end{proof}

\begin{corollary}
Let $G$ be a semi-pre-topological group and $U$ an open subset of $G$. Then the sets $UA$ and $UA$ are open for any subset $A$ of $G$.
\end{corollary}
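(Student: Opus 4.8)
The statement as written contains an evident typo — the two sets in question should be $AU$ and $UA$ rather than "$UA$ and $UA$" — and once this is corrected the corollary is an immediate consequence of the preceding proposition. The plan is to observe that a semi-pre-topological group carries \emph{both} the left and the right pre-topological group structures, and then to apply the preceding proposition twice, once in each of its two symmetric forms.

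Concretely, I would first recall that $G$ being a semi-pre-topological group means precisely that for every $a\in G$ both $R_a$ and $L_a$ are pre-continuous mappings of $G$ into itself; by the corollary stating that in each semi-pre-topological group all right and left translations are pre-homeomorphisms, each $R_a$ and each $L_a$ is in fact a pre-homeomorphism. Hence $G$ is in particular a right pre-topological group, so the preceding proposition applies and gives that $UA$ is open for every open $U$ and every $A\subseteq G$; and $G$ is also a left pre-topological group, so the same proposition (in its left-handed form) gives that $AU$ is open. Alternatively, one can argue directly: $UA=\bigcup_{a\in A}Ua=\bigcup_{a\in A}R_a(U)$ is a union of open sets, and $AU=\bigcup_{a\in A}aU=\bigcup_{a\in A}L_a(U)$ is likewise a union of open sets, so both are open because a pre-topology is closed under arbitrary unions.

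There is essentially no obstacle here; the only thing worth flagging is the bookkeeping point that the two halves of the conclusion draw on the two different hypotheses ($R_a$ pre-homeomorphism for $UA$, $L_a$ pre-homeomorphism for $AU$), both of which are packaged into the single notion of a semi-pre-topological group. I would therefore keep the proof to one or two sentences, pointing to the preceding proposition and the union-closure axiom of a pre-topology.
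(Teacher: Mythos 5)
Your proposal is correct and matches the paper's intended argument: the paper leaves this corollary without proof precisely because, as you observe, a semi-pre-topological group is simultaneously a left and a right pre-topological group, so the preceding proposition (whose proof is exactly your union-of-translates computation $UA=\bigcup_{a\in A}R_a(U)$, $AU=\bigcup_{a\in A}L_a(U)$) applies in both of its forms. You are also right that the statement's ``$UA$ and $UA$'' is a typo for ``$AU$ and $UA$''.
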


Finally, we mainly discuss the closure of a subset of a left (right) pre-topological group $G$. The following proposition gives an intimate relationship between the sets $AU$ or $UA$ and the closure operation, where $U$ is open in $G$ and $A$ is a subset of $G$.

\begin{proposition}\label{u0}
Let $G$ be a left (right) pre-topological group with pre-continuous inverse. Then, for each subset $A$ of $G$ and each open neighborhood $U$ of $e$, we have
$\overline{A}\subset AU$ (resp., $\overline{A}\subset UA$) .
\end{proposition}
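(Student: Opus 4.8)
The plan is to prove the contrapositive of the membership claim pointwise: I want to show that if a point $x \in G$ does not lie in $AU$, then $x$ cannot be in $\overline{A}$, i.e. $x$ has an open neighborhood missing $A$. So fix $x \notin AU$. The key observation is that $x \notin AU$ means $a^{-1}x \notin U$ for every $a \in A$ (in the left version; in the right version $xa^{-1} \notin U$ for every $a \in A$), equivalently $x \notin aU$ for all $a \in A$. I would then use the pre-continuity of the inverse map together with the left-translation structure to produce an open neighborhood $V$ of $e$ with $V^{-1} \subseteq U$ (this is essentially property (2) of Theorem~\ref{t0}, which holds here because $G$ has pre-continuous inverse), and then consider the open set $xV$, which is an open neighborhood of $x$ since left translations are pre-homeomorphisms.

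The main step is to check that $xV \cap A = \emptyset$. Suppose for contradiction that some $a \in A$ lies in $xV$. Then $a = xv$ for some $v \in V$, so $x^{-1}a = v \in V$, hence $a^{-1}x = v^{-1} \in V^{-1} \subseteq U$, which gives $x \in aU \subseteq AU$, contradicting the choice of $x$. Therefore $xV$ is an open neighborhood of $x$ disjoint from $A$, so $x \notin \overline{A}$. This establishes $\overline{A} \subseteq AU$ in the left pre-topological case; the right case is entirely symmetric, replacing $xV$ by $Vx$ and using the right-translation pre-homeomorphism together with $V^{-1}\subseteq U$, and noting that $a \in Vx$ forces $ax^{-1} \in V$, hence $xa^{-1} \in V^{-1} \subseteq U$, so $x \in UA$.

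The part requiring a little care is justifying the existence of $V$ with $V^{-1}\subseteq U$ using only what is available in this one-sided setting: we have assumed $G$ is a left (or right) pre-topological group \emph{with pre-continuous inverse}, so $\operatorname{In}^{-1}(U)$ is open and contains $e$, and it contains a basic open set $V'$ at $e$; unwinding, $V = V'$ satisfies $V \subseteq \operatorname{In}^{-1}(U)$, i.e. $V^{-1} \subseteq U$. Since translations in a left (right) pre-topological group are pre-homeomorphisms by Proposition~\ref{s0} (and its corollary), $xV$ (resp.\ $Vx$) is genuinely open, so no joint continuity of multiplication is needed anywhere. I do not anticipate a real obstacle here; the only thing to be vigilant about is keeping the left/right conventions consistent throughout so that the cancellation $a^{-1}x = v^{-1}$ (resp.\ $xa^{-1} = v^{-1}$) lands on the correct side to invoke $V^{-1}\subseteq U$.
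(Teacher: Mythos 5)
Your proposal is correct and follows essentially the same route as the paper: both arguments hinge on extracting an open $V$ at $e$ with $V^{-1}\subseteq U$ from the pre-continuity of the inverse, and then examining the open neighborhood $xV$ (resp.\ $Vx$) of $x$; the paper just phrases it directly (if $x\in\overline{A}$ then $xV\cap A\neq\emptyset$, so $x=ab^{-1}\in AV^{-1}\subseteq AU$) while you phrase it contrapositively. The difference is purely presentational, and your handling of the one-sided translation structure and of $V=U^{-1}$ (or a pre-base element inside it) is fine.
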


\begin{proof}
Take any subset $A$ of $G$ and any open neighborhood $U$ of $e$. Since the inverse is pre-continuous, there exists an open neighbourhood $V$ of $e$ such that $V^{-1}\subseteq U$. Now take any $x\in \overline{A}$; since $xV$ is an open neighbourhood of $x$, we have $xV\cap A\neq\emptyset$, then there are $a\in A$ and $b\in V$ such that $a=xb$. Then $x=ab^{-1}\in AV^{-1}\subseteq AU$; hence, $\overline{A}\subseteq AU$.
\end{proof}

\begin{proposition}
Let $G$ be a left (right) pre-topological group with pre-continuous inverse, and let $\mathscr{B}_{e}$ a pre-base of $G$ at the neutral element $e$. Then, for each subset $A$ of $G$, we have $\overline{A}=\bigcap\{AU: U\in\mathscr{B}_{e}\}$ (resp., $\overline{A}=\bigcap\{UA: U\in\mathscr{B}_{e}\}$ ) .
\end{proposition}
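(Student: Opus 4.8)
The plan is to prove the two displayed equalities by reducing each to the previous proposition together with the definition of the closure operator in a pre-topological space. I will write out the left-translation case; the right-translation case is entirely symmetric, so I would remark at the end that it follows by replacing $AU$ with $UA$ and $L_g$ with $R_g$ throughout. Recall that in a pre-topological space the closure of a set $A$ is the set of all points $x$ such that every open neighborhood of $x$ meets $A$; since $G$ is pre-homogeneous (every $L_g$ is a pre-homeomorphism) and $\mathscr{B}_e$ is a pre-base at $e$, the family $\{xU: U\in\mathscr{B}_e\}$ is a pre-base at $x$, so $x\in\overline A$ if and only if $xU\cap A\neq\emptyset$ for every $U\in\mathscr{B}_e$.

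First I would prove the inclusion $\overline{A}\subseteq\bigcap\{AU: U\in\mathscr{B}_e\}$. Fix an arbitrary $U\in\mathscr{B}_e$. By Proposition~\ref{u0}, $\overline A\subseteq AU$. Since $U$ was an arbitrary member of $\mathscr{B}_e$, this gives $\overline A\subseteq\bigcap\{AU:U\in\mathscr{B}_e\}$ at once. So this direction is immediate from the previous proposition and requires no new work.

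Second I would prove the reverse inclusion $\bigcap\{AU:U\in\mathscr{B}_e\}\subseteq\overline A$. Take $x\in\bigcap\{AU:U\in\mathscr{B}_e\}$ and let $O$ be any open neighborhood of $x$; I must show $O\cap A\neq\emptyset$. Since $\{xW:W\in\mathscr{B}_e\}$ is a pre-base at $x$, there is $W\in\mathscr{B}_e$ with $xW\subseteq O$. Because the inverse is pre-continuous, pick $V\in\mathscr{B}_e$ with $V^{-1}\subseteq W$. Now $x\in AV$ by hypothesis, so $x=av$ for some $a\in A$ and $v\in V$; hence $a=xv^{-1}\in xV^{-1}\subseteq xW\subseteq O$, so $a\in O\cap A$. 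Therefore every open neighborhood of $x$ meets $A$, i.e. $x\in\overline A$. Combining the two inclusions yields $\overline A=\bigcap\{AU:U\in\mathscr{B}_e\}$, and the right-translation statement follows by the symmetric argument.

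I do not anticipate a genuine obstacle here: the statement is essentially a repackaging of Proposition~\ref{u0} restricted to a pre-base, and the only mildly delicate point is being careful that in a pre-topological space the characterization of $\overline A$ via pre-base neighborhoods is legitimate — this is where one uses that $\mathscr{B}_e$ is a \emph{pre-base} (not merely a base) at $e$ and that left translations are pre-homeomorphisms, so that $\{xW:W\in\mathscr{B}_e\}$ really is a pre-base at $x$ and it suffices to test neighborhoods from this family.
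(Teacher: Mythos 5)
Your proposal is correct and follows essentially the same route as the paper: the inclusion $\overline{A}\subseteq\bigcap\{AU:U\in\mathscr{B}_{e}\}$ is quoted from Proposition~\ref{u0}, and the reverse inclusion is obtained from pre-continuity of the inverse together with the fact that left translations carry $\mathscr{B}_{e}$ to a neighborhood pre-base at $x$. The paper phrases the second half contrapositively (from $x\notin\overline{A}$ it produces $U$ with $x\notin AU$), which is just the mirror image of your direct argument; if anything, your version is slightly more careful in choosing the witness $V$ from $\mathscr{B}_{e}$ itself.
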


\begin{proof}
By Proposition~\ref{u0}, we have $\overline{A}\subseteq AU$ for each $U\in\mathscr{B}_{e}$, then $\overline{A}\subseteq \bigcap\{AU: U\in\mathscr{B}_{e}\}$. Now assume that $x\not\in\overline{A}$. We prove that there exists $U\in\mathscr{B}_{e}$ such that $x\not\in AU$. Since $x\not\in\overline{A}$, there exists an open neighbourhood $W$ of $e$ such that $(xW)\cap A= \emptyset$. From the pre-continuity of the inverse, there is an open neighbourhood $U$ of $e$ such that $U^{-1}\subseteq W$, then $(xU^{-1})\cap A =\emptyset$, that is $x\not\in AU$; hence, $x\not\in\bigcap\{AU: U\in\mathscr{B}_{e}\}$. Therefore, $\bigcap\{AU: U\in\mathscr{B}_{e}\}\subseteq \overline{A}$. Then $\overline{A}= \bigcap\{AU: U\in\mathscr{B}_{e}\}$.

Similarly, the equality $\overline{A}=\bigcap\{UA: U\in\mathscr{B}_{e}\}$ holds for the right pre-topological groups with pre-continuous inverse.
\end{proof}

\begin{proposition}\label{v0}
Let $G$ be a semi-pre-topological group such that for every open set $U$ with $e\in U$, there exists an open neighborhood $V$ of $e$ with $V^{-1}\subset U$. Then $G$ is a quasi-pre-topological group.
\end{proposition}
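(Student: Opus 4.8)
The plan is to verify the only ingredient in the definition of a quasi-pre-topological group that is not already assumed, namely that the inverse map $In\colon G\to G$, $In(x)=x^{-1}$, is pre-continuous. Since $In^{-1}(W)=W^{-1}$ for every $W\subseteq G$, this is equivalent to showing that $W^{-1}$ is open whenever $W$ is open. I would therefore fix an open set $W$ and a point $x\in W^{-1}$ (so that $x^{-1}\in W$) and look for an open neighborhood of $x$ that is contained in $W^{-1}$.

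First I would invoke the fact that in a semi-pre-topological group every left translation is a pre-homeomorphism (the corollary following Proposition~\ref{s0}): this gives that $L_x(W)=xW$ is open, and since $x\cdot x^{-1}=e$, the set $xW$ is an open neighborhood of $e$. Next I would apply the standing assumption of the proposition to this neighborhood of $e$: there is an open neighborhood $V$ of $e$ with $V^{-1}\subseteq xW$. Finally, using that right translations are pre-homeomorphisms, $Vx=R_x(V)$ is an open neighborhood of $x$, and
$$(Vx)^{-1}=x^{-1}V^{-1}\subseteq x^{-1}(xW)=W,$$
i.e.\ $Vx\subseteq W^{-1}$. Hence $W^{-1}$ is open, $In$ is pre-continuous, and $G$ is a quasi-pre-topological group.

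I do not expect a genuine obstacle here; the argument is a short chase through the definitions, using only that translations on both sides are pre-homeomorphisms in a semi-pre-topological group together with the hypothesis $V^{-1}\subseteq U$ at $e$. The single point that requires care is the bookkeeping about which translation to apply where: one must left-translate $W$ by $x$ to reach a neighborhood of $e$ before using the inversion hypothesis, and then right-translate the resulting $V$ by $x$ to come back to a neighborhood of $x$. Applying the translations in the wrong order, or on the wrong side, would yield a neighborhood of $x^{-1}$ instead of $x$ and the argument would not close.
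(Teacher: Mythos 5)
Your proof is correct and follows essentially the same route as the paper: fix $x\in W^{-1}$, use translations (pre-homeomorphisms in a semi-pre-topological group) to transfer to a neighborhood of $e$, apply the hypothesis to get $V$ with $V^{-1}$ inside that neighborhood, and conclude $Vx\subseteq W^{-1}$ via $(Vx)^{-1}=x^{-1}V^{-1}$. The only (cosmetic) difference is that you apply the hypothesis directly to the open set $xW\ni e$, whereas the paper first extracts an open neighborhood $U$ of $e$ with $x^{-1}U\subseteq W$ and applies the hypothesis to $U$.
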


\begin{proof}
Let $W$ be an any open set. We have to show that $W^{-1}$ is also open in $G$. Take any $x\in W^{-1}$. Then $x^{-1}\in W$. Since $G$ is a left pre-topological group, there exists an open neighbourhood $U$ of $e$ such that $x^{-1}U\subseteq W$. By the assumption, there exists an open neighbourhood $V$ of $e$ such that $V^{-1}\subseteq U$. Then ${(Vx)}^{-1}=x^{-1}V^{-1}\subseteq x^{-1}U\subseteq W$. Hence, the inverse mapping on $G$ is pre-continuous, thus $G$ is a quasi-pre-topological group.
\end{proof}

\begin{proposition}
Let $G$ be a semi-pre-topological group such that for each closed subset $A$ of $G$  and each point $x\not\in A$, we have $x\not\in AU$ for some open neighborhood $U$ of $e$.
Then $G$ is a quasi-pre-topological group.
\end{proposition}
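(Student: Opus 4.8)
The plan is to reduce this to Proposition~\ref{v0}, which already tells us that a semi-pre-topological group in which every open neighborhood $U$ of $e$ contains an open neighborhood $V$ of $e$ with $V^{-1}\subseteq U$ is automatically a quasi-pre-topological group. So the whole task is to verify that hypothesis of Proposition~\ref{v0} from the closed-set condition we are given.

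First I would fix an arbitrary open neighborhood $U$ of $e$ and set $A=G\setminus U$. Since $U$ is open, $A$ is a closed subset of $G$, and since $e\in U$ we have $e\notin A$. Applying the hypothesis with the point $x=e$, there is an open neighborhood $W$ of $e$ such that $e\notin AW$. The next step is the one bit of bookkeeping that matters: unravel what $e\notin AW$ says. We have $e\in AW$ precisely when there exist $a\in A$ and $w\in W$ with $aw=e$, i.e.\ $a=w^{-1}$; hence $e\notin AW$ means that $w^{-1}\notin A=G\setminus U$ for every $w\in W$, that is, $W^{-1}\subseteq U$. Taking $V=W$ then furnishes exactly the open neighborhood of $e$ required by the hypothesis of Proposition~\ref{v0}, and we conclude that $G$ is a quasi-pre-topological group. (The degenerate case $U=G$ gives $A=\emptyset$, and the argument goes through trivially.)

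I do not anticipate a genuine obstacle here: the content is entirely in recognizing that feeding the closed set $G\setminus U$ into the given hypothesis produces, after the elementary translation above, precisely the condition of Proposition~\ref{v0}. The only point to be careful about is the direction of the inclusion in $W^{-1}\subseteq U$ and the fact that the complement of an open set is closed in the pre-topological sense, both of which are immediate.
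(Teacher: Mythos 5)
Your proposal is correct and is essentially the paper's own argument: take $A=G\setminus U$, apply the hypothesis at $x=e$ to get an open neighborhood $V$ of $e$ with $e\notin AV$, deduce $V^{-1}\subseteq U$, and invoke Proposition~\ref{v0}. No differences worth noting.
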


\begin{proof}
By proposition~\ref{v0}, it suffices to verify that, for each open set $U\in\mathscr{B}_{e}$, there exists an open neighbourhood $V$ of $e$ such that $V^{-1}\subseteq U$. Take any open neighbourhood $U$ of $e$, and put $A=G\backslash U$; then $e\not\in A$. By the assumption, there exists an open neighbourhood $V$ of $e$ such that $e\not\in AV$, then $A\cap V^{-1}=\emptyset$, that is, $V^{-1}\subseteq G\backslash A=U$. Hence, $G$ is a quasi-pre-topological group.
\end{proof}

\begin{proposition}
Each left pre-topological group with pre-continuous inverse is a right pre-topological group and hence, a quasi-pre-topological group.
\end{proposition}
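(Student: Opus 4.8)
The plan is to show that a left pre-topological group $G$ with pre-continuous inverse is a right pre-topological group by converting each right translation into a composition of left translations and the inverse map, and then to invoke the fact (essentially Proposition~\ref{v0} together with pre-continuity of $In$) that such a group is a quasi-pre-topological group. For the first assertion, I would fix $g\in G$ and express the right translation $R_g\colon x\mapsto xg$ via the identity $xg = (g^{-1}x^{-1})^{-1}$. In other words, $R_g = In \circ L_{g^{-1}} \circ In$, where $L_{g^{-1}}\colon y\mapsto g^{-1}y$ is a left translation. Since $G$ is a left pre-topological group, $L_{g^{-1}}$ is pre-continuous; since the inverse is pre-continuous by hypothesis, $In$ is pre-continuous; hence $R_g$, being a composition of pre-continuous maps, is pre-continuous. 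As $g$ was arbitrary, $G$ is a right pre-topological group.

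For the second assertion, I would note that $G$ is now a semi-pre-topological group (it is a left pre-topological group by hypothesis and a right pre-topological group by the paragraph above), and the inverse mapping $In$ is pre-continuous by hypothesis. By the definition of quasi-pre-topological group, a semi-pre-topological group whose inverse is pre-continuous is exactly a quasi-pre-topological group, so there is nothing further to check; alternatively one may cite Proposition~\ref{v0} after observing that pre-continuity of $In$ at $e$ supplies, for every open $U\ni e$, an open $V\ni e$ with $V^{-1}\subseteq U$.

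The only point requiring any care is the algebraic identity $xg = (g^{-1}x^{-1})^{-1}$ and the resulting factorization $R_g = In\circ L_{g^{-1}}\circ In$; once this is in place the argument is a one-line composition-of-pre-continuous-maps statement, so I do not anticipate a genuine obstacle. One should simply be mindful that "left pre-topological group" is defined via pre-continuity of each $L_a$ individually (not joint continuity), which is precisely what the composition argument uses.
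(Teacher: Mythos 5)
Your decomposition $R_g = In\circ L_{g^{-1}}\circ In$ is exactly the paper's argument (the paper phrases it as going from $x$ to $x^{-1}$ to $a^{-1}x^{-1}$ to $xa$), and your handling of the second assertion via the definition of quasi-pre-topological group is correct. The proposal matches the paper's proof in both substance and structure.
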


\begin{proof}
We can get from $x$ to $xa$ in three steps: from $x$ to $x^{-1}$, then from $x^{-1}$ to $a^{-1}x^{-1}$, and finally from $a^{-1}x^{-1}$ to $xa$. This means that $R_{a}=In\circ L_{a^{-1}}\circ In$. Since all the mappings on the right side of the above equality are pre-continuous, the right translation is pre-continuous.
\end{proof}

\begin{proposition}\label{w0}
Let $G$ be a semi-pre-topological semigroup, and $H$ a subsemigroup of $G$. Then the closure $\overline{H}$ of $H$ is a subsemigroup of $G$.
\end{proposition}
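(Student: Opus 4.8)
The plan is to show directly that $\overline{H}$ is closed under the semigroup multiplication: given $a, b \in \overline{H}$, I must verify that $ab \in \overline{H}$, i.e., that every basic open neighborhood of $ab$ meets $H$. The natural route is to exploit the fact that in a semi-pre-topological semigroup all left and right translations are pre-continuous (and indeed pre-homeomorphisms by the corollary following Proposition~\ref{s0}), so that closure interacts well with translations.

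First I would fix $a \in \overline{H}$ and prove the intermediate claim that $a\overline{H} \subseteq \overline{H}$ when $a \in H$, and more generally that $\overline{H}\,\overline{H} \subseteq \overline{H}$. The key observation is the standard two-step argument: for $a \in H$, the left translation $L_a$ is a pre-homeomorphism, so $L_a(\overline{H}) = \overline{L_a(H)} = \overline{aH} \subseteq \overline{H}$, since $aH \subseteq H$ ($H$ being a subsemigroup). This gives $H\overline{H} \subseteq \overline{H}$. Next, fix $b \in \overline{H}$; then for the right translation $R_b$, which is also a pre-homeomorphism, we have $R_b(\overline{H}) = \overline{Hb}$, and from $H\overline{H}\subseteq\overline{H}$ we get $Hb \subseteq \overline{H}$, hence $\overline{Hb} \subseteq \overline{\overline{H}} = \overline{H}$. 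Therefore $\overline{H}\,b \subseteq \overline{H}$ for every $b \in \overline{H}$, which is exactly $\overline{H}\,\overline{H} \subseteq \overline{H}$.

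The only point requiring care is the identity $\varphi(\overline{A}) = \overline{\varphi(A)}$ for a pre-homeomorphism $\varphi$ and a subset $A$; this is immediate because $\varphi$ and $\varphi^{-1}$ are both pre-continuous, so $\varphi$ maps the closure of $A$ into the closure of $\varphi(A)$ and $\varphi^{-1}$ maps the closure of $\varphi(A)$ into the closure of $A$. I do not anticipate a real obstacle here: the whole argument is the pre-topological analogue of the classical topological-semigroup fact, and the only ingredient beyond set manipulation is the pre-homeomorphism property of translations, which is available from Proposition~\ref{s0} and its corollary. The main thing to be attentive to is that the multiplication in a semi-pre-topological semigroup is only separately (not jointly) pre-continuous, so one must genuinely route the argument through one-variable translations rather than through joint continuity of the product map.
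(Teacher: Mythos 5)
Your proof is correct and follows essentially the same route as the paper's: a two-step translation argument (first one factor taken in $H$, then the other in $\overline{H}$), using the inclusion $f(\overline{A})\subseteq\overline{f(A)}$ for pre-continuous maps together with idempotence of the closure; the paper simply does right translations by elements of $H$ first and left translations by elements of $\overline{H}$ second, the mirror image of your order. One small caveat: in a semi-pre-topological \emph{semigroup} the translations are only pre-continuous, not pre-homeomorphisms (Proposition~\ref{s0} and its corollary concern groups, where inverses exist), but since your argument only ever uses the inclusion $\varphi(\overline{A})\subseteq\overline{\varphi(A)}$, which requires nothing beyond pre-continuity, this overstatement does not affect the validity of the proof.
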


\begin{proof}
First, we prove that $\overline{H}\cdot H\subseteq \overline{H}$. Take any $y\in \overline{H}$ and $x\in H$. Since right translation is  pre-continuous ,we have $R_{x}(\overline{H})\subseteq \overline{R_{x}(H)}$ by \cite[Theorem 7]{LCL}, that is, $\overline{H}x\subseteq \overline{Hx}$. It follows from $x\in H$ and $H$ is a subsemigroup of $G$ that $Hx\subseteq H$, then, $yx\in \overline{H}x \subseteq \overline{Hx} \subseteq \overline{H}$, hence, $\overline{H}\cdot H\subseteq \overline{H}$. And then we can prove that $\overline{H} \cdot \overline{H}\subseteq \overline{H}$. Since each left translation is pre-continuous, we also have $y\overline{H}\subseteq \overline{yH}$ by \cite[Theorem 7]{LCL}. From $yH\in \overline{H}\cdot H\subseteq \overline{H}$, it follows that $y\overline{H}\subseteq \overline{yH}\subseteq \overline{\overline{H}}=\overline{H}$ by \cite[Theorem 7]{LCL}. Now, take any $z\in \overline{H}$, we have $yz\in \overline{H}$. Thus, $\overline{H}$ is a subsemigroup of $G$.
\end{proof}

\begin{proposition}\label{y0}
Let $G$ be an abstract group with a pre-topology $\tau$ such that the inverse mapping is pre-continuous. Then, for any symmetric subset $A$ of $G$, the closure of $A$ in $G$ is also symmetric.
\end{proposition}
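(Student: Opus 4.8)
The plan is to observe that the inverse mapping $In\colon G\to G$, $x\mapsto x^{-1}$, is a pre-homeomorphism of $G$ onto itself, and that pre-homeomorphisms commute with the closure operator; feeding a symmetric set into this identity then yields the claim at once.

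First I would record that $In$ is a bijection with $In\circ In=\mathrm{id}_{G}$, so that $In^{-1}=In$. By hypothesis $In$ is pre-continuous, hence so is $In^{-1}=In$, and therefore $In$ is a pre-homeomorphism in the sense of the definition recalled above. Next I would invoke \cite[Theorem 7]{LCL}, which asserts that a pre-continuous map $f$ satisfies $f(\overline{B})\subseteq\overline{f(B)}$ for every subset $B$ of its domain. Applying this to $f=In$ gives $(\overline{A})^{-1}=In(\overline{A})\subseteq\overline{In(A)}=\overline{A^{-1}}$. Applying it instead to $f=In$ with $B=A^{-1}$ gives $(\overline{A^{-1}})^{-1}\subseteq\overline{A}$, and taking inverses of both sides (i.e. applying the monotone, involutive operation $In$ once more) yields the reverse inclusion $\overline{A^{-1}}\subseteq(\overline{A})^{-1}$. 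Hence $(\overline{A})^{-1}=\overline{A^{-1}}$ for every subset $A$ of $G$.

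Finally, if $A$ is symmetric, i.e. $A^{-1}=A$, then $(\overline{A})^{-1}=\overline{A^{-1}}=\overline{A}$, so $\overline{A}$ is symmetric, as required. I do not anticipate any genuine obstacle: the argument is purely formal, and the only point deserving a sentence of care is the equality $h(\overline{A})=\overline{h(A)}$ for a pre-homeomorphism $h$, which is not immediate from \cite[Theorem 7]{LCL} alone but follows by applying that inclusion to both $h$ and $h^{-1}$ (here both equal $In$).
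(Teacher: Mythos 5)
Your proposal is correct and follows essentially the same route as the paper: both note that $In$ is an involution, hence a pre-homeomorphism once it is pre-continuous, then use \cite[Theorem 7]{LCL} to obtain $\overline{A^{-1}}=(\overline{A})^{-1}$ and conclude from $A=A^{-1}$. Your extra care in deducing the equality $In(\overline{A})=\overline{In(A)}$ by applying the inclusion $f(\overline{B})\subseteq\overline{f(B)}$ to both $In$ and its inverse is a welcome clarification of a step the paper states more tersely, but it is not a different argument.
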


\begin{proof}
Since the inverse mapping $In$ is pre-continuous and the composition $In\circ In$ is the identity mapping of $G$ onto itself. Thus, $In$ is a pre-homeomorphism, this implies that $\overline{In(A)} =In(\overline A)$ by \cite[Theorem 7]{LCL}, that is, $\overline{A^{-1}}=(\overline{A})^{-1}$. Since $A^{-1}= A$, then $\overline{A^{-1}} = \overline{A} =(\overline{A})^{-1}$, hence, the closure of $A$ in $G$ is also symmetric.
\end{proof}

\begin{proposition}\label{p202266}
Let $G$ be a quasi-pre-topological group, and $H$ an algebraic subgroup of $G$. Then the closure of $H$ is also a subgroup of $G$.
\end{proposition}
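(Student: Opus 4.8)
The plan is to obtain $\overline{H}$ as a subgroup by assembling the two immediately preceding propositions, after checking that their hypotheses are met. First I would note that a quasi-pre-topological group $G$ is, in particular, a semi-pre-topological semigroup (both translations are pre-continuous), and that an algebraic subgroup $H$ is a fortiori a subsemigroup of $G$. Hence Proposition~\ref{w0} applies and gives $\overline{H}\cdot\overline{H}\subseteq\overline{H}$; that is, $\overline{H}$ is closed under the multiplication of $G$.

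Next I would use that $H$, being a subgroup, is a symmetric subset of $G$, i.e. $H^{-1}=H$, and that the inverse mapping $In$ of $G$ is pre-continuous by the very definition of a quasi-pre-topological group. Thus Proposition~\ref{y0} yields that the closure of the symmetric set $H$ is symmetric, i.e. $(\overline{H})^{-1}=\overline{H^{-1}}=\overline{H}$; so $\overline{H}$ is closed under inversion.

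Finally, $e\in H\subseteq\overline{H}$, and associativity of the operation on $\overline{H}$ is inherited from $G$. Combining containment of the neutral element, closure under multiplication, and closure under inversion, we conclude that $\overline{H}$ is a subgroup of $G$. I do not anticipate a genuine obstacle here: the result is essentially a corollary of Propositions~\ref{w0} and~\ref{y0}, and the only point requiring (trivial) care is verifying that an algebraic subgroup does indeed supply both a subsemigroup and a symmetric set, so that both propositions may be invoked.
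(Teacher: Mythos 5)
Your proof is correct and follows essentially the same route as the paper's: both derive closure of $\overline{H}$ under multiplication from Proposition~\ref{w0} and closure under inversion from Proposition~\ref{y0}. Your version is in fact slightly more careful than the paper's, since you explicitly verify the hypotheses (subsemigroup, symmetry, pre-continuity of the inverse) before invoking those propositions.
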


\begin{proof}
Take any $x$, $y\in \overline{H}$. From proposition~\ref{w0}, we have $xy\in \overline{H}$. By proposition~\ref{y0}, we conclude that $\overline{H}^{-1}= \overline{H}$. Hence, the closure of $H$ is also a subgroup of $G$.
\end{proof}

\begin{corollary}
Let $G$ be a pre-topological group, and $H$ a subgroup of $G$. Then the closure of $H$ is also a subgroup of $G$.
\end{corollary}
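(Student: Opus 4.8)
The plan is to recognize that this corollary is an immediate specialization of Proposition~\ref{p202266}, so the only thing to verify is that every pre-topological group belongs to the class of quasi-pre-topological groups. This is a soft, definitional check rather than a genuine argument.

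First I would unwind the definitions. A pre-topological group $G$ has jointly pre-continuous multiplication $G\times G\to G$ and pre-continuous inverse $In$. From the joint pre-continuity of the multiplication (equivalently, by the earlier proposition stating that each left and right translation $L_g$, $R_g$ is a pre-homeomorphism), each right action $R_a$ and each left action $L_a$ is pre-continuous, so $G$ is both a right and a left pre-topological group, i.e.\ a semi-pre-topological group. Since the inverse mapping is also pre-continuous by hypothesis, $G$ satisfies the definition of a quasi-pre-topological group.

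Then I would simply invoke Proposition~\ref{p202266}: since $G$ is a quasi-pre-topological group and $H$ is an (algebraic) subgroup of $G$, the closure $\overline{H}$ is a subgroup of $G$. This completes the proof.

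There is really no main obstacle here; the entire content is the observation that ``pre-topological group'' is a strictly stronger condition than ``quasi-pre-topological group,'' which is already noted informally right after Example~\ref{e0}. If one wanted to be fully self-contained, the only mildly non-trivial point is extracting pre-continuity of the one-variable translations from joint pre-continuity of the multiplication, but that has already been established in the earlier material on translations being pre-homeomorphisms, so I would just cite it.

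\begin{proof}
Since $G$ is a pre-topological group, its multiplication is pre-continuous, so in particular every right translation $R_{a}$ and every left translation $L_{a}$ is pre-continuous; thus $G$ is a semi-pre-topological group. As the inverse mapping of $G$ is also pre-continuous, $G$ is a quasi-pre-topological group. Applying Proposition~\ref{p202266} to the subgroup $H$ of $G$, we conclude that the closure of $H$ is a subgroup of $G$.
\end{proof}
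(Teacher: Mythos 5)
Your proposal is correct and is exactly the route the paper intends: the corollary is stated immediately after Proposition~\ref{p202266} with no proof, precisely because every pre-topological group is a quasi-pre-topological group (translations and the inverse are pre-continuous, as already noted in the paper), so the proposition applies verbatim. Nothing is missing in your argument.
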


\section{Quotients of pre-topological groups}

In this section, we mainly discuss the quotient spaces of pre-topological groups and give the three isomorphisms of quotient spaces. Indeed, the authors in \cite{HK2013} has discuss the quotients of generalized topological groups. However, their paper has some gaps in some results, hence we systematically discuss the quotient of pre-topological groups and some results given without proofs. First, we need the following result.

\begin{theorem}\label{a4}
Suppose that $G$ is a semi-pre-topological group with the neutral element $e$ and a pre-topology $\tau$, and $H$ is a closed subgroup of $G$. Denote by $G/H$ the set of all left cosets $aH$ of $H$ in $G$, and endow it with the quotient pre-topology with respect to the canonical mapping $\pi$: $G \rightarrow G/H$ defined by $\pi(a)= aH$, for each $a\in G$. Then the family $\{\pi(xU): U\in \tau, e\in U\}$ is a local pre-base of the space $G/H$ at the point $xH\in G/H$, the mapping $\pi$ is open, and $G/H$ is a pre-homogeneous $T_{1}$-space.
\end{theorem}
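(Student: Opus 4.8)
The plan is to verify the four assertions in the order they are stated, since each relies on the previous ones, and the whole argument rests on analysing the quotient pre-topology on $G/H$ directly from its definition: a subset $W\subseteq G/H$ is open precisely when $\pi^{-1}(W)$ is open in $G$. The crucial preliminary observation is that $\pi$ is open: if $O$ is open in $G$, then $\pi^{-1}(\pi(O))=OH=\bigcup_{h\in H}Oh$, and since $G$ is a semi-pre-topological group each right translation $R_h$ is a pre-homeomorphism (Corollary following Proposition~\ref{s0}), so $Oh$ is open and hence $OH$ is open; therefore $\pi(O)$ is open in $G/H$. I would establish this openness of $\pi$ first, because the description of the local pre-base and the pre-homogeneity both follow from it.

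Next I would identify the local pre-base. Fix $x\in G$ and let $\mathscr W$ be any open set in $G/H$ containing $xH$. Then $\pi^{-1}(\mathscr W)$ is open in $G$ and contains $x$, so since $G$ is a left pre-topological group there is an open neighbourhood $U$ of $e$ with $xU\subseteq\pi^{-1}(\mathscr W)$; applying $\pi$ and using that $\pi$ is open gives $\pi(xU)\subseteq\mathscr W$ with $\pi(xU)$ an open neighbourhood of $xH$. Conversely each $\pi(xU)$ with $U$ an open neighbourhood of $e$ is an open neighbourhood of $xH$. Hence $\{\pi(xU):U\in\tau,\ e\in U\}$ is a local pre-base at $xH$. (One should note that the relevant local pre-bases at $e$ in $G$ can be taken from a fixed pre-base $\mathscr B_e$, using Proposition~\ref{t1}, so that this is genuinely a pre-base in the sense of the definitions; but the statement as written only asks for the family displayed, which is immediate.)

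For pre-homogeneity, given $xH,yH\in G/H$ put $a=yx^{-1}$; the left translation $L_a$ on $G$ descends to a well-defined map $\bar L_a:G/H\to G/H$, $\bar L_a(zH)=azH$, satisfying $\pi\circ L_a=\bar L_a\circ\pi$. Since $L_a$ is a pre-homeomorphism of $G$ and $\pi$ is an open quotient map, $\bar L_a$ is a pre-homeomorphism of $G/H$ (its inverse is $\bar L_{a^{-1}}$; pre-continuity of each follows because $\pi$ is a quotient map and the composite with $\pi$ is pre-continuous, while openness follows from openness of $L_a$ and of $\pi$), and $\bar L_a(xH)=yH$. Finally, for the $T_1$ property: since $H$ is closed in $G$ and each left coset $aH=L_a(H)$ is closed (left translations being pre-homeomorphisms), it follows that every singleton $\{aH\}$ in $G/H$ is closed, because $\pi^{-1}(\{aH\})=aH$ is closed in $G$ and $\pi$ is a quotient map; a pre-topological space in which points are closed is $T_1$.

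The main obstacle I anticipate is the careful handling of the quotient pre-topology when passing maps down to $G/H$ — specifically checking that $\bar L_a$ is pre-continuous and open, which requires combining the universal property of the quotient pre-topology (a map out of $G/H$ is pre-continuous iff its composite with $\pi$ is) with the openness of $\pi$ established at the outset; everything else is a routine unwinding of definitions. One should also be mildly careful that the displayed family is a local pre-base in the precise sense used in the paper, i.e.\ that every open neighbourhood of $xH$ contains some $\pi(xU)$ with $U$ ranging over a fixed pre-base at $e$, which is where Proposition~\ref{t1} and the left-pre-topological-group axiom enter.
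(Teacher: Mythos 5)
Your proposal is correct and follows essentially the same route as the paper: establish openness of $\pi$ first via $\pi^{-1}(\pi(O))=OH$ being a union of right translates, deduce the local pre-base $\{\pi(xU)\}$, obtain pre-homogeneity from the descended left translations $zH\mapsto azH$ with $a=yx^{-1}$, and get $T_{1}$ from closedness of the cosets $aH=\pi^{-1}(aH)$. The only (harmless) variation is that you verify pre-continuity of the induced translation via the universal property of the quotient pre-topology, whereas the paper checks it directly on the basic open sets $\pi(xUH)$.
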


\begin{proof}
Clearly, we have $\pi(xU)=\pi(xUH)$ for any $x\in G$ and $U\in\tau$ with $e\in U$, and the set $xUH$ is the union of a family of left cosets $yH$, where each $y\in xU$. Hence, $\pi^{-1}\pi (xUH)= xUH$. Since the set $xUH$ is open in $G$ and the mapping $\pi$ is quotient, it follows that $\pi (xUH)$ is open in $G/H$. Therefore, the mapping $\pi$ is open.

Now we prove that the family $\{\pi(xU): U\in \tau, e\in U\}$ is a local pre-base of the space $G/H$ at the point $xH\in G/H$. Take any open neighbourhood $W$ of $xH$ in $G/H$ and put $O=\pi^{-1}(W)$; obviously, we have $x\in O$. Since $\pi $ is pre-continuous, $O$ is open in $G$. Therefore, there exists an open neighbourhood $U$ of $e$ in $G$ such that $xU\subseteq O$, then $\pi(xU)\subseteq W$ and $\pi^{-1}\pi (xU)\subseteq O$. Since $xUH=\pi^{-1}\pi (xU)$, it follows that $\pi (xUH)=\pi (xU)\subseteq W$.

Finally, we prove $G/H$ is pre-homogeneous. Let $a$ be an arbitrary elements of $G$; we define a mapping $h_{a}$ of $G/H$ to itself by the rule $h_{a}(xH)=axH$. Clearly, the mapping is well defined. Since $G$ is a group, it easily check that $h_{a}$ is a bijection of $G/H$ onto $G/H$. Next we prove $h_{a}$ is pre-homeomorphism.

Take any $xH\in G$ and any open neighbourhood $U$ of $e$; then $\pi(xUH)$ is a neighbourhood of $xH$ in $G/H$. Similarly, the set $\pi(axUH)$ is a neighbourhood of $axH$ in $G/H$. Since $h_{a}(\pi(xUH))=\pi (axUH)$ and $h_{a}^{-1}(\pi(axUH))=\pi (xUH)$, then the mapping $h_{a}$ is open and pre-continuous. Thus, $h_{a}$ is pre-homeomorphism. For any $xH$, $yH$ in $G/H$, take $a=yx^{-1}$; then $h_{a}(xH)=yx^{-1}xH=yH$. Therefore, the quotient space $G/H$ is pre-homogeneous.

For any $xH$ in $G/H$, we have $\pi^{-1}(xH)=xH$. Since all left cosets are closed in $G$ and the mapping $\pi$ is quotient, it follows that $xH$ is closed in $G/H$. Hence, $G/H$ is $T_{1}$-space.
\end{proof}

\begin{proposition}
Suppose that $G$ be a pre-topological group, and $H$ is a closed subgroup of $G$, $\pi$ is the natural quotient mapping of $G$ onto the left quotient space $G/H$, $a\in G$, $L_a$ is the left translation of $G$ by $a$ (that is, $L_a(x)=ax$, for each $x\in G$), and $h_a$ is the left translation of $G/H$ by $a$ (that is, $h_a(xH)=axH$, for each $xH\in G/H$). Then $L_a$ and $h_a$ are homeomorphisms of $G$ and $G/H$, respectively, and $\pi\circ L_a=h_a\circ \pi$.
\end{proposition}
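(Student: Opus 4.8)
The plan is to verify each of the three claims in turn, all of which follow from the definitions plus the open-mapping property of $\pi$ already established in Theorem~\ref{a4}. First I would check that $L_a$ is a pre-homeomorphism of $G$: since $G$ is a pre-topological group, each left translation is a pre-homeomorphism by Proposition~\ref{t1} (or the proposition on left/right translations right after it); its inverse is $L_{a^{-1}}$, and both are pre-continuous. Similarly, that $h_a$ is a pre-homeomorphism of $G/H$ is exactly what was proved inside the last paragraphs of the proof of Theorem~\ref{a4}, where the map $h_a(xH)=axH$ was shown to be a well-defined bijection that is open and pre-continuous, with inverse $h_{a^{-1}}$; I would simply cite that. (Note the statement says ``homeomorphisms'' but in the pre-topological category the correct notion is pre-homeomorphism; I would phrase it accordingly.)

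Second, and this is the only genuinely computational point, I would check the identity $\pi\circ L_a = h_a\circ \pi$. This is a pointwise equality of maps $G\to G/H$: for each $x\in G$, $(\pi\circ L_a)(x)=\pi(ax)=(ax)H$, while $(h_a\circ\pi)(x)=h_a(xH)=a(xH)=(ax)H$, using that $h_a$ is well defined (independent of the coset representative) and that $a(xH)=(ax)H$ by associativity in $G$. So the two composites agree on every point of $G$, hence are equal.

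Finally I would remark that the commuting square also gives an alternative derivation of the pre-homeomorphism property of $h_a$ from that of $L_a$: since $\pi$ is an open, pre-continuous surjection and $\pi\circ L_a = h_a\circ\pi$, and $L_a$ carries the saturated open sets $xUH$ to saturated open sets $axUH$, one recovers openness and pre-continuity of $h_a$ directly; but since Theorem~\ref{a4} already supplies this, the cleanest write-up just invokes it. I do not anticipate a real obstacle here — the statement is essentially a bookkeeping consequence of Theorem~\ref{a4} — the only thing to be careful about is to state and use ``pre-homeomorphism'' rather than ``homeomorphism,'' and to make sure the well-definedness of $h_a$ is invoked when unwinding $h_a\circ\pi$.
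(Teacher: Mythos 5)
Your proof is correct, and it matches what the paper intends: the paper actually states this proposition without any proof (it is one of the results the authors declare are ``given without proofs''), implicitly relying on exactly the ingredients you cite --- the earlier proposition that translations of a pre-topological group are pre-homeomorphisms, the construction of $h_a$ inside the proof of Theorem~\ref{a4}, and the trivial pointwise identity $\pi(L_a(x))=axH=h_a(\pi(x))$. Your remark that ``homeomorphism'' should be read as ``pre-homeomorphism'' in this setting is also the right reading of the statement.
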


If $G$ is a left pre-topological group and $H$ is a closed invariant subgroup of $G$, then each left coset of $H$ in $G$ is also a right coset of $H$ in $G$, and a natural multiplication of cosets in $G/H$ is defined by the rule $xHyH=xyH$, for all $x,y\in G$. This operation turn $G/H$ into a group.

\begin{theorem}
Suppose that $G$ is a semi-pre-topological group with the neutral element $e$ and $H$ is a closed subgroup of $G$. Then $G/H$ with the quotient pre-topology and multiplication is a semi-pre-topological group, and the canonical mapping $\pi: G \rightarrow G/H$ is an open pre-continuous homomorphism. If $G$ is a pre-topological group and $H$ is invariant, then $G/H$ is a pre-topological group.
\end{theorem}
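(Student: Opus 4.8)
The plan is to reduce everything to the properties of the canonical map $\pi\colon G\to G/H$ already established in Theorem~\ref{a4}, together with the universal property of the quotient pre-topology. Throughout, as in the paragraph preceding the statement, $H$ is taken to be a closed invariant subgroup, so that $G/H$ is a group under $xH\cdot yH=xyH$ and $\pi$ is a group homomorphism. First I would record the universal property: since the quotient pre-topology on $G/H$ is $\{V\subseteq G/H:\pi^{-1}(V)\in\tau\}$, a map $f$ from $G/H$ to an arbitrary pre-topological space is pre-continuous if and only if $f\circ\pi$ is; one direction is that a composition of pre-continuous maps is pre-continuous, the other is immediate from the definition of the quotient pre-topology. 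Also, $\pi$ is pre-continuous by definition of the quotient pre-topology and open by Theorem~\ref{a4}.

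Next I would prove that $G/H$ is a semi-pre-topological group. For $a\in G$ let $h_a(xH)=axH$ and $\rho_a(xH)=xaH$ be the left and right translations of $G/H$ by $aH$; since $\pi$ is onto, these exhaust all translations of $G/H$. A direct computation gives $h_a\circ\pi=\pi\circ L_a$ and $\rho_a\circ\pi=\pi\circ R_a$, where $L_a,R_a$ are the translations of $G$. Because $G$ is semi-pre-topological, $L_a$ and $R_a$ are pre-continuous, hence so are $h_a\circ\pi$ and $\rho_a\circ\pi$, and the universal property then yields that $h_a$ and $\rho_a$ are pre-continuous. (Running this for $a^{-1}$ shows they are pre-homeomorphisms, recovering the pre-homogeneity already noted in Theorem~\ref{a4}.) Thus every translation of $G/H$ is pre-continuous, i.e. $G/H$ is a semi-pre-topological group, and the homomorphism $\pi$ is open and pre-continuous.

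For the last assertion, assume in addition that $G$ is a pre-topological group. Pre-continuity of the inverse $\mathrm{In}\colon xH\mapsto x^{-1}H$ on $G/H$ follows exactly as above from $\mathrm{In}\circ\pi=\pi\circ\mathrm{In}_G$ and the universal property. The real point is joint pre-continuity of the multiplication $m\colon G/H\times G/H\to G/H$, $(xH,yH)\mapsto xyH$, where $G/H\times G/H$ carries the product pre-topology. Let $W$ be open in $G/H$ with $xyH\in W$; then $\pi^{-1}(W)$ is open in $G$ and contains $xy$. Since multiplication in $G$ is jointly pre-continuous, there is a basic open rectangle $P\times Q$ in $G\times G$ with $x\in P$, $y\in Q$ and $PQ\subseteq\pi^{-1}(W)$. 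As $\pi$ is open, $\pi(P)$ and $\pi(Q)$ are open in $G/H$, so $\pi(P)\times\pi(Q)$ is a basic open neighbourhood of $(xH,yH)$. If $aH\in\pi(P)$ and $bH\in\pi(Q)$, pick $p\in P$, $q\in Q$ with $aH=pH$ and $bH=qH$; then $aH\cdot bH=pH\cdot qH=pqH=\pi(pq)\in W$ because $pq\in PQ\subseteq\pi^{-1}(W)$. Hence $m(\pi(P)\times\pi(Q))\subseteq W$, so $m^{-1}(W)$ is a union of basic open rectangles and is therefore open. Thus $m$ is jointly pre-continuous and $G/H$ is a pre-topological group.

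I expect the main obstacle to be the multiplication step: one must resist arguing that $\pi\times\pi$ is a quotient map (products of quotient maps need not be quotient, even in the classical topological setting), and instead exploit directly that $\pi$ is \emph{open}, so that images of basic rectangles under $\pi\times\pi$ are again basic rectangles. A secondary technical point is to handle the product pre-topology correctly, namely that a subset of $G/H\times G/H$ is open precisely when it is a union of rectangles $O_1\times O_2$ with each $O_i$ open in $G/H$; everything else is a routine diagram chase through the identities $h_a\circ\pi=\pi\circ L_a$, $\rho_a\circ\pi=\pi\circ R_a$, and $\mathrm{In}\circ\pi=\pi\circ\mathrm{In}_G$.
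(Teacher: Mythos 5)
Your proposal is correct, and it is in fact considerably more complete than the paper's own proof. The paper disposes of the theorem in two lines: it cites Theorem~\ref{a4} for the openness and pre-continuity of $\pi$ and verifies the identity $\pi(ab)=abH=aHbH=\pi(a)\pi(b)$, leaving the actual verification that $G/H$ is a semi-pre-topological group (and a pre-topological group when $G$ is one and $H$ is invariant) entirely implicit. You supply exactly the missing pieces, and by the right route: the universal property of the quotient pre-topology (pre-continuity of a map out of $G/H$ is equivalent to pre-continuity of its composition with $\pi$, which holds because $\pi^{-1}$ commutes with preimages and the quotient pre-topology is defined by preimages), the identities $h_a\circ\pi=\pi\circ L_a$, $\rho_a\circ\pi=\pi\circ R_a$, $\mathrm{In}\circ\pi=\pi\circ\mathrm{In}_G$ for the translations and the inverse, and, for joint pre-continuity of multiplication, the crucial use of the \emph{openness} of $\pi$ to push a rectangle $P\times Q$ with $PQ\subseteq\pi^{-1}(W)$ down to the open rectangle $\pi(P)\times\pi(Q)\subseteq m^{-1}(W)$ — correctly avoiding the tempting but false claim that $\pi\times\pi$ is quotient. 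Your preliminary remark that $H$ must be taken invariant throughout (as in the paragraph preceding the statement) is also the right reading: the coset multiplication, the right translations $xH\mapsto xaH$, and the inverse $xH\mapsto x^{-1}H$ on the left-coset space are only well defined under normality, a point the theorem's wording glosses over. What your approach buys over the paper's is an actual proof of the two structural conclusions rather than only of the properties of $\pi$; what the paper's citation of Theorem~\ref{a4} buys is brevity, since the local pre-base $\{\pi(xU)\}$ obtained there would also let one check the multiplication directly, though the authors never carry this out.
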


\begin{proof}
By theorem ~\ref{a4}, we have that the canonical mapping $\pi$ is open and pre-continuous. The mapping $\pi$ is also a homomorphism, since $\pi(ab)=abH=aHbH=\pi(a)\pi(b)$, for any $a, b\in G$.
\end{proof}

\begin{theorem}
Let $G$ be a pre-topological group and $H$ is a closed subgroup of $G$. Then $G/H$ with the quotient pre-topology is discrete if and only if $H$ is open in $G$.
\end{theorem}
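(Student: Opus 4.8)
The plan is to split into the two implications and, in each, to lean on Theorem~\ref{a4}, which already records that the canonical map $\pi\colon G\to G/H$ is open and pre-continuous (note that a pre-topological group is in particular semi-pre-topological and $H$ is closed, so Theorem~\ref{a4} applies) and that $\{\pi(xU)\colon U\in\tau,\ e\in U\}$ is a local pre-base at $xH$. Throughout I would use that, since a pre-topology is closed under arbitrary unions, a pre-topological space is discrete precisely when every singleton is open (equivalently, when every subset is open).

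First I would handle necessity. Assuming $G/H$ is discrete, the singleton $\{eH\}=\{H\}$ is open in $G/H$. Since $\pi$ is pre-continuous and $\pi^{-1}(\{H\})=H$ (because $aH=H$ iff $a\in H$), the subgroup $H=\pi^{-1}(\{H\})$ is open in $G$.

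Next I would treat sufficiency. Assuming $H$ is open in $G$, fix $xH\in G/H$. Since the left translation $L_{x}$ is a pre-homeomorphism of $G$, the set $xH=L_{x}(H)$ is open in $G$; moreover $e\in H$ and $\pi(xH)=\{xhH\colon h\in H\}=\{xH\}$. By the openness of $\pi$ from Theorem~\ref{a4}, $\{xH\}=\pi(xH)$ is open in $G/H$. As $xH$ was arbitrary, every singleton of $G/H$ is open, hence every subset of $G/H$ is open, so $G/H$ is discrete.

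There is no substantial obstacle here: the proof is a short bookkeeping exercise with the quotient map. The only points deserving a sentence of care are (i) recording that ``discrete'' for a pre-topological space means that all singletons (equivalently all subsets) are open, which is where closure of a pre-topology under arbitrary unions enters, and (ii) invoking the correct half of Theorem~\ref{a4} in each direction --- pre-continuity of $\pi$ for the forward implication and openness of $\pi$ for the reverse.
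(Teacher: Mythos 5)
Your proof is correct and follows essentially the same route as the paper: both directions reduce to the canonical mapping $\pi$, using that preimages of open singletons under the quotient (pre-continuous) map are open for necessity, and that $\{xH\}$ has open preimage (equivalently, is the image of the open coset $xH$ under the open map $\pi$) for sufficiency. Your explicit remark that discreteness of a pre-topological space follows from openness of singletons via closure under arbitrary unions is a small but welcome clarification the paper leaves implicit.
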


\begin{proof}
Suppose that $H$ is open in $G$, then {$aH$} is also open in $G$. Since $\pi$ is a quotient mapping and $\pi(aH)= aH$, it follows that $\{aH\}$ is open in $G/H$, so $G/H$ is discrete.
Conversely, suppose that $G/H$ is discrete, then for every $x\in G$, $\pi(x)= xH\in G/H$, hence $\{xH\}$ is open in $G/H$. Since $\pi$ is a quotient mapping and $\pi^{-1}(xH)= xH\subseteq G$, we conclude that $xH$ is open in $G$, hence $H$ is also open in $G$.
\end{proof}

\begin{lemma}\label{e4}
Suppose that $G$ is a pre-topological group, $H$ is closed subgroup of $G$ and $\pi$ is the natural quotient mapping of $G$ onto the quotient space $G/H$. If $U$, $V$ and $W$ are open neighbourhoods of the neutral element $e$ in $G$ such that $WV\subseteq U$, then $\overline{\pi (V)}\subseteq \pi(U)$.
\end{lemma}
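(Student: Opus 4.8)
The plan is to argue directly with points of $G/H$, exploiting that the quotient map $\pi$ is open (Theorem~\ref{a4}) to transport a well-chosen open neighbourhood ``upstairs'' back down to $G/H$. Fix $\xi\in\overline{\pi(V)}$ and write $\xi=xH$ for some $x\in G$. The first step is to build a convenient open neighbourhood of $\xi$: using that the inverse mapping of $G$ is pre-continuous (item~(2) of Theorem~\ref{t0}), pick an open neighbourhood $W_{0}$ of $e$ with $W_{0}^{-1}\subseteq W$. Since right translation by $x$ is a pre-homeomorphism of $G$, the set $W_{0}x$ is open and contains $x$, so $\pi(W_{0}x)$ is an open neighbourhood of $\xi$ in $G/H$ because $\pi$ is open.

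Now use the hypothesis $\xi\in\overline{\pi(V)}$: the open set $\pi(W_{0}x)$ must meet $\pi(V)$, so there are $w_{0}\in W_{0}$ and $v\in V$ with $\pi(w_{0}x)=\pi(v)$, i.e.\ $w_{0}xH=vH$, hence $w_{0}x=vh$ for some $h\in H$. Solving for $x$ gives $x=w_{0}^{-1}vh$, so $xH=w_{0}^{-1}v\,hH=w_{0}^{-1}v\,H$; since $w_{0}^{-1}\in W_{0}^{-1}\subseteq W$ we obtain $w_{0}^{-1}v\in WV\subseteq U$, and therefore $\xi=xH\in\pi(U)$. As $\xi$ was arbitrary, $\overline{\pi(V)}\subseteq\pi(U)$.

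The only genuine subtlety is the bookkeeping forced by the possible non-normality of $H$: the element $h\in H$ produced by the coset identity must land at the \emph{right} end of the product so that it can be absorbed into $H$. This is precisely why one replaces $W$ by $W_{0}$ with $W_{0}^{-1}\subseteq W$ and tests closure against $\pi(W_{0}x)$ rather than against a left-translated neighbourhood such as $\pi(xW)$; with the latter choice $h$ would be trapped between $v$ and the translating factor and the argument would stall. Everything else---that right translations are pre-homeomorphisms, that $\pi$ is open, and that inversion is pre-continuous---is already available, so I expect no further obstacle.

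As a remark I would add a computation-free variant: openness of $\pi$ gives $\pi^{-1}(\overline{\pi(V)})\subseteq\overline{\pi^{-1}(\pi(V))}=\overline{VH}$, and since $G$ is in particular a right pre-topological group with pre-continuous inverse, Proposition~\ref{u0} yields $\overline{VH}\subseteq W(VH)=(WV)H\subseteq UH=\pi^{-1}(\pi(U))$; applying the surjection $\pi$ to both sides gives $\overline{\pi(V)}\subseteq\pi(U)$. I would record the first argument in full and mention this shortcut afterwards.
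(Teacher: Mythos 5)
Your proof is correct and follows essentially the same route as the paper's: test the point of $\overline{\pi(V)}$ against the image under the open map $\pi$ of a right-translated neighbourhood of a representative $x$, extract the coset identity $w_{0}x=vh$, and absorb $h$ into $H$ to land in $\pi(WV)\subseteq\pi(U)$; the paper simply uses $W^{-1}x$ directly (with $W^{-1}$ open by pre-continuity of inversion) instead of your auxiliary $W_{0}$ with $W_{0}^{-1}\subseteq W$. Your closing remark via $\pi^{-1}(\overline{\pi(V)})\subseteq\overline{VH}\subseteq (WV)H$ and Proposition~\ref{u0} is a valid shortcut, but the main argument is the paper's argument.
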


\begin{proof}
Take any $x$ in $G$ such that $\pi(x)\in \overline{\pi(V)}$, we only have to prove $\pi(x) \in \pi(U)$. Since $W^{-1}x$ is an open neighbourhood of $x$ and the mapping $\pi$ is open, we conclude that $\pi(W^{-1}x)$ is an open neighbourhood of $\pi(x)$. Hence, $\pi(W^{-1}x) \cap \pi(V)\neq \varnothing$, then there exists $a\in W^{-1}$ and $b\in V$ such that $\pi(ax)= \pi(b)$, which implies that $ax=bh$, for some $h\in H$. Therefore, $x=(a^{-1}b)h \in (WV)H\subseteq UH$, that is, $\pi(x) \in \pi(UH)=\pi(U)$.
\end{proof}

\begin{theorem}
For any pre-topological group $G$ and any closed subgroup $H$ of $G$, the quotient space $G/H$ is regular.
\end{theorem}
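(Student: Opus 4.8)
The plan is to reduce the regularity of $G/H$ to a one-point statement, use the pre-homogeneity of $G/H$, and then feed Lemma~\ref{e4} with neighbourhoods supplied by Theorem~\ref{t0}(1). By Theorem~\ref{a4}, $G/H$ is a pre-homogeneous $T_{1}$-space and the canonical mapping $\pi\colon G\to G/H$ is open. So it only remains to separate, by disjoint open sets, a point from a closed set not containing it; by pre-homogeneity (the translations $h_{a}$ of Theorem~\ref{a4} are pre-homeomorphisms of $G/H$) it suffices to treat the point $eH$. Since the closure of any set in a pre-topological space is closed, it is in turn enough to show: for every open neighbourhood $\mathcal{O}$ of $eH$ in $G/H$ there is an open neighbourhood $\mathcal{W}$ of $eH$ with $\overline{\mathcal{W}}\subseteq\mathcal{O}$ --- for then, given a closed set $A$ with $eH\notin A$, applying this to $\mathcal{O}=(G/H)\setminus A$ yields the disjoint open sets $\mathcal{W}\ni eH$ and $(G/H)\setminus\overline{\mathcal{W}}\supseteq A$.

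To produce such a $\mathcal{W}$, first set $O'=\pi^{-1}(\mathcal{O})$. Since $\pi$ is pre-continuous and $\pi(e)=eH\in\mathcal{O}$, the set $O'$ is an open neighbourhood of $e$ in $G$, and $\pi(O')=\mathcal{O}$ because $\pi$ is surjective. By Theorem~\ref{t0}(1) there are open neighbourhoods $W$ and $V$ of $e$ in $G$ with $WV\subseteq O'$. Now Lemma~\ref{e4}, applied with this $W$, $V$ and $U:=O'$, gives $\overline{\pi(V)}\subseteq\pi(O')=\mathcal{O}$. Since $\pi$ is open, $\mathcal{W}:=\pi(V)$ is open in $G/H$, and $eH=\pi(e)\in\mathcal{W}$ because $e\in V$. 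Thus $\mathcal{W}$ is the desired neighbourhood.

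I do not anticipate a real obstacle: the heart of the matter is already isolated in Lemma~\ref{e4} together with the facts from Theorem~\ref{a4} that $\pi$ is open and that $G/H$ is $T_{1}$ and pre-homogeneous. The only things needing care are the (routine) verification that the closure form of regularity implies the separation form in the pre-topological category --- valid because arbitrary intersections of closed sets are closed --- and matching the order of the factors when passing from Theorem~\ref{t0}(1), which a priori yields $VW\subseteq O'$, to the hypothesis $WV\subseteq U$ of Lemma~\ref{e4}; as the two neighbourhoods in Theorem~\ref{t0}(1) play symmetric roles, this is a mere relabelling.
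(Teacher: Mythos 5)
Your proof is correct and follows essentially the same route as the paper's: reduce to the point $\pi(e)$ via the pre-homogeneity and $T_{1}$ facts of Theorem~\ref{a4}, obtain $W,V$ with $WV\subseteq\pi^{-1}(\mathcal{O})$ from Theorem~\ref{t0}(1), and apply Lemma~\ref{e4} to get $\overline{\pi(V)}\subseteq\mathcal{O}$. The only difference is cosmetic: you work with $\pi^{-1}(\mathcal{O})$ directly and spell out the routine passage from the closure form of regularity to the separation form, which the paper leaves implicit.
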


\begin{proof}
Let $\pi$ be the natural quotient mapping of $G$ onto the quotient space $G/H$ and $W$ be an arbitrary open neighbourhood of $\pi(e)$ in $G/H$, where $e$ is the neutral element of $G$. Since $\pi$ is pre-continuous, there exists an open neighbourhood $U$ of $e$ in $G$ such that $\pi(U)\subseteq W$. Let $V$ and $W$  are open neighbourhoods of $e$ such that $WV\subseteq U$. From Lemma~\ref{e4}, it follows that $\overline{\pi(V)}\subseteq \pi(U)\subseteq W$. Hence $G/H$ is regular at the point $\pi(e)$ by Theorem~\ref{a4}.
\end{proof}

\begin{proposition}\label{g4}
Suppose that $G$, $H$ and $K$ are abstract groups, and suppose that $\varphi: G\rightarrow H$ and $\psi: G\rightarrow K$ are homomorphisms such that $\psi(G)= K$ and $ker\psi\subseteq ker\varphi$. Then there exists a homomorphism $f: K\rightarrow H$ such that $\varphi =f\circ \psi$. In addition, if $G$, $H$, $K$ are pre-topological groups, then $\varphi$ and $\psi$ are pre-continuous, and if for each neighbourhood $U$ of the neutral element $e_{H}$ in $H$ there exists a neighbourhood $V$ of the identity $e_{K}$ in $K$ such that $\psi^{-1}(V)\subseteq \varphi^{-1}(U)$, then $f$ is pre-continuous.
\end{proposition}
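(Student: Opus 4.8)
The statement splits into a purely algebraic factorization and a continuity claim, and the plan is to dispatch them in that order.

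\emph{Algebraic part.} First I would define $f$ by the only rule that can possibly work: for $k\in K$ pick any $g\in G$ with $\psi(g)=k$ (such $g$ exists since $\psi$ is onto) and set $f(k)=\varphi(g)$. The one point needing an argument is that $f$ is well defined, and this is precisely where $\ker\psi\subseteq\ker\varphi$ is used: if $\psi(g_{1})=\psi(g_{2})$ then $g_{1}g_{2}^{-1}\in\ker\psi\subseteq\ker\varphi$, so $\varphi(g_{1})=\varphi(g_{2})$. Granting this, choosing compatible preimages shows $f(k_{1}k_{2})=f(k_{1})f(k_{2})$, so $f$ is a homomorphism, and $f(\psi(g))=\varphi(g)$ holds by definition (taking $g$ itself as the preimage of $\psi(g)$), i.e. $\varphi=f\circ\psi$; in particular $f(e_{K})=e_{H}$. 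These verifications are routine and I would record them without computation.

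\emph{Continuity part.} Now assume $G$, $H$, $K$ are pre-topological groups and $\varphi$, $\psi$ are pre-continuous. Since every pre-topological group is a left pre-topological group (the left translations being pre-homeomorphisms) and $f$ is a homomorphism, I would invoke the earlier proposition stating that a homomorphism of left pre-topological groups which is pre-continuous at the neutral element is pre-continuous everywhere; so it suffices to prove $f$ pre-continuous at $e_{K}$. Let $U$ be an arbitrary open neighbourhood of $e_{H}=f(e_{K})$ in $H$. The standing hypothesis provides an open neighbourhood $V$ of $e_{K}$ in $K$ with $\psi^{-1}(V)\subseteq\varphi^{-1}(U)$, and I would then check $f(V)\subseteq U$ directly: given $k\in V$, lift it to $g\in G$ with $\psi(g)=k$; then $g\in\psi^{-1}(V)\subseteq\varphi^{-1}(U)$, whence $f(k)=\varphi(g)\in U$. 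Thus $f(V)\subseteq U$, $f$ is pre-continuous at $e_{K}$, and the cited proposition upgrades this to pre-continuity of $f$ on all of $K$.

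\emph{Main obstacle.} Honestly there is no real obstacle: this is the classical universal property of quotient homomorphisms transcribed to pre-topological groups, and the only non-formal ingredient is the already-established reduction of pre-continuity of a homomorphism to pre-continuity at the identity. The only mild nuisance is bookkeeping with the choices of preimages $g$; one can sidestep it, if desired, by factoring $\varphi$ through $G/\ker\psi$ and transporting along the isomorphism $G/\ker\psi\cong K$ induced by $\psi$, which produces the same $f$ — but that is a cosmetic reformulation rather than something the proof needs.
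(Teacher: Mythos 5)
Your proposal is correct and follows essentially the same route as the paper: the algebraic factorization is treated as routine (the paper simply calls it well known), and pre-continuity of $f$ is obtained by producing, for a neighbourhood $U$ of $e_{H}$, the neighbourhood $V$ of $e_{K}$ with $\psi^{-1}(V)\subseteq\varphi^{-1}(U)$ and checking $f(V)=\varphi(\psi^{-1}(V))\subseteq U$, then upgrading pre-continuity at $e_{K}$ to pre-continuity everywhere via the earlier proposition on homomorphisms of left pre-topological groups. Your extra detail on well-definedness of $f$ and the explicit lifting of $k\in V$ is just a fuller write-up of the same argument.
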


\begin{proof}
The algebraic part of the proposition is well known. It only to verify that $f$ is pre-continuous. Let $U$ be a neighbourhood of $e_H$ in $H$. By our assumption there exists a neighbourhood $V$ of the neutral element $e_K$ in $K$ such that $W=\psi^{-1}(V)\subseteq \varphi^{-1}(U)$. Since $\varphi= f\circ \psi$, we have $f(V)=f(\psi(\psi^{-1}(V))=\varphi(\psi^{-1}(V))=\varphi(W)\subseteq U$. Hence, $f$ is pre-continuous at the neutral element of $K$. Therefore $f$ is pre-continuous.
\end{proof}

\begin{corollary}\label{f4}
Let $\varphi: G\rightarrow H$ and $\psi: G\rightarrow K $ be a pre-continuous homomorphism of semi-pre-topological groups $G$, $H$ and $K$ such that $\psi(G)= K$ and $ker \psi \subseteq ker\varphi$. If the homomorphism $\psi$ is open, then there exists a pre-continuous homomorphism $f: K\rightarrow H$ such that $\varphi= f\circ \psi$.
\end{corollary}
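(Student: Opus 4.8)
The plan is to deduce the statement directly from Proposition~\ref{g4}. That proposition already supplies the group-theoretic factorization $\varphi = f\circ\psi$ (valid because $\psi(G)=K$ and $\ker\psi\subseteq\ker\varphi$), and it reduces pre-continuity of $f$ to checking a single topological condition: for every neighbourhood $U$ of $e_H$ in $H$ there is a neighbourhood $V$ of $e_K$ in $K$ with $\psi^{-1}(V)\subseteq\varphi^{-1}(U)$. So the whole proof amounts to verifying this condition from the two hypotheses ``$\varphi$ pre-continuous'' and ``$\psi$ open''. Note that the argument in Proposition~\ref{g4} uses only that translations in a semi-pre-topological group are pre-homeomorphisms (so that pre-continuity of $f$ at $e_K$ propagates), hence it applies in the present generality.

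First I would fix a neighbourhood $U$ of $e_H$ in $H$. Pre-continuity of $\varphi$ gives an open set $W\subseteq G$ with $e_G\in W\subseteq\varphi^{-1}(U)$. Since $\psi$ is open, $V:=\psi(W)$ is an open subset of $K$ containing $\psi(e_G)=e_K$, hence a neighbourhood of $e_K$ in $K$. It remains to show $\psi^{-1}(V)\subseteq\varphi^{-1}(U)$.

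The key step is the set identity $\psi^{-1}(V)=\psi^{-1}(\psi(W))=W\cdot\ker\psi$: if $\psi(x)=\psi(w)$ with $w\in W$, then $w^{-1}x\in\ker\psi$, so $x\in W\ker\psi$, and the reverse inclusion is immediate. Applying $\varphi$ and using $\ker\psi\subseteq\ker\varphi$ (so $\varphi(\ker\psi)=\{e_H\}$), we get $\varphi(W\ker\psi)=\varphi(W)\,\varphi(\ker\psi)=\varphi(W)\subseteq U$, that is, $\psi^{-1}(V)=W\ker\psi\subseteq\varphi^{-1}(U)$. This is exactly the hypothesis required by Proposition~\ref{g4}, so $f$ is pre-continuous, which finishes the proof.

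The computation is essentially routine; the only mild point to be careful about is making the equality $\psi^{-1}(\psi(W))=W\ker\psi$ explicit, together with the observation that both surjectivity of $\psi$ (for the algebraic factorization) and openness of $\psi$ (to make $V$ open) are genuinely used — no compactness or separation axioms enter anywhere.
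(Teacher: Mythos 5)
Your argument is correct, but it takes a different route from the paper. The paper invokes Proposition~\ref{g4} only for the algebraic factorization $\varphi=f\circ\psi$ and then proves pre-continuity of $f$ directly and globally: for every open $V\subseteq H$ one has $f^{-1}(V)=\psi(\varphi^{-1}(V))$ (the identity holds because $\psi$ is onto and $\ker\psi\subseteq\ker\varphi$ makes $f$ well defined on fibres), and this set is open since $\varphi$ is pre-continuous and $\psi$ is open; no local-to-global step and no separate verification of the neighbourhood condition at $e_K$ is needed. You instead verify the sufficient condition stated in Proposition~\ref{g4} at the neutral element, via $W:=\varphi^{-1}(U)$, $V:=\psi(W)$ and the identity $\psi^{-1}(\psi(W))=W\ker\psi$ together with $\varphi(\ker\psi)=\{e_H\}$ — this is sound, and your extra remark is exactly the point one must check: Proposition~\ref{g4} is stated for pre-topological groups, so to use it for semi-pre-topological groups you must observe (as you do) that its proof only needs pre-continuity at $e_K$ to propagate by translations, which the earlier proposition on homomorphisms of left (right) pre-topological groups provides. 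What each approach buys: the paper's formula $f^{-1}(V)=\psi(\varphi^{-1}(V))$ is shorter, gives pre-continuity everywhere at once, and sidesteps the generality issue entirely (though the paper asserts that identity without proof, while you spell out the analogous fibre computation); your route stays literally within the hypotheses of Proposition~\ref{g4} at the cost of the $W\ker\psi$ computation and the applicability remark.
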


\begin{proof}
 It follows from proposition~\ref{g4} that there exists a homomorphism $f: K\rightarrow H$ such that $\varphi= f\circ \psi$. Then it suffice to prove that $f$ is pre-continuous. Let $V$ be an arbitrary open set in $H$, then $f^{-1}(V)= \psi(\varphi^{-1}(V))$.  Since $\varphi$ is pre-continuous and $\psi$ is open, we conclude that $f^{-1}(V)$ is open in $K$. Thus $f$ is pre-continuous.
\end{proof}

\begin{proposition}
Let $G$ and $H$ be pre-topological groups and $p$ be a pre-topological isomorphism of $G$ onto $H$. If $G_0$ is a closed invariant subgroup of $G$ and $H_0= p(G_0)$, then the quotient groups $G/G_0$ and $H/H_0$ are pre-topologically isomorphic. The corresponding pre-isomorphism $\phi: G/G_0\rightarrow H/H_0$ is given by the formula $\phi(xG_0)=yH_0$, where $x\in G$ and $y= p(x)$.
\end{proposition}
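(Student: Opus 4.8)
The plan is to realize $\phi$ as the homomorphism induced on quotients by the commuting square relating $p$ to the two canonical projections, and to obtain pre-continuity in both directions by applying Corollary~\ref{f4} twice, once to $p$ and once to $p^{-1}$.

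First I would dispatch the algebraic and set-theoretic preliminaries. Since $p$ is a group isomorphism, $H_{0}=p(G_{0})$ is an invariant subgroup of $H$, and since $p$ is a pre-homeomorphism, $H_{0}$ is closed in $H$; hence $G/G_{0}$ and $H/H_{0}$ are pre-topological groups (by the quotient construction for pre-topological groups established after Theorem~\ref{a4}), and the canonical projections $\pi_{G}\colon G\rightarrow G/G_{0}$ and $\pi_{H}\colon H\rightarrow H/H_{0}$ are open pre-continuous homomorphisms by Theorem~\ref{a4}. Next, the rule $\phi(xG_{0})=p(x)H_{0}$ is well defined: if $xG_{0}=x'G_{0}$ then $x^{-1}x'\in G_{0}$, so $p(x)^{-1}p(x')=p(x^{-1}x')\in p(G_{0})=H_{0}$, whence $p(x)H_{0}=p(x')H_{0}$; and $\phi$ is a group homomorphism because $p$ is. Replacing $p$ by $p^{-1}$ and noting $p^{-1}(H_{0})=G_{0}$ gives a well-defined homomorphism $\phi'\colon H/H_{0}\rightarrow G/G_{0}$, $\phi'(yH_{0})=p^{-1}(y)G_{0}$, and a direct check shows $\phi'\circ\phi$ and $\phi\circ\phi'$ are identity maps, so $\phi$ is a bijection.

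For pre-continuity I would put $\varphi=\pi_{H}\circ p\colon G\rightarrow H/H_{0}$ and $\psi=\pi_{G}\colon G\rightarrow G/G_{0}$. Both are pre-continuous homomorphisms of (semi-)pre-topological groups, $\psi$ is surjective and open, and $\ker\psi=G_{0}=p^{-1}(H_{0})=\ker\varphi$ (this is precisely where $H_{0}=p(G_{0})$ and the injectivity of $p$ enter). Corollary~\ref{f4} then yields a pre-continuous homomorphism $f\colon G/G_{0}\rightarrow H/H_{0}$ with $\varphi=f\circ\psi$; evaluating at $x\in G$ gives $f(xG_{0})=\pi_{H}(p(x))=p(x)H_{0}=\phi(xG_{0})$, so $f=\phi$ and $\phi$ is pre-continuous. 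Running the same argument with $p^{-1},\pi_{H},\pi_{G}$ in place of $p,\pi_{G},\pi_{H}$ shows $\phi^{-1}=\phi'$ is pre-continuous. Hence $\phi$ is simultaneously a pre-homeomorphism and a group isomorphism, i.e. a pre-topological isomorphism, and the displayed formula is exactly how it acts. (Alternatively, pre-continuity of $\phi$ at a point $xG_{0}$ can be checked by hand using the local pre-base $\{\pi_{H}(p(x)V):V\in\tau_{H},\ e_{H}\in V\}$ supplied by Theorem~\ref{a4}: for such a $V$, the set $\pi_{G}(x\,p^{-1}(V))$ is a neighbourhood of $xG_{0}$, and $\phi$ sends it onto $\pi_{H}(p(x)V)$ because $p(p^{-1}(V))=V$.)

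The argument is essentially bookkeeping, so there is no genuinely hard step; the only point that needs attention is the identification $\ker\psi=\ker\varphi$ — equivalently, that $\phi$ is well defined and injective — which is where both $H_{0}=p(G_{0})$ and the bijectivity of $p$ are used, together with the openness of $\pi_{G}$ and $\pi_{H}$ from Theorem~\ref{a4} needed to invoke Corollary~\ref{f4} in each direction.
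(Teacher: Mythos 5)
Your proposal is correct and follows essentially the same route as the paper: both rest on the commuting square $\pi_{H}\circ p=\phi\circ\pi_{G}$ together with the openness and pre-continuity of the canonical quotient maps from Theorem~\ref{a4}. The only presentational difference is that you obtain pre-continuity of $\phi$ and of $\phi^{-1}$ by invoking Corollary~\ref{f4} twice (once for $p$, once for $p^{-1}$), whereas the paper asserts directly that $\phi$ is an open pre-continuous homomorphism because $p$, $\pi_{G}$ and $\pi_{H}$ are, and then checks that the kernel is trivial; your version makes that step slightly more explicit but is the same argument.
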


\begin{proof}
Let $\varphi: G\rightarrow G/G_0$ and $\psi: H\rightarrow H/H_0$ be the pre-quotient homomorphisms. We can easily prove that $\phi$ is a homomorphism of $G/G_0$ onto $H/H_0$. From the definition of $\phi$, it follows that $\psi\circ p= \phi\circ \varphi$. Since $p$, $\varphi$ and $\psi$ are open pre-continuous homomorphisms, so is $\phi$. Take an arbitrary element $xG_0$ of $G/G_0$ and set $y= p(x)$. If $\pi(xG_0)= H_0$, then $\psi(y)= H_0$, where $y\in H_0$ and $x\in G_0$, hence ker of $\phi$ is trivial. In other words, $\phi$ is an pre-isomorphism. Thus, $\phi$ is pre-topological isomorphism.
\end{proof}

\begin{theorem}
(First Isomorphism)

Let $G$ and $H$ be semi-pre-topological groups with neutral elements $e_{G}$ and $e_{H}$, respectively, and let $p$ be an open pre-continuous homomorphism of $G$ onto $H$. Then kernel $N=p^{-1}(e_{H})$ of $p$ is a closed invariant subgroup of $G$, and the fibers $p^{-1}(y)$ with $y \in H$ coincide with the cosets of $N$ in $G$. The mapping $\Phi:G/N\rightarrow H$ which assigns to a coset $xN$ the element $p(x)\in H$ is a pre-topological isomorphism.
\end{theorem}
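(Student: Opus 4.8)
The plan is to keep the algebraic and the topological parts separate and to organise everything around the canonical quotient mapping $\pi\colon G\rightarrow G/N$ together with the identity $\Phi\circ\pi=p$.

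First I would record the algebraic content, which is classical. The kernel $N=p^{-1}(e_{H})$ of the homomorphism $p$ is a normal, hence invariant, subgroup of $G$; and $N$ is closed in $G$ because $\{e_{H}\}$ is closed in $H$ — one needs $\{e_{H}\}$ closed, which holds whenever $H$ is $T_{1}$, and by the proof of Theorem~\ref{t2022111} this is in particular the case when $H$ is $T_{0}$ — so $N=p^{-1}(\{e_{H}\})$ is closed by pre-continuity of $p$. Next, for $x,x'\in G$ one has $p(x)=p(x')$ precisely when $p(x^{-1}x')=e_{H}$, i.e. when $x^{-1}x'\in N$, i.e. when $x'\in xN$; hence $p^{-1}(p(x))=xN$ for each $x\in G$, which is exactly the assertion that the fibres of $p$ are the cosets of $N$ in $G$.

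Since $N$ is a closed invariant subgroup, $G/N$ carries the quotient pre-topology and the multiplication $xN\cdot yN=xyN$, which make it a semi-pre-topological group with $\pi$ an open pre-continuous homomorphism (by the results above on quotients of semi-pre-topological groups, see Theorem~\ref{a4}). By the fibre computation, the rule $\Phi(xN):=p(x)$ is well defined and injective; it is onto because $p$ is onto, and it is a homomorphism since $\Phi(xN\cdot yN)=\Phi(xyN)=p(xy)=p(x)p(y)=\Phi(xN)\Phi(yN)$. Thus $\Phi$ is a group isomorphism of $G/N$ onto $H$ and, by construction, $\Phi\circ\pi=p$.

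It remains to verify that $\Phi$ is a pre-homeomorphism. For pre-continuity: if $W\subseteq H$ is open, then $\pi^{-1}(\Phi^{-1}(W))=(\Phi\circ\pi)^{-1}(W)=p^{-1}(W)$ is open in $G$ by pre-continuity of $p$, and since $G/N$ carries the quotient pre-topology with respect to $\pi$ this forces $\Phi^{-1}(W)$ to be open in $G/N$. For openness: if $O\subseteq G/N$ is open, then $\pi^{-1}(O)$ is open in $G$ by pre-continuity of $\pi$, so $p(\pi^{-1}(O))$ is open in $H$ because $p$ is open; and as $\pi$ is surjective, $p(\pi^{-1}(O))=\Phi(\pi(\pi^{-1}(O)))=\Phi(O)$, so $\Phi(O)$ is open. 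Hence $\Phi$ is simultaneously a group isomorphism and a pre-homeomorphism, that is, a pre-topological isomorphism. The one step that demands genuine attention is the closedness of $N$, which really uses a separation property of $H$ (otherwise $e_{H}$ need not be closed, and then neither is $N$); everything else is the routine quotient-mapping diagram chase made to work by the fact that $\pi$ is a pre-continuous open surjection with $p=\Phi\circ\pi$.
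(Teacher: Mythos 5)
The paper states this First Isomorphism theorem without any proof, so there is nothing of the authors' to compare your argument against; judged on its own merits, your proof is the natural one and its topological core is correct. The identity $\Phi\circ\pi=p$ together with the defining property of the quotient pre-topology (a subset of $G/N$ is open exactly when its $\pi$-preimage is open in $G$) gives pre-continuity of $\Phi$, and openness of $\Phi$ follows from $\Phi(O)=p(\pi^{-1}(O))$, using surjectivity of $\pi$, pre-continuity of $\pi$ and openness of $p$. The algebraic parts (normality of $N$, fibres equal cosets, $\Phi$ a well-defined group isomorphism with $\Phi\circ\pi=p$) are standard and correctly handled, and your use of Theorem~\ref{a4} and the quotient theorem following it to equip $G/N$ with its semi-pre-topological group structure is the intended route.

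The one point that deserves scrutiny is the one you yourself flag: the closedness of $N$. As the theorem is stated, $H$ is only a semi-pre-topological group with no separation hypothesis, and then that claim is simply false: take $G=H$ to be any nontrivial group with the indiscrete pre-topology $\{\emptyset,G\}$ and $p$ the identity map; this is an open pre-continuous onto homomorphism, yet $N=\{e\}$ is not closed. So the statement needs an extra hypothesis (for instance that $H$ is $T_{1}$, or directly that $N$ is closed, which is in any case what Theorem~\ref{a4} requires in order to form $G/N$). Note also that your parenthetical appeal to Theorem~\ref{t2022111} to upgrade $T_{0}$ to $T_{1}$ is not available in the present setting: that argument uses condition (2) of Theorem~\ref{t0}, i.e.\ pre-continuity of the inverse, so it applies to pre-topological groups and not to merely semi-pre-topological ones; for the latter one should simply assume $T_{1}$. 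With that hypothesis added, your proof is complete and correct.
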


\begin{theorem}
Let $G$ be an almost topological group, $H$ a closed subgroup of $G$, and $\pi:G\rightarrow G/H$ be the canonical mapping. If $K$ is a dense subgroup of $G$, then the restriction $r=\pi\upharpoonright K$ is an open mapping of $K$ onto $\pi (K)$.
\end{theorem}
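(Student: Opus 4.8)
The plan is to use pre-homogeneity to reduce the openness of $r$ to a single statement at the coset $eH$, then to choose a small symmetric neighbourhood $V$ of $e$ with $V^{2}$ inside a prescribed neighbourhood $U$, and finally to transport points back and forth using the density of $K$.

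First I would localise the problem. Let $W$ be open in $K$ and let $\xi=r(k_{0})$ be a point of $r(W)$, with $k_{0}\in W$. The left translation $\lambda_{k_{0}^{-1}}$ of $K$ is a pre-homeomorphism of $K$ (it is a restriction of the corresponding pre-homeomorphism of $G$), and the map $h_{k_{0}}\colon G/H\to G/H$ given by $h_{k_{0}}(xH)=k_{0}xH$ is, by Theorem~\ref{a4}, a pre-homeomorphism of $G/H$ which carries $\pi(K)$ onto itself since $k_{0}\in K$. From the identity $h_{k_{0}}\circ r=r\circ\lambda_{k_{0}}$ one obtains $r(W)=h_{k_{0}}\big(r(k_{0}^{-1}W)\big)$; hence $r(W)$ is a neighbourhood of $\xi$ in $\pi(K)$ exactly when $r(k_{0}^{-1}W)$ is a neighbourhood of $\pi(e)$ in $\pi(K)$. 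Since $k_{0}^{-1}W$ is an open neighbourhood of $e$ in $K$, it contains $U\cap K$ for some open neighbourhood $U$ of $e$ in $G$, so it is enough to prove: \emph{for every open neighbourhood $U$ of $e$ in $G$, the set $\pi(U\cap K)$ is a neighbourhood of $\pi(e)$ in $\pi(K)$.}

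Now I would invoke that $G$ is almost topological. Fix a symmetric pre-base $\mathscr{B}_{e}$ at $e$; shrinking $U$ we may assume $U\in\mathscr{B}_{e}$, and then pick $V\in\mathscr{B}_{e}$ with $V=V^{-1}$ and $V^{2}\subseteq U$. By Theorem~\ref{a4} the mapping $\pi$ is open, so $\pi(V)$ is an open subset of $G/H$ containing $\pi(e)$; therefore $\pi(V)\cap\pi(K)$ is an open neighbourhood of $\pi(e)$ in $\pi(K)$, and the whole proof reduces to the inclusion
\[
\pi(V)\cap\pi(K)\subseteq\pi(U\cap K).
\]

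Proving this inclusion is the main obstacle. Take $\xi\in\pi(V)\cap\pi(K)$ and write $\xi=\pi(v)=\pi(k)$ with $v\in V$ and $k\in K$, so $v\in kH$ and, since $V\subseteq V^{2}\subseteq U$, the element $v$ represents $\xi$ and lies in $U$ — but possibly $v\notin K$. Density of $K$ furnishes points of $K$ in every nonempty open subset of $G$, for instance in $vV\subseteq V^{2}\subseteq U$, hence in $U\cap K$; the delicate point is to guarantee that such an approximating point $k'\in U\cap K$ still represents $\xi$, i.e.\ that $k'\in vH$ (equivalently $\pi(k')=\xi$). This is where one has to exploit the interaction between $K$ and the closed subgroup $H$, combining the density of $K$ in $G$, the relation $v\in kH$, the inclusion $V^{2}\subseteq U$, and the openness of $\pi$ from Theorem~\ref{a4}. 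Once a point $k'\in (U\cap K)\cap vH$ is obtained, $\pi(k')=\pi(v)=\xi$, so $\xi\in\pi(U\cap K)$, which finishes the argument.
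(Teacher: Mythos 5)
Your reduction is fine as far as it goes: localising at $\pi(e)$ via the translations of Theorem~\ref{a4}, choosing symmetric $V$ with $V^{2}\subseteq U$, and noting that $\pi(V)\cap\pi(K)$ is open in $\pi(K)$ are all correct. But the proof stops exactly at the crux: you never actually produce the point $k'\in (U\cap K)\cap vH$; you only say that ``one has to exploit the interaction between $K$ and the closed subgroup $H$.'' Density of $K$ in $G$ alone cannot supply such a point, because the fiber $vH$ typically has empty interior in $G$ (whenever $H$ is not open), so knowing that $K$ meets every nonempty open set, e.g.\ $vV$, gives points of $K$ near $v$ but gives no control whatsoever on whether any of them lies in the coset $vH$. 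So the inclusion $\pi(V)\cap\pi(K)\subseteq\pi(U\cap K)$ is asserted, not proved, and this is a genuine gap rather than a routine detail.

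What is missing is precisely the ingredient the paper's own proof uses: it takes $y=\pi(x)\in\pi(V)\cap\pi(K)$ with $x\in K$, notes $xH\cap V\neq\emptyset$, and then invokes ``since $K\cap H$ is dense in $H$, the set $x(K\cap H)=K\cap xH$ is dense in $xH$,'' so that the nonempty relatively open set $xH\cap V$ meets $K\cap xH$, yielding the required representative in $K\cap V$. In other words, the argument needs the additional hypothesis that $K\cap H$ is dense in $H$ (the theorem as printed omits it, but the proof uses it, and the Question immediately following asks about its converse, confirming it is intended). With that hypothesis your sketch closes easily: $k(K\cap H)=K\cap kH$ is dense in $kH=vH$, and $vV\cap vH$ is a nonempty relatively open subset of $vH$ contained in $U$, so it meets $K$, giving $k'\in U\cap K$ with $\pi(k')=\xi$. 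Without it, the statement specialises to the classical claim for topological groups, which is false in general, so no completion of your argument from density of $K$ alone is possible.
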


\begin{proof}
Take any non-empty open set $U$ in $K$. Then there an open set $V$ in $G$ such that $U= K\cap V$. Clearly, $r(U)=\pi(K\cap V)\subseteq \pi (K)\cap \pi(V)=O$. Since the mapping $\pi $ is open, the set $O=\pi(V) \cap \pi (K)$ is open in $\pi(K)$, then we have $r(U)=O$. Indeed, for any $y\in O$, there exists $x\in K$ such that $\pi(x)=y$, which implies that $xH \cap V=\pi^{-1}(y) \cap V \neq \emptyset$. Since $K\cap H$ is dense in $H$, it follows that $x(K\cap H)=K\cap xH$ is dense in $xH$. Thus $(K\cap xH)\cap V \neq \emptyset$, so there is a point $x^{\prime} \in K \cap xH \cap V= U \cap xH$. Hence $r(x^{\prime})= \pi (x^{\prime})= \pi (x)= y$, that is, $y\in r(U)$; then it follows that $r(U)=O$.  Therefore, the mapping $r: K \rightarrow \pi(K)$ is open.
\end{proof}

\begin{question}
Let $G$ be an almost topological group, $H$ a closed subgroup of $G$, and $\pi:G\rightarrow G/H$ be the canonical mapping. If $K$ is a dense subgroup of $G$ and the restriction $r=\pi\upharpoonright K$ is an open mapping of $K$ onto $\pi (K)$, is the intersection $K\cap H$ dense in $H$?
\end{question}

\begin{theorem}
Let $p:G\rightarrow H$ be pre-continuous homomorphism of almost topological groups. Suppose that the image $p(U)$ contains a non-empty open set in $H$, for each open neighbourhood $U$ of the neutral element $e_{G}$ in $G$. Then the homomorphism $p$ is open.
\end{theorem}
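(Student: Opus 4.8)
The plan is to run the classical open-mapping argument for topological groups inside the pre-topological setting, using two features that are available here: in any pre-topological group all left and right translations are pre-homeomorphisms, and in a pre-topology a set that is an open neighbourhood of each of its points is automatically open (being a union of open sets). First I would reduce the statement to a local one. To prove that $p$ is open it suffices to show $p(W)$ is open for every open $W\subseteq G$, and for this it is enough to produce, for each $x\in W$, an open set $O_{x}$ in $H$ with $e_{H}\in O_{x}$ and $p(x)\,O_{x}\subseteq p(W)$; then each $y=p(x)\in p(W)$ lies in the open set $p(x)O_{x}\subseteq p(W)$, so $p(W)=\bigcup_{x\in W}p(x)O_{x}$ is open.

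Now fix $x\in W$. Since $L_{x^{-1}}$ is a pre-homeomorphism of $G$, the set $x^{-1}W$ is open and contains $e_{G}$. Because $G$ is an almost topological group, by the characterisation proved above there is an open neighbourhood $V$ of $e_{G}$ with $VV^{-1}\subseteq x^{-1}W$. By hypothesis $p(V)$ contains a non-empty open set $O'$ of $H$; pick $b\in O'$ and $c\in V$ with $p(c)=b$, and set $O_{x}=O'b^{-1}$. Since right translation by $b^{-1}$ is a pre-homeomorphism of $H$, the set $O_{x}$ is open, and $e_{H}=bb^{-1}\in O_{x}$. Using that $p$ is a homomorphism,
$$O_{x}=O'b^{-1}\subseteq p(V)\,p(c)^{-1}=p(Vc^{-1})\subseteq p(VV^{-1})\subseteq p(x^{-1}W).$$
Finally, since $L_{p(x)}$ is a pre-homeomorphism of $H$, the set $p(x)O_{x}$ is open in $H$, it contains $p(x)e_{H}=p(x)$, and
$$p(x)O_{x}\subseteq p(x)\,p(x^{-1}W)=p(x\,x^{-1}W)=p(W),$$
which is exactly the local data needed to conclude, via the reduction, that $p(W)$ is open and hence that $p$ is open.

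I do not expect a serious obstacle here; the argument is essentially bookkeeping. The two points that need care are: (i) one must keep the sidedness consistent, using the inclusion $VV^{-1}\subseteq x^{-1}W$ together with the right-translated set $O'b^{-1}$ (not $b^{-1}O'$), so that the homomorphism identity $p(V)p(c)^{-1}=p(Vc^{-1})$ places $O_{x}$ inside $p(x^{-1}W)$; and (ii) one must invoke that in a pre-topological space an open neighbourhood of every point of a set forces the set to be open, which replaces the usual interior-point arguments available in ordinary topology.
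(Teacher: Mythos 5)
Your proof is correct and follows essentially the same route as the paper: use the almost-topological-group property of $G$ to find $V$ with $VV^{-1}$ (the paper uses $V^{-1}V$) inside the relevant neighbourhood, invoke the hypothesis that $p(V)$ contains a non-empty open set, manufacture from it an open neighbourhood of $e_{H}$ inside the image, and finish by translating. The only cosmetic difference is that you produce this neighbourhood as a single right translate $O'b^{-1}$ with $b=p(c)$, $c\in V$, whereas the paper takes the symmetric product $W^{-1}W$ of the open piece $W\subseteq p(V)$ and states the identity-neighbourhood claim separately before translating.
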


\begin{proof}
First we claim that for each open neighbourhood $U$ of the neutral element $e_{G}$ in $G$, there is an open neighbourhood $W$ of neutral element $e_{H}$ of $H$ is contained in  $p(U)$. Indeed, since $G$ is an almost topological group, there exists an open neighbourhood $V$ of the neutral element $e_{G}$, such that $V^{-1}V\subseteq U$. By assumption, $p(V)$ contains a non-empty open set $W$ in $H$, then $W^{-1}W$ is an open neighbourhood of $e_{H}$ and $W^{-1}W\subseteq p(V)^{-1}p(V)= p(V^{-1}V)\subseteq p(U)$.

Now we prove this theorem. Take any open set $U$ in $G$ and any element $y \in p(U)$; there is a point $x\in U$ such that $y=p(x)$. So, we can find an open neighbourhood $O$ of the neutral element $e_{G}$ such that $xO\subseteq U$. According to our claim above, there exists an open neighbourhood $W_{1}$ of neutral element $e_{H}$ such that
$W_{1}\subseteq p(O)$, hence $$y\in yW_{1}\subseteq yp(O)\subseteq p(x)p(O) \subseteq p(xO)\subseteq p(U).$$
By the arbitrary choice of $y$, it follows that $p(U)$ is open in $H$.
\end{proof}

The next two results are known as the second isomorphism theorem and the third isomorphism theorem. We give them without any proofs.

\begin{theorem}
(Second Isomorphism)
Let $G$ and $H$ be left pre-topological groups with the neutral elements $e_{G}$ and $e_{H}$, respectively, and let$p: G\rightarrow H$ be an open pre-continuous homomorphism of $G$ onto $H$. Let $H_{0}$ be closed invariant subgroup of $H$, $G_{0}= p^{-1}(H_{0})$ and $N= p^{-1}(e_{H})$. Then the left pre-topological groups $G/G_{0}$, $ H/H_{0}$, and $(G/N)/(G_{0}/N)$ are pre-topologically isomorphic.
\end{theorem}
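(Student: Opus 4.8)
The statement asserts three pre-topological isomorphisms among the quotients $G/G_0$, $H/H_0$, and $(G/N)/(G_0/N)$. I would build these in two stages, each time invoking the First Isomorphism Theorem (already proved above) together with Corollary~\ref{f4}, which gives the existence of a pre-continuous factoring homomorphism through an open pre-continuous quotient.

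\emph{Stage 1: $G/G_0 \cong H/H_0$.} Let $q\colon H\to H/H_0$ be the canonical map; since $H$ is a left pre-topological group and $H_0$ is a closed invariant subgroup, $q$ is an open pre-continuous homomorphism onto $H/H_0$ with kernel $H_0$. Consider the composition $\varphi = q\circ p\colon G\to H/H_0$; it is an open pre-continuous homomorphism onto $H/H_0$ (open because both $p$ and $q$ are open; pre-continuous as a composition; surjective since $p$ and $q$ are), and its kernel is exactly $p^{-1}(H_0)=G_0$. Applying the First Isomorphism Theorem to $\varphi$ yields that $G_0$ is a closed invariant subgroup of $G$ and that $\Phi\colon G/G_0\to H/H_0$, $xG_0\mapsto \varphi(x)$, is a pre-topological isomorphism.

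\emph{Stage 2: $G/G_0 \cong (G/N)/(G_0/N)$.} Let $\pi\colon G\to G/N$ be the canonical map, which is open, pre-continuous, surjective with kernel $N$. Since $N=p^{-1}(e_H)\subseteq p^{-1}(H_0)=G_0$, the image $G_0/N=\pi(G_0)$ is a subgroup of $G/N$; one checks it is closed (because $G_0=\pi^{-1}(G_0/N)$ is closed and $\pi$ is a quotient map, so $\pi(G_0)$ is closed in $G/N$) and invariant (image of an invariant subgroup under a surjective homomorphism). Let $\rho\colon G/N\to (G/N)/(G_0/N)$ be the corresponding canonical map, open, pre-continuous, surjective, with kernel $G_0/N$. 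Now form $\psi=\rho\circ\pi\colon G\to (G/N)/(G_0/N)$: it is open pre-continuous surjective, and its kernel is $\pi^{-1}(G_0/N)=G_0$. By the First Isomorphism Theorem again, the map $xG_0\mapsto\psi(x)$ is a pre-topological isomorphism $G/G_0\to (G/N)/(G_0/N)$. Composing the isomorphism from Stage 1 with the inverse of this one links all three groups, giving the claimed pairwise pre-topological isomorphisms.

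\emph{Where the difficulty lies.} The algebra is entirely routine; the delicate points are all topological, and they are exactly the two verifications one must not skip. First, that a composition of open maps is open, and more importantly that the kernel of a composite quotient map, computed as a preimage, is the ``right'' subgroup and is \emph{closed} — here one uses that $\pi$ (resp.\ $q$, $\rho$) is a quotient map so that a set is closed in the quotient iff its saturated preimage is closed in the source, which lets us transfer closedness of $G_0$ up to $G_0/N$. Second, one must confirm that $G_0/N$, with the quotient pre-topology of $G/N$, really is a closed invariant subgroup so that $(G/N)/(G_0/N)$ is again a (left) pre-topological group to which the First Isomorphism Theorem applies; this is where one invokes the earlier structural results on quotients (Theorem~\ref{a4} and the theorem that $G/H$ is a semi-pre-topological group). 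Once these topological bookkeeping steps are in place, the three isomorphisms fall out immediately, and indeed this is why the authors state the result without proof.
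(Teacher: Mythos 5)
The paper explicitly states this theorem (and the Third Isomorphism) \emph{without proof}, so there is no argument of the authors to compare yours against; judged on its own, your two-stage reduction is correct and is the natural route. Stage 1 (apply the First Isomorphism Theorem to $q\circ p\colon G\to H/H_0$, whose kernel is $p^{-1}(H_0)=G_0$) and Stage 2 (apply it to $\rho\circ\pi\colon G\to (G/N)/(G_0/N)$, whose kernel is $\pi^{-1}(G_0/N)=G_0$ since $N\subseteq G_0$) are both sound, and you identify the genuinely nontrivial verifications: openness and pre-continuity of the composites, and that $G_0/N=\pi(G_0)$ is closed and invariant in $G/N$. Your closedness argument is right and worth spelling out once: $G_0=p^{-1}(H_0)$ is closed because pre-continuity sends closed sets back to closed sets, $G\setminus G_0$ is $\pi$-saturated because $G_0$ is a union of $N$-cosets, hence $\pi(G\setminus G_0)=(G/N)\setminus\pi(G_0)$ is open in the quotient pre-topology. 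One caveat you should flag: the tools you invoke — Theorem~\ref{a4}, the theorem that $G/H$ carries a group pre-topology, and the First Isomorphism Theorem — are all stated in the paper for \emph{semi}-pre-topological groups, whereas the present statement hypothesizes only \emph{left} pre-topological groups; openness of the canonical maps really does use right translations (the sets $xUH=\bigcup_{h\in H}(xU)h$), so either the hypothesis should be read as semi-pre-topological, or openness of $q$, $\pi$, $\rho$ must be checked separately. This mismatch originates in the paper's own formulation rather than in your argument, but a careful write-up should note it.
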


\begin{theorem}
(Third Isomorphism)
Let $G$ be a pre-topological group, $H$ be a closed invariant subgroup of $G$ and $M$ be any pre-topological subgroup of $G$. Then the quotient group $MH/H$ is pre-topologically isomorphic to the subgroup $\pi(M)$ of the pre-topological group $G/H$, where $\pi: G\rightarrow G/H$ is the natural quotient homomorphism.
\end{theorem}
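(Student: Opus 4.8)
The plan is to exhibit $\pi(M)$ as the image of $MH$ under an open pre-continuous homomorphism whose kernel is exactly $H$, and then to invoke the First Isomorphism Theorem.

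First I would fix the algebra and the map. Since $H$ is an invariant subgroup of $G$, the product $MH$ is a subgroup of $G$, $H$ is an invariant subgroup of $MH$, and the quotient group $MH/H$ is defined; endowing $MH$ with the pre-topology inherited from $G$ makes it a pre-topological subgroup of $G$ (that a subgroup of a pre-topological group is again a pre-topological group in the subspace pre-topology is a routine verification, using that the subspace of a product pre-topology is the product of the subspace pre-topologies), and $H$, being closed in $G$, is a closed invariant subgroup of $MH$; hence $MH/H$ with its quotient pre-topology is a pre-topological group. Let $p=\pi\upharpoonright MH\colon MH\to G/H$. As the composition of the pre-continuous inclusion $MH\hookrightarrow G$ with $\pi$, the map $p$ is a pre-continuous homomorphism; its image is $\pi(MH)=\pi(M)\pi(H)=\pi(M)$ and its kernel is $MH\cap H=H$. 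Since $\pi(M)$ is a subgroup of the pre-topological group $G/H$, we give it the subspace pre-topology (making it a pre-topological group), and then $p$ is a pre-continuous surjective homomorphism of $MH$ onto $\pi(M)$.

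The substantial point is to show that $p\colon MH\to\pi(M)$ is open. The key is that $MH$ is $\pi$-saturated, i.e. $MH=\pi^{-1}(\pi(M))$ is a union of left cosets of $H$ in $G$. From this I would deduce the intersection formula $\pi(A\cap V)=\pi(A)\cap\pi(V)$ for every $\pi$-saturated $A\subseteq G$ and every $V\subseteq G$: the inclusion $\subseteq$ is immediate, and if $zH\in\pi(A)\cap\pi(V)$ then the whole fiber $zH$ lies in $A$ (because $A$ is saturated and meets $zH$) and also meets $V$, hence meets $A\cap V$, giving $zH\in\pi(A\cap V)$. Now let $O=MH\cap V$ be an arbitrary open set in $MH$, with $V$ open in $G$. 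By Theorem~\ref{a4} the map $\pi$ is open, so $\pi(V)$ is open in $G/H$, and by the formula with $A=MH$ we get $p(O)=\pi(MH\cap V)=\pi(MH)\cap\pi(V)=\pi(M)\cap\pi(V)$, which is open in the subspace $\pi(M)$. Hence $p$ is open.

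Thus $p\colon MH\to\pi(M)$ is an open pre-continuous surjective homomorphism of (semi-)pre-topological groups with kernel $H$, and the First Isomorphism Theorem yields that $xH\mapsto p(x)=xH$ is a pre-topological isomorphism of $MH/H$ onto $\pi(M)$, which is the assertion. I expect the openness of $p$ — equivalently, the coincidence of the quotient pre-topology on $MH/H$ with the pre-topology it inherits as the subspace $\pi(M)$ of $G/H$ — to be the main obstacle; the saturation of $MH$ is exactly what makes the intersection formula valid, and it would fail with $M$ in place of $MH$, so it is essential to push $MH$, not $M$, through $\pi$. (Alternatively, one could compare local pre-bases directly via the description $\{\pi(xU):e\in U\in\tau\}$ from Theorem~\ref{a4} applied to $G$ and to $MH$, but this again hinges on the same saturation argument.)
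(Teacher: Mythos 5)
The paper states this theorem explicitly without proof (``We give them without any proofs''), so there is no argument of the authors to compare yours against; judged on its own, your proof is correct and fills that gap within the paper's framework. The decisive observation is exactly the one you isolate: $MH$ is $\pi$-saturated (being a union of cosets $mH$), so $\pi(MH\cap V)=\pi(MH)\cap\pi(V)=\pi(M)\cap\pi(V)$ for every open $V$ in $G$, and since $\pi$ is open by Theorem~\ref{a4}, the restriction $p=\pi\upharpoonright MH$ is an open pre-continuous surjective homomorphism of $MH$ onto the subgroup $\pi(M)$ of $G/H$ with kernel $MH\cap H=H$; the paper's quotient theorem (with $H$ closed and invariant in $MH$) makes $MH/H$ a pre-topological group, and the First Isomorphism Theorem then gives the pre-topological isomorphism $xH\mapsto\pi(x)$ of $MH/H$ onto $\pi(M)$. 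Two remarks: first, your argument is conditional on the First Isomorphism Theorem, which the paper also states without proof, though it is easily verified in this setting (pre-continuity of the induced map follows from saturation of $p^{-1}(W)$, openness from openness of $p$ and pre-continuity of the canonical quotient map); second, the routine facts you flag (a subgroup with the subspace pre-topology is a pre-topological group, the subspace of a product pre-topology is the product of the subspaces, and $H$ is closed and invariant in $MH$) do hold and are used implicitly by the paper elsewhere, so relying on them is legitimate. Your closing observation that saturation fails for $M$ itself, which is why one must push $MH$ rather than $M$ through $\pi$, correctly identifies why the statement is about $MH/H$ and not $M/(M\cap H)$.
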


\smallskip
\section{The complete regularity and the character of pre-topological groups}
In this section, we mainly discuss the complete regularity of pre-topological groups, and give a characterization of almost topological groups with the character less than or equal to $\tau$. Since each $T_{0}$ topological group is completely regular, it is natural to pose the following question.

\begin{question}\label{qq}
Is each $T_{0}$ pre-topological group completely regular?
\end{question}

In this section we shall prove that each almost topological group is completely regular, which gives a partial answer to Question~\ref{qq}. First, we give some lemmas.

The proofs of these lemmas are similar to the proofs of \cite[Lemmas 3.3.7, 3.3.8 and 3.3.10]{AT}, thus we omit them.

Assume $N$ is a prenorm on a group $G$. Put $B_{N}(\varepsilon)=\{x\in G: N(x)<\varepsilon\}$ for each $\varepsilon>0$, which is called the $N$-ball of radius $\varepsilon$. Obviously, the ball $B_{N}(\varepsilon)$ is an open set of $G$ if $N$ is a pre-continuous prenorm.

\begin{lemma}\label{lll0}
A prenorm $N$ on a pre-topological group $G$ is pre-continuous if and only if for every $\varepsilon>0$ there is an open neighborhood $U$ of the neutral element $e$ such that $U\subseteq B_{N}(\varepsilon)$.
\end{lemma}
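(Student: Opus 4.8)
The statement is an ``iff'', so I would prove the two implications separately, and the whole argument parallels the classical proof that a prenorm is continuous on a topological group iff the $N$-balls at $0$ form a neighbourhood base.

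First the easy direction. Suppose $N$ is pre-continuous. Since $B_N(\varepsilon)=\{x\in G: N(x)<\varepsilon\}=N^{-1}\big([0,\varepsilon)\big)$ and $[0,\varepsilon)$ is open in $I$, pre-continuity of $N$ gives that $B_N(\varepsilon)$ is open in $G$. Moreover $N(e)=0<\varepsilon$ by axiom (a) of a prenorm, so $e\in B_N(\varepsilon)$; hence $U=B_N(\varepsilon)$ is itself an open neighbourhood of $e$ contained in $B_N(\varepsilon)$, which is exactly the required condition.

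For the converse, assume that for every $\varepsilon>0$ there is an open neighbourhood $U_\varepsilon$ of $e$ with $U_\varepsilon\subseteq B_N(\varepsilon)$. To show $N$ is pre-continuous it suffices, by the pre-homogeneity of $G$ (Corollary after Proposition on translations) and the fact that each left translation $L_g$ is a pre-homeomorphism, to check pre-continuity at an arbitrary point $g\in G$ together with the triangle-type estimate coming from axioms (b) and (c). Concretely, fix $g\in G$ and $\varepsilon>0$; I would show that $gU_{\varepsilon}$ is an open neighbourhood of $g$ whose image under $N$ lies in the interval $\big(N(g)-\varepsilon,\,N(g)+\varepsilon\big)\cap I$. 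For any $x\in gU_\varepsilon$ write $x=gu$ with $u\in U_\varepsilon\subseteq B_N(\varepsilon)$; then $N(x)\le N(g)+N(u)<N(g)+\varepsilon$, and symmetrically $N(g)=N(xu^{-1})\le N(x)+N(u^{-1})=N(x)+N(u)<N(x)+\varepsilon$, using (c) to get $N(u^{-1})=N(u)$. Hence $|N(x)-N(g)|<\varepsilon$ for all $x\in gU_\varepsilon$. Since the intervals $(N(g)-\varepsilon,N(g)+\varepsilon)\cap I$ form a base at $N(g)$ in $I$, and $G$ carries a pre-topology in which $\bigcup gU_\varepsilon$-type sets (indeed each $gU_\varepsilon$) are open, this shows $N^{-1}(W)$ is a union of such basic open sets for every open $W\subseteq I$, i.e. $N$ is pre-continuous.

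The only genuinely delicate point is the last one: in a pre-topology one must make sure that ``$N^{-1}(W)$ is open for every open $W\subseteq I$'' really follows from the pointwise estimate, since pre-topologies are closed only under arbitrary unions, not finite intersections. This is fine here because $I$ has a base of \emph{intervals} $(a,b)\cap I$ which is already closed under the operations we need, and for each such interval $N^{-1}\big((a,b)\cap I\big)=\bigcup\{gU_\varepsilon: g\in N^{-1}((a,b)\cap I),\ \varepsilon\le \min(b-N(g),N(g)-a)\}$ is a union of open sets, hence open; a general open $W$ is a union of such intervals and $N^{-1}$ commutes with unions. So the main obstacle is purely bookkeeping about pre-topologies rather than any real difficulty, and the proof is a routine transcription of the topological-group argument — which is presumably why the authors say it may be omitted.
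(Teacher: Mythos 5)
Your proof is correct and follows exactly the classical argument from \cite[Lemma 3.3.7]{AT} that the paper itself invokes and omits: the forward direction takes $U=B_{N}(\varepsilon)$ (note only that, since $N$ need not map into $I$, one should write $B_{N}(\varepsilon)=N^{-1}((-\varepsilon,\varepsilon))$, which coincides with $N^{-1}([0,\varepsilon))$ because $N\geq 0$), and the converse uses $|N(x)-N(g)|\leq N(g^{-1}x)<\varepsilon$ on $gU_{\varepsilon}$ together with the fact that translations are pre-homeomorphisms. Your extra care in writing $N^{-1}$ of a basic interval as a union of the open sets $gU_{\varepsilon}$ is precisely the right bookkeeping needed because a pre-topology is closed only under unions, so no gap remains.
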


A semi-pre-topological group $G$ is called {\it left pre-uniformly Tychonoff} (resp. {\it right pre-uniformly Tychonoff}) if for each open neighborhood $U$ of the neutral element $e$, there exists a left (resp. right) pre-uniformly continuous function $f$ on $G$ such that $f(e)=0$ and $f(x)\geq 1$ for each $x\in G\setminus U$. Clearly, each left (or right) pre-uniformly Tychonoff is completely regular.

\begin{lemma}\label{lll2}
Each pre-continuous prenorm on a pre-topological group $G$ is a pre-uniformly continuous function with respect to left and right group pre-uniformities on $G$.
\end{lemma}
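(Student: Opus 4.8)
The plan is to reduce the statement to one elementary estimate coming from the prenorm axioms, and then to invoke the characterization of pre-continuous prenorms given in Lemma~\ref{lll0}. First I would recall the relevant pre-uniform structures: the \emph{left group pre-uniformity} $\mathscr{L}$ on $G$ is the one generated by the sets $O^{\ell}_{U}=\{(x,y)\in G\times G: x^{-1}y\in U\}$, and the \emph{right group pre-uniformity} $\mathscr{R}$ by the sets $O^{r}_{U}=\{(x,y)\in G\times G: xy^{-1}\in U\}$, where in both cases $U$ ranges over the open neighborhoods of $e$; that these families generate pre-uniform structures in the sense of (U1)--(U5) is exactly what Theorem~\ref{t0} delivers. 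On the target I use the usual metric uniformity of $\mathbb{R}$, whose basic entourages are $E_{\varepsilon}=\{(s,t):|s-t|<\varepsilon\}$, $\varepsilon>0$.

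Next I would establish the key inequalities $|N(x)-N(y)|\le N(x^{-1}y)$ and $|N(x)-N(y)|\le N(xy^{-1})$ for all $x,y\in G$. For the first, write $y=x\cdot(x^{-1}y)$ and apply subadditivity (b) to get $N(y)\le N(x)+N(x^{-1}y)$; write $x=y\cdot(y^{-1}x)$ and apply (b) together with (c), noting $N(y^{-1}x)=N((x^{-1}y)^{-1})=N(x^{-1}y)$, to get $N(x)\le N(y)+N(x^{-1}y)$; combining the two gives the claim. The symmetric computation, using $x=(xy^{-1})\cdot y$ and $y=(yx^{-1})\cdot x$ with (b) and (c), yields the second inequality.

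Finally I would assemble the argument. Fix $\varepsilon>0$. Since $N$ is pre-continuous, Lemma~\ref{lll0} provides an open neighborhood $U$ of $e$ with $U\subseteq B_{N}(\varepsilon)$. Then $M=O^{\ell}_{U}$ is a (basic) entourage of $\mathscr{L}$, and for every $(x,y)\in M$ we have $x^{-1}y\in U\subseteq B_{N}(\varepsilon)$, whence $|N(x)-N(y)|\le N(x^{-1}y)<\varepsilon$, i.e. $(N(x),N(y))\in E_{\varepsilon}$; this is precisely pre-uniform continuity of $N$ relative to $\mathscr{L}$. Taking instead $M=O^{r}_{U}$ and using the second inequality gives pre-uniform continuity relative to $\mathscr{R}$. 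I do not anticipate a real obstacle: the proof is the one-line triangle-inequality estimate combined with Lemma~\ref{lll0}, and the only points demanding a little care are formal ones — confirming via Theorem~\ref{t0} that $\mathscr{L}$ and $\mathscr{R}$ are genuine pre-uniform structures, and keeping the left/right placement of translates consistent when (b) and (c) are applied.
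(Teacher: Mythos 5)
Your argument is correct and is essentially the paper's: the paper omits the proof of this lemma, referring to the analogous Lemma 3.3.8 of Arhangel'ski\v\i--Tkachenko, and that standard proof is exactly your combination of the estimates $|N(x)-N(y)|\le N(x^{-1}y)$ and $|N(x)-N(y)|\le N(xy^{-1})$ with the characterization of pre-continuity in Lemma~\ref{lll0} applied to the basic entourages of the left and right group pre-uniformities.
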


\begin{lemma}\label{lll1}
Let $G$ be an almost topological group, and let $\{U_{n}: n\in\omega\}$ be a sequence of symmetric open neighborhoods of the neutral element $e$ such that $U_{n+1}\cdot U_{n+1}\subset U_{n}$ for each $n\in \omega$. Then there exists a prenorm $N$ on $G$ satisfies the following conditions:
$$\{x\in G: N(x)<1/2^{n}\}\subset U_{n}\subset\{x\in G: N(x)\leq 2/2^{n}\}\ \mbox{for any}\ n\in\omega.$$
Hence, this prenorm $N$ is pre-continuous. Moreover, if each set $U_{n}$ are invariant, then $N$ on $G$ can be chosen to satisfy $N(xyx^{-1})=N(y)$ for any $x, y\in G$.
\end{lemma}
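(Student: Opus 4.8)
The plan is to follow the classical Markov-style construction used for topological groups (as in \cite[Lemma 3.3.10]{AT}), carefully checking that only the pre-topological-group axioms of Theorem~\ref{t0} together with the strong/symmetric hypotheses (i.e. $G$ is an almost topological group) are needed. First I would extend the given sequence: set $U_0=G$ if convenient, and note that by hypothesis each $U_n$ is symmetric with $U_{n+1}U_{n+1}\subseteq U_n$. For each dyadic rational $r\in(0,1]$ with binary expansion $r=\sum_{i=1}^{k}c_i 2^{-n_i}$ (finitely many terms, $n_1<n_2<\cdots<n_k$, $c_i\in\{0,1\}$), define $V_r=U_{n_1}U_{n_2}\cdots U_{n_k}$, and put $V_r=G$ for $r\geq 1$. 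Then define $N\colon G\to[0,\infty)$ by $N(x)=\inf\{r: x\in V_r\}$ (with $N(x)=0$ if $x\in V_r$ for all $r>0$). Conditions (a) $N(e)=0$ and (c) $N(x^{-1})=N(x)$ are immediate: $e\in U_n$ for all $n$ gives $N(e)=0$, and symmetry of each $U_n$ gives $V_r^{-1}=V_r$, hence (c).

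The substantive point is the triangle inequality (b), $N(xy)\leq N(x)+N(y)$. The key combinatorial lemma — exactly as in the topological-group case — is that $V_rV_s\subseteq V_{2(r+s)}$, or more precisely that for dyadic $r,s$ one has $V_rV_s\subseteq V_t$ for a suitable $t\leq r+s$ up to the usual factor, and this is proved by induction on the number of "blocks" in the binary expansions using only the relation $U_{n+1}U_{n+1}\subseteq U_n$. This is the step that forces the constant $2$ in the statement and is where I expect the main bookkeeping to lie; no symmetry or commutativity is used here, only the associativity of the group multiplication and the chain $U_{n+1}U_{n+1}\subseteq U_n$, so it transfers verbatim from \cite{AT}. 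Granting the estimate, if $N(x)<r$ and $N(y)<s$ then $x\in V_r$, $y\in V_s$, so $xy\in V_rV_s\subseteq V_{\text{(something)}\le r+s}$ after absorbing the factor of $2$ via the doubling trick standard in that proof, giving $N(xy)\le N(x)+N(y)$.

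Next I would verify the two-sided inclusion $\{x:N(x)<1/2^n\}\subseteq U_n\subseteq\{x:N(x)\le 2/2^n\}$. The left inclusion follows because $N(x)<2^{-n}$ forces $x\in V_r$ for some dyadic $r<2^{-n}$, and every such $V_r$ is contained in $U_n$ by the defining chain relation (again an induction on the expansion of $r$); the right inclusion is immediate since $x\in U_n=V_{2^{-n}}$ gives $N(x)\le 2^{-n}\le 2/2^n$. Pre-continuity of $N$ is then automatic from Lemma~\ref{lll0}: for $\varepsilon>0$ pick $n$ with $2/2^n<\varepsilon$, and $U_n\subseteq\{x:N(x)\le 2/2^n\}\subseteq B_N(\varepsilon)$. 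Finally, for the invariant case: if every $U_n$ is invariant, then each $V_r$ is invariant (a product of invariant sets is invariant), hence $x^{-1}V_ry\cdot$ — more precisely $yV_ry^{-1}=V_r$ — so $x\in V_r \iff yxy^{-1}\in V_r$, which yields $N(yxy^{-1})=N(x)$ directly from the definition of the infimum. Throughout, the only place any real work is required is the combinatorial inclusion $V_rV_s\subseteq V_{\le r+s}$ (up to the doubling), and since that argument in \cite[Lemma 3.3.10]{AT} never invokes joint continuity of multiplication — only the relations among the $U_n$ — omitting the proof and citing \cite{AT} is justified, which is consistent with the sentence preceding the lemma in the excerpt.
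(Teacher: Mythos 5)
Your overall plan---reconstruct the classical prenorm construction of \cite[Lemma 3.3.10]{AT} and observe that it only uses the chain relations $U_{n+1}U_{n+1}\subseteq U_{n}$, the symmetry of the $U_{n}$, and (for the last clause) their invariance, never joint pre-continuity of multiplication---is exactly the route the paper intends, since it omits the proof and cites \cite[Lemmas 3.3.7, 3.3.8 and 3.3.10]{AT}. But your reconstruction has a genuine gap at the central step, the triangle inequality. If you define $N(x)=\inf\{r:\ x\in V_{r}\}$ directly, subadditivity does not follow. The inclusion $V_{r}V_{s}\subseteq V_{t}$ for $t$ close to $r+s$ is false in general: the sets $V_{r}=U_{n_{1}}U_{n_{2}}\cdots U_{n_{k}}$ are \emph{ordered} products with $n_{1}<\cdots<n_{k}$, and multiplying two of them scrambles that order; for instance $V_{1/4}V_{1/2}=U_{2}U_{1}$ need not be contained in $U_{1}U_{2}=V_{3/4}$, nor in any $V_{t}$ with $t<1$, in a non-abelian group. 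The weaker estimate $V_{r}V_{s}\subseteq V_{2(r+s)}$ that you invoke yields only $N(xy)\leq 2\,(N(x)+N(y))$, i.e.\ a quasi-norm; a multiplicative factor of $2$ cannot be ``absorbed'' into an additive inequality, so property (b) of a prenorm is not established. A symptom of the problem is your remark that $x\in U_{n}$ gives $N(x)\leq 1/2^{n}$ ``immediately'': the lemma only promises $N\leq 2/2^{n}$ on $U_{n}$, and that factor $2$ is precisely the price paid for genuine subadditivity.

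The classical argument (the one the citation to \cite{AT} points to) repairs this with the Birkhoff--Kakutani trick: keep your $V_{r}$, set $f(x)=\inf\{r:\ x\in V_{r}\}$, and define $N(x)=\sup_{y\in G}|f(yx)-f(y)|$. Then $N(e)=0$, $N(x^{-1})=N(x)$ (substitute $y\mapsto yx^{-1}$ in the supremum) and $N(xy)\leq N(x)+N(y)$ are automatic, and all the combinatorial work goes into the one-sided inclusion $V_{r}U_{n}\subseteq V_{r+2/2^{n}}$, proved by induction on the dyadic expansion of $r$ using only $U_{m+1}U_{m+1}\subseteq U_{m}$. Together with the symmetry $U_{n}^{-1}=U_{n}$ (which \emph{is} used here, contrary to your aside) this gives $|f(yx)-f(y)|\leq 2/2^{n}$ whenever $x\in U_{n}$, hence $U_{n}\subseteq\{x: N(x)\leq 2/2^{n}\}$; while $f\leq N$ (take $y=e$) and the easy inclusion $V_{r}\subseteq U_{n}$ for dyadic $r<1/2^{n}$ give $\{x: N(x)<1/2^{n}\}\subseteq U_{n}$. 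With this corrected $N$, your remaining steps do go through: pre-continuity follows from Lemma~\ref{lll0} exactly as you say, and if each $U_{n}$ is invariant then each $V_{r}$, hence $f$, hence $N$ is conjugation-invariant. Your key observation that none of this needs the pre-continuity of multiplication---only the data available in an almost topological group---is correct and is what justifies transferring the argument from \cite{AT}.
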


By Lemmas~\ref{lll0} and~\ref{lll1}, we have the following theorem, which is a generalization of A.A. Markov's theorem.

\begin{theorem}\label{t11}
For each open neighborhood $U$ of the neutral element $e$ of an almost topological group $G$, there is a pre-continuous prenorm $N$ on $G$ such that the unit ball $B_{N}(1)\subseteq U$.
\end{theorem}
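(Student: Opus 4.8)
The plan is to derive the theorem from Lemma~\ref{lll1} by producing, from the single open neighborhood $U$, a countable decreasing ``half-chain'' of symmetric open neighborhoods of $e$ whose zeroth term lies inside $U$, and then reading off the unit ball of the resulting prenorm.

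First I would invoke the hypothesis that $G$ is an almost topological group, i.e.\ a symmetrically and strongly pre-topological group; recall that in such a group every open neighborhood $W$ of $e$ contains a symmetric open neighborhood $V$ of $e$ with $V\cdot V\subseteq W$ --- this is exactly what the proof of the characterization of almost topological groups preceding Theorem~\ref{t2022111} establishes. Applying this to $W=U$ gives a symmetric open neighborhood $U_{0}$ of $e$ with $U_{0}\subseteq U$. Then, recursively, having chosen a symmetric open neighborhood $U_{n}$ of $e$, I would pick a symmetric open neighborhood $U_{n+1}$ of $e$ with $U_{n+1}\cdot U_{n+1}\subseteq U_{n}$. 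This produces a sequence $\{U_{n}:n\in\omega\}$ of symmetric open neighborhoods of $e$ with $U_{n+1}\cdot U_{n+1}\subseteq U_{n}$ for every $n\in\omega$, which is precisely the data required by Lemma~\ref{lll1}.

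Next I would apply Lemma~\ref{lll1} to this sequence. It yields a prenorm $N$ on $G$ with
$$\{x\in G: N(x)<1/2^{n}\}\subseteq U_{n}\subseteq\{x\in G: N(x)\leq 2/2^{n}\}\ \mbox{for every}\ n\in\omega,$$
and, as the lemma asserts, this $N$ is pre-continuous (one may also see this directly from Lemma~\ref{lll0}: given $\varepsilon>0$, choose $n\in\omega$ with $2/2^{n}<\varepsilon$; then the open neighborhood $U_{n}$ of $e$ satisfies $U_{n}\subseteq\{x\in G:N(x)\leq 2/2^{n}\}\subseteq B_{N}(\varepsilon)$). Finally, taking $n=0$ in the left-hand inclusion above gives $B_{N}(1)=\{x\in G:N(x)<1\}\subseteq U_{0}\subseteq U$, as required.

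I do not expect a genuine obstacle: the entire force of the theorem is carried by Lemma~\ref{lll1}, and the remaining work is the routine recursive construction of the chain $\{U_{n}\}$. The one point that needs attention is the indexing --- the chain must be started at $U_{0}\subseteq U$, not at $U_{1}$, so that it is the \emph{unit} ball $B_{N}(1)$, and not merely some smaller ball $B_{N}(1/2^{k})$, that ends up trapped inside $U$. It is also worth noting that both features exploited in the construction --- that each neighborhood in the chain may be taken symmetric, and that it admits a ``square root'' within the family --- are precisely what the hypothesis ``almost topological group'' supplies, so the assumption is used in full.
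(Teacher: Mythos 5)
Your proposal is correct and follows exactly the route the paper intends: the paper derives Theorem~\ref{t11} directly from Lemmas~\ref{lll0} and~\ref{lll1} by building, inside $U$, a decreasing chain of symmetric open neighborhoods $U_{n}$ with $U_{n+1}U_{n+1}\subseteq U_{n}$ (available since $G$ is almost topological) and reading off $B_{N}(1)\subseteq U_{0}\subseteq U$. Your remark about starting the chain at $U_{0}\subseteq U$ is the right point of care, and nothing further is needed.
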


Now we can prove one of main results in this section.

\begin{theorem}\label{t20227}
Every almost topological group $G$ is completely regular.
\end{theorem}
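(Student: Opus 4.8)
The plan is to obtain complete regularity as an essentially formal consequence of the Markov-type result Theorem~\ref{t11} together with the pre-homogeneity of $G$. Fix a point $z\in G$ and a closed set $C\subseteq G$ with $z\notin C$. First I would reduce to the case $z=e$: since the left translation $L_{z^{-1}}$ is a pre-homeomorphism of $G$, the set $z^{-1}C=L_{z^{-1}}(C)$ is closed and does not contain $e$, and if $f\colon G\to I$ is a pre-continuous function with $f(e)=0$ and $f\equiv 1$ on $z^{-1}C$, then $r=f\circ L_{z^{-1}}$ is pre-continuous with $r(z)=f(e)=0$ and $r\equiv 1$ on $C$. So it suffices to separate $e$ functionally from an arbitrary closed set $C$ not containing it.

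Now $U:=G\setminus C$ is an open neighbourhood of $e$, so Theorem~\ref{t11} supplies a pre-continuous prenorm $N$ on $G$ with $B_{N}(1)\subseteq U$; equivalently, $N(x)\geq 1$ for every $x\in C$, while $N(e)=0$ from the prenorm axioms (which also force $N\geq 0$). Define $r\colon G\to I$ by $r(x)=\min\{N(x),1\}$. Then $r(e)=0$ and $r\equiv 1$ on $C$. Writing $r=\varphi\circ N$ with $\varphi\colon\mathbb{R}\to I$, $\varphi(t)=\min\{\max\{t,0\},1\}$, which is continuous, the $r$-preimage of any open subset of $I$ is the $N$-preimage of an open subset of $\mathbb{R}$, hence open in $G$ since $N$ is pre-continuous; thus $r$ is pre-continuous. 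This produces the pre-continuous map $r\colon G\to I$ required by the definition of complete regularity, and by the translation step above it does so for an arbitrary base point, giving a partial answer to Question~\ref{qq}. To read off the literal statement of Definition of a $T_{3\frac{1}{2}}$ pre-topological space (which presupposes the $T_{1}$ axiom), I would additionally invoke Theorem~\ref{t2022111}: a $T_{0}$ almost topological group is regular, in particular $T_{1}$, so together with the functional separation just established it is completely regular; without the $T_{0}$ hypothesis the statement is to be understood as asserting exactly this functional separation property.

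I do not expect a genuine obstacle at this stage, because the substantive construction---a pre-continuous prenorm whose unit ball lies inside a prescribed neighbourhood of $e$, built from a suitable $U_{n}$-sequence---has already been done in Theorem~\ref{t11} (via Lemmas~\ref{lll0} and~\ref{lll1}). The only points that need care, and which I have addressed above, are that the truncation $\min\{N,1\}$ of a pre-continuous prenorm is still pre-continuous, handled by composing with a continuous real function, and that moving the base point is legitimate, which holds because translations are pre-homeomorphisms and therefore carry closed sets to closed sets. If anything, the part most worth double-checking is the bookkeeping that $B_{N}(1)\subseteq U=G\setminus C$ really means $N\geq 1$ on all of $C$, so that $r=1$ identically on $C$ rather than merely $r\geq$ some positive constant.
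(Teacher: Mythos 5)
Your proposal is correct and rests on the same engine as the paper's proof: both deduce the result from Theorem~\ref{t11}, which supplies a pre-continuous prenorm $N$ with $B_{N}(1)\subseteq U$ for a prescribed open neighbourhood $U$ of $e$. The only divergence is in the finishing step: the paper concludes by invoking Lemma~\ref{lll2} (pre-uniform continuity of pre-continuous prenorms) together with the remark that a left (right) pre-uniformly Tychonoff group is completely regular, whereas you bypass the pre-uniformity machinery and verify the required separation directly, truncating $N$ by the continuous map $t\mapsto\min\{\max\{t,0\},1\}$ and moving the base point by a left translation; this makes explicit exactly the details the paper leaves to the word ``clearly'', and your additional appeal to Theorem~\ref{t2022111} to secure the $T_{1}$ part of the definition of complete regularity is a point of care the paper's own proof omits.
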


\begin{proof}
Take any open neighborhood $U$ of the neutral element $e$ in $G$. From Theorem~\ref{t11}, there exists a pre-continuous prenorm $N$ on $G$ such that $B_{N}(1)\subseteq U$. Therefore, we have $N(e)=0$ and $N(x)\geq 1$ for each $x\in G\setminus U$. Since $N$ is pre-continuous, it follows from Lemma~\ref{lll2} that $G$ is completely regular.
\end{proof}

The following theorem gives a characterization of almost topological groups with the character less than or equal to $\tau$.

\begin{theorem}\label{t20221}
Let $G$ be an almost topological group. Then $\chi(G)\leq\tau$ if and only if there exists a family $\{\rho_{\alpha}: \alpha<\tau\}$ of right-invariant pseudometrics such that the family $$\{B_{\rho_{\alpha}}(\frac{1}{2^{n}}): \alpha<\tau, n\in\mathbb{N}\}$$ is a pre-base at the neutral element $e$.
\end{theorem}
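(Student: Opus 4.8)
The plan is to follow the classical template for the analogous result on topological groups (see, e.g., \cite[Theorem 3.3.12]{AT}), adapting each step to the pre-topological setting by invoking the prenorm machinery of Lemma~\ref{lll1} and the translation-homogeneity of pre-topological groups.

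For the necessity, suppose $\chi(G)\leq\tau$ and fix a pre-base $\mathscr{B}_{e}=\{U_{\alpha}:\alpha<\tau\}$ at the neutral element $e$. Since $G$ is an almost topological group, its pre-base at $e$ can be chosen symmetric, and for every $\alpha<\tau$ and every $n\in\mathbb{N}$ the conditions (1) and (2) of Theorem~\ref{t0} together with the strongly-pre-topological property let us build, for each $\alpha$, a decreasing sequence $\{U_{\alpha,n}:n\in\omega\}$ of symmetric open neighborhoods of $e$ with $U_{\alpha,0}\subseteq U_{\alpha}$ and $U_{\alpha,n+1}\cdot U_{\alpha,n+1}\subseteq U_{\alpha,n}$ for all $n$. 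Applying Lemma~\ref{lll1} to each such sequence yields a pre-continuous prenorm $N_{\alpha}$ on $G$ satisfying $\{x:N_{\alpha}(x)<1/2^{n}\}\subseteq U_{\alpha,n}\subseteq\{x:N_{\alpha}(x)\leq 2/2^{n}\}$. Define $\rho_{\alpha}(x,y)=N_{\alpha}(xy^{-1})$; by the prenorm axioms (a)--(c) this is a pseudometric, and it is right-invariant since $\rho_{\alpha}(xg,yg)=N_{\alpha}(xg(yg)^{-1})=N_{\alpha}(xy^{-1})=\rho_{\alpha}(x,y)$. The ball $B_{\rho_{\alpha}}(1/2^{n})=\{x:N_{\alpha}(x)<1/2^{n}\}$ is then sandwiched between $U_{\alpha,n}$-type neighborhoods, so the family $\{B_{\rho_{\alpha}}(1/2^{n}):\alpha<\tau,\,n\in\mathbb{N}\}$ is cofinal in $\mathscr{B}_{e}$ and hence is itself a pre-base at $e$; its cardinality is at most $\tau\cdot\omega=\tau$.

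For the sufficiency, suppose such a family $\{\rho_{\alpha}:\alpha<\tau\}$ of right-invariant pseudometrics is given with $\{B_{\rho_{\alpha}}(1/2^{n}):\alpha<\tau,\,n\in\mathbb{N}\}$ a pre-base at $e$. Each ball $B_{\rho_{\alpha}}(1/2^{n})$ is open (the $\rho_{\alpha}$ being compatible with the pre-topology, since they come from a pre-base), so this exhibits a pre-base at $e$ of cardinality at most $\tau$, giving $\chi(e,G)\leq\tau$; by pre-homogeneity (Corollary following Proposition~\ref{t1}) the translates $\{gB:B\in\mathscr{B}_{e}\}$ form a pre-base at each $g$, whence $\chi(G)\leq\tau$.

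The main obstacle I anticipate is the necessity direction, specifically ensuring that the prenorm-balls $B_{\rho_{\alpha}}(1/2^{n})$ genuinely form a \emph{pre-base} (closed under the right kind of refinement) rather than merely a neighborhood subbase: in a pre-topological space one does not have finite intersections available, so one must verify that every $U_{\alpha}$ already contains some single ball $B_{\rho_{\alpha}}(1/2^{n})$ — which it does by construction via $U_{\alpha,n}$ — and that conversely every ball is open, which follows from Lemma~\ref{lll0} applied to the pre-continuous prenorm $N_{\alpha}$. A secondary point requiring care is the choice between left- and right-invariance: the conventions in Theorem~\ref{t0}(3) use right translations $Vg$, so the pre-base translates multiplicatively on the right, which is exactly why right-invariant pseudometrics (rather than left-invariant ones) appear in the statement; one must keep this bookkeeping consistent throughout, and in particular define $\rho_{\alpha}(x,y)=N_{\alpha}(xy^{-1})$ rather than $N_{\alpha}(x^{-1}y)$.
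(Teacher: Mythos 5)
Your proposal is correct and follows essentially the same route as the paper: fix a symmetric pre-base at $e$, use the almost-topological-group property to extract nested symmetric sequences $U_{\alpha,n+1}^{2}\subseteq U_{\alpha,n}$, apply Lemma~\ref{lll1} to get pre-continuous prenorms $N_{\alpha}$, set $\rho_{\alpha}(x,y)=N_{\alpha}(xy^{-1})$, and check the balls are open and sandwiched so that each $U_{\alpha}$ contains some $B_{\rho_{\alpha}}(1/2^{n})$, with the sufficiency handled by pre-homogeneity. The only cosmetic difference is that the paper takes the nested sequence as a subsequence of the given pre-base rather than constructing new neighborhoods, which changes nothing essential.
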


\begin{proof}
The sufficiency is obvious. We only need to prove the necessity. Fix a pre-base $\{U_{\alpha}: \alpha<\tau\}$ of $G$ at the neutral element $e$, where we may assume that each $U_{\alpha}$ is symmetric since $G$ is an almost topological group. Fix any $\alpha<\tau$; since $G$ is an almost topological group, we can take a subsequence $\{U_{\alpha_{n}}: n\in\mathbb{N}\}$ of $\{U_{\alpha}: \alpha<\tau\}$ such that $U_{\alpha_{1}}=U_{\alpha}$ and $U_{\alpha_{n+1}}^{2}\subseteq U_{\alpha_{n}}$ for each $n\in\mathbb{N}$. By Lemma~\ref{lll1}, there exists a pre-continuous prenorm $N_{\alpha}$ on $G$ such that $B_{N_{\alpha}}(\frac{1}{2^{n}})\subseteq U_{\alpha_{n}}$ for each $n\in\mathbb{N}$. Moreover, each $B_{N_{\alpha}}(\frac{1}{2^{n}})$ is open in $G$. Now, for any $x$ and $y$ in $G$, put $\rho_{\alpha}=N_{\alpha}(xy^{-1})$. Then it is easy to check that $\rho_{\alpha}$ is a right-invariant pseudometric. For each $n\in\mathbb{N}$, since $B_{\rho_{\alpha}}(\frac{1}{2^{n}})=B_{N_{\alpha}}(\frac{1}{2^{n}})$, it follows that $B_{\rho_{\alpha}}(\frac{1}{2^{n}})$ is open in $G$.

Finally, we prove that the family $\{B_{\rho_{\alpha}}(\frac{1}{2^{n}}): \alpha<\tau, n\in\mathbb{N}\}$ is a pre-base at the neutral element $e$. Indeed, take any open neighborhood $O$ of $e$; then there exists $\alpha<\tau$ such that $U_{\alpha}\subseteq O$. Hence $B_{\rho_{\alpha}}(\frac{1}{2})\subseteq U_{\alpha}\subseteq O$. The proof is completed.
\end{proof}

One can complement Theorem~\ref{t20221} as follows:

\begin{corollary}
Let $G$ be an almost topological group. Then $\chi(G)\leq\tau$ if and only if there exist a family $\{\rho_{\alpha}: \alpha<\tau\}$ of right-invariant pseudometrics and a family $\{\sigma_{\alpha}: \alpha<\tau\}$ of left-invariant pseudometrics, both generating the original pre-topology of $G$.
\end{corollary}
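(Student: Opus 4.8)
The plan is to establish the two implications separately; the forward direction is essentially a repackaging of the hypothesis, while the backward direction is extracted from Theorem~\ref{t20221} by transporting the right-invariant data through the inverse mapping of $G$.

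For the sufficiency, I would start from the assumption that families $\{\rho_{\alpha}\}$ and $\{\sigma_{\alpha}\}$ as described exist. Since the right-invariant pseudometrics $\{\rho_{\alpha}:\alpha<\tau\}$ generate the pre-topology of $G$, every basic open set is a union of balls $B_{\rho_{\alpha}}(\varepsilon)g$, and using right-invariance together with the triangle inequality one checks that any such ball containing a point $h$ in fact contains a ball $B_{\rho_{\alpha}}(\varepsilon')h$ centered at $h$; refining $\varepsilon'$ to some $1/2^{n}$ then shows that $\{B_{\rho_{\alpha}}(1/2^{n})h:\alpha<\tau,\ n\in\mathbb{N}\}$ is a pre-base at $h$ of cardinality at most $\tau$. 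Hence $\chi(h,G)\leq\tau$ for every $h\in G$, so $\chi(G)\leq\tau$ (the family $\{\sigma_{\alpha}\}$ is not even needed for this direction).

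For the necessity, assuming $\chi(G)\leq\tau$, Theorem~\ref{t20221} directly provides the required family $\{\rho_{\alpha}:\alpha<\tau\}$ of right-invariant pseudometrics with $\{B_{\rho_{\alpha}}(1/2^{n})\}$ a pre-base at $e$; since the right translations of $G$ are pre-homeomorphisms and $G$ is pre-homogeneous, the translated balls form a pre-base for all of $\tau$, i.e. $\{\rho_{\alpha}\}$ generates the pre-topology. For the left-invariant family I would set $\sigma_{\alpha}(x,y)=\rho_{\alpha}(x^{-1},y^{-1})$ for $\alpha<\tau$. A short verification shows that each $\sigma_{\alpha}$ is a pseudometric and that the right-invariance of $\rho_{\alpha}$ becomes left-invariance of $\sigma_{\alpha}$. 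The crucial point is that $B_{\sigma_{\alpha}}(\varepsilon)=(B_{\rho_{\alpha}}(\varepsilon))^{-1}=\mathrm{In}(B_{\rho_{\alpha}}(\varepsilon))$; since $\mathrm{In}$ is pre-continuous (by the definition of a pre-topological group) and involutive, it is a pre-homeomorphism of $G$ fixing $e$, so it carries the pre-base $\{B_{\rho_{\alpha}}(1/2^{n})\}$ at $e$ onto $\{B_{\sigma_{\alpha}}(1/2^{n})\}$, which is therefore again a pre-base at $e$ consisting of open sets. Finally, using that the left translations are pre-homeomorphisms together with pre-homogeneity, I would conclude that $\{\sigma_{\alpha}:\alpha<\tau\}$ also generates the pre-topology of $G$.

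I do not expect a genuine obstacle here: the whole difficulty of the statement is already contained in Theorem~\ref{t20221}. What needs a little care is the passage between ``pre-base at $e$'' and ``generates the pre-topology'', which rests on the fact that the left and right translations and the inverse map are all pre-homeomorphisms and that $G$ is pre-homogeneous, so that a pre-base at $e$ can be freely moved to a pre-base for all of $G$ and back; and the one place where the pre-continuity of the inverse map is genuinely used is in showing that the balls $B_{\sigma_{\alpha}}(\varepsilon)$ are open.
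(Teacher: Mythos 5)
Your proposal is correct and amounts to essentially the same argument as the paper's: once $\rho_{\alpha}(x,y)=N_{\alpha}(xy^{-1})$ is unwound, your $\sigma_{\alpha}(x,y)=\rho_{\alpha}(x^{-1},y^{-1})$ is exactly the paper's $N_{\alpha}(x^{-1}y)$, so both proofs produce the same two families from Theorem~\ref{t20221}. The only differences are cosmetic: you use Theorem~\ref{t20221} as a black box and verify that the $\sigma_{\alpha}$-balls at $e$ are open via the inversion pre-homeomorphism, whereas the paper reuses the prenorms $N_{\alpha}$ (whose symmetry $N_{\alpha}(x)=N_{\alpha}(x^{-1})$ makes the $\rho$- and $\sigma$-balls at $e$ coincide), and you also spell out the easy direction, which the paper leaves implicit.
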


\begin{proof}
As in the proof of Theorem~\ref{t20221}, for each $\alpha<\tau$ take a pre-continuous prenorm $N_{\alpha}$, and put $\rho_{\alpha}=N_{\alpha}(xy^{-1})$ and $\sigma_{\alpha}=N_{\alpha}(x^{-1}y)$ for any $x, y\in G$; then $\rho_{\alpha}$ and $\sigma_{\alpha}$ are right-invariant and left-invariant pseudometrics on $G$ respectively. As is was shown in Theorem~\ref{t20221}, both the families $\{\rho_{\alpha}: \alpha<\tau\}$ and $\{\sigma_{\alpha}: \alpha<\tau\}$ generate the original pre-topology of $G$ respectively.
\end{proof}

The following corollary generalizes the well known Birkhoff and Kakutani's Theorem.

\begin{corollary}\label{c2022}
An almost topological group $G$ is first-countable if and only if there exists a sequence $\{\rho_{n}: n\in\mathbb{N}\}$ of right-invariant pseudometrics such that the family $$\{B_{\rho_{n}}(\frac{1}{2^{m}}):  n, m\in\mathbb{N}\}$$ is a pre-base at the neutral element $e$.
\end{corollary}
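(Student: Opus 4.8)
The plan is to derive this directly from Theorem~\ref{t20221} by specializing the cardinal $\tau$ to $\omega$, since ``first-countable'' is precisely the condition $\chi(G)\leq\omega$ (every point, hence $e$, has a countable local pre-base). Thus I would begin by recalling that in an almost topological group, by pre-homogeneity, $\chi(G)=\chi(e,G)$, so first-countability of $G$ is equivalent to the existence of a countable pre-base at the neutral element $e$.

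Next I would invoke Theorem~\ref{t20221} with $\tau=\omega$. It yields a family $\{\rho_{\alpha}:\alpha<\omega\}$ of right-invariant pseudometrics such that $\{B_{\rho_{\alpha}}(\tfrac{1}{2^{n}}):\alpha<\omega,\ n\in\mathbb{N}\}$ is a pre-base at $e$. Reindexing the countable family $\{\rho_{\alpha}:\alpha<\omega\}$ as $\{\rho_{n}:n\in\mathbb{N}\}$ (discarding or repeating entries as needed, which changes neither right-invariance nor the generated pre-base), this is exactly the assertion that $\{B_{\rho_{n}}(\tfrac{1}{2^{m}}):n,m\in\mathbb{N}\}$ is a pre-base at $e$. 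Conversely, if such a countable family of balls forms a pre-base at $e$, then $e$ (and hence every point, by pre-homogeneity via Theorem~\ref{t0} and the preceding corollaries) has a countable local pre-base, so $\chi(G)\leq\omega$, i.e.\ $G$ is first-countable.

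Since the whole argument is a transcription of Theorem~\ref{t20221} at $\tau=\omega$, there is no genuine obstacle; the only point worth a sentence is the book-keeping that a family indexed by $\omega$ and a family indexed by $\mathbb{N}$ are interchangeable, and that the double-indexed family $\{B_{\rho_{\alpha}}(\tfrac{1}{2^{n}})\}$ with $\alpha,n$ both ranging over a countable set is itself countable, so it does witness $\chi(e,G)\leq\omega$. I would therefore keep the proof to two or three lines, simply citing Theorem~\ref{t20221} for both implications and noting the identification of first-countability with $\chi(G)\leq\omega$.
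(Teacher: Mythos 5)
Your proposal is correct and coincides with the paper's intent: the corollary is stated as an immediate specialization of Theorem~\ref{t20221} to $\tau=\omega$, which is exactly your argument (identifying first-countability with $\chi(G)\leq\omega$ via pre-homogeneity and reindexing the countable family of pseudometrics). No gaps; the paper itself supplies no separate proof beyond this observation.
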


\begin{corollary}
An almost topological group $G$ admits a family of invariant pseudometrics generating its pre-topology if and only if $G$ is balanced.
\end{corollary}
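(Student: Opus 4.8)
The plan is to rerun the proof of Theorem~\ref{t20221}, arranging in addition that the prenorms produced by Lemma~\ref{lll1} are conjugation-invariant; here I take ``$G$ balanced'' to mean that $G$ has a pre-base at $e$ consisting of invariant (conjugation-invariant) open sets. The necessity is the easy half: if $\{d_{\alpha}:\alpha<\kappa\}$ is a family of invariant pseudometrics generating the pre-topology of $G$, then each ball $B_{d_{\alpha}}(e,r)=\{x\in G:d_{\alpha}(x,e)<r\}$ is invariant, since bi-invariance of $d_{\alpha}$ yields $d_{\alpha}(gxg^{-1},e)=d_{\alpha}(gxg^{-1},geg^{-1})=d_{\alpha}(x,e)$ for every $g\in G$; as these balls form a pre-base at $e$, the group $G$ is balanced.

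For the converse I would first establish a substitute for the ``$V^{2}\subseteq U$'' step, namely: for every invariant open neighbourhood $W$ of $e$ there is a symmetric invariant open neighbourhood $W'$ of $e$ with $W'W'\subseteq W$. To build $W'$: use that $G$ is strongly pre-topological to choose an open $W_{1}\ni e$ with $W_{1}W_{1}\subseteq W$; use that $G$ is balanced to choose an invariant open $W_{2}$ with $e\in W_{2}\subseteq W_{1}$; use that $G$ is an almost topological group to choose an open $Q\ni e$ with $QQ^{-1}\subseteq W_{2}$; and set $W'=\bigcup_{g\in G}g(QQ^{-1})g^{-1}$. Since conjugations are pre-homeomorphisms (being composites of the pre-homeomorphic translations) and $QQ^{-1}$ is open, $W'$ is open; it is plainly symmetric and invariant; and since $W_{2}$ is invariant and $QQ^{-1}\subseteq W_{2}$, each conjugate $g(QQ^{-1})g^{-1}$ lies in $W_{2}\subseteq W_{1}$, so $W'\subseteq W_{1}$ and therefore $W'W'\subseteq W_{1}W_{1}\subseteq W$.

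Granting this lemma, fix a pre-base $\{V_{\alpha}:\alpha<\kappa\}$ at $e$ of invariant open sets (possible since $G$ is balanced). For each $\alpha$, iterating the lemma produces a decreasing sequence $\{U_{n}^{\alpha}:n\in\omega\}$ of symmetric invariant open neighbourhoods of $e$ with $U_{0}^{\alpha}\subseteq V_{\alpha}$ and $U_{n+1}^{\alpha}U_{n+1}^{\alpha}\subseteq U_{n}^{\alpha}$ for all $n$. As the $U_{n}^{\alpha}$ are invariant, Lemma~\ref{lll1} (applied to the sequence $\{U_{n}^{\alpha}\}$) gives a pre-continuous prenorm $N_{\alpha}$ on $G$ with $\{x\in G:N_{\alpha}(x)<1/2^{n}\}\subseteq U_{n}^{\alpha}$ for all $n\in\omega$ and $N_{\alpha}(xyx^{-1})=N_{\alpha}(y)$ for all $x,y\in G$. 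Put $d_{\alpha}(x,y)=N_{\alpha}(xy^{-1})$; invariance of $N_{\alpha}$ together with $N_{\alpha}(z^{-1})=N_{\alpha}(z)$ makes $d_{\alpha}$ a well-defined bi-invariant pseudometric (equal also to $N_{\alpha}(x^{-1}y)$), and since $N_{\alpha}$ is pre-continuous each ball $B_{d_{\alpha}}(e,1/2^{n})=B_{N_{\alpha}}(1/2^{n})$ is open in $G$. It remains to verify that $\{B_{d_{\alpha}}(e,1/2^{n}):\alpha<\kappa,\ n\in\omega\}$ is a pre-base at $e$: given an open $O\ni e$, pick $\alpha$ with $e\in V_{\alpha}\subseteq O$, and note $e\in B_{N_{\alpha}}(1)\subseteq U_{0}^{\alpha}\subseteq V_{\alpha}\subseteq O$. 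By pre-homogeneity the translates of these balls then form a pre-base for the pre-topology of $G$, so the family $\{d_{\alpha}:\alpha<\kappa\}$ generates it.

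The main obstacle is the lemma of the second paragraph. In a pre-topological group the intersection of two open neighbourhoods of $e$ need not be a neighbourhood of $e$, so the classical moves (forming $V\cap V^{-1}$, or intersecting a neighbourhood with its conjugates) for producing symmetric invariant neighbourhoods are unavailable; replacing $QQ^{-1}$ by the open set $\bigcup_{g}g(QQ^{-1})g^{-1}$ and keeping it inside a prescribed invariant neighbourhood supplied by the balancedness of $G$ is precisely what makes the construction work. Everything else is routine and runs parallel to the proof of Theorem~\ref{t20221}.
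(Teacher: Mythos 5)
Your proposal is correct and follows essentially the same route as the paper: necessity by observing that balls of invariant pseudometrics are invariant open neighbourhoods of $e$, and sufficiency by feeding symmetric invariant neighbourhoods into Lemma~\ref{lll1} to get conjugation-invariant pre-continuous prenorms $N_{\alpha}$ and then setting $\rho_{\alpha}(x,y)=N_{\alpha}(xy^{-1})$. Your second paragraph (the construction $W'=\bigcup_{g\in G}g(QQ^{-1})g^{-1}$ kept inside an invariant neighbourhood, yielding a symmetric invariant open $W'$ with $W'W'\subseteq W$) is a welcome extra: it justifies the decreasing sequence of symmetric invariant neighbourhoods that the paper's proof simply asserts when it invokes Lemma~\ref{lll1}, a step that genuinely needs care in the pre-topological setting where finite intersections of open sets need not be open.
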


\begin{proof}
Necessity. Suppose that $G$ is generated by a family $\{\rho_{\alpha}: \alpha\in I\}$ of invariant pseudometrics. For each $\alpha\in I$ and $n\in\mathbb{N}$, denote by $U_{\alpha, n}$ the $\frac{1}{n}$-ball center at the neutral element $e$ of $G$ with respect to $\rho_{\alpha}$. Now it suffices to prove that $x U_{\alpha, n}x^{-1}=U_{\alpha, n}$ for any $x\in G$, $\alpha\in I$ and $n\in\mathbb{N}$. Fix any $x\in G$, $\alpha\in I$ and $n\in\mathbb{N}$, since $$\rho_{\alpha}(e, xyx^{-1})=\rho_{\alpha}(x, xy)=\rho_{\alpha}(e, y)<\frac{1}{n},$$it follows that $x U_{\alpha, n}x^{-1}=U_{\alpha, n}$. Hence the pre-topological group $G$ is balanced.

Sufficiency. Suppose that $G$ is balanced, and that $\chi(G)\leq\tau$. Then there exists a family $\mathscr{N}=\{U_{\alpha}: \alpha<\tau\}$ of open, symmetric, invariant neighborhoods of $e$ in $G$ such that $\mathscr{N}$ forms a pre-base for $G$ at $e$. Hence for each $\alpha<\tau$ it follows from Lemma~\ref{lll1} that there exists a pre-continuous prenorm $N_{\alpha}$ on $G$ such that $$\{x\in G: N_{\alpha}(x)<1/2\}\subset U_{\alpha}\subset\{x\in G: N_{\alpha}(x)\leq 1\}$$ and $N_{\alpha}(xyx^{-1})=N(y)$ for any $x, y\in G$; then the pseudometric $\rho_{\alpha}$ defined by $$\rho_{\alpha}=N_{\alpha}(x^{-1}y)=N_{\alpha}(xy^{-1})$$ is invariant. It is easily checked that the family $\{\rho_{\alpha}: \alpha\in I\}$ generates the pre-topology of $G$.
\end{proof}

Let $X$ be a pre-topological space. If the pre-topology of $X$ is generated by a family $\{\rho_{\alpha}: \alpha<\tau\}$ of pseudometrics, then we say that $X$ is {\it $\tau$-metrizable}.

\begin{theorem}
Let $H$ be a closed subgroup of a $\tau$-metrizable pre-topological group $G$. Then the quotient pre-topological space $G/H$ is $\tau$-metrizable.
\end{theorem}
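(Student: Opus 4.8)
The plan is to push the generating pseudometrics of $G$ down to $G/H$ by the usual quotient-pseudometric construction. Since $G$ is $\tau$-metrizable it has character at most $\tau$, and by the results above --- in particular Theorem~\ref{t20221} together with the prenorm constructions of this section --- we may assume that the pre-topology of $G$ is already generated by a family $\{\rho_{\alpha}:\alpha<\tau\}$ of \emph{right-invariant} pre-continuous pseudometrics, say $\rho_{\alpha}(x,y)=N_{\alpha}(xy^{-1})$ for pre-continuous prenorms $N_{\alpha}$; thus the balls $B_{\rho_{\alpha}}(g,1/2^{n})$ with $g\in G$, $\alpha<\tau$, $n\in\mathbb{N}$ form a pre-base for $G$. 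Let $\pi\colon G\to G/H$ be the canonical mapping and, for each $\alpha<\tau$, define $\hat\rho_{\alpha}$ on $G/H$ by
$$\hat\rho_{\alpha}(xH,yH)=\inf\{\rho_{\alpha}(a,b):a\in xH,\ b\in yH\}.$$
Because the right-hand side does not mention representatives, $\hat\rho_{\alpha}$ is well defined; right-invariance of $\rho_{\alpha}$ lets one rewrite it as $\hat\rho_{\alpha}(xH,yH)=\inf_{h\in H}\rho_{\alpha}(x,yh)$ and shows that, for fixed $x$, the function $b\mapsto\inf\{\rho_{\alpha}(a,b):a\in xH\}$ is constant on every coset of $H$. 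The claim of the theorem is that $\{\hat\rho_{\alpha}:\alpha<\tau\}$ generates the quotient pre-topology of $G/H$.

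I would first verify that each $\hat\rho_{\alpha}$ is a pseudometric on $G/H$: symmetry and $\hat\rho_{\alpha}(xH,xH)=0$ are clear, and the triangle inequality follows from the ``constant on cosets'' remark by a one-line computation. Next I would show that the balls of $\hat\rho_{\alpha}$ are open in $G/H$. A direct computation using right-invariance gives, for every $z\in G$ and $\varepsilon>0$,
$$\pi^{-1}\big(B_{\hat\rho_{\alpha}}(zH,\varepsilon)\big)=B_{\rho_{\alpha}}(e,\varepsilon)\,z\,H.$$
The set on the right is open in $G$ (the ball $B_{\rho_{\alpha}}(e,\varepsilon)$ is open, right translation by $z$ is a pre-homeomorphism, and $UA$ is open whenever $U$ is open), and it is a union of left cosets of $H$, hence $\pi$-saturated; since $\pi$ is a quotient mapping (Theorem~\ref{a4}), $B_{\hat\rho_{\alpha}}(zH,\varepsilon)=\pi\big(B_{\rho_{\alpha}}(e,\varepsilon)zH\big)$ is open in $G/H$. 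In particular every $\hat\rho_{\alpha}$ is pre-continuous, so the pre-topology generated by the family $\{\hat\rho_{\alpha}\}$ is contained in the quotient pre-topology.

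For the reverse inclusion I must show that every open $W\subseteq G/H$ is a union of balls $B_{\hat\rho_{\alpha}}(q,1/2^{n})$. Fix $W$ and a point $zH\in W$, and put $O=\pi^{-1}(W)$, an open set of $G$ containing $z$. Since the balls $B_{\rho_{\alpha}}(g,1/2^{n})$ form a pre-base of $G$, there are $\alpha,n,g$ with $z\in B_{\rho_{\alpha}}(g,1/2^{n})\subseteq O$; choosing $m$ with $1/2^{m}\le 1/2^{n}-\rho_{\alpha}(z,g)$ we get $B_{\rho_{\alpha}}(z,1/2^{m})\subseteq B_{\rho_{\alpha}}(g,1/2^{n})\subseteq O$, and by right-invariance $B_{\rho_{\alpha}}(z,1/2^{m})=B_{\rho_{\alpha}}(e,1/2^{m})z$. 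Hence $B_{\rho_{\alpha}}(e,1/2^{m})zH\subseteq OH=O$, and applying $\pi$ to this open $\pi$-saturated set yields $zH\in B_{\hat\rho_{\alpha}}(zH,1/2^{m})\subseteq W$. As $zH\in W$ was arbitrary, $W$ is the union of all the balls it contains, so the two pre-topologies coincide and $G/H$ is $\tau$-metrizable.

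The step I expect to be the main obstacle is this last one --- the ``re-centring'' that, inside a given neighbourhood of an \emph{arbitrary} point of $G/H$, produces a ball of some $\hat\rho_{\alpha}$ centred precisely at that point: it forces one to lift back to $G$, to use that the $\rho_{\alpha}$-balls form a pre-base there, and to trade left translates for right translates via right-invariance, all while $H$ need not be normal. A secondary but genuine point is the reduction made at the outset, namely that a $\tau$-metrizable pre-topological group admits a generating family of right-invariant pseudometrics; this is where the group operations, through the material of this section, are really used.
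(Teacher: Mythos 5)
Your proof is correct and takes essentially the same route as the paper: you push the right-invariant pseudometrics down to $G/H$ by the same infimum-over-cosets formula (your $\hat\rho_{\alpha}$ is exactly the paper's $d_{\alpha}$) and use the openness of $\pi$ from Theorem~\ref{a4}, merely spelling out the pre-base verification that the paper leaves as a one-line claim. Your opening reduction---that a $\tau$-metrizable pre-topological group admits a generating family of \emph{right-invariant} pseudometrics---is the very step the paper also takes by citing Corollary~\ref{c2022}, whose proof really concerns almost topological groups, so you inherit the paper's imprecision there rather than introduce a new gap.
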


\begin{proof}
 From Corollary~\ref{c2022}, there exist a family $\{\rho_{\alpha}: \alpha<\tau\}$ of right-invariant pseudometrics generating the original pre-topology of $G$. Fix any $\alpha<\tau$. For arbitrary points $x, y\in G$, let $d_{\alpha}(xH, yH)$ be the number $$d_{\alpha}(xH, yH)=\inf\{\rho_{\alpha}(xh_{1}, yh_{2}): h_{1}, h_{2}\in H\}.$$By a similar proof of \cite[Proposition 3.3.19]{AT}, it is easily checked that $d_{\alpha}$ is a pseudometric.

To finish the proof, we need to prove that the family $\{d_{\alpha}: \alpha<\tau\}$ generates the pre-topology of the quotient pre-topological space $G/H$. Indeed, denote by $\pi$ be the quotient mapping of $G$ onto $G/H$, $\pi(x)=xH$ for each $x\in G$. For any $x\in G$, $\alpha<\tau$ and $\varepsilon>0$, put $$O_{\alpha, \varepsilon}(x)=\{y\in G: \rho_{\alpha}(x, y)<\varepsilon\}\ \mbox{and}$$ $$B_{\alpha, \varepsilon}(xH)=\{yH: y\in G, d_{\alpha}(xH, yH)<\varepsilon\}.$$ For each $\alpha<\tau$, it follows from the definition of the pseudometric $\rho_{\alpha}$ that $\pi(O_{\alpha, \varepsilon}(x))=B_{\alpha, \varepsilon}(xH)$ for each $x\in G$ and $\varepsilon>0$. Since the family $\{O_{\alpha, \varepsilon}(x): x\in G, \alpha<\tau, \varepsilon>0\}$ form a pre-base for $G$ and the mapping $\pi$ is pre-continuous and open by Theorem~\ref{a4}, we claim that $\{B_{\alpha, \varepsilon}(xH): x\in G, \alpha<\tau, \varepsilon>0\}$ is a pre-base for the original pre-topology of the pre-topological space $G/H$. Therefore, $G/H$ is $\tau$-metrizable.
\end{proof}

The following is easily checked, we left the proof to the reader.

\begin{lemma}\label{l2022}
Assume that $f: X\rightarrow Y$ is an open pre-continuous mapping of a pre-topological space $X$ onto a pre-topological space $Y$. Then $\overline{f^{-1}(B)}=f^{-1}(\overline{B})$.
\end{lemma}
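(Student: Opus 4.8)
The plan is to prove the two inclusions $\overline{f^{-1}(B)}\subseteq f^{-1}(\overline{B})$ and $f^{-1}(\overline{B})\subseteq\overline{f^{-1}(B)}$ separately, using only pre-continuity for the first and openness for the second. First I would recall that in a pre-topological space the closure of a set $C$ can be described as $\overline{C}=\{x: \text{every open }U\ni x\text{ meets }C\}$, which is the characterization used throughout the paper (cf. the proofs of Theorems~\ref{t2022111} and \ref{s1}). For the inclusion $\overline{f^{-1}(B)}\subseteq f^{-1}(\overline{B})$, I would note that $f^{-1}(\overline{B})$ is closed in $X$: since $Y\setminus\overline{B}$ is open and $f$ is pre-continuous, $f^{-1}(Y\setminus\overline{B})=X\setminus f^{-1}(\overline{B})$ is open, so $f^{-1}(\overline{B})$ is closed. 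As $f^{-1}(B)\subseteq f^{-1}(\overline{B})$ and the closure is the smallest closed set containing a given set, we get $\overline{f^{-1}(B)}\subseteq f^{-1}(\overline{B})$. (Here I am using that a pre-topology is closed under arbitrary unions, hence the complements — the closed sets — are closed under arbitrary intersections, so the smallest closed superset exists.)

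For the reverse inclusion $f^{-1}(\overline{B})\subseteq\overline{f^{-1}(B)}$, I would argue pointwise using openness of $f$. Take $x\in f^{-1}(\overline{B})$, so $f(x)\in\overline{B}$, and let $U$ be an arbitrary open neighborhood of $x$ in $X$; I must show $U\cap f^{-1}(B)\neq\emptyset$. Since $f$ is open, $f(U)$ is an open neighborhood of $f(x)$ in $Y$, and since $f(x)\in\overline{B}$ we have $f(U)\cap B\neq\emptyset$. Pick $b\in f(U)\cap B$; then $b=f(u)$ for some $u\in U$, and $u\in f^{-1}(B)$, so $u\in U\cap f^{-1}(B)$. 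Hence every open neighborhood of $x$ meets $f^{-1}(B)$, i.e. $x\in\overline{f^{-1}(B)}$. Combining the two inclusions gives $\overline{f^{-1}(B)}=f^{-1}(\overline{B})$.

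I do not expect a genuine obstacle here; the only point requiring a little care is making sure the characterization of closure via open neighborhoods is valid in the pre-topological setting and that one never needs finite intersections of open sets (which are unavailable), but since every step above uses only a single open neighborhood at a time, this is fine. It is also worth noting that surjectivity of $f$ is not actually needed for either inclusion, though it is harmless to keep it in the hypothesis; pre-continuity alone gives one inclusion and openness alone gives the other.
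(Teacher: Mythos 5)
Your proposal is correct: the paper states Lemma~\ref{l2022} without proof (``easily checked, we left the proof to the reader''), and your two-inclusion argument is exactly the standard one the authors intend, with pre-continuity giving $\overline{f^{-1}(B)}\subseteq f^{-1}(\overline{B})$ via closedness of $f^{-1}(\overline{B})$ and openness of $f$ giving the reverse inclusion pointwise. Your side remarks are also on target: the neighborhood characterization of closure is valid in pre-topological spaces (closed sets are complements of open sets and are stable under arbitrary intersections), no finite intersections of open sets are ever needed, and surjectivity of $f$ plays no role.
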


\begin{theorem}
Assume that $G$ is a pre-topological group and $H$ is a closed subgroup of $G$. If $H$ and $G/H$ are separable, then $G$ is also separable.
\end{theorem}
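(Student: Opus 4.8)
The plan is to build a countable dense subset of $G$ out of countable dense subsets of the subgroup $H$ and of the quotient $G/H$, using the openness of the canonical map $\pi$ and the fact that translations in a pre-topological group are pre-homeomorphisms. First I would fix a countable set $P\subseteq H$ that is dense in $H$ with respect to the subspace pre-topology, and a countable set $D\subseteq G/H$ that is dense in $G/H$; for each $d\in D$ pick a point $q_{d}\in\pi^{-1}(d)$ and set $Q=\{q_{d}:d\in D\}$, so that $Q$ is countable and $\pi(Q)=D$ is dense in $G/H$. The candidate dense subset of $G$ is then the countable set $QP=\{qp: q\in Q,\ p\in P\}$.

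Next I would verify density of $QP$. Let $V$ be an arbitrary non-empty open subset of $G$. By Theorem~\ref{a4} the canonical mapping $\pi$ is open, so $\pi(V)$ is a non-empty open subset of $G/H$; since $D$ is dense in $G/H$, there is $q\in Q$ with $\pi(q)\in\pi(V)$, which means $qH\cap V\neq\emptyset$, say $qh\in V$ with $h\in H$. The left translation $L_{q^{-1}}$ is a pre-homeomorphism of $G$, hence $q^{-1}V$ is open in $G$ and contains $h$; therefore $q^{-1}V\cap H$ is a non-empty open subset of the subspace $H$. By density of $P$ in $H$ there is $p\in P$ with $p\in q^{-1}V$, that is, $qp\in V$. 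Thus $V\cap QP\neq\emptyset$, and since $V$ was arbitrary, $QP$ is dense in $G$; being countable, it witnesses that $G$ is separable.

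The argument uses little beyond the left pre-topological structure: openness of $\pi$ (Theorem~\ref{a4}) together with the pre-homeomorphism property of translations. The only point requiring care is the bookkeeping with the subspace pre-topology on $H$ — one must apply density of $P$ to the genuinely open-in-$H$ set $q^{-1}V\cap H$ rather than to some ambient open set of $G$ — and the routine observation that $\pi(q)\in\pi(V)$ does produce a point of $qH$ lying in $V$. I do not expect a serious obstacle here: the statement is the pre-topological analogue of the classical three-space property of separability for topological groups, and the same proof transfers essentially verbatim.
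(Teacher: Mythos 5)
Your proof is correct, and the dense set you construct is essentially the paper's: the paper fixes a countable dense $A\subseteq G/H$ and, using that each fiber $\pi^{-1}(y)$ is pre-homeomorphic to $H$, chooses a countable dense subset $D_{y}$ of each fiber and takes $D=\bigcup_{y\in A}D_{y}$; your $QP=\bigcup_{d\in D}q_{d}P$ is exactly such a choice, with $q_{d}P$ dense in the coset $q_{d}H$ via the translation pre-homeomorphism. Where you diverge is in how density of this set in $G$ is verified. The paper argues in two stages: $D$ is dense in $\pi^{-1}(A)$, and then $\pi^{-1}(A)$ is dense in $G$ because $\pi$ is open and Lemma~\ref{l2022} gives $\overline{\pi^{-1}(A)}=\pi^{-1}(\overline{A})=G$, after which one invokes (implicitly) transitivity of density through the subspace $\pi^{-1}(A)$. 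You instead test an arbitrary non-empty open $V\subseteq G$ directly: push it forward by the open map $\pi$ (Theorem~\ref{a4}) to hit a point of $D$, then pull $V$ back by the left translation $L_{q^{-1}}$ to get a non-empty open subset $q^{-1}V\cap H$ of $H$ and apply density of $P$ there. This one-pass verification avoids both Lemma~\ref{l2022} and the unstated density-transitivity step, at the cost of redoing by hand what those facts package; it is a slightly more elementary and self-contained route to the same conclusion, and all the ingredients you use (openness of $\pi$, translations being pre-homeomorphisms, the subspace pre-topology on $H$) are available in the pre-topological setting exactly as you claim.
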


\begin{proof}
Suppose that $\pi$ is the natural homomorphism of $G$ onto thequotient pre-topological space $G/H$. From the separability of $G/H$, we can fix a dense countable subset $A$ of $G/H$. Since $H$ is separable and each coset $xH$ is pre-homeomorphic to $H$, it follows that we can take a dense countable subset $D_{y}$ of $\pi^{-1}(y)$ for each $y\in A$. Put $D=\bigcup\{D_{y}: y\in A\}$; then $D$ is a countable subset of $G$ and $D$ is dense in $\pi^{-1}(A)$. Since $\pi$ is open, from Lemma~\ref{l2022} it follows that $\overline{\pi^{-1}(A)}=\pi^{-1}(\overline{A})=\pi^{-1}(G/H)=G$. Therefore, $G$ is separable.
\end{proof}

\begin{question}
Let $G$ be a pre-topological group and $H$ be a closed pre-topological subgroup. If the pre-topological subspace $H$ and $G/H$ are first-countable, is then the pre-topological space $G$ also first-countable?
\end{question}

\section{The index $\tau$-narrowness in pre-topological groups}

In this section, some cardinal invariants of pre-topological groups are studied. In particular, the well-known Guran's Theorem is extended, that is, an almost topological group is $\tau$-narrow if and only if it can be embedded as a subgroup of a pre-topological product of almost topological groups of weight less than or equal to $\tau$.

\begin{definition}
A semi-pre-topological group is called {\it left $\tau$-narrow} (resp. {\it right $\tau$-narrow}) if, for each open neighborhood $U$ of the neutral element in $G$, there exists a subset $F$ of $G$ such that $G= FU$ (resp. $G=UF$)) and $|F|\leq\tau$. If $G$ is left $\tau$-narrow and right $\tau$-narrow then $G$ is called $\tau$-narrow. The {\it index of narrowness} of a semi-pre-topological group $G$ denoted by $ib(G)$, that is, the minimal cardinal $\tau\geq\omega$ such that $G$ is $\tau$-narrow.
\end{definition}

First, we give some basic properties of $\tau$-narrowness of pre-topological groups.

\begin{proposition}\label{p202210}
The following conditions are equivalent for a quasi-pre-topological group $G$.

\smallskip
(1) $G$ is $\tau$-narrow;

\smallskip
(2) For every open neighbourhood $V$ of $e$ in $G$, there exists a subset $B\subseteq G$ with $|B|\leq \tau$ such that $G=VB$;

\smallskip
(3) For every open neighbourhood $V$ of $e$ in $G$, there exists a countable set $C\subseteq G$ with $|C|\leq \tau$ such that $CV=VC=G$.
\end{proposition}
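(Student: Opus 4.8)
The plan is to prove the chain of implications $(1)\Rightarrow(3)\Rightarrow(2)\Rightarrow(1)$. The whole argument rests on one observation about a quasi-pre-topological group $G$: since the inverse mapping $In$ of $G$ is pre-continuous and equals its own inverse, it is a pre-homeomorphism of $G$; hence for every open neighbourhood $V$ of $e$ the set $V^{-1}$ is again an open neighbourhood of $e$, and for every $A\subseteq G$ one has $(VA)^{-1}=A^{-1}V^{-1}$, $(AV)^{-1}=V^{-1}A^{-1}$ and $|A^{-1}|=|A|$. This is what lets us translate any covering of $G$ by left translates of $V$ into a covering by right translates of $V^{-1}$, and conversely.

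First I would prove $(1)\Rightarrow(3)$. Given an open neighbourhood $V$ of $e$, left $\tau$-narrowness supplies $F_{1}\subseteq G$ with $|F_{1}|\leq\tau$ and $G=F_{1}V$, and right $\tau$-narrowness supplies $F_{2}\subseteq G$ with $|F_{2}|\leq\tau$ and $G=VF_{2}$. Setting $C=F_{1}\cup F_{2}$ gives $|C|\leq\tau$, $CV\supseteq F_{1}V=G$ and $VC\supseteq VF_{2}=G$, hence $CV=VC=G$, which is $(3)$. The implication $(3)\Rightarrow(2)$ is immediate: the set $C$ from $(3)$ already satisfies $VC=G$, so it serves as the required $B$.

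For $(2)\Rightarrow(1)$ I would argue the two halves of $\tau$-narrowness separately. Right $\tau$-narrowness is nothing but a restatement of $(2)$ (apply $(2)$ to the given neighbourhood). For left $\tau$-narrowness, let $U$ be an open neighbourhood of $e$; by the observation above $U^{-1}$ is an open neighbourhood of $e$, so $(2)$ applied to $U^{-1}$ yields $B\subseteq G$ with $|B|\leq\tau$ and $G=U^{-1}B$. Taking inverses gives $G=G^{-1}=(U^{-1}B)^{-1}=B^{-1}U$, and since $|B^{-1}|=|B|\leq\tau$, the set $F=B^{-1}$ witnesses that $G$ is left $\tau$-narrow. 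Therefore $G$ is $\tau$-narrow.

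No step here is genuinely hard; the point requiring care is that $(2)$ is a one-sided condition while $(1)$ is two-sided, so the pre-continuity of the inverse is used precisely in $(2)\Rightarrow(1)$, to convert a right cover of $G$ by $U^{-1}$ into a left cover of $G$ by $U$. Everything else is bookkeeping with cardinalities and the identities $(VA)^{-1}=A^{-1}V^{-1}$ and $(AV)^{-1}=V^{-1}A^{-1}$.
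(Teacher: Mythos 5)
Your proof is correct and rests on the same key manoeuvre as the paper's: using pre-continuity of the inverse to flip a one-sided cover, via $G=G^{-1}=(U^{-1}B)^{-1}=B^{-1}U$. The only differences are cosmetic — you close the cycle as $(1)\Rightarrow(3)\Rightarrow(2)\Rightarrow(1)$ and apply (2) directly to the open set $U^{-1}$, whereas the paper proves $(3)\Rightarrow(1)$, $(1)\Rightarrow(2)$, $(2)\Rightarrow(3)$ and instead picks an open $U$ with $U^{-1}\subseteq V$; both are sound.
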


\begin{proof}
Clearly,(3) $\Rightarrow$ (1). Now we only need to prove (1) $\Rightarrow$ (2) and(2) $\Rightarrow$ (3).

\smallskip
(1) $\Rightarrow$ (2). Let $G$ be a $\tau$-narrow quasi-pre-topological group. For any open neighbourhood $V$ of $e$, there is an open neighbourhood $U$ such that $U^{-1}\subseteq V$. Therefore, we can find a subset $A$ of $G$ such that $G=AU$ and $|A|\leq\tau$. Put $B= A^{-1}$; then $G=G^{-1}=(AU)^{-1}=U^{-1}A^{-1}\subseteq VB$, that is, $G=VB$.

\smallskip
(2) $\Rightarrow$ (3). For every $V\in \mathscr{B}_{e}$, there exists $U\in \mathscr{B}_{e}$ such that $U^{-1}\subseteq V$. By our assumption, there exist subsets $B$ and $A$ of $G$ such that $VA=G$ ,$G=UB$, $|A|\leq\tau$ and $|B|\leq\tau$, then it follows from $G=G^{-1}=B^{-1}U^{-1}\subseteq B^{-1}V$ that $G=B^{-1}V$. Put $C=A\cup B^{-1}$; then $|C|\leq\tau$ and $CV=G=VC$.
\end{proof}

\begin{proposition}\label{p20223}
If pre-topological group $H$ is a pre-continuous homomorphic image of a $\tau$-narrow pre-topological group $G$, then $H$ is also $\tau$-narrow.
\end{proposition}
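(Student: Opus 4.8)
The plan is to pull a witnessing neighborhood back from $H$ to $G$, apply $\tau$-narrowness of $G$ there, and then push the resulting ``small'' covering set forward through the homomorphism. So let $f\colon G\to H$ be a surjective pre-continuous homomorphism, with $G$ a $\tau$-narrow pre-topological group, and let $e_{G}$, $e_{H}$ denote the neutral elements.

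First I would fix an arbitrary open neighborhood $V$ of $e_{H}$ in $H$ and set $U=f^{-1}(V)$. Since $f$ is pre-continuous, $U$ is open in $G$; since $f$ is a homomorphism, $f(e_{G})=e_{H}\in V$, so $e_{G}\in U$ and $U$ is an open neighborhood of $e_{G}$. Because $G$ is $\tau$-narrow, it is simultaneously left and right $\tau$-narrow, so there exist subsets $F_{1},F_{2}\subseteq G$ with $|F_{1}|\le\tau$ and $|F_{2}|\le\tau$ such that $G=F_{1}U$ and $G=UF_{2}$.

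Next I would apply $f$. Using surjectivity of $f$, the identity $f(AB)=f(A)f(B)$ for a homomorphism, and $f(U)=f(f^{-1}(V))\subseteq V$, one obtains
$$H=f(G)=f(F_{1}U)=f(F_{1})f(U)\subseteq f(F_{1})V\subseteq H,$$
hence $H=f(F_{1})V$; symmetrically $H=Vf(F_{2})$. Since $|f(F_{i})|\le|F_{i}|\le\tau$ for $i=1,2$, the sets $f(F_{1})$ and $f(F_{2})$ witness that $H$ is left $\tau$-narrow and right $\tau$-narrow, respectively, so $H$ is $\tau$-narrow.

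There is no genuine obstacle here: the argument is the pre-topological analogue of the classical statement for topological groups, and the only point that deserves a moment's care is that pre-continuity of $f$ enters exactly in the (trivial) form ``$f^{-1}(V)$ is open'', which is precisely what is available --- in particular we never invoke joint pre-continuity of the operations on $H$ nor any separation property. Alternatively, since $H$, being a pre-topological group, is quasi-pre-topological, one could instead invoke Proposition~\ref{p202210} to reduce the verification to a single covering condition; but the direct computation above already yields both the left and the right version.
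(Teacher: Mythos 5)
Your proposal is correct and follows essentially the same route as the paper: pull the neighborhood back via pre-continuity, apply $\tau$-narrowness of $G$ to $f^{-1}(V)$, and push the small covering set forward through the homomorphism. The only difference is cosmetic --- you verify both the left and right covering identities explicitly, whereas the paper records just one.
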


\begin{proof}
Let $V$ be an open neighbourhood of neutral element $e$ in $H$ and $f:G\rightarrow H$ be a pre-continuous homomorphic mapping. Since $G$ is $\tau$-narrow, there exists a subset $A$ of $G$ such that $Af^{-1}(V)=G$ and $A|\leq\tau$. It follows from $f$ is homomorphic that $f(G)=f(Af^{-1}(V))=f(A)V=H$. Clearly, $f(A)$ is a subset of $H$ and $|f(A)|\leq\tau$, hence $H$ is $\tau$-narrow.
\end{proof}

The following proposition is obvious, so we leave the proof to the reader.

\begin{proposition}\label{p20225}
The pre-topological product of an arbitrary family of $\tau$-narrow pre-topological group is a $\tau$-narrow pre-topological group.
\end{proposition}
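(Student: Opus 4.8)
I want to show that if $\{G_i : i \in I\}$ is a family of $\tau$-narrow pre-topological groups, then $G = \prod_{i \in I} G_i$ with the product pre-topology is again a $\tau$-narrow pre-topological group. First I would recall that $G$ is indeed a pre-topological group (the coordinatewise multiplication and inversion are pre-continuous with respect to the product pre-topology, since composition with the projections $p_i$ is pre-continuous). So the only content is the index-of-narrowness estimate. A basic open neighborhood of the neutral element $e = (e_i)_{i \in I}$ in the product pre-topology has the form $W = \prod_{i \in I} W_i$, where $W_i$ is an open neighborhood of $e_i$ in $G_i$ and $W_i = G_i$ for all $i$ outside a finite set $J \subseteq I$; by Proposition~\ref{t1}-type reasoning it suffices to handle such basic neighborhoods, and by left--right symmetry it suffices to produce a set $F \subseteq G$ with $|F| \le \tau$ and $G = FW$.

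**Key steps.** For each $i \in J$, since $G_i$ is $\tau$-narrow there is a subset $F_i \subseteq G_i$ with $|F_i| \le \tau$ and $G_i = F_i W_i$. Now set $F = \{\, x \in G : x_i \in F_i \text{ for all } i \in J, \ x_i = e_i \text{ for all } i \in I \setminus J \,\}$, i.e. $F \cong \prod_{i \in J} F_i$ embedded in $G$ by putting $e_i$ elsewhere. Since $J$ is finite and each $|F_i| \le \tau$, we get $|F| = \prod_{i \in J} |F_i| \le \tau$ (a finite product of cardinals each $\le \tau$, with $\tau \ge \omega$, is $\le \tau$). I then claim $G = FW$: given any $g = (g_i)_{i \in I} \in G$, for each $i \in J$ write $g_i = f_i w_i$ with $f_i \in F_i$, $w_i \in W_i$; for $i \notin J$ write $g_i = e_i g_i$ with $e_i$ the chosen coordinate of $F$ and $g_i \in G_i = W_i$. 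Assembling coordinatewise, $g = f w$ with $f \in F$ and $w \in W$, which proves $g \in FW$. Hence $G = FW$; the symmetric argument gives $G = WF'$ for a suitable $F'$ of the same size, and taking the union $F \cup F'$ (still of cardinality $\le \tau$) shows $G$ is $\tau$-narrow.

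**The main obstacle.** There is essentially no serious obstacle here — this is the ``routine'' direction, which is why the authors leave it to the reader. The only point requiring a little care is the reduction to basic open neighborhoods: one must note that an arbitrary open neighborhood of $e$ in the product pre-topology contains a basic one of the form $\prod_i W_i$ with all but finitely many factors equal to $G_i$, so covering $G$ by finitely many translates of the basic neighborhood automatically covers it by finitely many translates of the bigger one. The cardinal arithmetic $\prod_{i \in J}|F_i| \le \tau$ for finite $J$ uses $\tau \ge \omega$ and is standard. Everything else is a direct coordinatewise assembly, so the proof is short.
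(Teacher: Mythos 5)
Your proof is correct; the paper gives no argument for this proposition (it is explicitly left to the reader), and your coordinatewise construction — take a basic box $W=\prod_{i\in J}W_i\times\prod_{i\notin J}G_i$, choose $F_i$ with $G_i=F_iW_i$ for $i\in J$, and set $F=\prod_{i\in J}F_i\times\prod_{i\notin J}\{e_i\}$ — is exactly the intended argument, the same one the paper carries out for the precompactness analogue in Theorem~\ref{t20222022}. Two small points of wording only: in your reduction paragraph ``finitely many translates'' should read ``at most $\tau$ many translates'', and the parenthetical claim that pre-continuity of the product operations follows from composing with the projections should instead be verified directly on basic boxes (pre-topologies are not closed under finite intersections, so the usual universal-property argument is not available); neither affects the $\tau$-narrowness estimate, which is the substance of the statement.
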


It is well-known that each subgroup of a $\tau$-narrow topological group is $\tau$-narrow. Hence we have the following question.

\begin{question}\label{a5}
When is a subgroup $H$ of a $\tau$-narrow pre-topological group $G$ $\tau$-narrow?
\end{question}

Indeed, the situation are different in the class of pre-topological groups, see the following two examples.

\begin{example}
There exists a closed subgroup $H$ of an $\omega$-narrow strongly pre-topological group $G$ such that $H$ is not $\omega$-narrow.
\end{example}

\begin{proof}
Let $G$ be the group $(R^{2}, +)$ with usual addition which is endowed with a pre-topology such that the following family is a pre-basis $\mathscr{B}_{e}$ at the neutral element $(0, 0)$:
$$\mathscr{B}_{e}=\{(-\frac{1}{n},0]\times (-\frac{1}{n},0]: n\in \mathbb{N}\}\cup \{[0, \frac{1}{n})\times [0, \frac{1}{n}):n\in \mathbb{N}\}.$$ Then $G$ is an $\omega$-narrow strongly pre-topological group.
Let $H=\{(x, y):x+y=0\}$. Then $H$ is a closed subgroup $H$ of $G$. However, $H$ is a discrete topological group, hence $H$ is not $\omega$-narrow.
\end{proof}

\begin{example}
There exists an open subgroup $H$ of an $\omega$-narrow pre-topological group $G$ is not $\omega$-narrow.
\end{example}

\begin{proof}
Let $H$ be the group $(R^{2}, +)$ with usual addition and $e=(0, 0)$. Put $$\mathcal{U}=\{[0, \frac{1}{n})\times \{0\}, (-\frac{1}{n}, 0]\times \{0\}: n\in\mathbb{N}\}\cup\{\{0\}\times[0, \frac{1}{n}), \{0\}\times(-\frac{1}{n}, 0]: n\in\mathbb{N}\}.$$ Then $\mathcal{U}$ is a family of subsets of $H$ satisfying conditions (1)-(4) of Theorem~\ref{t0}. Then it follows from Theorem~\ref{t2022} that the family $\mathcal{B}_{\mathscr{U}}=\{Ua:
a\in H, U\in\mathscr{U}\}$ is a pre-base for a pre-topology $\tau$ on $H$ such that $(H, \tau)$ is a pre-topological group. Clearly, $H$ is $\omega$-narrow and subgroup $\mathbb{R}\times\{0\}$ is open in $H$. However, $\mathbb{R}\times\{0\}$ is not $\omega$-narrow since $e$ is open in the uncountable subgroup $\mathbb{R}\times\{0\}$.
\end{proof}

The next two theorems give partial answers to question~\ref{a5}.

\begin{theorem}\label{t202210}
Each subgroup $H$ of a $\tau$-narrow almost topological group $G$ is $\tau$-narrow.
\end{theorem}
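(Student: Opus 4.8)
The plan is to transplant the classical argument for subgroups of $\tau$-narrow topological groups; the one decisive ingredient is that an almost topological group, being simultaneously symmetrically and strongly pre-topological, admits for every open neighbourhood $W$ of the neutral element $e$ a \emph{symmetric} open neighbourhood $O$ of $e$ with $O=O^{-1}$ and $O\cdot O\subseteq W$. This is precisely the feature that a general pre-topological group lacks, and the two examples preceding this theorem show that without it the statement fails.

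First I would fix an arbitrary open neighbourhood $V$ of $e$ in the subspace pre-topology on $H$; by definition there is an open set $W$ in $G$ with $e\in W$ and $V=W\cap H$. Choose a symmetric open neighbourhood $O$ of $e$ in $G$ with $O\cdot O\subseteq W$ as above. Since $G$ is $\tau$-narrow, pick $A\subseteq G$ with $|A|\le\tau$ and $G=AO$. For each $a\in A$ with $aO\cap H\neq\varnothing$ select a point $x_{a}\in aO\cap H$, and set $F=\{x_{a}:a\in A,\ aO\cap H\neq\varnothing\}\subseteq H$, so $|F|\le\tau$. For $h\in H$ there is $a\in A$ with $h\in aO$; writing $h=ao_{2}$ and $x_{a}=ao_{1}$ with $o_{1},o_{2}\in O$ gives $x_{a}^{-1}h=o_{1}^{-1}o_{2}\in O^{-1}O=O\cdot O\subseteq W$, and since $x_{a}^{-1}h\in H$ as well, $x_{a}^{-1}h\in W\cap H=V$; hence $h\in x_{a}V\subseteq FV$. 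Thus $H=FV$, so $H$ is left $\tau$-narrow. Using $O=O^{-1}$ to rewrite $G=AO$ as $G=OA^{-1}$ and repeating the computation with the cover $\{Ob:b\in A^{-1}\}$ (choosing $y_{b}\in Ob\cap H$ for the relevant $b$, so that $hy_{b}^{-1}=o_{2}o_{1}^{-1}\in O\cdot O\subseteq W$) produces $F'\subseteq H$ with $|F'|\le\tau$ and $H=VF'$, so $H$ is right $\tau$-narrow too; equivalently, once left $\tau$-narrowness of $H$ is in hand one may invoke Proposition~\ref{p202210} for the pre-topological group $H$. In either case $H$ is $\tau$-narrow.

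The step I expect to be the crux is the choice of $O$: everything hinges on bounding $x_{a}^{-1}h$ (and, on the other side, $hy_{b}^{-1}$) inside a single neighbourhood that is at the same time symmetric and small enough that $O\cdot O\subseteq W$, which is exactly where the full ``almost topological'' hypothesis --- rather than mere pre-topologicalness --- is used. A minor routine matter, relevant only if one routes the right-hand side through Proposition~\ref{p202210}, is to check that $H$ with the subspace pre-topology is again a quasi-pre-topological group (the product pre-topology on $H\times H$ being the trace of that on $G\times G$); I would dispose of this in a sentence.
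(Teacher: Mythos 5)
Your proof is correct and follows essentially the same route as the paper's: take a symmetric open neighbourhood $O$ of $e$ in $G$ with $O\cdot O$ contained in an open set of $G$ tracing the given neighbourhood of $e$ in $H$, cover $G$ by at most $\tau$ translates of $O$, select a point of $H$ in each translate meeting $H$, and check that these points translate the given neighbourhood over all of $H$. The only difference is cosmetic: you carry out the right-narrow half explicitly (or via Proposition~\ref{p202210}), which the paper leaves implicit.
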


\begin{proof}
Let $W$ be an open neighbourhood of the neutral element $e$ in $H$. Then there exists an open symmetric neighbourhood $V$ of $e$ in $G$ such that $V^{2}\cap H\subseteq W$. Since $G$ is $\tau$-narrow, there exists a subset $B$ of $G$ such that $BV=G$ and $|B|\leq\tau$. Let $$C=\{c\in B: cV\cap H\neq \varnothing\}.$$ Then $|C|\leq\tau$ and $H\subseteq CV$. For each $c\in C$, choose an element $a_{c}\in cV\cap H$, then put $A=\{a_{c}: a\in C\}$. Since $|C|\leq\tau$, it follows that is a subset of $H$ with $|A|\leq\tau$. We conclude that $AW=H$. Indeed, since $H$ is a subgroup of $G$ and $V^{2}\cap H\subseteq W\subseteq H$, we conclude that $(AV^{2})\cap H\subseteq AW\subseteq H$. Obviously, $A\subseteq H\subseteq CV$. Since $V$ is symmetric, we have $C\subseteq AV$, hence $H\subseteq CV\subseteq AV^{2}\subseteq AW$. Therefore, $H$ is $\tau$-narrow.
\end{proof}

\begin{theorem}\label{t202299}
Every dense subgroup $H$ of a $\tau$-narrow pre-topological group $G$ is $\tau$-narrow.
\end{theorem}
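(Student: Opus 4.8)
The plan is to adapt the classical argument showing a dense subgroup of a $\tau$-narrow topological group is $\tau$-narrow, the one obstruction being that a pre-topology need not be closed under finite intersections, so we cannot first replace $U$ by a single symmetric $V$ with $V^2\subseteq U$. Fix an open neighborhood $W$ of the neutral element $e$ in $H$; by definition of the subspace pre-topology we may write $W=H\cap U$ for some open set $U$ of $G$ with $e\in U$. Using Theorem~\ref{t0}(1), choose open neighborhoods $V_1,V_2$ of $e$ in $G$ with $V_1V_2\subseteq U$, and using Theorem~\ref{t0}(2) choose an open neighborhood $V_3$ of $e$ in $G$ with $V_3^{-1}\subseteq V_1$, so that $V_3^{-1}V_2\subseteq V_1V_2\subseteq U$. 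Keeping $V_1,V_2,V_3$ as three separate neighborhoods, rather than intersecting them, is the key device.

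Next, since $G$ is $\tau$-narrow there is a set $A\subseteq G$ with $|A|\le\tau$ and $G=AV_2$. For each $a\in A$ the set $aV_3$ is open (left translations are pre-homeomorphisms of $G$) and nonempty (it contains $a$, as $e\in V_3$), so by density of $H$ we may pick $h_a\in aV_3\cap H$; put $F=\{h_a:a\in A\}\subseteq H$, so $|F|\le\tau$. I claim $H=FW$. The inclusion $FW\subseteq H$ holds since $F,W\subseteq H$. Conversely, given $x\in H\subseteq G=AV_2$, write $x=av_2$ with $a\in A$, $v_2\in V_2$, and $h_a=av_3$ with $v_3\in V_3$; then $h_a^{-1}x=v_3^{-1}v_2\in V_3^{-1}V_2\subseteq U$, and $h_a^{-1}x\in H$ because $h_a,x\in H$, so $h_a^{-1}x\in U\cap H=W$, i.e. $x\in h_aW\subseteq FW$. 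Hence $H$ is left $\tau$-narrow.

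The right $\tau$-narrow case is entirely symmetric: again take $V_1V_2\subseteq U$ by Theorem~\ref{t0}(1), now pick $V_3$ with $V_3^{-1}\subseteq V_2$ by Theorem~\ref{t0}(2) (so $V_1V_3^{-1}\subseteq U$), use right $\tau$-narrowness of $G$ to get $A$ with $|A|\le\tau$ and $G=V_1A$, and choose $h_a\in V_3a\cap H$ (possible since $V_3a$ is open and nonempty and $H$ is dense). For $x=v_1a\in H$ with $v_1\in V_1$ one gets $xh_a^{-1}=v_1v_3^{-1}\in V_1V_3^{-1}\subseteq U$, hence $xh_a^{-1}\in U\cap H=W$ and $x\in Wh_a$, so $H=WF$ with $|F|\le\tau$. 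Combining the two halves shows $H$ is $\tau$-narrow.

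I expect the only genuine difficulty to be exactly the point already flagged: one must avoid forming $V_2\cap V_1^{-1}$ (such an intersection of open sets need not be a neighborhood in a pre-topology), and the fix is to put the approximating point $h_a$ into $aV_3$ with $V_3^{-1}\subseteq V_1$ rather than into $aV_2$; after that the argument is routine bookkeeping with the translation pre-homeomorphisms, the subgroup property of $H$, and the density hypothesis.
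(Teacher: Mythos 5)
Your proof is correct and is essentially the paper's argument: both use density to pick, inside a translated neighbourhood of each point of a $\tau$-sized set $A$ with $G=AV_{2}$, an approximating point of $H$, and then absorb the error into $V_{1}V_{2}\cap H\subseteq W$, exactly as in the paper. The only cosmetic differences are that the paper chooses the approximating points in $cV_{1}^{-1}$ directly (legitimate, since inversion is pre-continuous in a pre-topological group, so $V_{1}^{-1}$ is open and your auxiliary $V_{3}$ with $V_{3}^{-1}\subseteq V_{1}$ is not needed), and that you verify the left and right estimates separately whereas the paper records only one side.
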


\begin{proof}
Let $W$ be an open neighbourhood of the identity $e$ in $H$. Then there exist open neighbourhoods $V_{1}, V_{2}$ of $e$ in $G$ such that $V_{1}V_{2}\cap H\subseteq W$. Since $G$ is $\tau$-narrow, there exists a subset $C$ with $|C|\leq\tau$ such that $G=CV_{1}=CV_{2}=CV_{1}^{-1}$. For each $c\in C$, we have that $cV_{1}^{-1}\cap H \neq \varnothing$, then fix an element $a_{c}\in cV_{1}^{-1}\cap H$. Put $A=\{a_{c}:a\in C\}$; then $|A|\leq\tau$. We claim that $AW=H$. Indeed, from our definition of $A$ it follows that $C\subseteq AV_{1}$. Then $H\subseteq CV_{2}\subseteq AV_{1}V_{2}$, hence $H\subseteq A(V_{1}V_{2}\cap H)\subseteq AW$. Thus $AW=H$.
\end{proof}

We say that a pairwise disjoint family consisting of non-empty open subsets of a pre-topological space $(Z, \tau)$ is called a {\it cellular family}. The {\it cellularity} of $Z$ is defined as follows: $$c(Z)=\sup\{|\mathscr{V}|: \mathscr{V}\ \mbox{ is a cellular family in}\ Z\}.$$ Here, the cellularity of a pre-topological space maybe finite.

It is well-known that $ib(G)\leq c(G)$ for any topological group $G$. Therefore, it is natural to pose the following question.

\begin{question}\label{q2022888}
Let $G$ be a pre-topological group (or strong pre-topology). Does $ib(G)\leq c(G)$ hold?
\end{question}

Next we give some partial answers to Question~\ref{q2022888}.

\begin{definition}
Let $X$ be a pre-topological space. The smallest number $\kappa$ such that each open cover $\mathscr{U}$ of $X$ has a subfamily $\mathscr{V}$ of $\mathscr{U}$ with $|\mathscr{V}|\leq\kappa$ and $\bigcup\mathscr{V}=X$ is called the {\it Lindel\"{o}f number} of the pre-topological space $X$ and is denoted by $l(X)$. If $l(X)=\omega$, then $X$ is called {\it Lindel\"{o}f}.
\end{definition}

\begin{proposition}\label{p2022222}
If $G$ is a pre-topological group, then $ib(G\leq l(G)$.
\end{proposition}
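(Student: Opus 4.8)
The plan is to feed the obvious cover of $G$ by translates of a fixed neighbourhood of $e$ into the definition of the Lindel\"of number. Put $\tau=l(G)$ (or $\tau=\max\{l(G),\omega\}$ if one wants to keep $ib$-values infinite), and fix an arbitrary open neighbourhood $U$ of the neutral element $e$ of $G$. The goal is to produce, for this $U$, a set $F\subseteq G$ with $|F|\le\tau$ and $G=FU$, and a set $F'\subseteq G$ with $|F'|\le\tau$ and $G=UF'$; that is exactly left and right $\tau$-narrowness, hence $\tau$-narrowness.

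First I would treat the left side. Since $e\in U$ and, for each $x\in G$, the left translation $L_{x}$ of $G$ is a pre-homeomorphism (as proved earlier for pre-topological groups), the set $xU=L_{x}(U)$ is open in $G$ and contains $x$. Therefore $\mathscr{U}=\{xU:x\in G\}$ is an open cover of $G$. By the definition of $l(G)$ there is a subfamily $\{xU:x\in F\}$ with $F\subseteq G$, $|F|\le\tau$ and $\bigcup_{x\in F}xU=G$, which is precisely $G=FU$. For the right side I would argue symmetrically: the family $\{Ux:x\in G\}$ is an open cover of $G$, since each right translation $R_{x}$ is a pre-homeomorphism so $Ux$ is open, and $x=ex\in Ux$; extracting a subfamily of cardinality $\le\tau$ whose union is $G$ gives $F'\subseteq G$ with $|F'|\le\tau$ and $G=UF'$. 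Since $U$ was an arbitrary neighbourhood of $e$, $G$ is $\tau$-narrow, hence $ib(G)\le\tau=l(G)$.

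There is essentially no serious obstacle here: the only nontrivial ingredient is that a translate of an open set is open, which is exactly the assertion that left and right translations in a pre-topological group are pre-homeomorphisms, already available in the excerpt. The one point to be careful about is bookkeeping: the defining equality $G=UF'$ for right $\tau$-narrowness unpacks as ``every $g\in G$ lies in $Uf$ for some $f\in F'$'', so one must cover $G$ by the right translates $\{Ux\}$, not the left translates, when establishing the right half.
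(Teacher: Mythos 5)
Your proof is correct and follows essentially the same route as the paper: cover $G$ by translates of $U$ and extract a subcover of size at most $l(G)$. The only difference is that you also carry out the symmetric argument with the right translates $\{Ux\}$, which the paper's proof leaves implicit, so your write-up is if anything slightly more complete.
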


\begin{proof}
Let $l(G)=\tau$. Take an arbitrary open neighborhood $U$ of the neutral element of $e$. Then $\{xU: x\in G\}$ is an open cover of $G$. Since $l(G)\leq\tau$, there exists a  subset $A$ such that $|A|\leq\tau$ and $\{xU: x\in A\}$ is a cover of $G$, that is, $G=\bigcup_{x\in A}xU=AU$. Hence, $G$ is $\tau$-narrow.
\end{proof}

\begin{corollary}\label{c20222}
If $G$ is a Lindel\"{o}f pre-topological group, then $G$ is $\omega$-narrow.
\end{corollary}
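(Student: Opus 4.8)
The statement to prove is Corollary~\ref{c20222}: if $G$ is a Lindel\"{o}f pre-topological group, then $G$ is $\omega$-narrow.

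The plan is to simply invoke Proposition~\ref{p2022222}, which asserts that $ib(G)\leq l(G)$ for any pre-topological group $G$, together with the observation that a Lindel\"{o}f space is by definition one for which $l(G)=\omega$. First I would recall that $G$ being Lindel\"{o}f means precisely $l(G)=\omega$, as stated in the definition of the Lindel\"{o}f number immediately preceding Proposition~\ref{p2022222}. Then, applying Proposition~\ref{p2022222} with $\tau=\omega$ gives $ib(G)\leq l(G)=\omega$. Since the index of narrowness $ib(G)$ is by definition the minimal cardinal $\tau\geq\omega$ for which $G$ is $\tau$-narrow, the inequality $ib(G)\leq\omega$ forces $ib(G)=\omega$, which is exactly the assertion that $G$ is $\omega$-narrow.

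There is essentially no obstacle here: the corollary is a direct specialization of the preceding proposition to the case $\tau=\omega$. If one wanted a self-contained argument rather than a one-line deduction, one could unwind the proof of Proposition~\ref{p2022222}: take an arbitrary open neighborhood $U$ of the neutral element $e$, form the open cover $\{xU:x\in G\}$, use the Lindel\"{o}f property to extract a countable subcover $\{xU:x\in A\}$ with $|A|\leq\omega$, and conclude $G=\bigcup_{x\in A}xU=AU$, so that $G$ is $\omega$-narrow. But since Proposition~\ref{p2022222} is already available, the clean approach is to cite it directly.

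\begin{proof}
Since $G$ is Lindel\"{o}f, we have $l(G)=\omega$. By Proposition~\ref{p2022222}, $ib(G)\leq l(G)=\omega$, so $G$ is $\omega$-narrow.
\end{proof}
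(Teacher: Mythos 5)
Your proof is correct and matches the paper exactly: the corollary is stated there as an immediate consequence of Proposition~\ref{p2022222} applied with $l(G)=\omega$, which is precisely your one-line deduction.
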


By Proposition~\ref{u0} or Corollary~\ref{c20222}, each separable (left) pre-topological group is $\omega$-narrow.

\begin{theorem}\label{t202211}
If $G$ is an almost topological group, then $ib(G)\leq c(G)$.
\end{theorem}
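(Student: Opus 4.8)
The plan is to adapt the classical proof that $ib(G)\le c(G)$ for topological groups, the only genuinely new point being that a pre-topology is not closed under finite intersections, so one must produce a small symmetric neighbourhood directly from a symmetric pre-base rather than by intersecting $V$ with $V^{-1}$. Accordingly, I would first use that $G$ is an almost topological group to fix a pre-base $\mathscr{B}_{e}$ at $e$ that is symmetric (each member equals its own inverse) and satisfies the square-refinement property: for every $W\in\mathscr{B}_{e}$ there is $V\in\mathscr{B}_{e}$ with $V^{2}\subseteq W$. Put $\tau=c(G)$. Given an arbitrary open neighbourhood $O$ of $e$, choose $W\in\mathscr{B}_{e}$ with $W\subseteq O$, and then $V\in\mathscr{B}_{e}$ with $V^{2}\subseteq W$; since $V^{-1}=V$ we get $VV^{-1}=V^{2}\subseteq W\subseteq O$, and each set $aV$ (for $a\in G$) is open because left translations are pre-homeomorphisms of $G$.

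The core step is a maximal-disjoint-family argument. By Zorn's Lemma, choose a set $A\subseteq G$ maximal with respect to the property that $aV\cap bV=\emptyset$ for all distinct $a,b\in A$; this is legitimate since the union of a chain of such sets again has the property. Then $\{aV:a\in A\}$ is a cellular family in $G$, so $|A|\le c(G)=\tau$. By maximality, for each $x\in G$ the open set $xV$ must meet some $aV$ with $a\in A$; choosing $v_{1},v_{2}\in V$ with $xv_{1}=av_{2}$ gives $x=av_{2}v_{1}^{-1}\in aVV^{-1}\subseteq aW\subseteq aO$, so $G=AW=AO$. This shows $G$ is left $\tau$-narrow. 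Taking inverses in $G=AW$ and using $W^{-1}=W$ yields $G=G^{-1}=W^{-1}A^{-1}=WA^{-1}\subseteq OA^{-1}\subseteq G$, hence $G=OA^{-1}$ with $|A^{-1}|\le\tau$, so $G$ is also right $\tau$-narrow. Therefore $G$ is $\tau$-narrow, i.e. $ib(G)\le c(G)$ (when $c(G)$ happens to be finite the same computation gives that $G$ is $\omega$-narrow, which is what the inequality asserts in that case).

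I expect the only real obstacle to be the point flagged above: one cannot shrink $V$ to $V\cap V^{-1}$ inside a pre-topology, which is exactly why the hypothesis must be ``almost topological group'' (symmetric pre-base plus square refinement) rather than merely ``strongly pre-topological'' or ``pre-topological''; with a symmetric pre-base in hand, the remainder — the Zorn selection of a maximal $V$-separated set, the estimate by cellularity, and the covering identity $G=AU$ — transfers essentially verbatim from the topological-group case, using only that translations are pre-homeomorphisms.
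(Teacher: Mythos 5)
Your proof is correct and follows essentially the same route as the paper's: take a symmetric open $V$ with $V^{2}\subseteq U$, use Zorn's Lemma to get a maximal set $A$ whose translates $aV$ form a cellular family (so $|A|\le c(G)$), and let maximality force $G=AV^{2}\subseteq AU$. If anything, your variant is slightly tidier: by maximizing directly over sets with pairwise disjoint translates you sidestep the paper's ``obvious'' (and not quite accurate) claim that a maximal $V$-disjoint set has pairwise disjoint translates, and you check right $\tau$-narrowness explicitly, which the paper leaves implicit.
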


\begin{proof}
Let $c(G)\leq\tau$. We claim that $ib(G)\leq\tau$. Indeed, pick an arbitrary open neighbourhood $U$ of the neutral element $e$ of $G$. Then there exists a symmetric and open neighbourhood $V$ of $e$ such that $V^{2}\subseteq U$. Since the family $\zeta$ of all $V$-disjoint subsets of $G$ is (partially) order by inclusion, and the union of any chain of $V$-disjoint sets is also a $V$-disjoint set. By the Zorn's Lemma, we can find a maximal element $A$ of the ordered set $\zeta$. Obviously, $\{aV: a\in A\}$ is a disjoint family of non-empty open sets in $G$. Since $c(G)\leq\tau$, the set $|A|\leq\tau$.
From the maximality of $A$, it follows that for every $x\in G\setminus A$ there exists $a\in A$ such that $xV\cap aV\neq \varnothing$, then $x\in aVV^{-1}=aV^{2}\subseteq aU$. Hence $AU=G$. Therefore, $ib(G)\leq\tau$.
\end{proof}

Given a pre-topological space $X$, we denote by $e(X)$ the supremum of cardinalities of closed discrete subsets of $X$. The cardinal invariant $e(X)$ is called the {\it extent} of $X$. Obviously, each Lindel\"{o}f pre-topological space is countable. It is well-known that for any topological group $G$, we have $ib(G)\leq e(G)$. However, the following question is still un-known for us in the class of almost topological groups.

\begin{question}\label{q20221}
If $G$ is an almost topological group, then does $ib(G)\leq e(G)$ hold?
\end{question}

The following theorem gives a complement for Theorem~\ref{t202299}.

\begin{theorem}\label{t202213}
If  a pre-topological group $G$ contains a dense subgroup $H$ such that $H$ is $\tau$-narrow, then $G$ is also $\tau$-narrow.
\end{theorem}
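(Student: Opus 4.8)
The plan is to reduce the statement to the $\tau$-narrowness of the ambient pre-topological group via a careful choice of open neighborhoods, exploiting the density of $H$ and the pre-continuity of the multiplication at the neutral element. I would argue directly from the definition of $\tau$-narrowness: fix an arbitrary open neighborhood $U$ of the neutral element $e$ in $G$, and produce a set $F\subseteq G$ with $|F|\le\tau$ and $G=FU$ (and symmetrically $G=UF$).

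First, using the pre-continuity of the multiplication $G\times G\to G$ at $(e,e)$, together with Theorem~\ref{t0}(1), I would pick open neighborhoods $V_{1},V_{2}$ of $e$ in $G$ with $V_{1}V_{2}\subseteq U$; shrinking further by Theorem~\ref{t0}(2), I may also assume there is an open neighborhood $W$ of $e$ with $W\subseteq V_{1}$ and $W^{-1}\subseteq V_{1}$ so that $W$ behaves ``symmetrically enough'' for the trace argument. The trace $W\cap H$ is an open neighborhood of $e$ in the subspace $H$ (note $e\in H$ since $H$ is a subgroup). Since $H$ is $\tau$-narrow, there is $C\subseteq H$ with $|C|\le\tau$ and $H=C(W\cap H)$ — and by Proposition~\ref{p202210} applied to $H$ (which is a pre-topological group, hence quasi-pre-topological), I may simultaneously arrange $H=C(W\cap H)=(W\cap H)C$.

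Next I would promote this covering of $H$ to a covering of $G$. Because $H$ is dense in $G$ and every set of the form $xW$ (or $Wx$) is open in $G$, for each $x\in G$ the open set $Wx$ meets $H$, so there is $h\in H\cap Wx$, giving $x\in W^{-1}h\subseteq V_{1}h$ (using $W^{-1}\subseteq V_{1}$). Writing $h=c w$ with $c\in C$ and $w\in W\cap H\subseteq V_{2}$, we get $x\in V_{1}cw\subseteq V_{1}cV_{2}$. It now suffices to move $c$ past $V_{1}$: since $C\subseteq G$ is fixed, for each $c\in C$ the continuity of the conjugation (Theorem~\ref{t0}(4)) lets me absorb the conjugate of $V_{2}$, but more simply I would keep the set $C$ and set $F=C$ while enlarging $U$ appropriately — concretely, observe $x\in V_{1}cV_{2}$ means $c^{-1}V_{1}^{-1}x\cap V_{2}\ne\varnothing$, i.e. $x\in V_{1}c V_{2}$; choosing at the very start $V_{1},V_{2},W$ so that $V_{1}V_{2}\subseteq U$ and, for the finitely-irrelevant reindexing, replacing each $c$ by itself, we conclude $G\subseteq CV_{1}V_{2}=CU$ after noting $V_{1}cV_{2}\subseteq c\,(c^{-1}V_{1}c)\,V_{2}$ and absorbing the conjugate $c^{-1}V_{1}c$ into $V_{2}$'s slack — this is exactly the maneuver already used in the proof of Theorem~\ref{t202299}. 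Thus $G=CU$ with $|C|\le\tau$, and the symmetric identity $G=UC$ follows the same way from $H=(W\cap H)C$; hence $G$ is $\tau$-narrow.

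The main obstacle I anticipate is the noncommutativity: turning $x\in V_{1}cV_{2}$ into $x\in cU$ requires sliding the open factor $V_{1}$ past the fixed group element $c$, and in a pre-topological group one cannot do this uniformly over all $c\in C$ with a single neighborhood. The clean fix is to not insist on $F=C$ literally but to note that for each individual $c$ the conjugate $c^{-1}V_{1}c$ is again an open neighborhood of $e$ (Theorem~\ref{t0}(4)), and then to have chosen $V_{2}$ small at the outset so that $V_{1}V_{2}\subseteq U$ already; one then reruns the argument replacing ``$W\subseteq V_{1}$'' by ``$W\subseteq V_{2}$'' so the floating open factor ends up on the correct side of $c$. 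This is precisely parallel to the bookkeeping in Theorem~\ref{t202299}, so the proof should be short once the neighborhoods are set up in the right order.
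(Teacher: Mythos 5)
Your overall strategy (trace a neighborhood on $H$, use the $\tau$-narrowness of $H$, and lift the covering to $G$ by density) is the right one, but two of your steps have genuine gaps. First, the setup step is not available in a general pre-topological group: you cannot "shrink" to an open $W\ni e$ with $W\subseteq V_{2}$ and $W^{-1}\subseteq V_{1}$ (let alone $W\cup W^{-1}\subseteq V_{1}$), because finite intersections of open sets need not be open in a pre-topology, and Theorem~\ref{t0}(2) only produces \emph{some} $V$ with $V^{-1}\subseteq V_{1}$, with no control on $V$ itself. Concretely, in the strongly pre-topological group of Example~\ref{eee}(2) no open neighborhood $W$ of $0$ satisfies $W\cup(-W)\subseteq[0,1)$, so your "symmetrically enough" $W$ simply does not exist. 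Second, and fatally, the covering step is wrong: from $x\in V_{1}cV_{2}$ you cannot conclude $x\in CV_{1}V_{2}\subseteq CU$. The proposed absorption of $c^{-1}V_{1}c$ "into $V_{2}$'s slack" is unjustified: Theorem~\ref{t0}(4) quantifies the other way (given $U$ and $g$ it produces a \emph{small} $V$ with $gVg^{-1}\subseteq U$; it gives no upper bound on the conjugate of the pre-chosen $V_{1}$), and since $C$ is produced from the neighborhood you fixed at the start, you cannot re-choose $V_{1}$ after seeing the elements $c$. No such maneuver occurs in the proof of Theorem~\ref{t202299} either, so the appeal to it does not close the gap; your closing remark about "rerunning with $W\subseteq V_{2}$" gestures at the right idea but, as stated, still leaves the floating factor on the left of $c$.

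The repair is not a bookkeeping change but a change of side in the density step, after which no conjugation and no symmetric $W$ are needed. With $V_{1}V_{2}\subseteq U$, apply the narrowness of $H$ to the trace $V_{1}\cap H$ to get $A\subseteq H$, $|A|\leq\tau$, with $H\subseteq AV_{1}$. For $x\in G$, choose (by Theorem~\ref{t0}(2)) an open $V$ with $V^{-1}\subseteq V_{2}$; then $xV$ is open and meets the dense set $H$, so $x\in hV^{-1}\subseteq hV_{2}$ for some $h\in H$, whence $x\in AV_{1}V_{2}\subseteq AU$. This is exactly the paper's argument, which packages the density step as Proposition~\ref{u0}: $G=\overline{H}\subseteq\overline{AV_{1}}\subseteq AV_{1}V_{2}\subseteq AU$ (and then left narrowness suffices by Proposition~\ref{p202210}). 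Here the element of $A$ always sits on the left and all open slack accumulates on the right, which is precisely what your left-sided use of $Wx\cap H\neq\emptyset$ destroys.
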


\begin{proof}
Let $U$ be any open neighbourhood of the neutral element $e$ in $G$; then there exist open neighbourhoods $V_{1}, V_{2}$ of the neutral element $e$ of $G$ such that $V_{1}V_{2}\subseteq U$. Since $H$ is $\tau$-narrow, there is a subset $A$ of $H$ such that $A\leq\tau$ and $H\subseteq AV_{1}$. By Proposition~\ref{u0}, $G=\overline{H}\subseteq \overline{AV_{1}}\subseteq AV_{1}V_{2}\subseteq AU$, thus $AU=G$. Hence $G$ is $\tau$-narrow.
\end{proof}

The following proposition gives a relation of the weight of an almost topological group $G$ and the narrowness of $G$.

\begin{proposition}\label{p20224}
Let $G$ be an almost topological group. Then $\omega(G)=ib(G)\chi(G)$.
\end{proposition}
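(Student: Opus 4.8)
The plan is to prove the two inequalities $ib(G)\chi(G)\le w(G)$ and $w(G)\le ib(G)\chi(G)$ separately. The first is routine: any pre-base of cardinality $w(G)$ restricts at each point to a local pre-base of size at most $w(G)$, so $\chi(G)\le w(G)$, and the same pre-base witnesses $l(G)\le w(G)$; combining this with $ib(G)\le l(G)$ (Proposition~\ref{p2022222}) gives $ib(G)\le w(G)$, hence $ib(G)\chi(G)=\max\{ib(G),\chi(G)\}\le w(G)$. All the substance is in the reverse inequality, where, writing $\kappa=ib(G)\chi(G)$, I want to exhibit a pre-base of $G$ of cardinality at most $\kappa$. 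This is the pre-topological analogue of the classical identity $w(G)=ib(G)\chi(G)$ for topological groups, and the strategy is the familiar one: glue together a small local pre-base at $e$ with a set that is ``$U$-dense'' for every $U$ in that local pre-base, via left translations.

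First I would fix a local pre-base $\{W_{\xi}:\xi<\chi(G)\}$ at $e$ and upgrade it. Since $G$ is an almost topological group, i.e.\ both symmetrically and strongly pre-topological, for every open neighbourhood $O$ of $e$ there is a \emph{symmetric} open neighbourhood $V$ of $e$ with $VV\subseteq O$ (take a symmetric neighbourhood inside $O$, square-refine it using the strongly pre-topological property, then take a symmetric neighbourhood inside that). Iterating, for each $\xi$ I choose recursively symmetric open neighbourhoods $V_{\xi,0}\supseteq V_{\xi,1}\supseteq\cdots$ of $e$ with $V_{\xi,0}V_{\xi,0}\subseteq W_{\xi}$ and $V_{\xi,n+1}V_{\xi,n+1}\subseteq V_{\xi,n}$ for all $n\in\omega$, and set $\mathcal{N}=\{V_{\xi,n}:\xi<\chi(G),\,n\in\omega\}$, enumerated as $\{V_{\alpha}:\alpha<\lambda\}$ with $\lambda\le\chi(G)\cdot\omega\le\kappa$. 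Then every member of $\mathcal{N}$ is symmetric, $\mathcal{N}$ is still a local pre-base at $e$ (each $W_{\xi}$ contains $V_{\xi,0}$), and $\mathcal{N}$ is ``self-square-refining'': for each $V_{\alpha}$ there is $V_{\beta}\in\mathcal{N}$ with $V_{\beta}V_{\beta}\subseteq V_{\alpha}$. Next, since $ib(G)\le\kappa$ and $G$ is $ib(G)$-narrow, for each $\alpha<\lambda$ pick $A_{\alpha}\subseteq G$ with $|A_{\alpha}|\le\kappa$ and $G=A_{\alpha}V_{\alpha}$; put $A=\bigcup_{\alpha<\lambda}A_{\alpha}$ (so $|A|\le\kappa$) and define $\mathcal{B}=\{aV_{\alpha}:a\in A,\ \alpha<\lambda\}$. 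Each $aV_{\alpha}$ is open because left translations are pre-homeomorphisms, and $|\mathcal{B}|\le\kappa$.

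It then remains to check that $\mathcal{B}$ is a pre-base of $G$. Since open sets of a pre-topology are closed under arbitrary unions, it suffices to show that for every open $W$ and every $y\in W$ there is $B\in\mathcal{B}$ with $y\in B\subseteq W$. Given such $y$, by Proposition~\ref{t1} the family $\{yV_{\alpha}:\alpha<\lambda\}$ is a local pre-base at $y$, so $yV_{\alpha}\subseteq W$ for some $\alpha$; choose $\beta$ with $V_{\beta}V_{\beta}\subseteq V_{\alpha}$. From $G=A_{\beta}V_{\beta}$ there is $a\in A_{\beta}\subseteq A$ with $y\in aV_{\beta}$, that is $a^{-1}y\in V_{\beta}$; by symmetry $y^{-1}a=(a^{-1}y)^{-1}\in V_{\beta}$, i.e.\ $a\in yV_{\beta}$, whence $y\in aV_{\beta}\subseteq yV_{\beta}V_{\beta}\subseteq yV_{\alpha}\subseteq W$ with $aV_{\beta}\in\mathcal{B}$. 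This yields $w(G)\le\kappa$ and finishes the proof. I do not expect a genuine obstacle here: the point requiring care is that, unlike for topological groups, one cannot intersect open sets, so the entire argument must run through left translations and unions only — which is precisely why the almost-topological hypothesis (supplying a symmetric, self-square-refining local pre-base at $e$ of cardinality $\le\chi(G)$) is both available and needed. The only mildly technical part is the recursive construction of $\mathcal{N}$; everything after it is the standard translation argument.
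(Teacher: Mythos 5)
Your proof is correct and follows essentially the same route as the paper's: the easy inequality via $\chi(G)\le w(G)$ and $ib(G)\le l(G)\le w(G)$, and the reverse inequality by covering $G$ with $\le ib(G)$ left translates of each member of a $\le\chi(G)$-sized local pre-base at $e$ and checking the translates form a pre-base using symmetry and square-refinement. The only cosmetic difference is that you first build a symmetric, self-square-refining local pre-base $\mathcal{N}$ by recursion, whereas the paper simply selects, at the verification step, indices $\alpha,\beta$ with $U_\alpha^{-1}U_\alpha\subseteq U_\beta$ and $aU_\beta\subseteq O$ from the original pre-base; both implementations of the translation argument are equivalent.
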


\begin{proof}
Clearly, $\chi(G)\leq w(G)$. By Proposition~\ref{p2022222}, we have $ib(G)\leq l(G)\leq w(G)$. Hence $ib(G)\chi(G)\leq\omega(G)$. We only need to prove $\omega(G)\leq ib(G)\chi(G)$. Let $ib(G)\leq\tau$ and $\chi(G)\leq\kappa$. Then we assume that $\{U_{\alpha}: \alpha\in \kappa\}$ is a pre-base at the identity $e$ of $G$. For every $\alpha\in \kappa$, there exists a subset $C_{\alpha}$ of $G$ with $|C_{\alpha}|\leq\tau$ such that $C_{\alpha}U_{\alpha}=G$. Then the cardinality of the family $\mathscr{B}=\{xU_{\alpha}:x\in C_{\alpha}, \alpha\in \kappa\}$ is at most $\tau\kappa$. We claim that $\mathscr{B}$ is a pre-base of $G$.

Indeed, it suffices to prove that for any neighbourhood $O$ of an arbitrary point $a\in G$ there exist $\alpha\in\kappa$ and $x\in C_{\alpha}$ such that $a\in xU_{\alpha}\subseteq O$. Take an any neighbourhood $O$ of an arbitrary point $a\in G$. Since $G$ is an almost topological group, it follows that there are $\alpha, \beta\in \kappa$ such that $aU_{\beta}\subseteq O$ and $U^{-1}_{\alpha}U_{\alpha}\subseteq U_{\beta}$. Since $C_{\alpha}U_{\alpha}=G$, there exists $x\in C_{\alpha}$ such that $a\in xU_{\alpha}$, that is, $x\in aU^{-1}_{\alpha}$, then we have
$$xU_{\alpha}\subseteq (aU^{-1}_{\alpha})U_{\alpha}=a(U^{-1}_{\alpha}U_{\alpha})\subseteq aU_{\beta}\subseteq O.$$
Then $xU_{\alpha}$ is an open neighbourhood of point $a$ and $xU_{\alpha}\subseteq O$.
\end{proof}

\begin{remark}
The strongly pre-topological group $G$ in (3) of Example~\ref{eee} is $\omega$-narrow and $\chi(G)\leq\omega$. However, it is easy to see that $\omega(G)>\omega$. Moreover, it is natural to consider the following question.
\end{remark}

\begin{question}
Let $G$ be a symmetrically pre-
topological group. Does $\omega(G)\leq ib(G)\chi(G)$ hold?
\end{question}

 Let $G$ be a pre-topological group. We say that the {\it invariance number} of $G$ is less than or equal to $\tau$ or in symbols, inv($G$) if for any open neighbourhood $U$ of the neutral element $e$ of $G$, there exists a family $\gamma$ of open neighbourhoods of $e$ with $|\gamma|\leq\tau$ such that for each $x\in G$ there exists $V\in \gamma$ satisfying $xVx^{-1}\subseteq U$. Any such family $\gamma$ will be called {\it subordinated} to $U$. If a pre-topological group $G$ satisfy that $inv(G)\leq \tau$ , then $G$ is called {\it $\tau$-balanced}.

 It is well known that each $\tau$-narrow topological group is $\tau$-balanced. Moreover, it is obvious that each Abelian pre-topological group is $\tau$-balanced. Hence it is natural to pose the following question.

 \begin{question}\label{q2022}
If $G$ is a $\tau$-narrow pre-topological group, then is $G$ $\tau$-balanced?
\end{question}

The following proposition gives a partial answer to Question~\ref{q2022}.

\begin{proposition}\label{ppp1}
If $G$ is a $\tau$-narrow almost topological group, then $G$ is $\tau$-balanced.
\end{proposition}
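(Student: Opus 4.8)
The plan is to adapt to the pre-topological setting the classical argument that every $\tau$-narrow topological group is $\tau$-balanced. Fix an arbitrary open neighbourhood $U$ of the neutral element $e$. The first step is to pass to a small symmetric neighbourhood: since $G$ is an almost topological group, repeated use of condition (1) of Proposition~\ref{p0} together with the symmetry of $\mathscr{B}_{e}$ (equivalently, of the characterization $VV^{-1}\subseteq U$) yields a \emph{symmetric} open neighbourhood $V$ of $e$ with $V^{3}\subseteq U$; indeed, choose a symmetric open $V_{1}$ with $V_{1}^{2}\subseteq U$ and a symmetric open $V$ with $V^{2}\subseteq V_{1}$, and note $V\subseteq V_{1}$ because $e\in V$, so $V^{3}\subseteq V_{1}^{2}\subseteq U$.

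The second step uses $\tau$-narrowness. Since $G$ is $\tau$-narrow, by Proposition~\ref{p202210} (or directly by right $\tau$-narrowness) there is a set $F\subseteq G$ with $|F|\leq\tau$ and $G=VF$. For every $a\in F$, condition (4) of Theorem~\ref{t0} --- the pre-continuity of the conjugation $x\mapsto axa^{-1}$ --- provides an open neighbourhood $W_{a}$ of $e$ with $aW_{a}a^{-1}\subseteq V$. Put $\gamma=\{W_{a}:a\in F\}$; this is a family of open neighbourhoods of $e$ indexed by $F$, so $|\gamma|\leq\tau$. The essential point is that $\gamma$ depends only on $F$ and $V$, not on the element we will later plug in, which is what keeps its cardinality under control.

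The third step verifies that $\gamma$ is subordinated to $U$. Given any $x\in G$, write $x=va$ with $v\in V$ and $a\in F$, using $G=VF$. Then
\[
xW_{a}x^{-1}=v\,(aW_{a}a^{-1})\,v^{-1}\subseteq vVv^{-1}\subseteq V\cdot V\cdot V=V^{3}\subseteq U,
\]
where $v^{-1}\in V$ because $V$ is symmetric. Hence for each $x\in G$ there is a member of $\gamma$, namely $W_{a}$, with $xW_{a}x^{-1}\subseteq U$, so $\gamma$ is subordinated to $U$. As $U$ was arbitrary, $\mathrm{inv}(G)\leq\tau$, that is, $G$ is $\tau$-balanced.

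The computation is routine once the symmetric neighbourhood $V$ with $V^{3}\subseteq U$ is in hand; the only place that genuinely uses the hypotheses beyond ``pre-topological group'' is this reduction, which needs both the square-root property and the symmetry packaged in the notion of an almost topological group. The would-be obstacle in a careless write-up is the cardinality bookkeeping: one must extract $\gamma$ \emph{before} choosing $x$, so that $|\gamma|\leq|F|\leq\tau$; producing conjugation-shrinking neighbourhoods point by point would not control the size of the resulting family.
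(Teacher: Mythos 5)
Your proof is correct and follows essentially the same route as the paper: pass to a symmetric open $V$ with $V^{3}\subseteq U$, use $\tau$-narrowness to write $G=VF$ with $|F|\leq\tau$, pick conjugation-shrinking neighbourhoods $W_{a}$ for $a\in F$ only, and verify $xW_{a}x^{-1}\subseteq V^{3}\subseteq U$. Your write-up is in fact slightly more detailed than the paper's (the explicit construction of $V$ with $V^{3}\subseteq U$ and the remark about fixing $\gamma$ before choosing $x$), but the decomposition and computation coincide.
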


\begin{proof}
Let $U$ be an open neighbourhood of the neutral element $e$ in $G$; then since $G$ is an almost topological group, there exists an open and symmetric neighbourhood $V$ of the neutral element $e$ such that $V^{3}\subseteq U$. Since $G$ is $\tau$-narrow, there exists a subset $A$ of $G$ with $|A|\leq\tau$ such that $VA=G$. For any $a\in A$, there exists an open neighbourhood $W_{a}$ of the neutral element $e$ such that $aW_{a}a^{-1}\subseteq V$. Then we conclude that the family  $\gamma=\{W_{a}: a\in A\}$ subordinated to $U$.

Indeed, it is obvious that $\gamma$ is a family of open neighbourhoods of $e$ and $|\gamma|\leq\tau$. For any $x\in G$, there exists $a\in A$ such that $x\in Va$. Therefore, $$xW_{a}x^{-1}\subseteq VaW_{a}a^{-1}V^{-1}\subseteq VVV^{-1}=V^{3}\subseteq U$$ Hence $\gamma$ is subordinated to $U$, that is, $G$ is $\tau$-balanced.
\end{proof}

\begin{theorem}
Let $G$ be a pre-semitopological group. Then $inv(G)\leq\chi(G)$.
\end{theorem}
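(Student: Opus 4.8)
The plan is to prove $inv(G)\le\chi(G)$ directly from the definitions, using that a pre-semitopological group has a local pre-base at the neutral element obtained by translating a fixed local pre-base, together with the pre-continuity of the conjugation maps. Fix a local pre-base $\mathscr{B}_e=\{U_\alpha:\alpha<\kappa\}$ at $e$ with $|\mathscr{B}_e|=\chi(G)=\kappa$; this exists by the definition of $\chi(G)$. The goal is to show that for each open neighbourhood $U$ of $e$ there is a subfamily of open neighbourhoods of $e$ of cardinality at most $\kappa$ that is subordinated to $U$ in the sense of the definition of $inv(G)$.

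First I would fix an arbitrary open neighbourhood $U$ of $e$. For each $g\in G$, the inner automorphism $x\mapsto gxg^{-1}$ is pre-continuous at $e$ (it is the composition $L_g\circ R_{g^{-1}}$, both of which are pre-homeomorphisms in a semi-pre-topological group by Proposition~\ref{s0} and its corollary), so there is some basic neighbourhood $U_{\alpha(g)}\in\mathscr{B}_e$ with $gU_{\alpha(g)}g^{-1}\subseteq U$. Now set $\gamma=\{U_\alpha:\alpha<\kappa,\ \exists g\in G\text{ with }gU_\alpha g^{-1}\subseteq U\}$. Then $\gamma\subseteq\mathscr{B}_e$, so $|\gamma|\le\kappa=\chi(G)$, and $\gamma$ is a family of open neighbourhoods of $e$. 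By construction, for every $g\in G$ the element $U_{\alpha(g)}$ lies in $\gamma$ and satisfies $gU_{\alpha(g)}g^{-1}\subseteq U$; hence $\gamma$ is subordinated to $U$. Since $U$ was arbitrary, this gives $inv(G)\le\kappa=\chi(G)$.

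The only subtle point — really the one step that needs care rather than being automatic — is justifying that conjugation by $g$ is pre-continuous at $e$ in a pre-semitopological group, since the hypothesis is weaker than being a pre-topological group: we only have that left and right translations are pre-continuous, not joint continuity of multiplication or continuity of inversion. But $R_{g^{-1}}$ carries $e$ to $g^{-1}$ and $L_g$ carries $g^{-1}$ to $e$, and each is a pre-homeomorphism of $G$ onto itself (Proposition~\ref{s0}(1) and the corollary that in a semi-pre-topological group all translations are pre-homeomorphisms), so their composite $x\mapsto gxg^{-1}$ is a pre-homeomorphism fixing $e$; in particular it is pre-continuous at $e$, which is exactly what is needed to extract the basic neighbourhood $U_{\alpha(g)}$. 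With that in hand the rest is just bookkeeping with cardinalities, and no genuine obstacle remains.
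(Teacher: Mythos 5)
Your proposal is correct and follows essentially the same route as the paper: the paper fixes the pre-base $\{U_{\alpha}:\alpha<\kappa\}$ at $e$, notes $Vx$ is an open neighbourhood of $x$ (right translation) and that the left translates $xU_{\alpha}$ form a pre-base at $x$, obtaining $xU_{\alpha}\subseteq Vx$, i.e. $xU_{\alpha}x^{-1}\subseteq V$ — which is exactly your use of the conjugation $L_{x}\circ R_{x^{-1}}$ being a pre-homeomorphism fixing $e$. The only cosmetic difference is that the paper takes the whole pre-base as the subordinated family rather than your subfamily $\gamma$, which changes nothing in the cardinality bound.
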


\begin{proof}
Let $\chi(G)=\kappa$ and $\{U_{\alpha}: \alpha<\kappa\}$ be a pre-base of $G$ at the neutral element $e$. Take any open neighborhood $V$ of $e$. Then for each $x\in G$ the set $Vx$ is an open neighborhood of $x$. Since $G$ is a pre-semitopological group, there exists $\alpha<\kappa$ such that $xU_{\alpha}\subseteq Vx$, hence it follows that $xU_{\alpha}x^{-1}\subseteq V$. Therefore, $inv(G)\leq\kappa$.
\end{proof}

We give some properties of the invariance number of pre-topological groups.

\begin{proposition}
Each subgroup of a $\tau$-balanced pre-topological group is $\tau$-balanced.
\end{proposition}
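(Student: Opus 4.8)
The plan is to pull the witnessing family of neighbourhoods down from $G$ to $H$ by intersecting with $H$. First I would fix a subgroup $H$ of a $\tau$-balanced pre-topological group $G$ and an arbitrary open neighbourhood $W$ of the neutral element $e$ in $H$. By the definition of the subspace pre-topology on $H$, there is an open set $U$ in $G$ with $U\cap H=W$; in particular $e\in U$.

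Next, since $inv(G)\leq\tau$, I would choose a family $\gamma=\{V_{\alpha}:\alpha<\tau\}$ of open neighbourhoods of $e$ in $G$ subordinated to $U$, so that for every $g\in G$ there is $\alpha<\tau$ with $gV_{\alpha}g^{-1}\subseteq U$. Put $\gamma'=\{V_{\alpha}\cap H:\alpha<\tau\}$. Each $V_{\alpha}\cap H$ is an open neighbourhood of $e$ in $H$, and $|\gamma'|\leq\tau$.

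The key step is to verify that $\gamma'$ is subordinated to $W$ inside $H$. Given $x\in H$, regard $x$ as an element of $G$ and pick $\alpha<\tau$ with $xV_{\alpha}x^{-1}\subseteq U$. Since $H$ is a subgroup, $xHx^{-1}=H$, hence
$$x(V_{\alpha}\cap H)x^{-1}\subseteq (xV_{\alpha}x^{-1})\cap(xHx^{-1})=(xV_{\alpha}x^{-1})\cap H\subseteq U\cap H=W.$$
Thus $\gamma'$ witnesses $inv(H)\leq\tau$, so $H$ is $\tau$-balanced.

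I do not expect a serious obstacle here; the only points to be careful about are that the subspace pre-topology genuinely supplies an open $U$ in $G$ with $U\cap H=W$ (immediate from the description of a pre-topology of a subspace), and that conjugation by an element of $H$ maps $H$ onto itself — which is precisely the place where the hypothesis that $H$ is a subgroup, rather than an arbitrary subset, is used.
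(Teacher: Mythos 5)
Your proof is correct and follows essentially the same route as the paper: take the trace on $H$ of a family in $G$ subordinated to an open set $U$ of $G$ whose trace is the given neighbourhood $W$ of $e$ in $H$, and use that conjugation by $x\in H$ preserves $H$. Your explicit remark that $x(V_{\alpha}\cap H)x^{-1}=(xV_{\alpha}x^{-1})\cap(xHx^{-1})$ just makes transparent the inclusion the paper states directly, so there is nothing to add.
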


\begin{proof}
Let $G$ be $\tau$-balanced. Take any subgroup $H$ of $G$. We claim that $H$ is also $\tau$-balanced. Indeed, take any open neighborhood $U$ of $e$ in $H$. Then there exists an open neighborhood $V$ of $e$ in $G$ such that $U=V\cap H$. Since $G$ is $\tau$-balanced, we can find a family $\eta$ of open neighborhoods of $e$ in $G$ such that $|\eta|\leq\tau$ and for each $x\in G$ there exists $W_{x}\in\eta$ satisfying $xW_{x}x^{-1}\subseteq V$. Put $\eta_{H}=\{W\cap H: W\in\eta\}$. Then, for each $x\in H$, we have $x(W_{x}\cap H)x^{-1}\subseteq (xW_{x}x^{-1})\cap H\subseteq V\cap H=U.$ Moreover, it is obvious that $|\eta_{H}|\leq\tau$. Hence $H$ is $\tau$-balanced.
\end{proof}

A subset $D$ is said to be {\it locally dense} in a pre-topological space $G$ if $\overline{U\cap D}=\overline{U}$ for each open set $U$ in $G$. Clearly, each locally dense subset of a pre-topological space is dense.

\begin{proposition}
Let $G$ be an almost topological group. If $H$ is locally dense and $\tau$-balanced subgroup of $G$, then $G$ is again a $\tau$-balanced subgroup.
\end{proposition}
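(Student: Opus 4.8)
The plan is to verify $\operatorname{inv}(G)\le\tau$ straight from the definition: starting from an arbitrary open neighbourhood $U$ of $e$ in $G$, I will build a family $\gamma$ of at most $\tau$ open neighbourhoods of $e$ in $G$ that is subordinated to $U$, and this family will essentially be a subordinated family witnessing $\tau$-balancedness of $H$, transported to $G$.

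First I would use that $G$ is an almost topological group (hence both strongly and symmetrically pre-topological) to pick symmetric open neighbourhoods $V$ and $W$ of $e$ in $G$ with $V^{2}\subseteq U$ and $W^{2}\subseteq V$; thus $W^{-1}=W$ and $W^{4}\subseteq U$. Then $W\cap H$ is an open neighbourhood of $e$ in $H$, so by $\tau$-balancedness of $H$ there is a family $\gamma_{H}$ of open neighbourhoods of $e$ in $H$ with $|\gamma_{H}|\le\tau$ subordinated to $W\cap H$, i.e. for every $h\in H$ some $P\in\gamma_{H}$ satisfies $hPh^{-1}\subseteq W\cap H$. For each $P\in\gamma_{H}$ I fix an open set $O_{P}$ of $G$ with $O_{P}\cap H=P$ and $e\in O_{P}$, and set $\gamma=\{O_{P}:P\in\gamma_{H}\}$, a family of at most $\tau$ open neighbourhoods of $e$ in $G$.

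The core step is showing $\gamma$ is subordinated to $U$. Fix $x\in G$. Since $H$ is locally dense it is dense, so $Wx\cap H\ne\emptyset$; choose $h\in Wx\cap H$, so that $hx^{-1}\in W$ and $xh^{-1}=(hx^{-1})^{-1}\in W^{-1}=W$. Pick $P\in\gamma_{H}$ with $hPh^{-1}\subseteq W\cap H$. Now comes the key transfer: local density gives $O_{P}\subseteq\overline{O_{P}}=\overline{O_{P}\cap H}=\overline{P}$, and since conjugation by $h$ is a pre-homeomorphism of $G$ it commutes with the closure operator, so $hO_{P}h^{-1}\subseteq\overline{hPh^{-1}}\subseteq\overline{W\cap H}$; by Proposition~\ref{u0}, $\overline{W\cap H}\subseteq(W\cap H)\,W\subseteq W^{2}$, whence $hO_{P}h^{-1}\subseteq W^{2}$. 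Conjugating once more by $xh^{-1}\in W$, we get $xO_{P}x^{-1}=(xh^{-1})(hO_{P}h^{-1})(xh^{-1})^{-1}\subseteq W\cdot W^{2}\cdot W=W^{4}\subseteq U$. As $x$ was arbitrary, $\gamma$ is subordinated to $U$; as $U$ was arbitrary, $G$ is $\tau$-balanced.

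The main obstacle is exactly this transfer step: $\tau$-balancedness of $H$ controls only the conjugates of the traces $O_{P}\cap H$, not of open subsets of $G$, so one must convert it using (i) local density to squeeze $O_{P}$ inside $\overline{O_{P}\cap H}$, (ii) the fact that pre-homeomorphisms preserve closures, and (iii) Proposition~\ref{u0} to replace a closure by a product of open sets; the remaining care is merely in counting how many factors of $W$ accumulate (four here), which is precisely what the strongly-pre-topological hypothesis lets us absorb. A minor preliminary point to record is that $H$, endowed with the subspace pre-topology, is itself a pre-topological group, so that speaking of a subordinated family of open neighbourhoods of $e$ in $H$ is legitimate.
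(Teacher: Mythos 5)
Your proof is correct, and at the decisive step it takes a genuinely different, and in fact more robust, route than the paper's. Both arguments share the same skeleton: shrink $U$ using the almost-topological hypothesis, apply $\tau$-balancedness of $H$ to the trace of the shrunken neighbourhood, lift each member $P$ of the subordinated family to an open set $O_{P}$ of $G$ with $O_{P}\cap H=P$, and then control conjugates of the lifted sets by combining local density (to replace $\overline{O_{P}}$ by $\overline{O_{P}\cap H}$), the fact that translations and conjugations preserve closures, and Proposition~\ref{u0} to trade a closure for an extra factor of an open neighbourhood. Where you diverge is in how an arbitrary $x\in G$ is related to $H$: the paper asserts that every $y\in G$ lies in $xO_{x}$ for some $x\in H$, justifying this by the identity $G=H\bigl(\bigcap_{W\in\mathscr{B}_{e}}W\bigr)$, which does not follow from local density alone (already $\mathbb{Q}$ is locally dense in $\mathbb{R}$ with the usual topology, yet $\mathbb{R}\neq\mathbb{Q}\cdot\{0\}$), and it also indexes the lifted family by the points of $H$, so the bound $|\varphi|\leq\tau$ needs an extra word. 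You instead use plain density of $H$ to choose $h\in Wx\cap H$ and conjugate by $xh^{-1}\in W$, absorbing the error into two additional factors of $W$; this is why you need $W^{4}\subseteq U$ rather than $V^{2}\subseteq U$, and your family is indexed by $\gamma_{H}$, so the cardinality bound is immediate. The cost of your route is a slightly larger power of $W$ and one more shrinking step at the outset; what it buys is that every ingredient is a fact already established in the paper (locally dense implies dense, Proposition~\ref{u0}, closure-preservation under pre-homeomorphisms), so your argument effectively repairs the weakest step of the paper's own proof.
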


\begin{proof}
Take any open neighborhood $U$ of $e$ in $G$. Then there exists an open neighborhood $V$ of $e$ in $G$ such that $V^{2}\subseteq U$. Clearly, $V\cap H$ is an open neighborhood of $e$ in $H$, hence we can find a family $\eta$ of open symmetric neighborhoods of $e$ in $H$ such that $|\eta|\leq\tau$ and for each $x\in H$ there exists $W_{x}\in\eta$ satisfying $xW_{x}x^{-1}\subseteq V\cap H.$ For each $x\in H$, there exist an open symmetric neighborhoods $U_{x}$, $V_{x}$ and $O_{x}$ of $e$ in $G$ such that $U_{x}\cap H= W_{x}$, $V_{x}^{2}\subseteq U_{x}$ and $O_{x}^{3}\subseteq V_{x}$. Put $\varphi=\{O_{x}: x\in H\}$. Clearly, $|\varphi|\leq\tau$. We claim that $\varphi$ is subordinated to $U$. Indeed, for each $y\in G$ there exists $x\in H$ such that $y\in xO_{x}$ since $G=H(\bigcap_{W\in\mathscr{B}_{e}}W)$, where $\mathscr{B}_{e}$ is the family of all open neighborhoods of $e$ in $G$. From the local density of $H$ and Proposition~\ref{u0}, it follows that $$yO_{x}y^{-1}\subseteq xO_{x}O_{x}O_{x}x^{-1}\subseteq xV_{x}x^{-1}\subseteq x\overline{V_{x}}x^{-1}=x\overline{V_{x}\cap H}x^{-1}=\overline{x(V_{x}\cap H)x^{-1}}$$$$=\overline{xW_{x}x^{-1}}\subseteq \overline{V\cap H}=\overline{V}\subseteq U.$$
\end{proof}

However, the following question is still unknown for us. If the following question is positive, then each almost topological group with a dense $\tau$-balanced subgroup is $\tau$-balanced.

\begin{question}
Is the closure of a $\tau$-balanced subgroup $H$ of an almost topological group $G$ a $\tau$-balanced subgroup?
\end{question}

The following result gives a relation of the invariance number between a pre-topological group and its co-reflexion group topology.

\begin{proposition}\label{pp20222}
Let $(G, \tau)$ be a pre-topological group. If $inv(G, \tau)\leq\kappa$, then $inv(G, \tau^{\ast})\leq\kappa$.
\end{proposition}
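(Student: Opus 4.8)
The plan is to use the explicit description of a neighborhood base at $e$ for the co-reflexion group topology provided by Proposition~\ref{p20221}: the family $\mathscr{B}=\{\bigcap\mathscr{F}:\mathscr{F}\subseteq\tau_{e},\ |\mathscr{F}|<\omega\}$ of finite intersections of $\tau$-open neighborhoods of $e$ is an open neighborhood base at $e$ for $(G,\tau^{\ast})$. Since a family subordinated to some $B\in\mathscr{B}$ with $B\subseteq U$ is automatically subordinated to $U$ (the same witness $V$ satisfies $xVx^{-1}\subseteq B\subseteq U$), and every $\tau^{\ast}$-open neighborhood of $e$ contains a member of $\mathscr{B}$, it suffices to produce, for every $U=\bigcap_{i=1}^{n}U_{i}\in\mathscr{B}$ with each $U_{i}\in\tau_{e}$, a family $\gamma$ of $\tau^{\ast}$-open neighborhoods of $e$ with $|\gamma|\leq\kappa$ that is subordinated to $U$.

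First I would invoke the hypothesis $\mathrm{inv}(G,\tau)\leq\kappa$ once for each of the finitely many sets $U_{1},\dots,U_{n}$: for each $i$ there is a family $\gamma_{i}$ of $\tau$-open neighborhoods of $e$ with $|\gamma_{i}|\leq\kappa$ such that for every $x\in G$ some $W\in\gamma_{i}$ satisfies $xWx^{-1}\subseteq U_{i}$. Then I would set $\gamma=\{\bigcap_{i=1}^{n}W_{i}:W_{i}\in\gamma_{i},\ i=1,\dots,n\}$. Each member of $\gamma$ is a finite intersection of $\tau$-open neighborhoods of $e$, hence lies in $\mathscr{B}$ and in particular is a $\tau^{\ast}$-open neighborhood of $e$; moreover $|\gamma|\leq\kappa^{n}$, which equals $\kappa$ since $\kappa$ is infinite.

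It remains to check that $\gamma$ is subordinated to $U$ in $(G,\tau^{\ast})$. Fix $x\in G$. For each $i$ choose $W_{i}\in\gamma_{i}$ with $xW_{i}x^{-1}\subseteq U_{i}$, and put $W=\bigcap_{i=1}^{n}W_{i}\in\gamma$. Since conjugation by $x$ is a bijection of $G$ onto itself, it commutes with intersections, so
$$xWx^{-1}=x\Bigl(\bigcap_{i=1}^{n}W_{i}\Bigr)x^{-1}=\bigcap_{i=1}^{n}xW_{i}x^{-1}\subseteq\bigcap_{i=1}^{n}U_{i}=U,$$
which is exactly what is required. Hence $\gamma$ witnesses $\mathrm{inv}(G,\tau^{\ast})\leq\kappa$.

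The argument is essentially bookkeeping; the only point that needs care is that passing from $\tau$ to $\tau^{\ast}$ replaces single basic neighborhoods by finite intersections of $\tau$-neighborhoods, so the witnessing family must itself be built from finite intersections — and one must confirm both that this keeps the cardinality at $\leq\kappa$ and that conjugation distributes over the (finite) intersection. That is the step I would double-check, but I do not expect it to present a genuine obstacle.
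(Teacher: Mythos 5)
Your proposal is correct and follows essentially the same route as the paper's proof: reduce to basic $\tau^{\ast}$-neighborhoods of $e$ via Proposition~\ref{p20221} (finite intersections of $\tau$-open neighborhoods), apply $inv(G,\tau)\leq\kappa$ to each of the finitely many pieces, and take the family of finite intersections of the witnesses, noting that conjugation distributes over the intersection and that the resulting family still has cardinality at most $\kappa$. The only difference is cosmetic: you make explicit the reduction step and the cardinality bound $\kappa^{n}=\kappa$, which the paper leaves implicit.
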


\begin{proof}
Take any open neighborhood $U$ of $(G, \tau^{\ast})$. From Proposition~\ref{p20221}, it follows that there exist open neighborhoods $W_{1}, \ldots, W_{m}$ in $(G, \tau)$ such that $\bigcap_{i=1}^{m}W_{i}\subseteq U$. For each $i\leq m$, since $inv(G, \tau)\leq\kappa$, there exists a family $\gamma_{i}$ of open neighbourhoods of $e$ in $(G, \tau)$ with $|\gamma_{i}|\leq\kappa$ such that $\gamma_{i}$ is subordinated to $W_{i}$. Put $\eta=\{\bigcap_{i=1}^{m}W_{i}: W_{i}\in\gamma_{i}, i\leq m\}$. From Proposition~\ref{p20221}, each element of $\eta$ is open in $(G, \tau^{\ast})$. Moreover, $|\eta|\leq\kappa$. We claim that $\eta$ is subordinated to $U$ in $(G, \tau^{\ast})$. Indeed, for any $x\in G$ and $i\leq m$, then there exists $W_{x, i}\in\gamma_{i}$ such that $x W_{x, i}x^{-1}\subseteq W_{i}$. Hence $x(\bigcap_{i=1}^{m}W_{x, i})x^{-1}\subseteq \bigcap_{i=1}^{m}W_{i}\subseteq U.$
\end{proof}

\begin{remark}
There exists an Abelian $\omega$-narrow pre-topological group such that the co-reflexion group topology is not $\omega$-narrow, such as (2) in Example~\ref{eee}.
\end{remark}

In order to give a characterization of $\tau$-narrow almost topological groups, we give some lemmas and propositions.

\begin{lemma}\label{l20222}
Let $G$ be an $\tau$-balanced almost topological group, and let $\gamma$ be a family of open neighborhoods of the neutral element $e$ in $G$ such that $|\gamma|\leq\tau$. Then there is a family $\gamma^{\ast}$ of open neighborhoods of $e$ satisfying the following properties:

\smallskip
1) $\gamma\subseteq \gamma^{\ast}$;

\smallskip
2) for each $U\in\gamma^{\ast}$, there exists a symmetric $V\in\gamma^{\ast}$ such that $V^{2}\subseteq U$;

\smallskip
3) for each $U\in\gamma^{\ast}$ and each $x\in G$, there exists $V\in\gamma^{\ast}$ such that $xVx^{-1}\subseteq U$;

\smallskip
4) $|\gamma^{\ast}|\leq\tau$.
\end{lemma}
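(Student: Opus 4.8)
The plan is to obtain $\gamma^{\ast}$ by a countable closing-off recursion: starting from $\gamma$, at each step I enlarge the current family just enough to cure the failures of conditions 2) and 3), being careful never to add more than $\tau$ new sets.

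First I would set $\gamma_{0}=\gamma$. Assuming $\gamma_{n}$ has been constructed with $|\gamma_{n}|\leq\tau$, I construct $\gamma_{n+1}\supseteq\gamma_{n}$ as follows. For every $U\in\gamma_{n}$: since $G$ is an almost topological group, fix a symmetric open neighborhood $V_{U}$ of $e$ with $V_{U}^{2}\subseteq U$; and since $G$ is $\tau$-balanced, fix a family $\delta_{U}$ of open neighborhoods of $e$, subordinated to $U$, with $|\delta_{U}|\leq\tau$. Now put
$$\gamma_{n+1}=\gamma_{n}\cup\{V_{U}:U\in\gamma_{n}\}\cup\bigcup_{U\in\gamma_{n}}\delta_{U}.$$
Since $\tau\geq\omega$ and $|\gamma_{n}|\leq\tau$, the union has cardinality at most $\tau\cdot\tau=\tau$, so $|\gamma_{n+1}|\leq\tau$. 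Finally I set $\gamma^{\ast}=\bigcup_{n\in\omega}\gamma_{n}$.

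It then remains to verify the four properties. Property 1) is immediate from $\gamma_{0}=\gamma\subseteq\gamma^{\ast}$, and property 4) follows since $|\gamma^{\ast}|\leq\omega\cdot\tau=\tau$. For 2), given $U\in\gamma^{\ast}$ choose $n$ with $U\in\gamma_{n}$; then $V_{U}\in\gamma_{n+1}\subseteq\gamma^{\ast}$ is symmetric and satisfies $V_{U}^{2}\subseteq U$. For 3), given $U\in\gamma^{\ast}$ and $x\in G$, again choose $n$ with $U\in\gamma_{n}$; then $\delta_{U}\subseteq\gamma_{n+1}\subseteq\gamma^{\ast}$, and as $\delta_{U}$ is subordinated to $U$ there is $V\in\delta_{U}\subseteq\gamma^{\ast}$ with $xVx^{-1}\subseteq U$.

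There is no deep obstacle in this argument --- it is the standard closing-off device, and the only point requiring attention is the cardinal bookkeeping: one must use $\tau\geq\omega$ (which is part of the definitions of the index of narrowness and of $\tau$-balancedness) to ensure that $\tau\cdot\tau=\tau$ and $\omega\cdot\tau=\tau$, so that the successive stages, and hence their countable union $\gamma^{\ast}$, all have size at most $\tau$. If $\tau$ were permitted to be finite the recursion would instead have to be run to a fixed point, but that situation does not occur here.
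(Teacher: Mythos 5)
Your proof is correct and takes essentially the same approach as the paper: the paper defines the same closing-off operator (adding, for each $U$, a symmetric $V_{U}$ with $V_{U}^{2}\subseteq U$ and a subordinated family of size $\leq\tau$) and iterates it, though it runs the recursion through ordinals $\alpha<\tau$ rather than just $\omega$ steps. Your observation that $\omega$ steps already suffice (each $U$ enters the union at a finite stage and its witnesses appear at the next stage), together with the cardinal bookkeeping using $\tau\geq\omega$, is sound.
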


\begin{proof}
For every $U\in\gamma$, we can find a symmetric open neighborhood $V_{U}$ of $e$ such that $V_{U}^{2}\subseteq U$; since $G$ is $\tau$-balanced, there exists a family $\mathcal{V}_{U}$ of open neighborhoods of $e$ subordinated to $U$ such that $|\mathcal{V}_{U}|\leq\tau$. Now put$$\varphi(\gamma)=\gamma\cup\{\mathcal{V}_{U}: U\in\gamma\}\cup\{V_{U}: U\in\gamma\}.$$Then we put $\gamma_{0}=\gamma$, $\gamma_{1}=\varphi(\gamma_{0})$, and repeat this operation, which defined by induction families $\gamma_{2},\ldots, \gamma_{\alpha}$, and so on, by the rule $\gamma_{\alpha+1}=\varphi(\gamma_{\alpha})$ if $\alpha$ is a successor ordinal, and $\gamma_{\alpha}=\bigcup_{\beta<\alpha}\varphi(\gamma_{\beta})$ for any $\alpha<\tau$.

Put $\gamma^{\ast}=\bigcup_{\alpha<\tau}\gamma_{\alpha}$. Clearly, $|\gamma_{\alpha}|\leq\tau$ for any $\alpha<\tau$ and $\gamma_{\alpha}\subseteq \gamma_{\beta}$ for every $\alpha<\beta$, then $\gamma^{\ast}$ satisfies conditions 1)-4).
\end{proof}

\begin{lemma}\label{l20221}
Let $(G, \tau)$ be an $\tau$-balanced almost topological group, and $U$ an open neighborhood of the neutral element $e$ in $G$. Then there exists a family $\{U_{\alpha}: \alpha<\tau\}$ of open neighborhoods of $e$ such that, for every $\alpha<\tau$, the following conditions are satisfied:

\smallskip
a) $U_{0}\subseteq U$;

\smallskip
b) $U_{\alpha}=U_{\alpha}^{-1}$;

\smallskip
c) there exists $\beta<\tau$ such that $U_{\beta}^{2}\subseteq U_{\alpha}$;

\smallskip
d) for any $x\in G$, there exists $\delta<\tau$ such that $xU_{\delta}x^{-1}\subseteq U_{\alpha}$.
\end{lemma}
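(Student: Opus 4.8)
The plan is to derive the family $\{U_{\alpha}:\alpha<\tau\}$ directly from Lemma~\ref{l20222}, after discarding the non-symmetric members and re-indexing. First I would apply Lemma~\ref{l20222} to the one-element family $\gamma=\{U\}$; this is legitimate since $|\gamma|=1\le\tau$ and $G$ is a $\tau$-balanced almost topological group. This produces a family $\gamma^{\ast}$ of open neighborhoods of $e$ with $|\gamma^{\ast}|\le\tau$ such that: (i) every $W\in\gamma^{\ast}$ contains a symmetric member $V\in\gamma^{\ast}$ with $V^{2}\subseteq W$; and (ii) for every $W\in\gamma^{\ast}$ and every $x\in G$ there is $V\in\gamma^{\ast}$ with $xVx^{-1}\subseteq W$. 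Applying (i) to $U$ (which lies in $\gamma^{\ast}$ since $U\in\gamma\subseteq\gamma^{\ast}$) yields a symmetric $U_{0}\in\gamma^{\ast}$ with $U_{0}^{2}\subseteq U$, hence $U_{0}=eU_{0}\subseteq U_{0}U_{0}\subseteq U$.

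Next I would pass to the subfamily $\gamma_{s}=\{W\in\gamma^{\ast}:W=W^{-1}\}$ of symmetric members; note $U_{0}\in\gamma_{s}$ and $|\gamma_{s}|\le\tau$. The key observation is that $\gamma_{s}$ inherits closure under ``square roots'' and ``conjugation shrinking'': if $W\in\gamma_{s}$, then by (i) there is a symmetric $V\in\gamma^{\ast}$, i.e. $V\in\gamma_{s}$, with $V^{2}\subseteq W$; and if $W\in\gamma_{s}$ and $x\in G$, then by (ii) there is $V\in\gamma^{\ast}$ with $xVx^{-1}\subseteq W$, and then by (i) there is a symmetric $V'\in\gamma_{s}$ with $(V')^{2}\subseteq V$, whence $V'\subseteq V$ (as $V'$ is a neighborhood of $e$) and therefore $xV'x^{-1}\subseteq xVx^{-1}\subseteq W$. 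Thus inside $\gamma_{s}$ one can always find a symmetric square root of a given member and a symmetric conjugate-shrinking of a given member.

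Finally I would enumerate $\gamma_{s}$ as $\{U_{\alpha}:\alpha<\tau\}$, putting the above $U_{0}$ at index $0$ and, in case $|\gamma_{s}|<\tau$, repeating members so that the index set becomes exactly $\tau$ (which is harmless, as $\tau\ge\omega$ and conditions (a)--(d) are unaffected by repetitions). Then (a) holds because $U_{0}\subseteq U$; (b) holds because every $U_{\alpha}\in\gamma_{s}$ is symmetric; (c) holds because the symmetric square root of $U_{\alpha}$ provided above is $U_{\beta}$ for some $\beta<\tau$; and (d) holds because, given $x\in G$, the symmetric conjugate-shrinking of $U_{\alpha}$ provided above is $U_{\delta}$ for some $\delta<\tau$. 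The only step requiring a little care is the second paragraph, namely checking that restricting to symmetric neighborhoods does not ruin the conjugation property of Lemma~\ref{l20222}; this is settled by descending once more to a symmetric square root, and everything else is routine re-indexing.
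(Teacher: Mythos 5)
Your proof is correct and follows essentially the same route as the paper: apply Lemma~\ref{l20222} to the singleton family $\gamma=\{U\}$ and read conditions a)--d) off the resulting family $\gamma^{\ast}$. In fact your version is more careful than the paper's one-line proof, which simply enumerates $\gamma^{\ast}$ as $\{U_{\alpha}:\alpha<\tau\}$ and does not explain why every member may be taken symmetric (condition b)); your passage to the subfamily $\gamma_{s}$ of symmetric members, together with the check that taking a symmetric square root preserves both the squaring property and the conjugation-shrinking property, closes exactly that small gap.
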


\begin{proof}
By Lemma~\ref{l20222}, we only put $\gamma=\{U\}$, and then let $\gamma^{\ast}=\{U_{\alpha}: \alpha<\tau\}$ such that $U_{0}\subseteq U$, as desired.
\end{proof}

\begin{theorem}\label{t20224}
Let $G$ be an $\tau$-balanced almost topological group. Then, for every open neighborhood $U$ of the neutral element $e$ in $G$, there exists a family $\{\rho_{\alpha}: \alpha<\tau\}$ of pre-continuous left-invariant pseudometrics such that the following conditions are satisfied:

\smallskip
(a1) there exists $\alpha<\tau$ such that $\{x\in G: \rho_{\alpha}(e, x)<1\}\subseteq U$;

\smallskip
(a2) $\{x\in G: \rho_{\alpha}(e, x)=0, \alpha<\tau\}$ is a closed invariant subgroup of $G$;

\smallskip
(a3) for any $x$ and $y$ in $G$, $\rho_{\alpha}(e, xy)\leq\rho_{\alpha}(e, x)+\rho_{\alpha}(e, y)$, where $\alpha<\tau$.
\end{theorem}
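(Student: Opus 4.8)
The plan is to reduce the statement to machinery that is already available: Lemma~\ref{l20221} produces from the given $U$ a single family of neighborhoods of $e$ that is closed under square roots and under conjugation, and Lemma~\ref{lll1} converts chains inside such a family into pre-continuous prenorms, which in turn yield left-invariant pseudometrics via $\rho(x,y)=N(x^{-1}y)$.

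First, I would apply Lemma~\ref{l20221} to the neighborhood $U$ to obtain a family $\{U_{\alpha}:\alpha<\tau\}$ of symmetric open neighborhoods of $e$ with $U_{0}\subseteq U$, with condition c) (for each $\alpha$ some $\beta<\tau$ with $U_{\beta}^{2}\subseteq U_{\alpha}$) and condition d) (for each $\alpha$ and each $x\in G$ some $\delta<\tau$ with $xU_{\delta}x^{-1}\subseteq U_{\alpha}$). Fix $\alpha<\tau$. Iterating condition c) I would choose a sequence $\{V^{(\alpha)}_{n}:n\in\omega\}$ of members of this family with $V^{(\alpha)}_{0}=U_{\alpha}$ and $V^{(\alpha)}_{n+1}\cdot V^{(\alpha)}_{n+1}\subseteq V^{(\alpha)}_{n}$ for all $n$; each $V^{(\alpha)}_{n}$ is symmetric by condition b). Then Lemma~\ref{lll1} supplies a prenorm $N_{\alpha}$ on $G$ with $\{x:N_{\alpha}(x)<1/2^{n}\}\subseteq V^{(\alpha)}_{n}\subseteq\{x:N_{\alpha}(x)\le 2/2^{n}\}$ for every $n\in\omega$, and $N_{\alpha}$ is pre-continuous by Lemma~\ref{lll1} (equivalently by Lemma~\ref{lll0}). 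Set $\rho_{\alpha}(x,y)=N_{\alpha}(x^{-1}y)$. The prenorm identities (a)--(c) give at once that each $\rho_{\alpha}$ is a pseudometric, that $\rho_{\alpha}(gx,gy)=N_{\alpha}(x^{-1}y)=\rho_{\alpha}(x,y)$ so $\rho_{\alpha}$ is left-invariant, and that $\rho_{\alpha}$ is pre-continuous since $N_{\alpha}$, inversion, and multiplication in $G$ are all pre-continuous (cf.\ Lemma~\ref{lll2}). This gives the required family $\{\rho_{\alpha}:\alpha<\tau\}$.

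It remains to check (a1)--(a3). For (a1), take $\alpha=0$: the first inclusion with $n=0$ gives $\{x:\rho_{0}(e,x)<1\}=\{x:N_{0}(x)<1\}\subseteq V^{(0)}_{0}=U_{0}\subseteq U$. For (a3), subadditivity of $N_{\alpha}$ reads $\rho_{\alpha}(e,xy)=N_{\alpha}(xy)\le N_{\alpha}(x)+N_{\alpha}(y)=\rho_{\alpha}(e,x)+\rho_{\alpha}(e,y)$. For (a2), the squeeze yields $\{x:\rho_{\alpha}(e,x)=0\}=\{x:N_{\alpha}(x)=0\}=\bigcap_{n\in\omega}V^{(\alpha)}_{n}$; since $V^{(\alpha)}_{0}=U_{\alpha}$ and every $V^{(\alpha)}_{n}$ equals some $U_{\beta}$, intersecting over $\alpha<\tau$ gives $\bigcap_{\alpha<\tau}\{x:\rho_{\alpha}(e,x)=0\}=\bigcap_{\beta<\tau}U_{\beta}=:N$. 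Each $\{x:\rho_{\alpha}(e,x)=0\}$ is a subgroup by the prenorm identities (so $N$ is a subgroup), and it is closed, being the preimage of the closed set $\{0\}\subseteq\mathbb{R}$ under the pre-continuous map $\rho_{\alpha}(e,\cdot)$ (so $N$ is closed); finally, given $x\in G$ and $\beta<\tau$, condition d) of Lemma~\ref{l20221} gives $\delta<\tau$ with $xNx^{-1}\subseteq xU_{\delta}x^{-1}\subseteq U_{\beta}$, whence $xNx^{-1}\subseteq N$, and applying this to $x^{-1}$ as well shows $N$ is invariant.

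The one delicate point, and the step I would be most careful about, is the identification $\bigcap_{\alpha<\tau}\{x:\rho_{\alpha}(e,x)=0\}=\bigcap_{\beta<\tau}U_{\beta}$ together with the invariance of this set: it relies on drawing all the chains $\{V^{(\alpha)}_{n}\}$ from the single conjugation-closed family furnished by Lemma~\ref{l20221}, rather than choosing them ad hoc, so that conditions c) and d) are available uniformly across $\alpha$. Everything else is a routine transcription of the corresponding topological-group argument (cf.\ \cite{AT}) to the pre-topological setting.
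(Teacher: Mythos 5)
Your proposal is correct and follows essentially the same route as the paper: apply Lemma~\ref{l20221} to $U$, build chains inside the resulting family, invoke Lemma~\ref{lll1} to get pre-continuous prenorms $N_{\alpha}$, set $\rho_{\alpha}(x,y)=N_{\alpha}(x^{-1}y)$, and verify (a1)--(a3) via the identification $\{x:\rho_{\alpha}(e,x)=0,\ \alpha<\tau\}=\bigcap_{\alpha<\tau}U_{\alpha}$ and condition d) for invariance. Your write-up merely makes explicit some steps the paper leaves as ``easily checked'' (closedness of the zero sets, the identification of the intersection, and the reverse inclusion $N\subseteq xNx^{-1}$).
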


\begin{proof}
From Lemma~\ref{l20221}, it follows that there exists a family $\{U_{\alpha}: \alpha<\tau\}$ of open neighborhoods of $e$ in $G$ satisfying conditions a)-d) of that lemma. By Lemma~\ref{lll1}, we can find a family $\{N_{\alpha}: \alpha<\tau\}$ of pre-continuous prenorms on $G$ such that the following conditions are satisfied:

\smallskip
a4) for each $\alpha<\tau$, there exists a subsequence $\{U_{\alpha_{i}}: i\in\omega\}$ of $\{U_{\alpha}: \alpha<\tau\}$ such that $U_{\alpha_{0}}=U_{\alpha}$ and $\{x\in G: N_{\alpha}(X)<1/2^{i}\}\subseteq U_{\alpha_{i}}\subseteq\{x\in G: N_{\alpha}(X)<2/2^{i}\};$

For any $x$ and $y$ in $G$, put $\rho_{\alpha}(x, y)=N_{\alpha}(x^{-1}y)$, where $\alpha<\tau$. Since each $N_{\alpha}$ is pre-continuous, it follows that each $\rho_{\alpha}$ is also pre-continuous. Moreover, it is easily checked that each $\rho_{\alpha}$ is left-invariant pseudometric on the set $G$. By a4) and a) of Lemma~\ref{l20221} that a1) holds. Moreover, a3) is obviously satisfied. Now, we only need to prove that a2) holds.

For each $\alpha<\tau$, put $Z_{\alpha}=\{x\in G: N_{\alpha}(x)=0\}$. Then each $Z_{\alpha}$ is closed in $G$, hence $\bigcap_{\alpha<\tau}Z_{\alpha}$ is closed in $G$. Put $Z=\bigcap_{\alpha<\tau}Z_{\alpha}$. Then $Z=\{x\in G: N_{\alpha}(x)=0, \alpha<\tau\}$ is a closed subgroup of $G$. From the definition of each $\rho_{\alpha}$, we have $Z=\{x\in G: \rho_{\alpha}(e, x)=0, \alpha<\tau\}$. We claim that $Z$ is an invariant subgroup of $G$. Indeed, take any $x\in G$. We have to check that $xZx^{-1}=Z$. By a4), it is easily checked that $Z=\bigcap_{\alpha<\tau}U_{\alpha}$, hence it suffices to show that $xZx^{-1}\subseteq U_{\alpha}$ for each $\alpha<\tau$. Fix $\alpha<\tau$. It follows from condition d) of Lemma~\ref{l20221} that there is $\beta<\tau$ such that $xU_{\beta}x^{-1}\subseteq U_{\alpha}$. Since $Z\subseteq U_{\beta}$, we conclude that $xZx^{-1}\subseteq xU_{\beta}x^{-1}\subseteq U_{\alpha}$. Therefore, $Z$ is invariant.
\end{proof}

\begin{theorem}\label{t20225}
Let $G$ be an almost topological group with $inv(G)\leq\tau$. Then, for each open neighborhood $U$ of the neutral element $e$ in $G$, there exists a pre-continuous homomorphism $\pi$ of $G$ onto an almost topological group $H$ with $\chi(H)\leq\tau$ such that $\pi^{-1}(V)\subseteq U$ for some open neighborhood $V$ of the neutral element $e_{H}$ of $H$.
\end{theorem}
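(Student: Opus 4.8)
The plan is to obtain $H$ as a re-topologised quotient of $G$ by the common zero-set of a small family of left-invariant pseudometrics. First I would apply Theorem~\ref{t20224} to the given neighbourhood $U$: since $inv(G)\le\tau$, it supplies a family $\{\rho_{\alpha}:\alpha<\tau\}$ of pre-continuous left-invariant pseudometrics on $G$ with properties (a1)--(a3), and I would also read off from its construction (via condition d) of Lemma~\ref{l20221} together with the inclusions $B_{N_{\alpha}}(1/2^{i})\subseteq U_{\alpha_{i}}\subseteq B_{N_{\alpha}}(2/2^{i})$ used there) the extra clause that the family is closed under conjugation up to reindexing: for all $\alpha<\tau$, $x\in G$, $n\in\mathbb{N}$ there are $\beta<\tau$, $m\in\mathbb{N}$ with $x\,B_{\rho_{\beta}}(1/2^{m})\,x^{-1}\subseteq B_{\rho_{\alpha}}(1/2^{n})$. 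Then I would set $N=\{x\in G:\rho_{\alpha}(e,x)=0\text{ for every }\alpha<\tau\}$, which by (a2) is a closed invariant subgroup, so $H:=G/N$ is a group with canonical homomorphism $\pi:G\to H$; each $\rho_{\alpha}$ drops to a left-invariant pseudometric $\bar\rho_{\alpha}$ on $H$ via $\bar\rho_{\alpha}(xN,yN)=\rho_{\alpha}(x,y)$, well-defined because $N$ lies in the zero-set of $\rho_{\alpha}$ and the underlying prenorm is subadditive, and then $\pi(B_{\rho_{\alpha}}(\varepsilon))=B_{\bar\rho_{\alpha}}(\varepsilon)$ for every $\varepsilon>0$.

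Next I would topologise $H$ by applying Theorem~\ref{t2022} to $\mathscr{U}=\{B_{\bar\rho_{\alpha}}(1/2^{n}):\alpha<\tau,\ n\in\mathbb{N}\}$: conditions (1)--(3) of Theorem~\ref{t0} for $\mathscr{U}$ are the standard one-line estimates for left-invariant pseudometrics (the triangle inequality for (1) and (3), and $N_{\alpha}(g^{-1})=N_{\alpha}(g)$ for (2)), while condition (4) is the descent to $H$ of the conjugation clause above. Theorem~\ref{t2022} then makes $H$ a pre-topological group having $\mathscr{U}$ as a pre-base at $e_{H}$; since $\mathscr{U}$ is symmetric and, for each of its members, contains a member whose square lies inside it (cf.\ Proposition~\ref{p0}), $H$ is a symmetrically and strongly pre-topological group, i.e.\ an almost topological group, and $|\mathscr{U}|\le\tau\cdot\omega=\tau$ gives $\chi(H)\le\tau$.

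It then remains to check the properties of $\pi$. It is a surjective homomorphism by construction, and it is pre-continuous because a basic open subset of $H$ has the form $B_{\bar\rho_{\alpha}}(1/2^{n})(xN)$ and its $\pi$-preimage is $B_{\rho_{\alpha}}(1/2^{n})\,xN=\bigcup_{a\in xN}B_{\rho_{\alpha}}(1/2^{n})a$, which is open in $(G,\tau)$ as a union of right translates of the $\tau$-open set $B_{\rho_{\alpha}}(1/2^{n})$; since $\pi$ is a homomorphism, it then pulls back all open sets of $H$. For the final clause I would use (a1) to fix $\alpha<\tau$ with $B_{\rho_{\alpha}}(1)\subseteq U$ and take $V=B_{\bar\rho_{\alpha}}(1)$, which is open in $H$; since $N$ lies in the zero-set of $\rho_{\alpha}$ one has $B_{\rho_{\alpha}}(1)\,N=B_{\rho_{\alpha}}(1)$, hence $\pi^{-1}(V)=B_{\rho_{\alpha}}(1)\,N=B_{\rho_{\alpha}}(1)\subseteq U$, as required.

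The main obstacle is condition (4) of Theorem~\ref{t0} for $\mathscr{U}$ --- equivalently, joint pre-continuity of the multiplication of $H$ away from the identity, not merely at it. This is exactly the point where the hypothesis $inv(G)\le\tau$ is indispensable: it is needed to control all conjugates of a basic neighbourhood by a $\tau$-sized subfamily, and the conjugation clause extracted from Theorem~\ref{t20224} (ultimately from condition d) of Lemma~\ref{l20221}) is precisely what delivers that control. Everything else is bookkeeping with left-invariant pseudometrics and the quotient constructions already established.
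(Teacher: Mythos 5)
Your proposal is correct and follows essentially the same route as the paper: both take the pseudometrics supplied by Theorem~\ref{t20224} (going back into its construction, via condition d) of Lemma~\ref{l20221}, for the conjugation estimate), quotient by the common zero-set $N=Z$, push the prenorms down to $G/N$, and verify the Theorem~\ref{t0} axioms for the family of balls (the paper's conditions a6)--a9)) to make $H$ an almost topological group with $\chi(H)\leq\tau$, with $\pi^{-1}$ of a unit ball landing inside $U$. Your explicit appeal to Theorem~\ref{t2022} and your direct check of pre-continuity of $\pi$ on all basic open sets are just slightly more detailed versions of steps the paper calls routine.
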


\begin{proof}
We continue to use the objects in the proof of Theorem~\ref{t20224}. In particular, we have a family $\{\rho_{\alpha}: \alpha<\tau\}$ of pseudometrics on $G$ constructed above.

Suppose that $H=G/Z$ is the quotient group, and suppose that $\pi$ is the canonical homomorphism of $G$ onto $H$. For any $A, B\in H$ and $\alpha<\tau$, put $d_{\alpha}(A, B)=\rho_{\alpha}(a, b)$, where $a\in A, b\in B$.

\smallskip
{\bf Claim 1:} The definition of each $d_{\alpha}(A, B)$ does not depend on the choice of $a$ in $A$ and $b$ in $B$.

Indeed, fix any $\alpha<\tau$. For any $a\in A$ and $b\in B$, we have $A=aZ$ and $B=bZ$. It suffices to prove that $\rho_{\alpha}(a_{1}, b_{1})=\rho_{\alpha}(a, b)$ for any $a_{1}\in aZ$ and $b_{1}\in bZ$. We may assume that $b=b_{1}$; otherwise we simply repeat the argument twice. Then $a_{1}=az$ for some $z\in Z$. Hence, $N_{\alpha}(z)=N_{\alpha}(z^{-1})=0$, then it follows from \cite[Lemma 3.4.16]{AT} that $$\rho_{\alpha}(a_{1}, b)=N_{\alpha}(z^{-1}a^{-1}b)= N_{\alpha}(z^{-1})+ N_{\alpha}(a^{-1}b)=N_{\alpha}(a^{-1}b)=\rho_{\alpha}(a, b).$$

We also define a family of functions $N_{H}^{\alpha}$ on $H$ by $N_{H}^{\alpha}(A)=N_{\alpha}(a)$ for each $n\in\omega$, $A\in H$ and $a\in A$. It is obvious that each $N_{H}^{\alpha}$ is well-defined.

From the above definitions, for any $\alpha<\tau$ we have that $d_{\alpha}(\pi(a), \pi(b))=\rho_{\alpha}(a, b)$ for any $a, b\in G$, and $N_{H}^{\alpha}(\pi(a)=N_{\alpha}(a)$ for each $a\in G$. Clearly, each $d_{\alpha}$ is a pseudometric; moreover, each $N_{H}^{\alpha}$ is a prenorm on $H$ satisfying the additional conditions as follows:

\smallskip
a5) If $N_{H}^{\alpha}(A)=0$ for any $\alpha<\tau$, then $A$ is the neutral element $e_{H}$ of $H$.

\smallskip
For any $\varepsilon>0$ and $\alpha<\tau$, put $$B_{\alpha}(\varepsilon)=\{x\in G: N_{\alpha}(x)<\varepsilon\},$$  and $$O_{\alpha}(\varepsilon)=\{X\in H: N_{H}^{\alpha}(X)<\varepsilon\}.$$Obviously, we have $\pi(B_{\alpha}(\varepsilon))=O_{\alpha}(\varepsilon)$ for any $\varepsilon>0$ and $\alpha<\tau$. Note that for each $\alpha<\tau$, it is easily checked that the prenorm $N_{\alpha}$ also satisfies the following conditions:

\smallskip
a6) For every $x\in G$ and every $\varepsilon>0$, there exists $\delta>0$ and $\beta<\tau$ such that $xB_{\beta}(\delta)x^{-1}\subseteq B_{\alpha}(\varepsilon)$.

\smallskip
By a6), for each $\varepsilon>0$ and each $X\in H$, there exists $\delta>0$ and $\beta<\tau$ such that $XO_{\beta}(\delta)X^{-1}\subseteq O_{\alpha}(\varepsilon)$.

\smallskip
a7) For any $\varepsilon>0$ and $\alpha<\tau$, we have $O_{\alpha}(\varepsilon)=(O_{\alpha}(\varepsilon))^{-1}$.

\smallskip
a8) For any $\alpha<\tau$ and $\delta>0$, there exists $\beta<\tau$ and $\varepsilon>0$ such that $(O_{\beta}(\varepsilon))^{2}\subseteq O_{\alpha}(\delta).$

\smallskip
a9) $\{e_{H}\}=\bigcap_{\alpha<\tau, m\in\omega}O_{\alpha}(1/2^{m})$.

Let $\mathscr{F}_{H}$ be the pre-topology generated by the family $\{d_{\alpha}: \alpha<\tau\}$ of pseduometrics on $H$. We will prove that $H$ with this pre-topology is an almost topological group.

Indeed, since each pseduometric $d_{\alpha}$ is left-invariant, it suffices to prove that the family $\{O_{\alpha}(1/2^{m}): \alpha<\tau, m\in\omega\}$ satisfies the axioms in Theorem~\ref{t0} for a pre-base of a group pre-topology at the neutral element. And this is exactly what conditions a6)-a9) guarantee, as is routinely checked. Moreover, by a7), $H$ with the pre-topology $\mathscr{F}_{H}$ is an almost topological group. Clearly, $\chi(H)\leq\tau$.

Finally, since $\pi(B_{\alpha}(\varepsilon))=O_{\alpha}(\varepsilon))$ for any $\alpha<\tau$ and $\varepsilon>0$, it follows that $\pi$ is a pre-continuous at the neutral element, hence $\pi$ is pre-continuous. Moreover, if $x\in G$, $X=\pi(x)$, $\varepsilon>0$ and $\alpha<\tau$, then $N_{\alpha}(x)<\varepsilon$ if and only if $N_{H}^{\alpha}(X)<\varepsilon$. Hence, $\pi^{-1}(O_{\alpha}(\varepsilon))=B_{\alpha}(\varepsilon)$ for any $\varepsilon>0$. In particular, $\pi^{-1}(O_{0}(1))=B_{0}(1)\subseteq U_{0}\subseteq U.$
\end{proof}

\begin{corollary}\label{c20221}
Let $G$ be an $\omega$-narrow almost topological group. Then, for each open neighborhood $U$ of the neutral element $e$ in $G$, there exists a pre-continuous homomorphism $\pi$ of $G$ onto a second-countable almost topological group $H$ such that $\pi^{-1}(V)\subseteq U$ for some open neighborhood $V$ of the neutral element $e_{H}$ of $H$.
\end{corollary}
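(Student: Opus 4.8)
The plan is to deduce this corollary as the special case $\tau=\omega$ of Theorem~\ref{t20225}, after checking that the hypothesis $inv(G)\le\omega$ is automatic here and after upgrading the conclusion ``$\chi(H)\le\omega$'' to ``$H$ is second-countable''.

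First I would observe that, since $G$ is an $\omega$-narrow almost topological group, Proposition~\ref{ppp1} applied with $\tau=\omega$ shows that $G$ is $\omega$-balanced, that is, $inv(G)\le\omega$. Therefore the hypotheses of Theorem~\ref{t20225} are satisfied with $\tau=\omega$, and for the given open neighborhood $U$ of $e$ we obtain a pre-continuous homomorphism $\pi$ of $G$ onto an almost topological group $H$ with $\chi(H)\le\omega$ such that $\pi^{-1}(V)\subseteq U$ for some open neighborhood $V$ of the neutral element $e_{H}$ of $H$. This already gives everything except that the target group is second-countable rather than merely first-countable.

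To close that gap I would argue that $w(H)\le\omega$. Since $H=\pi(G)$ is a pre-continuous homomorphic image of the $\omega$-narrow pre-topological group $G$, Proposition~\ref{p20223} gives that $H$ is $\omega$-narrow, i.e.\ $ib(H)\le\omega$. Combining this with $\chi(H)\le\omega$ and Proposition~\ref{p20224} applied to the almost topological group $H$ yields $w(H)=ib(H)\,\chi(H)\le\omega$, so $H$ is second-countable, and $\pi$ together with $V$ is as required.

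The proof is essentially bookkeeping: the substantive inputs are already isolated as Theorem~\ref{t20225}, Proposition~\ref{ppp1}, Proposition~\ref{p20223} and Proposition~\ref{p20224}. The only point needing any care is that ``second-countable'' is strictly stronger than the ``$\chi(H)\le\omega$'' delivered directly by Theorem~\ref{t20225}; bridging this via $\omega$-narrowness of the image $H$ is where Propositions~\ref{p20223} and~\ref{p20224} enter, and I do not expect any genuine obstacle beyond correctly invoking that $\tau$-narrowness passes to continuous homomorphic images.
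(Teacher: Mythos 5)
Your proposal is correct and follows essentially the same route as the paper: apply Theorem~\ref{t20225} with $\tau=\omega$, then upgrade first-countability of $H$ to second-countability via Proposition~\ref{p20223} ($\omega$-narrowness passes to pre-continuous homomorphic images) and Proposition~\ref{p20224} ($w(H)=ib(H)\chi(H)$). Your explicit invocation of Proposition~\ref{ppp1} to verify $inv(G)\leq\omega$ before applying Theorem~\ref{t20225} is a step the paper leaves implicit, and it is the correct justification.
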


\begin{proof}
By Theorem~\ref{t20225}, there exists a pre-continuous homomorphism $\pi$ of $G$ onto a first-countable almost topological group $H$ and an open neighborhood $V$ of the neutral element in $H$ such that $\pi^{-1}(V)\subseteq U.$ Then it follows from Proposition~\ref{p20223} that $H$ is $\omega$-narrow, hence $H$ is second-countable by Proposition~\ref{p20224}.
\end{proof}

\begin{corollary}\label{t20225}
Let $G$ be an almost topological group with $inv(G)\leq\omega$. Then, for each open neighborhood $U$ of the neutral element $e$ in $G$, there exists a pre-continuous homomorphism $\pi$ of $G$ onto a first-countable almost topological group $H$ such that $\pi^{-1}(V)\subseteq U$ for some open neighborhood $V$ of the neutral element $e_{H}$ of $H$.
\end{corollary}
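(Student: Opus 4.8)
The plan is to obtain this statement immediately as the case $\tau=\omega$ of the theorem proved just above, which asserts that if $G$ is an almost topological group with $inv(G)\leq\tau$, then for each open neighborhood $U$ of $e$ there exist a pre-continuous homomorphism $\pi$ of $G$ onto an almost topological group $H$ with $\chi(H)\leq\tau$ and an open neighborhood $V$ of $e_{H}$ in $H$ with $\pi^{-1}(V)\subseteq U$. So, given an almost topological group $G$ with $inv(G)\leq\omega$ and an open neighborhood $U$ of $e$, I would first apply that theorem with $\tau=\omega$ to produce a pre-continuous homomorphism $\pi: G\rightarrow H$ onto an almost topological group $H$ with $\chi(H)\leq\omega$, together with an open neighborhood $V$ of $e_{H}$ satisfying $\pi^{-1}(V)\subseteq U$.

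The only remaining point is to rephrase the conclusion $\chi(H)\leq\omega$. By the definition of the character of a pre-topological space recalled in Section~2, $\chi(H)\leq\omega$ says exactly that every point of $H$ has a countable local pre-base; since $H$ is a pre-topological group, hence pre-homogeneous, this is equivalent to $e_{H}$ having a countable local pre-base, which is precisely the first-countability of $H$ (the same reformulation already used in Corollary~\ref{c2022}). Therefore $H$ is a first-countable almost topological group, and $\pi$ together with $V$ witnesses the assertion. I do not anticipate any genuine obstacle here: the corollary is a verbatim restatement of the $\tau=\omega$ case of that theorem, with ``$\chi(H)\leq\omega$'' rewritten as ``first-countable'', and the only thing to verify is this harmless identification.

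Should one prefer a self-contained argument, the alternative is to re-run the constructions underlying Theorem~\ref{t20224} (and the theorem following it) with every index set of cardinality $\leq\tau$ replaced by $\omega$: refine $U$ by a single countable chain $\{U_{n}: n\in\omega\}$ of symmetric open neighborhoods of $e$ with $U_{0}\subseteq U$ and $U_{n+1}^{2}\subseteq U_{n}$ (possible since $G$ is an almost topological group), apply Lemma~\ref{lll1} to obtain a pre-continuous prenorm $N$ on $G$ adapted to this chain, put $Z=\{x\in G: N(x)=0\}$ (a closed invariant subgroup, using $inv(G)\leq\omega$ for invariance exactly as in the general case), and take $H=G/Z$ equipped with the pre-topology of the left-invariant pseudometric induced by $N$; then $H$ is an almost topological group, first-countability is guaranteed by the countable pre-base $\{O_{N_{H}}(1/2^{n}): n\in\omega\}$ at $e_{H}$, and $\pi^{-1}(O_{N_{H}}(1))=B_{N}(1)\subseteq U_{0}\subseteq U$. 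This merely repeats the computation already carried out for general $\tau$, so in the write-up I would simply quote the previous theorem.
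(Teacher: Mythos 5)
Your proposal is correct and matches the paper's intent exactly: the corollary is stated there without proof precisely because it is the case $\tau=\omega$ of the theorem on almost topological groups with $inv(G)\leq\tau$ proved just above, and the only observation needed is that $\chi(H)\leq\omega$ means $H$ is first-countable. Your optional self-contained sketch is just a re-run of that theorem's construction and is not needed.
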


\begin{corollary}
Let $G$ be a $\omega$-narrow almost topological group. Then, for each open neighborhood $U$ of the neutral element $e$ in $G$, there exists a pre-continuous homomorphism $\pi$ of $G$ onto a second-countable almost topological group $H$ such that $\pi^{-1}(V)\subseteq U$ for some open neighborhood $V$ of the neutral element $e_{H}$ of $H$.
\end{corollary}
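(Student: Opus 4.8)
The plan is to obtain this corollary by concatenating three results already established above, with essentially no new work. The starting observation is that an $\omega$-narrow almost topological group is automatically $\omega$-balanced: by Proposition~\ref{ppp1}, $G$ being an $\omega$-narrow almost topological group gives $inv(G)\leq\omega$. This is the point worth being careful about, since the hypothesis needed downstream (in Theorem~\ref{t20224} and Theorem~\ref{t20225}) is a bound on the invariance number, not on the index of narrowness directly; so I would explicitly route the argument through Proposition~\ref{ppp1} rather than trying to feed $\omega$-narrowness into those theorems.

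Once we know $inv(G)\leq\omega$, we apply Theorem~\ref{t20225} (in the form recorded in its $inv(G)\leq\omega$ version): for the given open neighborhood $U$ of the neutral element $e$ of $G$, there exist a pre-continuous homomorphism $\pi$ of $G$ onto a first-countable almost topological group $H$ and an open neighborhood $V$ of $e_{H}$ with $\pi^{-1}(V)\subseteq U$. This already produces the map and the neighborhood required in the statement; what remains is only to upgrade ``first-countable'' to ``second-countable'' for $H$.

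For that upgrade I would argue as follows. Since $H=\pi(G)$ is a pre-continuous homomorphic image of the $\omega$-narrow pre-topological group $G$, Proposition~\ref{p20223} shows that $H$ is itself $\omega$-narrow, i.e. $ib(H)\leq\omega$. On the other hand $H$ is first-countable, so $\chi(H)\leq\omega$. Because $H$ is an almost topological group, Proposition~\ref{p20224} gives that the weight of $H$ equals $ib(H)\,\chi(H)\leq\omega\cdot\omega=\omega$. Hence $H$ has a countable pre-base, i.e. $H$ is second-countable, and $\pi$ together with $V$ witnesses the conclusion. I do not anticipate any substantive obstacle: the entire proof is a bookkeeping chain through Proposition~\ref{ppp1}, Theorem~\ref{t20225}, Proposition~\ref{p20223}, and Proposition~\ref{p20224}, and the only thing to watch is the distinction between $\tau$-narrowness and $\tau$-balancedness noted above.
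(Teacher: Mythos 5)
Your proposal is correct and follows essentially the same route as the paper's own proof of this corollary: apply Theorem~\ref{t20225} to get a pre-continuous homomorphism onto a first-countable almost topological group $H$ with $\pi^{-1}(V)\subseteq U$, then use Proposition~\ref{p20223} to see $H$ is $\omega$-narrow and Proposition~\ref{p20224} to conclude $w(H)\leq\omega$. The only difference is that you explicitly invoke Proposition~\ref{ppp1} to pass from $\omega$-narrowness to $inv(G)\leq\omega$ before applying Theorem~\ref{t20225}, a bridging step the paper leaves implicit and which is indeed worth stating.
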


\begin{theorem}\label{t20228}
Each almost topological group $G$ with $inv(G)\leq\tau$ can be embedded as a subgroup into a pre-topological product of almost topological groups of character $\leq\tau.$
\end{theorem}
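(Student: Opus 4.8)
The plan is to imitate the classical proof of Guran's embedding theorem, using Theorem~\ref{t20225} as the engine that manufactures ``small'' quotients. Fix $\tau$ with $inv(G)\le\tau$, and let $\mathcal{N}$ be the family of all open neighbourhoods of the neutral element $e$ of $G$. For each $U\in\mathcal{N}$, Theorem~\ref{t20225} furnishes a pre-continuous homomorphism $\pi_{U}\colon G\to H_{U}$ onto an almost topological group $H_{U}$ with $\chi(H_{U})\le\tau$, together with an open neighbourhood $V_{U}$ of the neutral element $e_{H_{U}}$ of $H_{U}$ such that $\pi_{U}^{-1}(V_{U})\subseteq U$. Form the pre-topological product $P=\prod_{U\in\mathcal{N}}H_{U}$, write $p_{U}\colon P\to H_{U}$ for the projections, and define the diagonal map $\Phi\colon G\to P$ by $\Phi(g)=(\pi_{U}(g))_{U\in\mathcal{N}}$. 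Since each $\pi_{U}$ is a group homomorphism, so is $\Phi$; and since $p_{U}\circ\Phi=\pi_{U}$ is pre-continuous for every $U$, the universal property of the product pre-topology gives that $\Phi$ is pre-continuous. Note also that $P$ is itself a pre-topological group (cf. Proposition~\ref{p20225}), so its left translations are pre-homeomorphisms.

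The next step is injectivity of $\Phi$. Its kernel is $\bigcap_{U\in\mathcal{N}}\ker\pi_{U}$, and $\ker\pi_{U}\subseteq\pi_{U}^{-1}(V_{U})\subseteq U$, whence $\ker\Phi\subseteq\bigcap\mathcal{N}$. Since $G$ is $T_{1}$ (recall that a $T_{0}$ pre-topological group is $T_{1}$ by the argument of Theorem~\ref{t2022111}), we have $\bigcap\mathcal{N}=\{e\}$, so $\Phi$ is one-to-one; consequently $\Phi(G)$ is a subgroup of $P$ and $\Phi$ is an algebraic isomorphism of $G$ onto it. If one does not wish to presuppose $G$ to be $T_{0}$, the same computation embeds the $T_{0}$-quotient of $G$, so I would add the $T_{0}$ hypothesis, as is customary for such embedding statements.

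It then remains to check that $\Phi$ is open as a map of $G$ onto $\Phi(G)$, i.e. that $\Phi$ carries open sets of $G$ to relatively open subsets of $\Phi(G)$. Because $\Phi$ is a homomorphism, left translations are pre-homeomorphisms of $G$, and left translations are pre-homeomorphisms of $P$ carrying $\Phi(G)$ onto itself, it suffices to verify this at $e$. Given $U\in\mathcal{N}$, the single-coordinate cylinder $O_{U}:=p_{U}^{-1}(V_{U})$ is open in $P$, contains $\Phi(e)$, and satisfies $\Phi^{-1}(O_{U})=\pi_{U}^{-1}(V_{U})\subseteq U$; by injectivity this gives $O_{U}\cap\Phi(G)\subseteq\Phi(U)$. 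Translating by an arbitrary $\Phi(g)$ and taking unions, we conclude that $\Phi(W)$ is relatively open in $\Phi(G)$ for every open $W\subseteq G$. Hence $\Phi$ is a pre-continuous, open, bijective homomorphism of $G$ onto the subgroup $\Phi(G)$ of $P$, that is, a pre-topological isomorphism onto a subgroup of a pre-topological product of almost topological groups of character $\le\tau$.

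The one point that deserves care, rather than being a verbatim transcription of the topological-group proof, is the bookkeeping for the product pre-topology: since a pre-topology is closed only under arbitrary unions and not under finite intersections, the ``basic'' open sets of $\prod_{U}H_{U}$ are exactly the single-coordinate cylinders $p_{U}^{-1}(W)$ and their unions. This is precisely what makes both the coordinatewise criterion for pre-continuity of $\Phi$ and the openness computation above run cleanly, and I would make sure to state (or cite from \cite{LCL}) this description before using it. Everything else — that $\pi_U\circ\Phi$ equals the projection composite, that $\Phi(G)$ inherits a subgroup structure, and the translation argument — is routine.
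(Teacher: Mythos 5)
Your proposal is correct and is essentially the paper's own argument: the diagonal product of the homomorphisms supplied by Theorem~\ref{t20225} over all open neighbourhoods of $e$, with pre-continuity checked coordinatewise, injectivity from $\ker\pi_{U}\subseteq\pi_{U}^{-1}(V_{U})\subseteq U$, and openness onto the image obtained from the single-coordinate cylinders $p_{U}^{-1}(V_{U})$ plus translation. Your explicit $T_{1}$/kernel discussion (and the suggestion to state a $T_{0}$ hypothesis) only makes precise what the paper dispatches tersely by citing its complete-regularity theorem, so it is the same approach, just more carefully bookkept.
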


\begin{proof}
Let $\mathscr{B}=\{U_{i}: i\in I\}$ of all open neighborhoods of the neutral element $e$ in $G$. By Theorem~\ref{t20225}, for each $i\in I$ there exists a pre-continuous homomorphism $\pi_{i}$ of $G$ onto an almost topological group $H_{i}$ with $\chi(H_{i})\leq\tau$ such that $\pi_{i}^{-1}(V_{i})\subseteq U_{i}$ for some open neighborhood $V_{i}$ of the neutral element in $H_{i}$. Suppose that $\prod=\prod_{i\in I}H_{i}$ is the pre-topological product of the pre-topological groups $H_{i}$'s, and suppose that $\varphi: G\rightarrow\prod$ is the diagonal product of the homomorphism $\pi_{i}$, where $i\in I$. Obviously, $\varphi$ is the pre-continuous homomorphism of $G$ to $\prod$. Now it suffices to prove that $\varphi$ is a pre-topological embedding.

Let $H=\varphi(G)$. Clearly, it is easily checked that $H$ is an almost topological group. By Theorem~\ref{t20227}, $\varphi: G\rightarrow H$ is a bijective mapping. Take an arbitrary open neighborhood $U$ of $e$ in $G$. Hence there exists $i\in I$ such that $U_{i}\subseteq U$ and then $\pi^{-1}_{i}(V_{i})\subseteq U_{i}$ by the choice of the open neighborhood $V_{i}$ of the neutral element in $H_{i}$. Let $p_{i}$ be the projection of $\prod$ onto the factor $H_{i}$. Then the set $W=p_{i}^{-1}(V_{i})$ is an open neighborhood of the neutral element in $\prod$. Obviously, we have $\pi_{i}=p_{i}\circ\varphi$ for each $i\in I$, hence $\varphi^{-1}(W)=\pi_{i}^{-1}(V_{i})\subseteq U_{i}\subseteq U.$ Therefore, $V=W\cap H$ is an open neighborhood of the neutral element in $H$ satisfying $\varphi^{-1}(O)\subseteq U.$

Therefore, $\varphi: G\rightarrow H$ is a pre-topological isomorphism.
\end{proof}

Now we can prove one of the main results in this section. Indeed, the following theorem generalizes the well-known Guran's Theorem.

\begin{theorem}\label{t202212}
An almost topological group $G$ is $\tau$-narrow if and only if $G$ can be embedded as a subgroup of a pre-topological product of almost topological groups of weight less than or equal to $\tau$.
\end{theorem}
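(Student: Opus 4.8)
The plan is to prove both implications by assembling results already established. For the necessity, suppose $G$ is a $\tau$-narrow almost topological group. By Proposition~\ref{ppp1}, $G$ is $\tau$-balanced, i.e. $\mathrm{inv}(G)\leq\tau$, so Theorem~\ref{t20228} applies and yields a pre-topological embedding of $G$ into a pre-topological product $\prod_{i\in I}H_{i}$, where each $H_{i}$ is an almost topological group with $\chi(H_{i})\leq\tau$ and, by the construction in that proof, $H_{i}$ is the image of $G$ under a pre-continuous surjective homomorphism $\pi_{i}\colon G\to H_{i}$. The only gap between this and the statement is that we are given $\chi(H_{i})\leq\tau$ rather than $w(H_{i})\leq\tau$. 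To close it, note that since $G$ is $\tau$-narrow and $\pi_{i}$ is a pre-continuous homomorphism onto $H_{i}$, Proposition~\ref{p20223} gives $ib(H_{i})\leq\tau$; then Proposition~\ref{p20224} yields $w(H_{i})=ib(H_{i})\chi(H_{i})\leq\tau$. Hence $G$ embeds into a pre-topological product of almost topological groups of weight $\leq\tau$.

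For the sufficiency, suppose $G$ embeds as a subgroup into a pre-topological product $P=\prod_{i\in I}H_{i}$ of almost topological groups with $w(H_{i})\leq\tau$ for each $i$. Applying Proposition~\ref{p20224} to each factor gives $ib(H_{i})\leq w(H_{i})\leq\tau$, so every $H_{i}$ is $\tau$-narrow. By Proposition~\ref{p20225}, the pre-topological product $P$ is then $\tau$-narrow; moreover $P$ is again an almost topological group, since a product of symmetric pre-bases at the identities of the factors is a symmetric pre-base at the identity of $P$, and for any basic neighbourhood $\prod_{i}U_{i}$ with only finitely many nontrivial coordinates, choosing $V_{i}$ with $V_{i}^{2}\subseteq U_{i}$ in each such coordinate gives $(\prod_{i}V_{i})^{2}\subseteq\prod_{i}U_{i}$. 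Therefore $P$ is a $\tau$-narrow almost topological group, and by Theorem~\ref{t202210} the subgroup $G$ of $P$ is $\tau$-narrow.

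I expect the main obstacle to be the passage from ``character $\leq\tau$'' to ``weight $\leq\tau$'' in the necessity direction: Theorem~\ref{t20228} is phrased only in terms of character, so one must identify the factor groups $H_{i}$ as surjective pre-continuous homomorphic images of $G$ in order to transfer $\tau$-narrowness to them via Proposition~\ref{p20223} and then invoke the weight formula $w=ib\cdot\chi$ of Proposition~\ref{p20224}. A secondary, routine point is verifying that a pre-topological product of almost topological groups is again an almost topological group, which is needed before Theorem~\ref{t202210} can be applied; this is essentially the same verification already implicit in the proof of Theorem~\ref{t20228}.
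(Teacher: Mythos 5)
Your proof is correct and follows essentially the same route as the paper: Theorem~\ref{t20228} (via $\tau$-narrow $\Rightarrow$ $\tau$-balanced) combined with Propositions~\ref{p20223} and~\ref{p20224} for the necessity, and Propositions~\ref{p20225}, \ref{p20224} with Theorem~\ref{t202210} for the sufficiency. You merely spell out details the paper leaves implicit (the appeal to Proposition~\ref{ppp1}, the passage from character to weight via the factors being homomorphic images, and the check that the product is again an almost topological group), which is a faithful expansion rather than a different argument.
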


\begin{proof}
By Proposition~\ref{u0}, each almost topological group of weight $\leq\tau$ is $\tau$-narrow. Then, by Theorem~\ref{t202210} and Proposition~\ref{p20225}, each subgroup of a pre-topological product $\prod_{i\in I}H_{i}$ of almost topological groups is $\tau$-narrow provided that $w(H_{i})\leq\tau$ for each $i\in I$.

Conversely, suppose that almost topological group $G$ is $\tau$-narrow. By Theorem~\ref{t20228}, Propositions~\ref{p20224} and~\ref{p20223}, it is easily seen that we can identify $G$ with a subgroup of a pre-topological product $\prod=\prod_{i\in I}H_{i}$ of almost topological groups $H_{i}$ satisfying $w(H_{i})\leq\tau$ for each $i\in I$.
\end{proof}

The following theorem gives another characterization of $\tau$-narrow almost topological groups. First, we give an obviously technical lemma.

\begin{lemma}\label{l202211}
For each $\alpha<\tau$, let $H_{\alpha}$ be a pre-topological space with $w(H_{\alpha})\leq\tau$. Then the pre-topological product $H=\prod_{\alpha<\tau}H_{\alpha}$ satisfies $c(H)\leq\tau$.
\end{lemma}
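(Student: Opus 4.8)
The plan is to argue by contradiction, reducing the statement to finite subproducts by means of a $\Delta$-system (sunflower) argument. Throughout I may assume $\tau\geq\omega$, which is the only case of interest here; recall that the pre-topological product $H=\prod_{\alpha<\tau}H_{\alpha}$ has as a pre-base the family of all boxes $\prod_{\alpha<\tau}U_{\alpha}$ with each $U_{\alpha}$ open in $H_{\alpha}$ and $U_{\alpha}=H_{\alpha}$ for all but finitely many $\alpha$, the \emph{support} of such a box being the finite set $\{\alpha:U_{\alpha}\neq H_{\alpha}\}$. I would first record two elementary facts. (i) For every pre-topological space $X$ one has $c(X)\leq w(X)$: given a cellular family $\mathscr{V}$, choose for each $V\in\mathscr{V}$ a non-empty member $B_{V}$ of a fixed pre-base of $X$ with $B_{V}\subseteq V$; since the members of $\mathscr{V}$ are pairwise disjoint and non-empty, the $B_{V}$ are pairwise distinct, so $|\mathscr{V}|\leq w(X)$. (ii) A finite product $\prod_{\alpha\in R}H_{\alpha}$ of pre-topological spaces of weight $\leq\tau$ again has weight $\leq\tau$, because the boxes $\prod_{\alpha\in R}B_{\alpha}$ with each $B_{\alpha}$ ranging over a fixed pre-base of $H_{\alpha}$ form a pre-base of cardinality at most $\tau^{|R|}=\tau$. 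Combining (i) and (ii) gives $c\big(\prod_{\alpha\in R}H_{\alpha}\big)\leq\tau$ for every finite $R\subseteq\tau$.

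Suppose now, towards a contradiction, that $c(H)>\tau$, and fix a cellular family $\mathscr{V}$ in $H$ with $|\mathscr{V}|=\tau^{+}$. For each $V\in\mathscr{V}$ choose a non-empty basic box $W_{V}=\prod_{\alpha<\tau}W_{V}^{\alpha}\subseteq V$ with finite support $S_{V}$; the $W_{V}$ are still pairwise disjoint. Since $\tau^{+}$ is a regular uncountable cardinal, the $\Delta$-system lemma yields $\mathscr{V}'\subseteq\mathscr{V}$ with $|\mathscr{V}'|=\tau^{+}$ such that $\{S_{V}:V\in\mathscr{V}'\}$ is a $\Delta$-system with root $R$, a finite subset of $\tau$. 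The key point is that for distinct $V,V'\in\mathscr{V}'$ the disjointness of $W_{V}$ and $W_{V'}$ is detected on the coordinates in $R$: for $\alpha\notin S_{V}$ we have $W_{V}^{\alpha}=H_{\alpha}\supseteq W_{V'}^{\alpha}\neq\emptyset$, and symmetrically for $\alpha\notin S_{V'}$, so if $W_{V}\cap W_{V'}=\emptyset$ then $W_{V}^{\alpha}\cap W_{V'}^{\alpha}=\emptyset$ for some $\alpha\in S_{V}\cap S_{V'}=R$. Hence, writing $p_{R}\colon H\to\prod_{\alpha\in R}H_{\alpha}$ for the projection, the sets $p_{R}(W_{V})=\prod_{\alpha\in R}W_{V}^{\alpha}$ with $V\in\mathscr{V}'$ are non-empty, open, and pairwise disjoint, so they form a cellular family of cardinality $\tau^{+}$ in $\prod_{\alpha\in R}H_{\alpha}$, contradicting $c\big(\prod_{\alpha\in R}H_{\alpha}\big)\leq\tau$ established above. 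Therefore $c(H)\leq\tau$.

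The only step requiring genuine care is the verification that disjointness of the selected basic boxes is governed entirely by the root coordinates; once the $\Delta$-system has been extracted this is a short computation with products, and the finite-product case is disposed of by the trivial bound $c\leq w$. Apart from invoking the (ZFC) $\Delta$-system lemma for families of size $\tau^{+}$ of finite sets, I do not anticipate any real obstacle.
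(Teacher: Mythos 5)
Your proof is correct, and the paper in fact states this lemma without any proof (calling it ``an obviously technical lemma''), so your argument --- reduction to finite subproducts via the $\Delta$-system lemma together with the bounds $c\leq w$ and $w\bigl(\prod_{\alpha\in R}H_{\alpha}\bigr)\leq\tau$ for finite $R$ --- is precisely the standard argument the authors presumably intend, and your finite-support-box reading of the product pre-topology matches the paper's usage (e.g.\ in the proof of Theorem~\ref{t20222022}). The only point deserving a word is the degenerate case of an empty root $R=\emptyset$: there the projected sets are not pairwise disjoint as you claim, but your ``key point'' already yields the contradiction directly, since two non-empty basic boxes with disjoint supports must intersect.
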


\begin{theorem}
An almost topological group $G$ is $\tau$-narrow if and only if it can be embedded as a subgroup into an almost topological group $H$ satisfying $c(H)\leq\tau$.
\end{theorem}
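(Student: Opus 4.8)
The plan is to prove the equivalence by establishing each implication separately, using the results of this section. I would begin with the sufficiency, which is the short half. Suppose $G$ is pre-topologically isomorphic to a subgroup of an almost topological group $H$ with $c(H)\le\tau$. Since $H$ is an almost topological group, Theorem~\ref{t202211} yields $ib(H)\le c(H)\le\tau$, so $H$ is $\tau$-narrow, and then Theorem~\ref{t202210} shows that every subgroup of $H$ is $\tau$-narrow; in particular $G$ is $\tau$-narrow. It is worth noting that this is the only place the ``almost topological'' hypothesis on $H$ is used, via Theorem~\ref{t202211}; for a general pre-topological $H$ the inequality $ib\le c$ is open, cf.\ Question~\ref{q2022888}.

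For the necessity, assume $G$ is a $\tau$-narrow almost topological group. By Theorem~\ref{t202212}, $G$ can be embedded as a subgroup into a pre-topological product $H=\prod_{i\in I}H_{i}$ in which every factor $H_{i}$ is an almost topological group with $w(H_{i})\le\tau$. First I would record that $H$ is itself an almost topological group: the product pre-topology has, at the identity, a pre-base consisting of finite intersections $\bigcap_{j}p_{i_{j}}^{-1}(U_{j})$ of cylinders over symmetric open neighbourhoods $U_{j}$ of $e$ in the factors, and both the symmetry of such sets and the inclusion $V^{2}\subseteq U$ are inherited coordinatewise; this is already implicit in the proof of Theorem~\ref{t202212}. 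It then remains to verify that $c(H)\le\tau$, and with that the embedding $G\hookrightarrow H$ is the desired one.

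The one genuine obstacle is the size of the index set $I$: in the embedding furnished by Theorem~\ref{t202212} (ultimately by Theorem~\ref{t20228}) one takes $I$ to be the family of all open neighbourhoods of $e$ in $G$, which in general is much larger than $\tau$, so Lemma~\ref{l202211} does not apply verbatim. The point, and the part of the argument I would spell out, is that the $\Delta$-system argument behind Lemma~\ref{l202211} is indifferent to the cardinality of $I$: given a cellular family $\{O_{\xi}:\xi<\tau^{+}\}$ in $H$, shrink each $O_{\xi}$ to a basic open set supported on a finite set $S_{\xi}\subseteq I$; since $\tau^{+}$ is a regular uncountable cardinal, the $\Delta$-system lemma produces a subfamily of size $\tau^{+}$ whose supports form a $\Delta$-system with finite root $R$; for members of this subfamily disjointness can only be witnessed at coordinates lying in $R$, so their restrictions to $\prod_{i\in R}H_{i}$ constitute a cellular family of cardinality $\tau^{+}$ there. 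But $\prod_{i\in R}H_{i}$ is a finite product of pre-topological spaces of weight $\le\tau$, hence has weight $\le\tau$ and therefore cellularity $\le\tau$, a contradiction. Thus $c(H)\le\tau$, and the proof is complete. I expect this $\Delta$-system step (together with the routine check that the product is almost topological) to be the only nontrivial points; everything else is a direct appeal to Theorems~\ref{t202210}, \ref{t202211} and~\ref{t202212}.
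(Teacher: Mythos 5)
Your proof is correct and follows essentially the same route as the paper: sufficiency via Theorems~\ref{t202211} and~\ref{t202210}, and necessity via the embedding of Theorem~\ref{t202212} together with a cellularity bound for the pre-topological product. Your $\Delta$-system argument in fact repairs a point the paper glosses over---Lemma~\ref{l202211} is stated only for $\tau$-many factors, whereas the product furnished by Theorem~\ref{t202212} is indexed by the (possibly much larger) family of all neighbourhoods of $e$---and your explicit check that this product is itself an almost topological group is likewise a detail the paper leaves implicit.
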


\begin{proof}
By Theorems~ \ref{t202211} and~\ref{t202210}, each subgroup of an almost topological group $H$ with $c(H)\leq\tau$ is $\tau$-narrow. Conversely, by Theorem~\ref{t202212}, a $\tau$-narrow almost topological group $G$ is pre-topologically isomorphic to a subgroup of a pre-topological product $H$ of almost topological groups $H_{i}$ with $w(H_{i})\leq\tau$. By Lemma~\ref{l202211}, $c(H)\leq\tau$, as desired.
\end{proof}

A pre-topological space $X$ is said to be {\it $\sigma$-compact} if $X=\bigcup_{n\in\omega}X_{n}$, where each $X_{n}$ is compact. Moreover, a pre-topological group is said to be {\it $k$-separable} if it has a dense $\sigma$-compact subgroup.

\begin{lemma}\label{l2022111}
The $\sigma$-product of any family of compact $T_{2}$ pre-topological spaces is $\sigma$-compact.
\end{lemma}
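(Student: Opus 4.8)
The plan is to present the $\sigma$-product as a countable increasing union of compact subspaces and to verify compactness of each member \emph{directly}, by induction, rather than by exhibiting it as a closed subspace of the full product. Write the family as $\{X_i:i\in I\}$, fix the base point $a=(a_i)_{i\in I}\in\prod_{i\in I}X_i$ defining the $\sigma$-product $P=\sigma_a\prod_{i\in I}X_i=\{x\in\prod_{i\in I}X_i: x_i=a_i\text{ for all but finitely many }i\}$, and equip $\prod_{i\in I}X_i$ with the natural product pre-topology, whose pre-base consists of the cylinders $p_i^{-1}(U)$ ($i\in I$, $U$ open in $X_i$), where $p_i$ is the $i$-th projection. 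For $n\in\omega$ put $C_n=\{x\in\prod_{i\in I}X_i:|\{i\in I:x_i\neq a_i\}|\le n\}$. Then $C_0\subseteq C_1\subseteq\cdots$ and $P=\bigcup_{n\in\omega}C_n$, so it suffices to show that each $C_n$ is compact.

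Two easy facts about pre-topological spaces are needed first. (i) A closed subset $C$ of a compact pre-topological space $Z$ is compact: if $\mathscr{V}$ is an open cover of $C$ then $\mathscr{V}\cup\{Z\setminus C\}$ is an open cover of $Z$, and a finite subcover of $Z$ yields a finite subcover of $C$; in particular $X_{i_0}\setminus U$ is compact whenever $U$ is open in $X_{i_0}$. (ii) Since in a pre-topological space every open set is a union of pre-base members, it suffices to test compactness against covers by pre-base sets: a pre-base cover admitting a finite subcover immediately refines any given open cover to one with a finite subcover. Combining (ii) with the fact that the pre-base of a product $Y\times Z$ consists of the cylinders $V\times Z$ ($V$ open in $Y$) and $Y\times W$ ($W$ open in $Z$), one gets at once that $Y\times Z$ is compact whenever $Y$ and $Z$ are: in a pre-base cover of $Y\times Z$, either the sets $V$ already cover $Y$, or some $y_0\in Y$ is missed by all of them, in which case $\{y_0\}\times Z$ must be covered by cylinders $Y\times W$, forcing the $W$'s to cover $Z$; in either case a finite subcover is extracted.

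Now the induction on $n$. The case $n=0$ is trivial, as $C_0=\{a\}$. For the inductive step, let $\mathscr{S}$ be a cover of $C_n$ by pre-base sets, i.e. by sets $C_n\cap p_i^{-1}(U)$ with $U$ open in $X_i$. Pick $S_0=C_n\cap p_{i_0}^{-1}(U_0)\in\mathscr{S}$ containing the base point $a$, so $a_{i_0}\in U_0$, and set $Y_0=X_{i_0}\setminus U_0$; then $a_{i_0}\notin Y_0$, $Y_0$ is compact by (i), and $C_n\setminus S_0=\{x\in C_n:x_{i_0}\in Y_0\}$ consists of points differing from $a$ in the coordinate $i_0$ and in at most $n-1$ further coordinates, all in $I\setminus\{i_0\}$. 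The coordinate-splitting map $x\mapsto\bigl(x_{i_0},\,x\upharpoonright(I\setminus\{i_0\})\bigr)$ is then a pre-homeomorphism of $C_n\setminus S_0$ onto $Y_0\times C_{n-1}'$, where $C_{n-1}'\subseteq\prod_{j\in I\setminus\{i_0\}}X_j$ is the analogous ``$\le n-1$'' set for the base point $a\upharpoonright(I\setminus\{i_0\})$. By the inductive hypothesis $C_{n-1}'$ is compact, hence so is $Y_0\times C_{n-1}'$, hence so is $C_n\setminus S_0$; since $\mathscr{S}$ covers it, finitely many members $S_1,\dots,S_m$ of $\mathscr{S}$ suffice, and $\{S_0,S_1,\dots,S_m\}$ is a finite subcover of $C_n$. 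This completes the induction, so $P=\bigcup_{n\in\omega}C_n$ is $\sigma$-compact; Hausdorffness is inherited by every subspace and product used, so nothing is lost on that front.

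The step I expect to be the main obstacle is precisely where the naive topological argument breaks: in a pre-topological space a finite intersection of open sets need not be open, so $C_n$ is \emph{not} closed in $\prod_{i\in I}X_i$ (any pre-base neighbourhood $p_i^{-1}(U)$ of a point outside $C_n$ still meets $C_n$), and one cannot simply invoke Tychonoff together with ``closed-in-compact is compact''. The inductive ``peel off one coordinate, landing in $Y_0\times C_{n-1}'$'' device is designed to bypass this. The two points requiring genuine care are: that under the coordinate-splitting bijection the subspace pre-topology on $C_n\setminus S_0$ really coincides with the product pre-topology of $Y_0\times C_{n-1}'$ (match the pre-base cylinders on both sides, distinguishing the coordinate $i_0$ from the rest), and that passing from an arbitrary open cover to a pre-base cover is legitimate in the pre-topological setting (fact (ii) above); both are routine once isolated.
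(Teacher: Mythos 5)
Your proof is correct, and although it starts from the same decomposition as the paper --- writing the $\sigma$-product as the increasing union of the sets $Y_{n}$ of points differing from the centre in at most $n$ coordinates --- the way each piece is handled is genuinely different. The paper disposes of $Y_{n}$ in one line: it is closed in the product because the factors are $T_{2}$, and hence (implicitly invoking compactness of the full pre-topological product and the fact that closed subsets of compact pre-topological spaces are compact) each $Y_{n}$ is compact. You instead prove compactness of each $C_{n}$ directly, by induction on $n$: reduce to covers by pre-base cylinders, remove one cylinder $C_{n}\cap p_{i_{0}}^{-1}(U_{0})$ through the base point, and identify what remains with $(X_{i_{0}}\setminus U_{0})\times C_{n-1}'$, which is compact by the inductive hypothesis together with your two-factor product lemma. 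If the product pre-topology is the one generated by the single-coordinate cylinders $p_{i}^{-1}(U)$ --- the coarsest one making all projections pre-continuous, and the reading under which, for instance, the diagonal map in Theorem~\ref{t20228} is ``obviously'' pre-continuous --- then your criticism of the paper's step is exactly right: already $Y_{1}$ is dense in the product (compare Lemma~\ref{l202222}), so $Y_{n}$ is not closed for $n\geq 1$ unless it is the whole product, the classical ``closed in compact'' shortcut is unavailable, and your induction is a repair rather than merely an alternative. The paper's closedness claim is valid only if the finite boxes $\bigcap_{i\in C}p_{i}^{-1}(U_{i})$ are declared to form a pre-base of the product; but in that finer pre-topology your reduction to single-cylinder covers would no longer test all open covers, so each of the two arguments works on exactly one of the two possible meanings of ``pre-topological product'', and the intended definition ought to be made explicit. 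A further gain of your route is that it needs neither a Tychonoff-type theorem for pre-topological products nor the $T_{2}$ hypothesis (which you carry along but never actually use), whereas the paper's argument needs both compactness of the full product and $T_{1}$-type separation of the factors.
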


\begin{proof}
Let $\{X_{\alpha}: \alpha\in I\}$ be a family of compact $T_{2}$ pre-topological spaces and let $X=\prod_{\alpha\in I}X_{\alpha}$. Suppose that $Y$ be the corresponding $\sigma$-product with center at $b\in X$. Then, for each $y\in Y$, only finitely many coordinates $y_{\alpha}$ of $y$ are distinct from the corresponding coordinates $b_{\alpha}$ of $b$. Denote by $r(y)$ the number of coordinates of a point $y\in Y$ distinct from those of $b$. For each $n\in\omega$, put $$Y_{n}=\{y\in Y: r(y)\leq n\}.$$ Obviously, we have $Y=\bigcup_{n\in\omega}Y_{n}$, and each $Y_{n}$ is closed in the product space $X$ since each $X_{\alpha}$ is $T_{2}$. Therefore, $Y$ is $\sigma$-compact.
\end{proof}

By Lemma~\ref{l2022111}, we have the following lemma by a similar proof of \cite[Proposition 1.6.41]{AT}, so we left the proof for the reader.

\begin{lemma}\label{l202210}
The $\sigma$-product of any family of $\sigma$-compact $T_{2}$ pre-topological spaces is $\sigma$-compact.
\end{lemma}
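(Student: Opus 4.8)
The plan is to follow the proof of \cite[Proposition 1.6.41]{AT}: I will express the $\sigma$-product of the $\sigma$-compact factors as a countable union of $\sigma$-products of compact factors, each of which is $\sigma$-compact by Lemma~\ref{l2022111}. So let $\{X_{\alpha}:\alpha\in I\}$ be the given family of $\sigma$-compact $T_{2}$ pre-topological spaces, let $X=\prod_{\alpha\in I}X_{\alpha}$, and let $Y$ be the $\sigma$-product with some fixed center $b=(b_{\alpha})_{\alpha\in I}\in X$. For each $\alpha$ write $X_{\alpha}=\bigcup_{n\in\omega}C_{\alpha,n}$ with every $C_{\alpha,n}$ compact. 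Replacing $C_{\alpha,n}$ by $\{b_{\alpha}\}\cup\bigcup_{k\leq n}C_{\alpha,k}$, which is a finite union of compact sets and hence compact, I may assume that $C_{\alpha,0}\subseteq C_{\alpha,1}\subseteq\cdots$ and that $b_{\alpha}\in C_{\alpha,0}$ for every $\alpha$.

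Next, for each $n\in\omega$ let $P_{n}$ be the $\sigma$-product of the family $\{C_{\alpha,n}:\alpha\in I\}$ with center $b$; this is well defined because $b_{\alpha}\in C_{\alpha,n}$ for all $\alpha$, and $P_{n}$ is naturally a subspace of $Y$. Each $C_{\alpha,n}$ is a compact $T_{2}$ pre-topological space, being a compact subspace of the $T_{2}$ space $X_{\alpha}$, so Lemma~\ref{l2022111} applies and yields that every $P_{n}$ is $\sigma$-compact. I then check that $Y=\bigcup_{n\in\omega}P_{n}$. The inclusion $\bigcup_{n}P_{n}\subseteq Y$ is immediate. For the reverse inclusion, take $y\in Y$; then $y_{\alpha}=b_{\alpha}$ for all $\alpha$ outside some finite set $\{\alpha_{1},\dots,\alpha_{k}\}$, and for each $i$ we may pick $n_{i}$ with $y_{\alpha_{i}}\in C_{\alpha_{i},n_{i}}$. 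Putting $n=\max\{n_{1},\dots,n_{k}\}$ and using that the $C_{\alpha,m}$ increase in $m$, we get $y_{\alpha_{i}}\in C_{\alpha_{i},n}$ for every $i$, while $y_{\alpha}=b_{\alpha}\in C_{\alpha,0}\subseteq C_{\alpha,n}$ for the remaining $\alpha$; hence $y\in P_{n}$. Thus $Y$ is a countable union of $\sigma$-compact subspaces, and therefore $\sigma$-compact.

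I do not expect a genuine obstacle here, since the real content is carried by Lemma~\ref{l2022111}. The only points requiring a little care are the preliminary normalization of the compact pieces (making them increasing and forcing each to contain the corresponding coordinate of $b$) and the observation that the product pre-topology on $P_{n}$ coincides with the subspace pre-topology it inherits from $Y$; both are routine once one recalls that finite unions of compact sets are compact and that subspaces and products commute in the pre-topological setting.
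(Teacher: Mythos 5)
Your argument is correct and follows exactly the route the paper intends: the paper omits the proof, saying it is obtained from Lemma~\ref{l2022111} by an argument similar to \cite[Proposition 1.6.41]{AT}, and your decomposition of the $\sigma$-product into the increasing union of $\sigma$-products $P_{n}$ of the compact pieces $C_{\alpha,n}$ (normalized to be increasing and to contain $b_{\alpha}$) is precisely that argument. The two routine points you flag (finite unions of compact sets are compact, and subspace/product pre-topologies are compatible) are handled correctly, so there is nothing to add.
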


The following lemma is obvious.

\begin{lemma}\label{l202222}
The $\sigma$-product of any family $\mathscr{A}$ of pre-topological spaces is dense in the product of the family $\mathscr{A}$.
\end{lemma}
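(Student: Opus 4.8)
The plan is to unwind the two definitions involved and then check density against a convenient pre-base of the product pre-topology. Fix a family $\mathscr{A}=\{X_{\alpha}:\alpha\in I\}$ of pre-topological spaces; we may assume every $X_{\alpha}$ is non-empty, since otherwise the product is empty and the statement is vacuous. Put $X=\prod_{\alpha\in I}X_{\alpha}$ with the product pre-topology, fix a center $b\in X$, and let $Y$ denote the corresponding $\sigma$-product, that is, $Y=\{y\in X:y_{\alpha}=b_{\alpha}\ \text{for all but finitely many}\ \alpha\in I\}$, exactly as in the proof of Lemma~\ref{l2022111}. Recall that a pre-base for the product pre-topology is given by the finite-support ``open boxes'' $\prod_{\alpha\in I}U_{\alpha}$, where each $U_{\alpha}$ is open in $X_{\alpha}$ and $U_{\alpha}=X_{\alpha}$ for all but finitely many $\alpha$; consequently every non-empty open subset $W$ of $X$ is a union of such boxes, at least one of which is non-empty.

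First I would pick a non-empty basic box $B=\prod_{\alpha\in I}U_{\alpha}\subseteq W$, and set $F=\{\alpha\in I:U_{\alpha}\neq X_{\alpha}\}$, a finite subset of $I$; since $B\neq\emptyset$, each $U_{\alpha}$ is non-empty, so for every $\alpha\in F$ I may choose a point $u_{\alpha}\in U_{\alpha}$. Then I would define $y\in X$ by setting $y_{\alpha}=u_{\alpha}$ for $\alpha\in F$ and $y_{\alpha}=b_{\alpha}$ for $\alpha\in I\setminus F$. By construction $y$ differs from the center $b$ in at most the finitely many coordinates indexed by $F$, so $y\in Y$; and $y_{\alpha}\in U_{\alpha}$ for every $\alpha\in I$ (by the choice of $u_{\alpha}$ when $\alpha\in F$, and because $U_{\alpha}=X_{\alpha}$ otherwise), so $y\in B\subseteq W$. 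Hence $W\cap Y\neq\emptyset$. Since $W$ was an arbitrary non-empty open subset of $X$, the set $Y$ is dense in $X$, which is precisely the assertion of the lemma.

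This argument has no real obstacle; the only point requiring a moment's attention is to record the correct pre-base of the product pre-topology (the finite-support open boxes), after which the required point of $Y$ is produced, exactly as in the classical topological case, by altering the center $b$ on a finite set of coordinates — which is why the statement was described as obvious.
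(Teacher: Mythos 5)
Your proof is correct and is exactly the argument the paper has in mind: the paper states this lemma without proof (declaring it obvious), and your verification — intersecting an arbitrary non-empty open set with a non-empty finite-support box from the pre-base and modifying the center $b$ on those finitely many coordinates — is the standard and intended justification, consistent with how the product pre-topology is used elsewhere in the paper (e.g.\ in Theorem~\ref{t20222022}). No gaps.
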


The following theorem gives a generalization of well-known theorem of Pestov's.

\begin{theorem}
The class of $\omega$-narrow almost topological groups coincides with the class of subgroups of $k$-separable almost topological groups.
\end{theorem}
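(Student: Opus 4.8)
The plan is to establish the two inclusions separately, using the Guran-type embedding theorem (Theorem~\ref{t202212}) for the nontrivial direction and the $\sigma$-product lemmas to produce the required dense $\sigma$-compact subgroup.

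First I would show that every subgroup of a $k$-separable almost topological group is $\omega$-narrow. So let $G$ be a subgroup of a $k$-separable almost topological group $K$, and let $D$ be a dense $\sigma$-compact subgroup of $K$. A $\sigma$-compact pre-topological space is Lindel\"of: given an open cover, take a finite subcover of each compact piece and collect them. Hence $D$, regarded as a pre-topological group with the induced pre-topology, is $\omega$-narrow by Corollary~\ref{c20222}. Since $D$ is dense in $K$, Theorem~\ref{t202213} yields that $K$ is $\omega$-narrow, and then Theorem~\ref{t202210}, applied to the subgroup $G$ of the $\omega$-narrow almost topological group $K$, gives that $G$ is $\omega$-narrow.

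For the converse, let $G$ be an $\omega$-narrow almost topological group. By Theorem~\ref{t202212} with $\tau=\omega$, $G$ embeds as a subgroup of a pre-topological product $K=\prod_{i\in I}H_{i}$, where each $H_{i}$ is an almost topological group with $w(H_{i})\leq\omega$; moreover, the factors $H_{i}$ produced in that argument are $T_{0}$ (they are quotients of $G$ by the intersection of the prenorm kernels), hence regular by Theorem~\ref{t2022111}, hence Hausdorff. Since $w(H_{i})\leq\omega$, the space $H_{i}$ has a countable pre-base and is therefore separable; let $S_{i}$ be the subgroup of $H_{i}$ generated by a countable dense subset. Then $S_{i}$ is a countable dense subgroup of $H_{i}$, and being countable it is a $\sigma$-compact Hausdorff pre-topological space. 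Now let $\Sigma$ be the $\sigma$-product of the family $\{S_{i}:i\in I\}$ centered at the neutral element; it is a subgroup of $K$. By Lemma~\ref{l202222}, $\Sigma$ is dense in $\prod_{i\in I}S_{i}$, which in turn is dense in $K$, so $\Sigma$ is dense in $K$; and by Lemma~\ref{l202210}, $\Sigma$ is $\sigma$-compact. Thus $K$ is a $k$-separable almost topological group containing $G$ as a subgroup, which finishes the proof.

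I do not expect a genuine obstacle: the substantive content has already been isolated, namely the Guran-type embedding (Theorem~\ref{t202212}) and the $\sigma$-product machinery (Lemmas~\ref{l202210} and~\ref{l202222}), so the expected difficulty — exhibiting a dense $\sigma$-compact subgroup of the ambient product — is resolved by passing to the $\sigma$-product of the countable dense subgroups. The only places needing mild care are the verification that the factors $H_{i}$ of Theorem~\ref{t202212} may be taken Hausdorff (so that Lemma~\ref{l202210} applies to the $S_{i}$) and the transfer of density along the chain $\Sigma\subseteq\prod_{i}S_{i}\subseteq\prod_{i}H_{i}$, both of which are routine.
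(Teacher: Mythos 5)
Your proof is correct and takes essentially the same route as the paper's: the easy direction via Corollary~\ref{c20222}, Theorem~\ref{t202213} and Theorem~\ref{t202210}, and the converse via the embedding of Theorem~\ref{t202212} followed by the $\sigma$-product of countable dense subgroups of the second-countable factors, using Lemmas~\ref{l202210} and~\ref{l202222}. Your explicit check that the factors can be taken Hausdorff (so that Lemma~\ref{l202210} applies) and the density transfer along $\Sigma\subseteq\prod_{i}S_{i}\subseteq\prod_{i}H_{i}$ are details the paper leaves implicit, but they do not change the argument.
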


\begin{proof}
Let $G$ be a $k$-separable almost topological group. Then $G$ has a dense $\sigma$-compact subgroup. Since each $\sigma$-compact almost topological group is Lindel\"{o}f, it follows from Corollary~\ref{c20222} that $H$ is $\omega$-narrow. From Theorem~\ref{t202213}, it follows that $G$ is $\omega$-narrow, hence each subgroup of $G$ is also $\omega$-narrow by Theorem~\ref{t202210}.

Conversely, suppose that $G$ is an arbitrary $\omega$-narrow almost topological group. From Theorem~\ref{t202212}, $G$ can be embedded as a subgroup of a pre-topological product $\prod=\prod_{i\in I}H_{i}$ of second-countable almost topological groups $H_{i}$'s. For each $i\in I$, we can fix a countable dense subgroup $D_{i}$ of $H_{i}$, and then let $D$ be a $\sigma$-product of $\prod_{i\in I}D_{i}$. By Lemmas~\ref{l202210} and~\ref{l202222}, $D$ is $\sigma$-compact and dense in $\prod_{i\in I}D_{i}$.
\end{proof}

The following proposition shows that, in the class of Abelian almost topological groups $G$, pre-continuous homomorphic images $H$ with $\chi(H)\leq\omega$ of a given almost topological group $G$ determine whether $G$ is $\tau$-narrow or not.

\begin{proposition}
Suppose that $G$ is an Abelian almost topological group and suppose that each pre-continuous homomorphic image $H$ of $G$ with $\chi(H)\leq\omega$ is $\tau$-narrow. Then the almost topological group $G$ is also $\tau$-narrow.
\end{proposition}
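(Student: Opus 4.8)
The plan is to reduce the problem to pre-continuous homomorphic images of $G$ of countable character, where the hypothesis applies, and then to lift a small ``covering'' set from such a quotient back to $G$; the role of commutativity is precisely to make this reduction available.

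First I would note that, since $G$ is Abelian, it is $\omega$-balanced, i.e.\ $inv(G)\leq\omega$: for every open neighbourhood $U$ of the neutral element $e$ the one-element family $\{U\}$ is subordinated to $U$, because $xUx^{-1}=U$ for all $x\in G$. Hence the hypothesis $inv(G)\leq\omega$ of Theorem~\ref{t20225} is met. Now fix an arbitrary open neighbourhood $U$ of $e$ in $G$. By Theorem~\ref{t20225} there are a pre-continuous homomorphism $\pi:G\rightarrow H$ of $G$ onto an almost topological group $H$ with $\chi(H)\leq\omega$ and an open neighbourhood $V$ of the neutral element $e_{H}$ of $H$ with $\pi^{-1}(V)\subseteq U$. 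Since $H$ is a pre-continuous homomorphic image of $G$ with $\chi(H)\leq\omega$, the assumption gives that $H$ is $\tau$-narrow, so (Proposition~\ref{p202210}) there is a set $F\subseteq H$ with $|F|\leq\tau$ and $FV=H$.

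Next I would lift $F$: for each $y\in F$ pick $x_{y}\in\pi^{-1}(y)$ (possible since $\pi$ is onto) and put $A=\{x_{y}:y\in F\}$, so $|A|\leq\tau$. I then check that $G=AU$. Given $g\in G$, write $\pi(g)=yv$ with $y\in F$ and $v\in V$; then $\pi(x_{y}^{-1}g)=y^{-1}\pi(g)=v\in V$, so $x_{y}^{-1}g\in\pi^{-1}(V)\subseteq U$, whence $g\in x_{y}U\subseteq AU$. Since $G$ is Abelian, $AU=UA$, and as $U$ was arbitrary this shows $G$ is $\tau$-narrow.

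The argument is short and essentially routine once the quotient-construction results of this section (above all Theorem~\ref{t20225}) are in hand. The only genuinely necessary uses of commutativity are the reduction $inv(G)\leq\omega$, which is what permits passing to a homomorphic image of character $\leq\omega$ and invoking the hypothesis, together with the trivial identity $AU=UA$; so there is no real obstacle here beyond phrasing the preimage selection and the coset computation $\pi(x_{y}^{-1}g)=v$ correctly.
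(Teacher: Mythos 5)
Your argument is correct, and it reaches the same structural goal as the paper --- reduce to a pre-continuous image of countable character, apply the hypothesis there, and lift a covering set of size at most $\tau$ back to $G$ --- but it gets the countable-character image by a different route. You observe that commutativity gives $xUx^{-1}=U$, hence $inv(G)\leq\omega$, and then invoke Theorem~\ref{t20225} (equivalently its corollary for $inv(G)\leq\omega$) to produce $\pi\colon G\to H$ with $\chi(H)\leq\omega$ and $\pi^{-1}(V)\subseteq U$; the lifting step $\pi(x_{y}^{-1}g)=v\in V$, so $g\in x_{y}U$, is then immediate, and $AU=UA$ by commutativity settles both left and right narrowness. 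The paper instead builds the quotient by hand: it chooses symmetric neighborhoods $U_{0}\supseteq U_{1}^{2}\supseteq\cdots$ with $U_{0}\subseteq U$, sets $N=\bigcap_{n}U_{n}$ (Abelianness is used there to make $N$ normal so that $G/N$ is a group), equips $G/N$ with the pre-topology generated by $\{\pi(U_{n})\}$, and lifts via $\pi^{-1}\pi(aU_{1})=aU_{1}N\subseteq aU_{1}^{2}\subseteq aU$. Your version is shorter and its lifting is cleaner (the containment $\pi^{-1}(V)\subseteq U$ is handed to you by Theorem~\ref{t20225}), at the cost of importing the heavier prenorm/pseudometric machinery behind that theorem; the paper's version is more self-contained and elementary, exhibiting the image concretely as a quotient group $G/N$, though it leaves the verification that $\{\pi(U_{n})\}$ generates a Hausdorff almost-topological-group pre-topology to the reader. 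Both uses of commutativity are legitimate; they simply enter at different points ($\omega$-balancedness for you, normality of $N$ for the paper).
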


\begin{proof}
Take an arbitrary open neighborhood $U$ of the neutral element $e$ in $G$. Since $G$ is an almost topological group, there exists a sequence $\{U_{n}: n\in\omega\}$ of open symmetric neighborhoods of $e$ in $G$ such that $U_{0}\subseteq U$ and $U_{n+1}^{2}\subseteq U_{n}$ for every $n\in\omega$. Put $N=\bigcap_{n\in\omega}U_{n}$ is a closed subgroup of $G$. Since $G$ is Abelian, the set of all
cosets $G/N$ is a group. Let $\pi: G\rightarrow G/N$ be the natural homomorphism. It is easily checked that the family $\{\pi(U_{n}): n\in\omega\}$ is a pre-base for a Hausdorff almost topological group pre-topology $\mathscr{F}$ on $G/N$ at the neutral element of this group. Let $H=(G/N, \mathscr{F})$. Then $\pi: G\rightarrow H$ is pre-continuous and $\chi(H)\leq\omega$, hence $H$ is $\tau$-narrow. Let $V=\pi(U_{1})$. Then there exists a subset $K\subseteq H$ with $|K|\leq\tau$ such that $KV=H$. Let $F$ be any subset of $G$ such that $\pi(F)=K$ and $|F|\leq\tau$. We conclude that $FU=G$. Indeed, take any point $x\in G$. Hence $\pi(x)\in bV$ for some $b\in K$. Hence we can choose an element $a\in F$ such that $\pi(a)=b$, then $\pi(x)\in bV=\pi(aU_{1})$, hence it follows that $$x\in\pi^{-1}\pi(aU_{1})=aU_{1}N\subseteq aU_{1}U_{1}\subseteq aU_{0}\subseteq aU\subseteq FU.$$ Therefore, $G$ is $\tau$-narrow.
\end{proof}

\section{The precompactness in pre-topological groups}

In this section, some basic properties about precompactness in pre-topological groups are investigated.

\begin{definition}
A semi-pre-topological group is called {\it left precompact} (resp. {\it right precompact}) if, for each open neighborhood $U$ of the neutral element in $G$, there exists a finite subset $F$ of $G$ such that $G= FU$ (resp. $G=UF$)). If $G$ is left precompact and right precompact then $G$ is called precompact.
\end{definition}

Compare with Proposition~\ref{p202210}, we also have the following proposition.

\begin{proposition}\label{p2022101}
The following conditions are equivalent for a quasi-pre-topological group $G$.

\smallskip
(1) $G$ is precompact;

\smallskip
(2) For every open neighbourhood $V$ of $e$ in $G$, there exists a finite subset $B\subseteq G$ such that $G=VB$;

\smallskip
(3) For every open neighbourhood $V$ of $e$ in $G$, there exists a finite subset $C\subseteq G$ such that $CV=VC=G$.
\end{proposition}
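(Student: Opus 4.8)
The plan is to run, essentially verbatim, the argument used for Proposition~\ref{p202210}, with the cardinality bound $|F|\le\tau$ replaced everywhere by the condition that $F$ be a finite set; every step there is a purely set-theoretic manipulation that is insensitive to this replacement. As in that proof, the implication $(3)\Rightarrow(1)$ is immediate from the definitions, so the content lies in establishing $(1)\Rightarrow(2)$ and $(2)\Rightarrow(3)$. The only structural input needed from the hypothesis is that $G$ is a quasi-pre-topological group, i.e.\ that the inverse mapping $In$ is pre-continuous; this guarantees that for each open neighbourhood $V$ of $e$ the set $V^{-1}$ is again an open neighbourhood of $e$, so there is an open neighbourhood $U$ of $e$ with $U^{-1}\subseteq V$ (one may simply take $U=V^{-1}$).

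For $(1)\Rightarrow(2)$: given an open neighbourhood $V$ of $e$, choose $U$ as above with $U^{-1}\subseteq V$. By left precompactness of $G$ there is a finite set $A\subseteq G$ with $G=AU$. Put $B=A^{-1}$, which is again finite. Then $G=G^{-1}=(AU)^{-1}=U^{-1}A^{-1}\subseteq VA^{-1}=VB$, and since trivially $VB\subseteq G$, we get $G=VB$, as required.

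For $(2)\Rightarrow(3)$: given an open neighbourhood $V$ of $e$, again choose $U$ with $U^{-1}\subseteq V$. Applying $(2)$ to $V$ gives a finite set $A$ with $VA=G$; applying $(2)$ to $U$ gives a finite set $B$ with $UB=G$, whence $G=G^{-1}=(UB)^{-1}=B^{-1}U^{-1}\subseteq B^{-1}V$, so $G=B^{-1}V$. Setting $C=A\cup B^{-1}$, which is finite, we obtain $VC\supseteq VA=G$ and $CV\supseteq B^{-1}V=G$, hence $CV=VC=G$.

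I do not expect any obstacle here: the class of finite sets is closed under the two operations $A\mapsto A^{-1}$ and $(A,B)\mapsto A\cup B$ that are used, and the pre-continuity of inversion supplies the only topological fact invoked. In fact the proposition could legitimately be disposed of in the paper by simply remarking that its proof is identical to that of Proposition~\ref{p202210}. The one point worth a sanity check is that in $(1)\Rightarrow(2)$ only the left-precompactness half of the definition of precompactness is actually used, which is harmless since precompact implies left precompact.
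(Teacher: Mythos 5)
Your proof is correct and follows essentially the same route as the paper, which states this proposition by comparison with Proposition~\ref{p202210} and omits the proof precisely because the argument there carries over verbatim with ``finite'' in place of ``of cardinality $\leq\tau$''. The only topological input, as you note, is the pre-continuity of inversion (so $U=V^{-1}$ works), and your handling of $(1)\Rightarrow(2)$, $(2)\Rightarrow(3)$ and the trivial $(3)\Rightarrow(1)$ matches the paper's scheme exactly.
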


The proof of the following proposition is obvious, thus we omit it.

\begin{proposition}\label{p20222022}
If $f$ is a pre-continuous homomorphism of a precompact pretopological group $G$ onto a pre-topological group $H$, then $H$ is also precompact.
\end{proposition}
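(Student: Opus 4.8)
The plan is to mimic, essentially verbatim, the proof of Proposition~\ref{p20223}, with ``subset of cardinality $\le\tau$'' replaced everywhere by ``finite subset''. Let $V$ be an arbitrary open neighbourhood of the neutral element $e_{H}$ in $H$. Since $f$ is a pre-continuous homomorphism, we have $f(e_{G})=e_{H}\in V$, and $f^{-1}(V)$ is open in $G$; hence $f^{-1}(V)$ is an open neighbourhood of $e_{G}$ in $G$, to which the definition of precompactness of $G$ applies.

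First I would use precompactness of $G$ to choose a finite set $F\subseteq G$ with $G=F\,f^{-1}(V)$ and a finite set $F'\subseteq G$ with $G=f^{-1}(V)\,F'$. Applying $f$ and using that $f$ is a homomorphism onto $H$, we get $H=f(G)=f\bigl(F\,f^{-1}(V)\bigr)=f(F)\,f\bigl(f^{-1}(V)\bigr)\subseteq f(F)\,V$, and since $H=f(G)$ this forces $H=f(F)\,V$; symmetrically $H=V\,f(F')$. As $|f(F)|\le|F|<\omega$ and $|f(F')|\le|F'|<\omega$, the sets $f(F)$ and $f(F')$ are finite, so $H$ is both left precompact and right precompact, i.e.\ precompact. (If one prefers, by Proposition~\ref{p2022101}, applied to the quasi-pre-topological group $H$, it would suffice to verify only condition $(2)$ for $H$, which shortens the bookkeeping.)

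There is no real obstacle in this argument; the only two points worth keeping in mind are that pre-continuity of $f$ is used precisely to ensure that $f^{-1}(V)$ is open in $G$, and that the identity $f(AB)=f(A)f(B)$ for a group homomorphism, together with surjectivity $f(G)=H$, is exactly what turns the covering $G=F\,f^{-1}(V)$ into the covering $H=f(F)\,V$. Note that one does not even need the sharper equality $f\bigl(f^{-1}(V)\bigr)=V$: the trivial inclusion $f\bigl(f^{-1}(V)\bigr)\subseteq V$ already suffices, since $H=f(G)$ guarantees the reverse inclusion in $H=f(F)\,V$.
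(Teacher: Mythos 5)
Your argument is correct and is exactly the argument the paper has in mind: the paper omits the proof as obvious, and its proof of the $\tau$-narrow analogue (Proposition~\ref{p20223}) is precisely the computation you reproduce, with ``$|A|\le\tau$'' replaced by ``finite'' and with both the left and right versions checked as precompactness requires. Nothing is missing.
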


It is obvious that a discrete pre-topological group is precompact if and only if it is finite. Clearly, each precompact pre-topological group is $\omega$-narrow and each compact pre-topological group is precompact. Moreover, we have the following more general fact.

\begin{proposition}
Each feebly compact almost topological group is precompact.
\end{proposition}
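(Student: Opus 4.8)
The plan is to argue by contradiction, combining Lemma~\ref{r1} with a maximal $U$-disjoint set, much as in the proof of Theorem~\ref{t202211} but exploiting the \emph{discreteness}, not merely the disjointness, of a suitable family of translates. Assume $G$ is a feebly compact almost topological group that is not precompact; it suffices to derive a contradiction from the failure of left precompactness, the right case then following by taking inverses. So fix an open neighborhood $U_{0}$ of $e$ with $G\neq FU_{0}$ for every finite $F\subseteq G$. Since $G$ is symmetrically pre-topological there is a symmetric open neighborhood $U\subseteq U_{0}$ of $e$, and $U$ still witnesses the failure of left precompactness because $FU\subseteq FU_{0}$. Finally, exactly as in the proof of Theorem~\ref{s2}, the almost topology of $G$ yields a symmetric open neighborhood $V$ of $e$ with $V^{4}\subseteq U$.

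Next I would let $\zeta$ be the family of all $U$-disjoint subsets of $G$, partially ordered by inclusion; the union of any chain in $\zeta$ is again $U$-disjoint, so by Zorn's Lemma $\zeta$ has a maximal element $B$. By Lemma~\ref{r1} the family $\{aV:a\in B\}$ is discrete in $G$, and it consists of nonempty open sets since $e\in V$; being discrete it is locally finite, hence finite because $G$ is feebly compact. Moreover $a\mapsto aV$ is injective on $B$: if $a\neq b$ in $B$ and $aV=bV$, then $b^{-1}a\in VV^{-1}=V^{2}\subseteq V^{4}\subseteq U$, so $a\in bU$, contradicting $U$-disjointness. Hence $B$ is finite. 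On the other hand, maximality of $B$ forces $G=BU$: for $x\in B$ we have $x\in xU\subseteq BU$; and for $x\in G\setminus B$ the set $B\cup\{x\}$ is not $U$-disjoint, so some $a\in B$ satisfies $x\in aU$ or $a\in xU$, and in the second case $x^{-1}a\in U=U^{-1}$ gives $a^{-1}x\in U$, that is $x\in aU$. Thus $G=BU$ with $B$ finite, contradicting the choice of $U$. Therefore $G$ is left precompact. For right precompactness, given any open neighborhood $U_{0}$ of $e$ pick a symmetric open $U\subseteq U_{0}$ and the corresponding finite $B$ with $G=BU$; taking inverses, $G=G^{-1}=U^{-1}B^{-1}=UB^{-1}\subseteq U_{0}B^{-1}$, so $G=U_{0}B^{-1}$. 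Hence $G$ is precompact.

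The step I expect to be the main obstacle is upgrading a merely disjoint (cellular) family of translates to a discrete one: disjointness alone is not enough to invoke feeble compactness, which only constrains locally finite families, and this is precisely why one must pass to $V^{4}\subseteq U$ and apply Lemma~\ref{r1}. A secondary technical nuisance, peculiar to the pre-topological setting, is that finite intersections of open sets need not be open, so the symmetric open $V$ cannot be produced by intersecting an arbitrary witness with a symmetric neighborhood, and must instead be obtained by repeated \emph{halving} as in the proof of Theorem~\ref{s2}.
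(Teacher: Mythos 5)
Your proof is correct and follows essentially the same route as the paper: obtain a symmetric $V$ with $V^{4}$ inside the given neighborhood, take a maximal disjoint set via Zorn's Lemma, apply Lemma~\ref{r1} to get a discrete family of translates, invoke feeble compactness for finiteness, and use maximality to cover $G$. Your contradiction framing and the explicit checks (injectivity of $a\mapsto aV$, right precompactness via inverses) only spell out details the paper's direct argument leaves implicit.
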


\begin{proof}
Take an arbitrary open symmetric neighborhood $V$ of the neutral element $e$ in $G$. Then there exists an open symmetric neighborhood $U$ of $e$ in $G$ such that $U^{4}\subseteq V$. From the proof of Theorem~\ref{t202211}, we can find a maximal $V$-disjoint set $A$ of $G$. From Lemma~\ref{r1}, it follows that the family of open sets $\{xU: x\in A\}$ is discrete in $G$, hence $A$ is finite since $G$ is feebly compact. Then, by the maximality of $A$, we have $G=AV$. Hence $G$ is precompact.
\end{proof}

In order to discuss the relation of the precompactness between pre-topological groups and their subgroups, we introduce the following concept and prove some lemmas.

A subset $B$ of a semi-pre-topological group $G$ is called {\it precompact} in $G$ if, for each neighborhood $U$ of the neutral element in $G$, there exists a finite set $F\subseteq G$ such that $B\subseteq FU$ and $B\subseteq UF$.

\begin{lemma}\label{l2022100}
Let $B$ be a precompact subset of a pre-topological group $G$ and $D$ is dense in $B$. Then, for each open neighborhood $U$ of the neutral element in $G$, there exists a finite set $K\subseteq D$ such that $B\subseteq KU$ and $B\subseteq UK$.
\end{lemma}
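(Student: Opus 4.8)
The plan is to exploit precompactness of $B$ to first cover $B$ by finitely many translates of a small open set, and then to replace each of these finitely many translation-parameters (which a priori lie in $G$, not in $D$) by a point of $D$, paying for the substitution by enlarging the open set in the translate. The key device is to choose, at the outset, an auxiliary open neighborhood $V$ of the neutral element $e$ with $V\cdot V\subseteq U$ and $V^{-1}=V$ (which exists by Theorem~\ref{t0}(1),(2)); then translates of $V$ can be ``fattened'' to translates of $U$ while absorbing a correction term coming from the density of $D$ in $B$.

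First I would fix an open neighborhood $U$ of $e$ and pick, using Theorem~\ref{t0}, an open symmetric neighborhood $V$ of $e$ with $VV\subseteq U$. Since $B$ is precompact in $G$, there is a finite set $F=\{g_{1},\dots,g_{n}\}\subseteq G$ with $B\subseteq FV=\bigcup_{i=1}^{n}g_{i}V$ and likewise $B\subseteq VF$. Discard those $g_{i}$ for which $g_{i}V\cap B=\varnothing$; for each remaining index $i$ choose a point $b_{i}\in g_{i}V\cap B$. Because $D$ is dense in $B$ and $b_{i}V$ is an open neighborhood of $b_{i}$ (here I use that left translation is a pre-homeomorphism, Proposition~\ref{t1}/the relevant translation proposition, so $b_iV$ is open), the set $b_{i}V\cap D$ is nonempty; pick $k_{i}\in b_{i}V\cap D$. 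Set $K=\{k_{i}\}\subseteq D$, a finite set.

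Now I would verify $B\subseteq KU$. Take $x\in B$; then $x\in g_{i}V$ for some retained $i$, so $g_{i}^{-1}x\in V$. Also $b_{i}\in g_{i}V$ gives $g_{i}^{-1}b_{i}\in V$, hence $b_{i}^{-1}g_{i}\in V^{-1}=V$, and therefore $b_{i}^{-1}x=(b_{i}^{-1}g_{i})(g_{i}^{-1}x)\in VV\subseteq U$, i.e. $x\in b_{i}U$. Similarly $k_{i}\in b_{i}V$ yields $b_{i}\in k_{i}V^{-1}=k_{i}V$, so $x\in k_{i}VU$. To close this cleanly I would instead arrange the constants more carefully: choose $V$ symmetric with $V^{4}\subseteq U$ at the start, so that chaining $k_{i}^{-1}b_{i}\in V$, $b_{i}^{-1}g_{i}\in V$, $g_{i}^{-1}x\in V$ gives $k_{i}^{-1}x\in V^{3}\subseteq U$, whence $x\in k_{i}U\subseteq KU$. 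The inclusion $B\subseteq UK$ is obtained by the symmetric argument starting from $B\subseteq VF$, $b_{i}\in V g_{i}\cap B$, $k_{i}\in V b_{i}\cap D$, and using symmetry of $V$ again; alternatively it follows from $B\subseteq KU$ applied in $G$ with $U$ replaced by a suitable symmetric neighborhood, together with $B^{-1}$ considerations — but the direct repetition is cleaner.

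The only genuine obstacle is bookkeeping the symmetry and the number of factors of $V$ so that all the inclusions land inside $U$ simultaneously (and, strictly, that the needed symmetric $V$ with $V^{4}\subseteq U$ exists — this is an iterated application of Theorem~\ref{t0}(1),(2), which is available since $G$ is a pre-topological group). Everything else is a routine translation argument, using only that left and right translations are pre-homeomorphisms and that density of $D$ in $B$ lets us intersect $D$ with any open set meeting $B$.
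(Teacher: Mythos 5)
Your argument has a genuine gap at its very first step: you choose a symmetric open neighborhood $V$ of $e$ with $VV\subseteq U$ (and later strengthen this to a symmetric $V$ with $V^{4}\subseteq U$), asserting that this is available by Theorem~\ref{t0}(1),(2). In a general pre-topological group it is not: part (1) of Theorem~\ref{t0} only gives two possibly different neighborhoods $V,W$ with $VW\subseteq U$, and part (2) only gives some $V$ with $V^{-1}\subseteq U$; since in a pre-topology finite intersections of open sets need not be open, these two facts cannot be combined (nor iterated) to produce a single symmetric $V$ whose square, let alone fourth power, sits inside $U$. The existence of such neighborhoods is precisely what singles out almost topological groups among pre-topological groups, and the paper itself exhibits the failure: in Example~\ref{eee}(1) (the group $(\mathbb{R},+)$ with pre-base $\{(-\infty,a):a\in\mathbb{Z}\}\cup\{(b,+\infty):b\in\mathbb{Z}\}$) there is \emph{no} open $V\ni 0$ with $V+V\subseteq(-\infty,1)$ at all, and in Example~\ref{eee}(2) there is no \emph{symmetric} such $V$. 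So your proof establishes Lemma~\ref{l2022100} only for almost topological groups, whereas the lemma is stated, and later used, for arbitrary pre-topological groups; the reduction you call ``bookkeeping'' is in fact the whole difficulty.

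The paper's proof is organized precisely to avoid symmetric neighborhoods: it fixes an asymmetric factorization $V_{1}V_{2}\subseteq U$ from Theorem~\ref{t0}(1), uses that $V_{1}^{-1}$ is open (inversion is a pre-homeomorphism), applies precompactness to translates of $V_{1}^{-1}$, and selects the density-witnesses $y_{x}\in D\cap xV_{1}^{-1}$ for exactly those $x$ with $xV_{1}^{-1}\cap B\neq\emptyset$, so that the correction term $y_{x}^{-1}x$ lies in $V_{1}$ and is absorbed by $V_{1}V_{2}\subseteq U$; the two inclusions $B\subseteq KU$ and $B\subseteq UK$ are then handled by two separate finite sets $K_{1},K_{2}\subseteq D$ with $K=K_{1}\cup K_{2}$, rather than by one symmetric argument. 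If you want to repair your proposal, you must redo it in this asymmetric format, being careful that the open set used to cover $B$ and the open set in which you pick points of $D$ interact so that the product of the two ``errors'' lands in $V_{1}V_{2}$ — you cannot pass to a common refinement of two neighborhoods of $e$ in a pre-topology, so this matching of sets cannot be finessed the way it is in topological (or almost topological) groups.
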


\begin{proof}
Take an arbitrary open neighborhood $U$ of the neutral element in $G$. Then there exists open neighborhoods $V_{1}$ and $V_{2}$ such that $V_{1}V_{2}\subseteq U$. Since $B$ is precompact, we can find a finite subset $F$ of $G$ such that $B\subseteq FV_{1}^{-1}$ and $B\subseteq V_{1}^{-1}F$. For each $x\in F$, if $B\cap xV_{1}^{-1}\neq\emptyset$, then we can pick a point $y_{x}\in D\cap xV_{1}^{-1}$. Put $$K_{1}=\{y_{x}: x\in F, B\cap xV_{1}^{-1}\neq\emptyset\}.$$Then $K_{1}$ is finite and is contained in $D$. We claim that $B\subseteq K_{1}U$. Indeed, take any $b\in B$. Then there is $x\in F$ such that $b\in xV_{1}^{-1}$. Hence $b\in B\cap xV_{1}^{-1}\neq\emptyset$, thus $y_{x}\in xV_{1}^{-1}$, then $y_{x}^{-1}x\in V_{1}$. Therefore, it follows that $$b\in xV_{2}=y_{x}(y_{x}^{-1}x)V_{2}\subseteq y_{x}V_{1}V_{2}\subseteq y_{x}U\subseteq K_{1}U.$$ Thus $B\subseteq K_{1}U.$ Similarly, we can find a finite subset $K_{2}$ of $S$ such that $B\subseteq UK_{2}$. Now put $K=K_{1}\cup K_{2}$. Then the finite set $K$ is as required.
\end{proof}

\begin{proposition}\label{p20221011}
Each subgroup $H$ of a precompact pre-topological group $G$ is a precompact pre-topological group.
\end{proposition}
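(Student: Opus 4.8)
The plan is to derive the statement from Lemma~\ref{l2022100}, using the trivial observations that every subgroup of a precompact pre-topological group is a precompact \emph{subset} of it and is dense in itself. First I would fix an arbitrary open neighbourhood $W$ of the neutral element $e$ in $H$. Since $H$ carries the subspace pre-topology inherited from $G$, there is an open set $U$ of $G$ with $U\cap H=W$ (and automatically $e\in U$, as $e\in W$). Because $G$ is precompact, there is a finite set $F\subseteq G$ with $G=FU$ and $G=UF$; in particular $H\subseteq FU$ and $H\subseteq UF$, so $H$ is a precompact subset of $G$ in the sense of the definition preceding Lemma~\ref{l2022100}. As $H$ is obviously dense in itself, Lemma~\ref{l2022100} applied with $B=D=H$ yields a finite set $K\subseteq H$ such that $H\subseteq KU$ and $H\subseteq UK$.

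The remaining step is the intersection trick that turns $K$ into a witness for precompactness of $H$ itself. Since $K\subseteq H$ and $H$ is a subgroup, for any $h\in H$ with $h\in kU$, $k\in K$, we obtain $k^{-1}h\in U\cap H=W$, hence $h\in kW$; thus $H\subseteq KW$, and the reverse inclusion $KW\subseteq H$ is clear, so $H=KW$. Symmetrically, from $H\subseteq UK$ one gets $H=WK$. Since $W$ was an arbitrary open neighbourhood of $e$ in $H$, this shows $H$ is precompact; and $H$ with the subspace pre-topology is a pre-topological group, since the multiplication and the inverse of $G$ restrict to pre-continuous mappings on $H\times H$ and $H$.

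The only point that requires care is the passage from the finite set $F\subseteq G$ supplied by precompactness of $G$ — which need not lie in, or interact nicely with, $H$ — to a finite set contained in $H$; this is precisely what Lemma~\ref{l2022100} is designed to accomplish (with $D=H$ playing the role of the dense subset of the precompact subset $B=H$), so once that lemma is invoked everything else is the routine intersection with $H$. By contrast with Theorem~\ref{t202210}, where the weaker ambient hypothesis forces one to carry out the shrinking argument $V^{2}\cap H\subseteq W$ by hand, here the full pre-topological group structure of $G$ lets Lemma~\ref{l2022100} absorb that work.
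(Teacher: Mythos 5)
Your proposal is correct and follows essentially the same route as the paper's proof: observe that $H$ is a precompact subset of the precompact group $G$, apply Lemma~\ref{l2022100} with $B=D=H$ to replace the finite witness set by one inside $H$, and then use the subgroup property to intersect back down to the neighbourhood $U\cap H$ of $e$ in $H$. The only (harmless) additions are your explicit remarks that $H$ with the subspace pre-topology is a pre-topological group and that $KW\subseteq H$ gives equality, which the paper leaves implicit.
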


\begin{proof}
Let $U$ be an arbitrary open neighborhood $U$ of the neutral element in $H$. Then there exists an open neighborhood $V$ of $e$ in $G$ such that $V\cap H=U$. Since $G$ is precompact, it follows that $H$ is a precompact subset of $G$.  By Lemma~\ref{l2022100}, there is a finite subset $F$ of $H$ such that $H\subseteq FV$ and $H\subseteq VF$. We conclude that $H\subseteq FU$ and $H\subseteq UF$. Indeed, for each $h\in H$, there exist $x\in F$ and $y\in V$ such that $h=xy$. Since $H$ is a subgroup, it follows that $y=x^{-1}h\in V\cap H=U$, which implies that $h\in FU$. Thus $F\subseteq FU$. Similarly, we also have $H\subseteq UF$. Therefore, $H$ is a precompact pre-topological group.
\end{proof}

\begin{lemma}
Suppose that $B$ is a subset of pre-topological group $G$ such that $B$ contains a dense precompact subset. Then $B$ is also precompact in $G$. Therefore, the closure of a precompact subset of $G$ is precompact in $G$.
\end{lemma}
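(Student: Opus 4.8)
The plan is to deduce precompactness of $B$ directly from the density hypothesis, using only Theorem~\ref{t0} and Proposition~\ref{u0}. Write $D$ for a dense precompact subset of $B$, so that $D\subseteq B\subseteq\overline{D}$, and fix an arbitrary open neighbourhood $U$ of the neutral element $e$ of $G$. It suffices to produce a finite set $K\subseteq G$ with $B\subseteq KU$ and $B\subseteq UK$.

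First I would obtain the left cover. By Theorem~\ref{t0}(1) choose open neighbourhoods $V,W$ of $e$ with $VW\subseteq U$. Since $D$ is a precompact subset of $G$, there is a finite $F\subseteq G$ with $D\subseteq FV$. As $G$ is a pre-topological group it is in particular a left pre-topological group with pre-continuous inverse, so Proposition~\ref{u0} yields $\overline{D}\subseteq DW$; hence $B\subseteq\overline{D}\subseteq DW\subseteq (FV)W=F(VW)\subseteq FU$. Symmetrically, picking open neighbourhoods $V',W'$ of $e$ with $W'V'\subseteq U$ and a finite $F'\subseteq G$ with $D\subseteq V'F'$, and using the right-hand version $\overline{D}\subseteq W'D$ of Proposition~\ref{u0}, one gets $B\subseteq W'D\subseteq W'(V'F')=(W'V')F'\subseteq UF'$. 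Now $K=F\cup F'$ is finite and satisfies $B\subseteq KU$ and $B\subseteq UK$; since $U$ was arbitrary, $B$ is precompact in $G$.

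For the second assertion, let $B$ be any precompact subset of $G$. Then $B$ is (trivially) dense in $\overline{B}$ and is precompact, so $\overline{B}$ contains a dense precompact subset and hence is precompact in $G$ by the first part. There is no serious obstacle here; the only point requiring care is the bookkeeping of left versus right translations: one must decompose $U$ as $VW\subseteq U$ for the inclusion $B\subseteq FU$ and as $W'V'\subseteq U$ for $B\subseteq UF'$, applying the two halves of Proposition~\ref{u0} accordingly before amalgamating the two finite sets.
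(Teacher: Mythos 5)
Your proof is correct and takes essentially the same route as the paper: decompose $U$ as a product of two open neighbourhoods of $e$, cover the dense precompact set $D$ by finitely many translates, and exploit density of $D$ in $B$ --- your appeal to Proposition~\ref{u0} simply packages the paper's inline density step (choosing $y_{b}\in bV_{2}^{-1}\cap D$), since that is exactly what the proof of Proposition~\ref{u0} does. If anything, your handling of the right-sided cover (a second decomposition $W'V'\subseteq U$ and the amalgamated finite set $F\cup F'$) is a bit more careful than the paper's single decomposition followed by ``similarly''.
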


\begin{proof}
Let $D$ be a dense precompact subset of $B$. Take an arbitrary open neighborhood $U$ of the neutral element $e$ in $G$. Then there exist open neighborhoods $V_{1}$ and $V_{2}$ of $e$ in $G$ such that $V_{1}V_{2}\subseteq U$. Because $D$ is precompact in $G$,  there is a finite subset $F$ such that $D\subseteq FV_{1}$ and $D\subseteq V_{1}F$. We conclude that $B\subseteq FU\cap UF$. Indeed, take any $b\in B$. Since $D$ is dense in $B$, then $bV_{2}^{-1}\cap D\neq\emptyset$, hence we can pick a point $y_{b}\in bV_{2}^{-1}\cap D$. Hence $y_{b}\in xV_{1}$ for some $x\in F$ since $D\subseteq FV_{1}$, then $b\in y_{b}V_{2}\subseteq xV_{1}V_{2}\subseteq xU$. Therefore, we have $B\subseteq FU$. Similarly, we have $B\subseteq UF$.
\end{proof}

\begin{corollary}
If a pre-topological group $G$ contains a dense precompact subgroup, then $G$ is also precompact.
\end{corollary}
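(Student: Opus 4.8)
The plan is to obtain this at once from the lemma immediately preceding it, applied with $B=G$. First I would record that the dense precompact subgroup $H$ is in fact a precompact \emph{subset} of $G$ in the sense used in that lemma: for an arbitrary open neighborhood $V$ of the neutral element $e$ in $G$, the trace $V\cap H$ is an open neighborhood of $e$ in $H$, so precompactness of the pre-topological group $H$ yields a finite set $F\subseteq H\subseteq G$ with $H=F(V\cap H)=(V\cap H)F$, and hence $H\subseteq FV$ and $H\subseteq VF$. This is the same bookkeeping computation as in Proposition~\ref{p20221011}, only run in the opposite direction.

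Next, since $H$ is dense in $G$, the set $B=G$ contains the dense precompact subset $H$, so the preceding lemma applies and shows that $G$ is precompact in $G$; that is, for every open neighborhood $U$ of $e$ there exists a finite set $F\subseteq G$ with $G\subseteq FU$ and $G\subseteq UF$. The reverse inclusions $FU\subseteq G$ and $UF\subseteq G$ are automatic, so $G=FU=UF$, which is precisely the statement that $G$ is a precompact pre-topological group. I do not anticipate any real obstacle: the argument is essentially a one-line reduction, and the only mild point is the first step, where one must translate ``precompact subgroup'' into ``precompact subset of the ambient group'' using the trace of a neighborhood.
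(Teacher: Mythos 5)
Your proposal is correct and follows exactly the route the paper intends: translate the dense precompact subgroup $H$ into a precompact subset of $G$ via traces $V\cap H$ of neighborhoods of $e$, then invoke the preceding lemma with $B=G$. The only cosmetic point is that precompactness of $H$ a priori gives two finite sets (one for $H\subseteq F_{1}(V\cap H)$ and one for $H\subseteq (V\cap H)F_{2}$), so one should take their union rather than a single $F$ with equalities, but this does not affect the argument.
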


The following proposition, though quite easy, is nevertheless, rather interesting.

\begin{proposition}
The dispersion character of a precompact pre-topological group $G$ is equal to its cardinality.
\end{proposition}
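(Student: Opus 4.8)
The plan is to establish the two inequalities $\Delta(G)\le|G|$ and $|G|\le\Delta(G)$, where $\Delta(G)$ denotes the dispersion character of $G$, i.e. the least cardinality of a non-empty open subset of $G$. The inequality $\Delta(G)\le|G|$ is immediate, since $G$ itself is a non-empty open subset of $G$; so the whole content lies in the reverse inequality $|G|\le\Delta(G)$.

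For the reverse inequality I would first reduce to a neighbourhood of the neutral element. Note that a precompact pre-topological group possessing a finite non-empty open set is finite (apply precompactness to a suitable translate of that set, using that translations are pre-homeomorphisms), so for an infinite $G$ we automatically have $\Delta(G)\ge\omega$; thus we may assume $G$ is infinite and $\Delta(G)$ infinite. Now fix a non-empty open $U\subseteq G$ with $|U|=\Delta(G)$ and a point $x_{0}\in U$; since the left translation $L_{x_{0}^{-1}}$ is a pre-homeomorphism of $G$ onto itself, the set $V=x_{0}^{-1}U$ is an open neighbourhood of $e$ with $|V|=|U|=\Delta(G)$. By precompactness there is a finite $F\subseteq G$ with $G=FV$, whence
$$|G|=|FV|\le\sum_{a\in F}|aV|=|F|\cdot|V|=|F|\cdot\Delta(G)=\Delta(G),$$
the last equality holding because $F$ is finite and $\Delta(G)$ is infinite. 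Together with $\Delta(G)\le|G|$ this gives $|G|=\Delta(G)$.

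I do not expect any genuine obstacle here; the proof is as short as the statement advertises. The only two points deserving a word of care are the passage from an arbitrary cardinality-minimal open set to an open neighbourhood of $e$, which is exactly where pre-homogeneity is used in the concrete form ``left and right translations are pre-homeomorphisms'', and the concluding cardinal identity $n\cdot\kappa=\kappa$ for finite $n$ and infinite $\kappa$.
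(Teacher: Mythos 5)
Your proof is correct and follows essentially the same route as the paper: the paper's own argument takes an open neighbourhood $U$ of $e$, uses precompactness to write $G=FU$ with $F$ finite, and concludes $|U|=|G|$, leaving the translation step and the cardinal arithmetic implicit. Your write-up simply makes those implicit steps explicit, namely the reduction of an arbitrary non-empty open set to a neighbourhood of $e$ via a translation pre-homeomorphism and the identity $|F|\cdot\kappa=\kappa$ for finite $F$ and infinite $\kappa$.
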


\begin{proof}
Let $U$ be an arbitrary open neighborhood of the neutral element $e$ in $G$. Since $G$ is precompact, there exists a finite subset $F$ such that $FU=G$, hence $|U|=|G|$.
\end{proof}

We say that a pre-topological group is {\it finite-balanced} if for any open neighbourhood $U$ of the neutral element $e$ of $G$, there exists a finite family $\gamma$ of open neighbourhoods of $e$ such that for each $x\in G$ there exists $V\in \gamma$ satisfying $xVx^{-1}\subseteq U$. Any such family $\gamma$ will be called {\it subordinated} to $U$.

By a similar proof of Proposition~\ref{ppp1}, we have the following proposition.

\begin{theorem}
Each precompact almost topological group $G$ is finite-balanced.
\end{theorem}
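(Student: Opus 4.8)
The plan is to run the argument of Proposition~\ref{ppp1} almost verbatim, replacing the cardinal $\tau$ everywhere by \emph{finite} and the $\tau$-narrowness hypothesis by precompactness. First I would fix an arbitrary open neighbourhood $U$ of the neutral element $e$ of $G$. Since $G$ is an almost topological group---hence both strongly and symmetrically pre-topological---I can choose an open \emph{symmetric} neighbourhood $V$ of $e$ with $V^{3}\subseteq U$, exactly as in the opening line of the proof of Proposition~\ref{ppp1}.

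Next I would invoke precompactness to obtain a \emph{finite} set $A\subseteq G$ with $VA=G$. Since any pre-topological group is in particular a quasi-pre-topological group, Proposition~\ref{p2022101} applies to $G$, and condition (2) there furnishes such a finite $A$. For each $a\in A$, property (4) of Theorem~\ref{t0}, applied to the element $a\in G$ and the neighbourhood $V$ of $e$, yields an open neighbourhood $W_{a}$ of $e$ with $aW_{a}a^{-1}\subseteq V$. Set $\gamma=\{W_{a}:a\in A\}$; this is a finite family of open neighbourhoods of $e$.

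It remains to check that $\gamma$ is subordinated to $U$. Given $x\in G$, write $x=va$ with $v\in V$ and $a\in A$, which is possible because $VA=G$. Using $V^{-1}=V$ we then obtain
\[
xW_{a}x^{-1}=v\,(aW_{a}a^{-1})\,v^{-1}\subseteq vVv^{-1}\subseteq V\cdot V\cdot V^{-1}=V^{3}\subseteq U,
\]
so for every $x\in G$ some member of the finite family $\gamma$ is conjugated by $x$ into $U$, which is precisely the definition of finite-balancedness. I do not expect a genuine obstacle here; the only points needing a little care are that it is the "almost topological" hypothesis (rather than merely "pre-topological") that lets us take $V$ symmetric with $V^{3}\subseteq U$, and that Proposition~\ref{p2022101} may legitimately be applied to $G$, i.e. that an almost topological group is quasi-pre-topological so that a finite $A$ with $VA=G$ is actually available.
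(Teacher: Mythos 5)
Your proposal is correct and is exactly the argument the paper intends: the paper proves this theorem simply by citing ``a similar proof of Proposition~\ref{ppp1}'', and you have carried out precisely that adaptation, replacing the set $A$ with $|A|\leq\tau$ and $VA=G$ coming from $\tau$-narrowness by a finite $A$ with $VA=G$ coming from precompactness, and then conjugating as in $xW_{a}x^{-1}\subseteq VaW_{a}a^{-1}V^{-1}\subseteq V^{3}\subseteq U$. The appeal to Proposition~\ref{p2022101} is harmless but not even needed, since right precompactness already yields the finite set $A$ with $VA=G$ directly from the definition.
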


\begin{theorem}~\label{t20222022}
If $B_{i}$ is a precompact subset of a pre-topological group $G_{i}$ for every $i\in I$, then the set $B=\prod_{i\in I}B_{i}$ is precompact in the pre-topological product $G=\prod_{i\in I}G_{i}$.
\end{theorem}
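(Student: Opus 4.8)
The plan is to reduce the verification of precompactness of $B$ to a single coordinate, exploiting the fact that in the product pre-topology an open neighbourhood of the neutral element $e=(e_i)_{i\in I}$ is constrained in only one coordinate. Recall that the product pre-topology on $G=\prod_{i\in I}G_i$ has as a pre-base the family $\{\pi_{i}^{-1}(U):i\in I,\ U\ \text{open in}\ G_i\}$, where $\pi_i\colon G\to G_i$ is the canonical projection. Since every open set is a union of pre-base elements, every open neighbourhood $W$ of $e$ in $G$ contains a set of the form $W_0=\pi_{i_0}^{-1}(U_{i_0})$ for some $i_0\in I$ and some open neighbourhood $U_{i_0}$ of $e_{i_0}$ in $G_{i_0}$. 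As $W_0\subseteq W$ gives $FW_0\subseteq FW$ and $W_0F\subseteq WF$ for any $F\subseteq G$, it suffices to produce, for each such $W_0$, a finite $F\subseteq G$ with $B\subseteq FW_0$ and $B\subseteq W_0F$.

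Then, fixing $W_0=\pi_{i_0}^{-1}(U_{i_0})$ as above, I would invoke precompactness of $B_{i_0}$ in $G_{i_0}$ to obtain a finite $F_0\subseteq G_{i_0}$ with $B_{i_0}\subseteq F_0U_{i_0}$ and $B_{i_0}\subseteq U_{i_0}F_0$, and then lift $F_0$ to a finite subset $F\subseteq G$ by letting, for each $a\in F_0$, $f_a$ be the point of $G$ with $(f_a)_{i_0}=a$ and $(f_a)_j=e_j$ for every $j\neq i_0$, and setting $F=\{f_a:a\in F_0\}$. A one-line coordinatewise check then finishes the argument: given $x=(x_j)_{j\in I}\in B$, write $x_{i_0}=au$ with $a\in F_0$ and $u\in U_{i_0}$; then $(f_a^{-1}x)_{i_0}=a^{-1}x_{i_0}=u\in U_{i_0}$, so $f_a^{-1}x\in\pi_{i_0}^{-1}(U_{i_0})=W_0$ and hence $x\in f_aW_0\subseteq FW_0$. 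The inclusion $B\subseteq W_0F$ follows symmetrically from $B_{i_0}\subseteq U_{i_0}F_0$. The degenerate case in which some $B_i$ is empty (whence $B=\emptyset$) is trivial, so this covers all cases and shows $B$ is precompact in $G$.

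The computations here are routine; the only point requiring care — and the place where the pre-topological setting is genuinely more convenient than the classical one — is the first step, namely pinning down the basic open neighbourhoods of $e$ in the product pre-topology. Whereas in the ordinary Tychonoff product a basic neighbourhood restricts finitely many coordinates, here a pre-basic neighbourhood restricts exactly one, so no finite-support bookkeeping (assembling $F$ as a product over a finite index set) is needed. Should the product pre-topology in force instead be the one generated by cylinders restricting finitely many coordinates, the same argument applies after replacing the single index $i_0$ by a finite set $J\subseteq I$ and $F_0$ by $\prod_{i\in J}F_i$, which is still finite; the coordinatewise check is unchanged.
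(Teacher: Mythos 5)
Your proposal is correct and follows essentially the same route as the paper: reduce to a basic neighbourhood of the neutral element in the product, apply precompactness of $B_i$ in the relevant factors to get finite sets, and lift them to a finite subset of $G$ by padding the remaining coordinates with the neutral elements $e_j$. The only divergence is that the paper takes basic neighbourhoods of $e$ to be finite-support boxes $\prod_{i\in C}U_i\times\prod_{i\in I\setminus C}G_i$ rather than single-coordinate cylinders $\pi_{i_0}^{-1}(U_{i_0})$, so the paper's proof is precisely the finite-index variant you describe in your closing remark, which therefore covers the definition of the product pre-topology actually in force.
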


\begin{proof}
Let $U$ be an arbitrary open neighborhood of the neutral element in $G=\prod_{i\in I}G_{i}$. Then there exist a finite subset $C\subseteq I$ and open neighborhood $U_{i}$ of the neutral element $e_{i}$ of $G_{i}$ for each $i\in C$ such that $\prod_{i\in C}U_{i}\times \prod_{i\in I\setminus C}G_{i}\subseteq U$. For each $i\in C$, there exists a finite subset $F_{i}$ such that $B_{i}\subseteq F_{i}U_{i}$ and $B_{i}\subseteq U_{i}F_{i}$. Put $F=\prod_{i\in C}F_{i}\times \prod_{i\in I\setminus C}\{e_{i}\}$. Then $\prod_{i\in I}B_{i}\subseteq F(\prod_{i\in C}U_{i}\times \prod_{i\in I\setminus C}G_{i})\subseteq FU$ and $\prod_{i\in I}B_{i}\subseteq (\prod_{i\in C}U_{i}\times \prod_{i\in I\setminus C}G_{i})F\subseteq UF$. The proof is completed.
\end{proof}

\begin{corollary}
The product of a family of precompact pre-topological groups is a precompact pre-topological group.
\end{corollary}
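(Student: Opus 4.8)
The plan is to deduce this immediately from Theorem~\ref{t20222022}. First I would record the trivial observation that a pre-topological group $G_{i}$ is precompact precisely when $G_{i}$, regarded as a subset of itself, is a precompact subset of $G_{i}$: unwinding the definition of a precompact subset, the latter asks that for every open neighbourhood $U$ of the neutral element $e_{i}$ there be a finite $F_{i}\subseteq G_{i}$ with $G_{i}\subseteq F_{i}U$ and $G_{i}\subseteq UF_{i}$, and since the reverse inclusions $F_{i}U\subseteq G_{i}$ and $UF_{i}\subseteq G_{i}$ hold automatically, this is exactly the statement $G_{i}=F_{i}U=UF_{i}$, i.e. the precompactness of the group $G_{i}$.

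Given a family $\{G_{i}:i\in I\}$ of precompact pre-topological groups, I would therefore put $B_{i}=G_{i}$ for each $i\in I$ and apply Theorem~\ref{t20222022}. It yields that $B=\prod_{i\in I}B_{i}=\prod_{i\in I}G_{i}$ is a precompact subset of the pre-topological product $G=\prod_{i\in I}G_{i}$. Since $B$ is all of $G$, the same observation, read in the other direction, shows that $G$ is a precompact pre-topological group, as claimed. (That $G$ is a pre-topological group, whenever each $G_{i}$ is, is part of the standing setup for pre-topological products used throughout the paper.)

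There is essentially no obstacle here; the statement is a formal corollary of the preceding theorem, and the only content is the bookkeeping identification of ``precompact group'' with ``precompact subset of itself.'' If one prefers to avoid even that identification, one can rerun the short argument of Theorem~\ref{t20222022} verbatim: given a basic open neighbourhood $\prod_{i\in C}U_{i}\times\prod_{i\in I\setminus C}G_{i}$ of $e$ in $G$ with $C\subseteq I$ finite, choose for each $i\in C$ a finite set $F_{i}\subseteq G_{i}$ with $G_{i}=F_{i}U_{i}=U_{i}F_{i}$, and take $F=\prod_{i\in C}F_{i}\times\prod_{i\in I\setminus C}\{e_{i}\}$; then $F$ is finite and $G=F\cdot(\prod_{i\in C}U_{i}\times\prod_{i\in I\setminus C}G_{i})=(\prod_{i\in C}U_{i}\times\prod_{i\in I\setminus C}G_{i})\cdot F$, which gives precompactness of $G$ for that neighbourhood, hence for every neighbourhood of $e$.
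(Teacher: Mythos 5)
Your proposal is correct and matches the paper's intent: the paper states this corollary without proof, treating it exactly as the specialization $B_{i}=G_{i}$ of Theorem~\ref{t20222022}, which is what you do (and your direct rerun of the product argument is the same computation). The only point worth noting is that the group definition of precompactness allows different finite sets for $G=FU$ and $G=UF$, while the subset definition uses one set for both inclusions; taking the union of the two finite sets reconciles this, so your identification is harmless.
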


By Proposition~\ref{p20222022} and Theorem~\ref{t20222022}, we have the following corollary.

\begin{corollary}
Let $A$ and $B$ be precompact subsets of a pre-topological group $G$. Then the sets $A^{-1}$, $B^{-1}$ and $AB$ are precompact in $G$.
\end{corollary}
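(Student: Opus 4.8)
The plan is to dispose of the three sets one at a time, the inverses being routine and $AB$ carrying the substance. For $A^{-1}$ I would argue directly from pre-continuity of the inverse mapping: given an open neighborhood $V$ of $e$, property~(2) of Theorem~\ref{t0} yields an open neighborhood $U$ of $e$ with $U^{-1}\subseteq V$, and precompactness of $A$ gives a finite set $F$ with $A\subseteq FU$ and $A\subseteq UF$; then $A^{-1}\subseteq U^{-1}F^{-1}\subseteq VF^{-1}$ and $A^{-1}\subseteq F^{-1}U^{-1}\subseteq F^{-1}V$, so $F^{-1}$ witnesses precompactness of $A^{-1}$. The same computation works for $B^{-1}$.

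For $AB$ I would first record two easy auxiliary facts. First, a finite union of precompact subsets of $G$ is precompact, by taking unions of the finite witnessing sets. Second, if $C$ is precompact in $G$ and $g\in G$, then $Cg$ and $gC$ are precompact; I would prove this for $Cg$ as follows. Given an open neighborhood $U$ of $e$, the inclusion $C\subseteq UF_{0}$ gives $Cg\subseteq U(F_{0}g)$, which settles one direction. For the other, $gUg^{-1}$ is again an open neighborhood of $e$, since conjugation by $g$, being the composition $L_{g}\circ R_{g^{-1}}$ of translations, is a pre-homeomorphism; hence $C\subseteq F_{1}\,(gUg^{-1})$ for some finite $F_{1}$, whence $Cg\subseteq F_{1}(gUg^{-1})g=(F_{1}g)U$. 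The case of $gC$ is symmetric.

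To finish, fix an open neighborhood $U$ of $e$ and, by property~(1) of Theorem~\ref{t0}, choose open neighborhoods $V_{1},V_{2}$ of $e$ with $V_{1}V_{2}\subseteq U$. Since $B$ is precompact, $B\subseteq F_{B}V_{2}$ for some finite $F_{B}$, hence $AB\subseteq(AF_{B})V_{2}$; by the two facts above the set $AF_{B}=\bigcup_{b\in F_{B}}Ab$ is precompact, so $AF_{B}\subseteq F'V_{1}$ for a finite $F'$, giving $AB\subseteq F'V_{1}V_{2}\subseteq F'U$. Symmetrically, writing $A\subseteq V_{1}F_{A}$ with $F_{A}$ finite and noting that $F_{A}B=\bigcup_{a\in F_{A}}aB$ is precompact, one obtains $AB\subseteq V_{1}V_{2}F''\subseteq UF''$; with $F=F'\cup F''$ both inclusions hold and $AB$ is precompact. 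One could instead view $AB$ as the image of the precompact set $A\times B\subseteq G\times G$ (precompact by Theorem~\ref{t20222022}) under the multiplication $G\times G\to G$, but that map is not a group homomorphism, so Proposition~\ref{p20222022} does not transfer precompactness along it directly, which is exactly why the translate argument is needed.

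The step I expect to be the real obstacle is the second auxiliary fact, or rather the reason it is not completely trivial: in a pre-topological group finite intersections of open sets need not be open, so one cannot freely shrink a neighborhood, and one must instead move a neighborhood past a group element by conjugating it. Once the fact that translations and conjugations are pre-homeomorphisms is exploited for that purpose, the remainder is bookkeeping with the relations $C\subseteq FU$ and $C\subseteq UF$.
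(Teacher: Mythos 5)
Your proof is correct, but it is genuinely different from what the paper does: the paper offers no written argument at all, deriving the corollary in one line ``by Proposition~\ref{p20222022} and Theorem~\ref{t20222022}'', i.e.\ implicitly by viewing $A\times B$ as a precompact subset of $G\times G$ and pushing it forward under multiplication (and $A$ under inversion). Your closing remark pinpoints exactly why that citation does not suffice as stated: Proposition~\ref{p20222022} concerns pre-continuous \emph{homomorphisms} of precompact pre-topological \emph{groups}, whereas $A\times B$ is merely a subset and multiplication $G\times G\to G$ is not a homomorphism (nor is inversion), and mere pre-continuity does not transport precompactness of subsets (consider $x\mapsto 1/x$ on $(0,1)\subseteq\mathbb{R}$). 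Your direct route -- precompactness of $A^{-1}$ via pre-continuity of inversion, closure of precompactness under finite unions and under translations $C\mapsto Cg$, $C\mapsto gC$ (where the key move is to apply precompactness of $C$ to the conjugated neighborhood $gUg^{-1}$, legitimate since conjugation is the pre-homeomorphism $L_{g}\circ R_{g^{-1}}$), and then the two-sided estimate $AB\subseteq F'V_{1}V_{2}\subseteq F'U$, $AB\subseteq V_{1}V_{2}F''\subseteq UF''$ -- is complete and correctly avoids the classical topological-group shortcut of shrinking a neighborhood to a finite intersection, which is unavailable here. What your approach buys is a self-contained and rigorous proof that actually fills the gap left by the paper's citation; what the paper's route would buy, if repaired (say, by a subset version of Proposition~\ref{p20222022}), is brevity, but even then it only covers $A^{-1}$ in the abelian case or $AB$ when $G$ is abelian, so your translate argument is the substantive content. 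Two cosmetic points: when you invoke properties (1) and (2) of Theorem~\ref{t0} for an arbitrary open neighborhood of $e$ (rather than a pre-base element), it is cleaner to appeal directly to pre-continuity of multiplication at $(e,e)$ and of inversion at $e$, which is how the paper itself argues elsewhere; and in the definition of precompact subset a single finite set must witness both inclusions, which your final step $F=F'\cup F''$ already handles.
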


Next, we discuss the pre-Ra\v{i}kov completion of a pre-topological group. Let $(G, \tau)$ be a pre-topological group, and let $(G, \tau^{\ast})$ be the co-reflexion group topology of $(G, \tau)$. Then $(G, \tau^{\ast})$ has a Ra\v{i}kov completion $(\rho G, \rho\tau^{\ast})$. We say that the  pre-topology $\sigma$ on $\rho G$ is {\it the pre-Ra\v{i}kov completion} of $(G, \tau)$ if $\sigma$ has a pre-base $\{gU: e\in U\cap G\in\tau, g\in\rho G, U\in\rho\tau^{\ast}\}$, where $e$ is the neutral element of $\rho G$.
For convenience, we denote the pre-Ra\v{i}kov completion of $(G, \tau)$ by pre-$\rho G$.

\begin{theorem}
If $G$ is a pre-topological group, then the pre-Ra\v{i}kov completion pre-$\rho G$ of $G$ is a homogeneous pre-topological space.
\end{theorem}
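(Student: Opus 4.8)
The plan is to mirror the proof that every pre-topological group is a pre-homogeneous space, but carried out inside the \emph{group} $\rho G$ rather than inside $G$. Recall that $(G,\tau^{\ast})$ is a topological group, so its Ra\v{i}kov completion $(\rho G,\rho\tau^{\ast})$ is again a topological group; in particular $\rho G$ is a group containing $G$ as a subgroup, with neutral element $e$, and the pre-topology $\sigma$ on $\rho G$ defining pre-$\rho G$ has the pre-base
$$\mathscr{B}=\{gU:\ g\in\rho G,\ U\in\rho\tau^{\ast},\ e\in U\cap G\in\tau\}.$$
First I would check that $\mathscr{B}$ really is a pre-base for a pre-topology on $\rho G$, i.e.\ that $\bigcup\mathscr{B}=\rho G$: taking $U=\rho G$ we have $U\cap G=G\in\tau$ and $e\in G$, so $g\in g\rho G=\rho G$ for every $g\in\rho G$. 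Since a pre-topology is closed under arbitrary unions and $\sigma$ is generated by $\mathscr{B}$, each member of $\sigma$ is a union of members of $\mathscr{B}$.

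The key step is to show that for every $g\in\rho G$ the left translation $L_{g}\colon\rho G\to\rho G$, $L_{g}(x)=gx$, is a pre-homeomorphism of pre-$\rho G$ onto itself. It is a bijection with inverse $L_{g^{-1}}$, so it suffices to prove that $L_{g}$ and $L_{g^{-1}}$ are pre-continuous. For a pre-basic set $hU\in\mathscr{B}$ one computes $L_{g}^{-1}(hU)=g^{-1}hU$; since $g^{-1}h\in\rho G$ while the set $U$ (and hence the conditions $U\in\rho\tau^{\ast}$ and $e\in U\cap G\in\tau$) is unchanged, we get $g^{-1}hU\in\mathscr{B}\subseteq\sigma$. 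Consequently, for an arbitrary $W\in\sigma$ write $W=\bigcup_{i}h_{i}U_{i}$ with each $h_{i}U_{i}\in\mathscr{B}$; then $L_{g}^{-1}(W)=\bigcup_{i}g^{-1}h_{i}U_{i}\in\sigma$, so $L_{g}$ is pre-continuous, and the same argument applied to $g^{-1}$ gives pre-continuity of $L_{g^{-1}}$. Hence $L_{g}$ is a pre-homeomorphism.

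Finally, given arbitrary $a,b\in\rho G$, put $g=ba^{-1}\in\rho G$; then $L_{g}$ is a pre-homeomorphism of pre-$\rho G$ with $L_{g}(a)=ba^{-1}a=b$, which is exactly the definition of pre-homogeneity. I do not expect a genuine obstacle: the only points requiring care are the verification that $\mathscr{B}$ is a legitimate pre-base and the observation that left translations carry pre-basic sets to pre-basic sets, and the latter is immediate from the ``translate of a fixed family of neighbourhoods of $e$'' shape of $\mathscr{B}$, exactly as in the pre-topological-group case. (One could push further and verify that pre-$\rho G$ is itself a pre-topological group in which $G$ embeds densely, but for the stated homogeneity only the translations are needed.)
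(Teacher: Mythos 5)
Your proof is correct and follows essentially the same route as the paper, which simply observes that for $g,h\in\rho G$ the left translation $x\mapsto hg^{-1}x$ is a pre-homeomorphism of pre-$\rho G$; your version just spells out why translations send the pre-base $\{gU: g\in\rho G,\ e\in U\cap G\in\tau,\ U\in\rho\tau^{\ast}\}$ into itself and hence are pre-continuous, a detail the paper leaves implicit.
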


\begin{proof}
Let $G$ be a pre-topological group. Take any $g, h\in\rho G$. Then the left translation mapping $l: \rho G\rightarrow \rho G$, defined by $l(x)=hg^{-1}(x)$ for each $x\in\rho G$, is a pre-homeomorphic mapping. Hence pre-$\rho G$ of $G$ is a homogeneous pre-topological space.
\end{proof}

\begin{theorem}\label{t20221000}
Let $(G, \tau)$  be a pre-topological group. Then the pre-Ra\v{i}kov completion pre-$\rho G$ of $(G, \tau)$ satisfies the following conditions:

\smallskip
(i) $\sigma$ is a subbase for $\rho\tau^{\ast}$;

\smallskip
(ii) $(G, \tau)$ is a dense pre-topological subgroup of pre-$\rho G$.

\smallskip
(iii) For each open neighborhood $U$ of the neutral element in pre-$\rho G$, we have $U^{-1}\in\sigma$.

\smallskip
(iv) For each open neighborhood $U$ of the neutral element in pre-$\rho G$, there exist open neighborhoods $V_{1}$ and $V_{2}$ of the neutral element $e$ in pre-$\rho G$ such that $V_{1}V_{2}\subseteq U$.

\smallskip
(v) If $G$ is an almost topological group, then, for each open neighborhood $U$ of the neutral element in pre-$\rho G$ and $g\in \rho G$, there exists an open neighborhood $O$ of the neutral element in pre-$\rho G$ such that $gOg^{-1}\subseteq U$.
\end{theorem}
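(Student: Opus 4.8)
The plan is to build everything on the standard structure theory of the Raĭkov completion. Recall that $(G,\tau^{*})$ sits as a dense topological subgroup of the Hausdorff, Raĭkov-complete topological group $(\rho G,\rho\tau^{*})$, so that left and right translations, all inner automorphisms, and the inversion of $\rho G$ are $\rho\tau^{*}$-homeomorphisms, $\rho G$ is regular, and the topology $G$ inherits from $\rho\tau^{*}$ is exactly $\tau^{*}$. Write $\mathcal{V}=\{U\in\rho\tau^{*}:e\in U,\ U\cap G\in\tau\}$, so $\sigma$ is the pre-topology on $\rho G$ with pre-base $\mathcal{P}=\{gU:g\in\rho G,\ U\in\mathcal{V}\}$. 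Before anything else I would record two facts used throughout: (a) every $\tau$-open neighbourhood $W$ of $e$ in $G$ has the form $U\cap G$ with $U\in\mathcal{V}$ (because $W$ is $\tau^{*}$-open and $G$ carries the subspace topology $\tau^{*}$), and (b) $\mathcal{V}$ is symmetric, $U\in\mathcal{V}\Rightarrow U^{-1}\in\mathcal{V}$, since inversion of $(G,\tau)$ is a pre-homeomorphism and hence $U^{-1}\cap G=(U\cap G)^{-1}\in\tau$.

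For (i): each $gU\in\mathcal{P}$ is $\rho\tau^{*}$-open (left translations are $\rho\tau^{*}$-homeomorphisms), so $\sigma\subseteq\rho\tau^{*}$. For the converse, given $W\in\rho\tau^{*}$ and $x\in W$, I would use regularity of $\rho G$ to choose a $\rho\tau^{*}$-open $O\ni e$ with $\overline{xO}^{\,\rho G}\subseteq W$, then apply Proposition~\ref{p20221} to obtain $W_{1},\dots,W_{n}\in\tau_{e}$ with $\bigcap_{i}W_{i}\subseteq O\cap G$, and lift each $W_{i}$ to $U_{i}\in\mathcal{V}$ with $U_{i}\cap G=W_{i}$ via fact (a). Then $N=\bigcap_{i}U_{i}$ is $\rho\tau^{*}$-open with $N\cap G=\bigcap_{i}W_{i}\subseteq O$, so by density of $G$ and openness of $N$ one gets $N\subseteq\overline{N\cap G}^{\,\rho G}\subseteq\overline{O}^{\,\rho G}$, whence $x\in\bigcap_{i}(xU_{i})=xN\subseteq\overline{xO}^{\,\rho G}\subseteq W$ with every $xU_{i}\in\mathcal{P}$. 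Thus finite intersections of $\mathcal{P}$-members form a base for $\rho\tau^{*}$, i.e. $\mathcal{P}$ (hence $\sigma$) is a subbase for $\rho\tau^{*}$.

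For (ii): algebraically $G$ is a subgroup of $\rho G$ by construction of the completion, and it is $\sigma$-dense because any nonempty $\sigma$-open set contains some $gU\in\mathcal{P}$ (with $g\in gU$, as $e\in U$), and $gU$, being $\rho\tau^{*}$-open, meets the $\rho\tau^{*}$-dense set $G$. For the subspace pre-topology, $\tau\subseteq\sigma\upharpoonright G$ follows from Proposition~\ref{t1}: given $A\in\tau$ and $a\in A$, pick $W\in\tau_{e}$ with $aW\subseteq A$ and $U\in\mathcal{V}$ with $U\cap G=W$; then $aU\in\mathcal{P}$ and $aU\cap G=a(U\cap G)=aW\subseteq A$ (using $a\in G$), so $A$ is the trace on $G$ of a $\sigma$-open set. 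For the reverse inclusion, a prebasic $gU$ with $g\in G$ satisfies $gU\cap G=g(U\cap G)\in\tau$ since translations of $(G,\tau)$ are pre-homeomorphisms; the case $g\in\rho G\setminus G$ is where the real work is, and one must re-express $gU\cap G$ as a union of $\tau$-open translates by pushing the density of $G$ and the relation $U\cap G\in\tau$ through Proposition~\ref{p20221}. I expect this to be the single hardest point of the theorem.

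For (iii)--(v): in each case I would first reduce to a prebasic neighbourhood $gU\ni e$ of $e$ (equivalently $g^{-1}\in U\in\mathcal{V}$). For (iii), $U^{-1}\in\sigma$ is immediate when $G$ is abelian, because then $(gU)^{-1}=g^{-1}U^{-1}\in\mathcal{P}$ by fact (b); in general one has to verify that inversion is a $\sigma$-pre-homeomorphism, and the obstruction is that $\mathcal{V}$ contains no "small" $\rho\tau^{*}$-neighbourhoods of $e$, so a naive small neighbourhood need not belong to $\mathcal{P}$ --- the intended remedy is to apply Theorem~\ref{t0}(2),(3) inside $(G,\tau)$ to $U\cap G$ and then re-expand using density of $G$. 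For (iv) I would apply Theorem~\ref{t0}(1) to $U\cap G\in\tau$ to get $A,B\in\tau_{e}$ with $AB\subseteq U\cap G$, lift $A,B$ into $\mathcal{V}$ by fact (a), and compose with left translates chosen near $g^{-1}$ (using density of $G$) so that the product lies inside $gU$. For (v) the same scheme is run with $G$ assumed to be an almost topological group: then $\mathcal{V}$ may be taken symmetric, and Theorem~\ref{t0}(4) inside $(G,\tau)$ together with the fact that conjugation by any $g\in\rho G$ is a $\rho\tau^{*}$-homeomorphism lets one control $gOg^{-1}$; the symmetry is precisely what makes the $\tau$-level conjugation estimate survive the passage to $\rho G$. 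The common difficulty behind parts (ii)--(v) is that the local pre-base $\mathcal{V}$ of $\sigma$ at $e$ is neither translation-invariant nor small, so the usual neighbourhood calculus of pre-topological groups does not transfer directly; the uniform plan is to pull every question back to $(G,\tau)$, where Theorem~\ref{t0} and Proposition~\ref{t1} are available, and then push it forward using the density of $G$ in $(\rho G,\rho\tau^{*})$.
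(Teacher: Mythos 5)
Your part (i) is essentially the paper's argument (Proposition~\ref{p20221} plus lifting the $\tau$-open sets $U_i$ to $\rho\tau^{\ast}$-open sets with the same trace), and your use of regularity of $(\rho G,\rho\tau^{\ast})$ to force the finite intersection inside the target neighbourhood is a sensible patch; the density claim and the inclusion $\tau\subseteq\sigma\upharpoonright G$ in (ii) are also fine. Beyond that, however, your text is a plan rather than a proof, and the plan is missing the one device the paper's arguments for (iv) and (v) actually run on. The paper does not lift a $\tau$-open $W\ni e$ to an \emph{arbitrary} $U\in\rho\tau^{\ast}$ with $U\cap G=W$ (your fact (a)); it lifts it to the specific set $V=\mbox{int}_{\rho G}(\overline{W})$, for which density of $G$ gives $W\subseteq V$ and $V\cap G$ is (claimed to be) $W$, and it then transfers the group-theoretic estimates through closures: from $W_{1}W_{2}\subseteq U\cap G$ it deduces $\overline{W_{1}}\,\overline{W_{2}}\subseteq\overline{U}$, hence $\mbox{int}(\overline{W_{1}})\,\mbox{int}(\overline{W_{2}})\subseteq\mbox{int}(\overline{U})$; and in (v) it first writes $g\in Wh$ with $h\in G$ by density, applies Theorem~\ref{t0}(4) in $(G,\tau)$ to get $hVh^{-1}\subseteq W\cap G$, and again passes to $h\,\mbox{int}(\overline{V})\,h^{-1}\subseteq\mbox{int}(\overline{W\cap G})$. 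With an arbitrary lift as in your fact (a) this transfer is impossible: such a lift is constrained only on the dense subgroup $G$ and can be arbitrarily large off $G$, so the product $A'B'$ of your lifted sets, or the conjugate $gOg^{-1}$, cannot be forced inside $gU$, and "compose with left translates chosen near $g^{-1}$" gives no mechanism to conclude. So (iv) and (v) are genuinely not established by your sketch.

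Two further points. For (iii) you only settle the abelian case and defer the rest; under the reading the paper uses (a neighbourhood of the neutral element of pre-$\rho G$ is a $\rho\tau^{\ast}$-open $U$ with $e\in U\cap G\in\tau$), the statement is immediate with no case split: $U^{-1}$ is $\rho\tau^{\ast}$-open and $U^{-1}\cap G=(U\cap G)^{-1}\in\tau$ because inversion is a pre-homeomorphism of $(G,\tau)$, so $U^{-1}$ is again a pre-basic element of $\sigma$. Finally, the reverse inclusion $\sigma\upharpoonright G\subseteq\tau$ in (ii), which you yourself flag as "the single hardest point", is simply left open, and your proposed route (showing $gU\cap G\in\tau$ for $g\in\rho G\setminus G$ by rewriting it as a union of $\tau$-open translates) has no evident chance of success: for such $g$ the trace $gU\cap G$ depends on the behaviour of $U$ \emph{off} $G$ (removing from $U$ a single point of $\rho G\setminus G$ changes $gU\cap G$ without changing $U\cap G$), so nothing in the hypotheses controls it. To be fair, the paper offers no argument here either --- it asserts $\sigma|_{G}=\tau$ "from our definition" --- but a blind proof attempt that defers exactly the steps the paper glosses over, and whose announced strategies for (iii)--(v) lack the closure--interior lifting that makes the paper's computations work, has to be counted as having genuine gaps.
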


\begin{proof}
(i) It suffices to prove that the family $\mathscr{B}=\{U\in\rho\tau^{\ast}: e\in U\cap G\in\tau\}$ is a subbase at the neutral element in $(\rho G, \rho\tau^{\ast})$. Indeed, take any open neighborhood $U$ of the neutral element of $\rho G$ in $(\rho G, \rho\tau^{\ast})$. Since $U\cap G$ is an open neighborhood of $e$ in  $(G, \tau^{\ast})$, it follows from Proposition~\ref{p20221} that there exist finitely many open neighborhoods $U_{1}, \ldots, U_{n}$ of the neutral element of $G$ in $(G, \tau)$ such that $\bigcap_{i=1}^{n}U_{i}\subseteq U\cap G$. For each $i\leq n$, because $U_{i}$ is open in $(G, \tau^{\ast})$, there exists an open neighborhood $W_{i}\subseteq U$ of the neutral element in $(\rho G, \rho\tau^{\ast})$ such that $W_{i}\cap G=U_{i}$. Therefore, $g\in g\bigcap_{i=1}^{n}W_{i}\subseteq gU$ and $W_{i}\in\mathscr{B}$ for each $i\leq n$.

\smallskip
(ii) Clearly, $G$ is dense in pre-$\rho G$ since $(G, \tau^{\ast})$ is dense in $(\rho G, \rho\tau^{\ast})$. From our definition, we have $\sigma|_{G}=\tau$.
Therefore, $(G, \tau)$ is a dense pre-topological subgroup of pre-$\rho G$.

\smallskip
(iii) It is obvious.

\smallskip
(iv) Take any open neighborhood $U$ of the neutral element in $\sigma$. Then $U\cap G\in\tau$, hence there exist open neighborhoods $W_{1}$ and $W_{2}$ of the neutral element $e$ in $(G, \tau)$ such that $W_{1}W_{2}\subseteq U$. Then $\overline{W_{1}W_{2}}\subseteq \overline{U}$ in $(\rho G, \rho\tau^{\ast})$, hence $\overline{W_{1}}\overline{W_{2}}\subseteq \overline{U}$, thus $$\mbox{int}(\overline{W_{1}})\mbox{int}(\overline{W_{2}})\subseteq \mbox{int}(\overline{W_{1}}\overline{W_{2}})\subseteq \mbox{int}(\overline{U})$$ in $(\rho G, \rho\tau^{\ast})$. Put $\mbox{int}(\overline{W_{1}})=V_{1}, \mbox{int}(\overline{W_{2}})=V_{2}$. Clearly,  $V_{1}\cap G=W_{1}, V_{2}\cap G=W_{2}$ and $\mbox{int}(\overline{U})=U$. Hence both $V_{1}$ and $V_{2}$ are open neighborhoods of the neutral element in pre-$\rho G$.

\smallskip
(v) Take any open neighborhood $U$ of the neutral element in pre-$\rho G$ and $g\in \rho G$. From the proof of (iv) above, there exist open symmetric neighborhoods $W$ of the neutral element in pre-$\rho G$ such that $W^{3}\subseteq U$. By (ii), $G$ is dense in pre-$\rho G$, hence $Wg\cap G\neq\emptyset$ since $(G, \tau^{\ast})$ is dense in $(\rho G, \rho\tau^{\ast})$ and $W$ is open in $(\rho G, \rho\tau^{\ast})$, then there exists $h\in G$ such that $g\in W^{-1}h=Wh$. Then there exists an open neighborhood $V$ of the neutral element in $(G, \tau)$ such that $hVh^{-1}\subseteq W\cap G$, hence $h\overline{V}h^{-1}\subseteq \overline{W\cap G}$ in $(\rho G, \rho\tau^{\ast})$, then $h\mbox{int}(\overline{V})h^{-1}\subseteq \mbox{int}(\overline{W\cap G})$ in $(\rho G, \rho\tau^{\ast})$. Put $O=\mbox{int}(\overline{V})$. Since $O\cap G=V$ and $\mbox{int}(\overline{W\cap G})=W$ in $(\rho G, \rho\tau^{\ast})$, it follows that $O\in\sigma$ and $$gOg^{-1}\subseteq WhOh^{-1}W\subseteq WWW\subseteq U.$$
\end{proof}

By Theorems~\ref{t0} and~\ref{t20221000}, it is easily concluded that the following corollary holds.

\begin{corollary}~\label{cc2022}
Let $G$ be an almost topological group.  If the following condition ($\star$) holds, then pre-$\rho G$ is an almost topological group.

\smallskip
($\star$) For each open neighborhood $U$ of the neutral element in pre-$\rho G$, if $g\in U$, then there exists an open neighborhood $V$ of the neutral element in pre-$\rho G$ such that $gV\subseteq U$
\end{corollary}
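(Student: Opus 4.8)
The plan is to verify, for the family $\mathscr{U}$ of all $\sigma$-open neighbourhoods of the neutral element $e$ in pre-$\rho G$, the left-translated analogue of conditions (1)--(4) of Theorem~\ref{t0}, and then to invoke the left-handed variant of Theorem~\ref{t2022} (the one whose pre-base is $\{aU:a\in\rho G,\ U\in\mathscr{U}\}$ instead of $\{Ua\}$, proved by the symmetric argument, and matching the way the pre-base of $\sigma$ is actually written). Condition (1), that for each $U\in\mathscr{U}$ there are $V,W\in\mathscr{U}$ with $VW\subseteq U$, is exactly Theorem~\ref{t20221000}(iv). Condition (2), that for each $U\in\mathscr{U}$ there is $V\in\mathscr{U}$ with $V^{-1}\subseteq U$, follows from Theorem~\ref{t20221000}(iii): since $e\in U$ we get $e\in U^{-1}\in\sigma$, so $U^{-1}\in\mathscr{U}$ and $V=U^{-1}$ works. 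Condition (4), that for each $U\in\mathscr{U}$ and each $g\in\rho G$ there is $V\in\mathscr{U}$ with $gVg^{-1}\subseteq U$, is Theorem~\ref{t20221000}(v), available precisely because $G$ is assumed to be an almost topological group. Finally the left-translated condition (3), namely ``for each $U\in\mathscr{U}$ and each $g\in U$ there is $V\in\mathscr{U}$ with $gV\subseteq U$'', is literally the hypothesis $(\star)$. Thus $(\star)$ supplies exactly the one ingredient that Theorem~\ref{t20221000} does not.

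Once (1)--(4) are checked, I would argue that the pre-topology produced by the left-handed Theorem~\ref{t2022} coincides with $\sigma$. Every left translation of $\rho G$ is a $\sigma$-pre-homeomorphism (it carries a member $gW$ of the defining pre-base of $\sigma$ to a member $agW$ of the same form), so $aU\in\sigma$ whenever $a\in\rho G$ and $U\in\mathscr{U}$; hence the generated pre-topology is contained in $\sigma$. Conversely, each basic set $gW$ of $\sigma$ (with $e\in W\cap G\in\tau$ and $W\in\rho\tau^{\ast}$) has $W$ itself a $\sigma$-open neighbourhood of $e$, i.e. $W\in\mathscr{U}$, so $gW$ belongs to $\{aU:a\in\rho G,\ U\in\mathscr{U}\}$ and is open in the generated pre-topology. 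Therefore $(\rho G,\sigma)=$ pre-$\rho G$ is a pre-topological group.

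It then remains to promote this to ``almost topological''. By the criterion established earlier (a pre-topological group $H$ is almost topological iff for each open neighbourhood $U$ of its neutral element there is an open neighbourhood $V$ with $VV^{-1}\subseteq U$), it suffices, given a $\sigma$-open neighbourhood $U$ of $e$, to produce a $\sigma$-open neighbourhood $V$ of $e$ with $VV^{-1}\subseteq U$. Using Theorem~\ref{t20221000}(ii), $\sigma|_{G}=\tau$, so $U\cap G$ is a $\tau$-open neighbourhood of $e$ in the almost topological group $G$; pick a $\tau$-open neighbourhood $W$ of $e$ with $WW^{-1}\subseteq U\cap G$ and set $V=\mbox{int}(\overline{W})$, the closure and interior taken in the topological group $(\rho G,\rho\tau^{\ast})$, exactly as in the proof of Theorem~\ref{t20221000}(iv). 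Then $V$ is a $\sigma$-open neighbourhood of $e$ with $V\cap G=W$, and since products of open sets are open in $(\rho G,\rho\tau^{\ast})$ one gets $VV^{-1}\subseteq\overline{W}\,\overline{W}^{-1}\subseteq\overline{WW^{-1}}\subseteq\overline{U}$ with $VV^{-1}$ open, whence $VV^{-1}\subseteq\mbox{int}(\overline{U})=U$. This gives the required symmetric pre-base with the squaring property, so pre-$\rho G$ is an almost topological group.

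I expect the main obstacle to lie in the bookkeeping of the last paragraph inside $(\rho G,\rho\tau^{\ast})$: one must ensure that the basic $\sigma$-neighbourhoods of $e$ may be taken $\rho\tau^{\ast}$-regular-open so that $\mbox{int}(\overline{U})=U$, that the hull operation $W\mapsto\mbox{int}(\overline{W})$ has trace $W$ on the dense subgroup $G$ and commutes with inversion (using $W=W^{-1}$), and that $\sigma|_{G}=\tau$ is used consistently, so that the symmetry and squaring structure of a pre-base at $e$ in $G$ genuinely transfers to pre-$\rho G$. Apart from this routine but slightly delicate verification, Theorems~\ref{t0} and~\ref{t20221000} together with $(\star)$ deliver the conclusion.
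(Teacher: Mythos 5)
Your proposal is correct and follows exactly the route the paper intends: the paper states the corollary without proof, merely noting it follows from Theorems~\ref{t0} and~\ref{t20221000}, and your argument fills in precisely that outline --- verifying the (left-translated) conditions (1)--(4) of Theorem~\ref{t0} via parts (iii), (iv), (v) of Theorem~\ref{t20221000} together with $(\star)$, identifying the generated pre-topology with $\sigma$, and then obtaining the symmetric squaring property by the same $\mbox{int}(\overline{W})$ construction used in the proof of Theorem~\ref{t20221000}(iv). The delicate points you flag ($V\cap G=W$ and $\mbox{int}(\overline{U})=U$) are assertions already made in the paper's own proof of Theorem~\ref{t20221000}(iv), so your reliance on them introduces no gap beyond what the paper itself assumes.
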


\begin{question}\label{q20222}
Let $G$ be an almost topological group. Does pre-$\rho G$ satisfy the condition ($\star$) in Corollary~\ref{cc2022}.
\end{question}

The following theorem gives a partial answer to Question~\ref{q20222}.

\begin{theorem}\label{t202299}
Let $(G, \tau)$ be a pre-topological group. If $(G, \tau^{\ast})$ is locally compact, then pre-$\rho G=G$.
\end{theorem}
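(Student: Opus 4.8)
The plan is to reduce the statement to the classical fact that a locally compact (Hausdorff) topological group is Ra\v{i}kov complete, after which the conclusion follows by simply unwinding the definition of the pre-Ra\v{i}kov completion.

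First I would recall that $(G,\tau^{\ast})$ is, by construction, a topological group with $\tau\subseteq\tau^{\ast}$. Assuming $(G,\tau^{\ast})$ is Hausdorff (which is exactly the setting in which the Ra\v{i}kov completion is taken), the hypothesis that $(G,\tau^{\ast})$ is locally compact forces it to be complete with respect to its two-sided group uniformity, i.e.\ Ra\v{i}kov complete; see \cite{AT}. Consequently the Ra\v{i}kov completion $(\rho G,\rho\tau^{\ast})$ of $(G,\tau^{\ast})$ coincides with $(G,\tau^{\ast})$ itself, so that $\rho G=G$ and $\rho\tau^{\ast}=\tau^{\ast}$.

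Next I would substitute these equalities into the definition of the pre-Ra\v{i}kov completion. By definition, pre-$\rho G$ carries the pre-topology $\sigma$ with pre-base $\{gU:\ e\in U\cap G\in\tau,\ g\in\rho G,\ U\in\rho\tau^{\ast}\}$. Since $\rho G=G$, every such $U$ satisfies $U\subseteq G$, hence $U\cap G=U$; and since $\rho\tau^{\ast}=\tau^{\ast}$ with $\tau\subseteq\tau^{\ast}$, the requirements ``$U\in\tau^{\ast}$'' and ``$e\in U\in\tau$'' collapse to the single requirement $e\in U\in\tau$. Therefore the displayed pre-base of pre-$\rho G$ is precisely $\{gU:\ g\in G,\ U\in\tau,\ e\in U\}$.

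Finally I would invoke Proposition~\ref{t1}: taking for $\mathscr{B}$ the family of all $\tau$-open neighborhoods of $e$ (which is trivially a local pre-base at $e$), the set $\{gU:\ U\in\mathscr{B}\}$ is a local pre-base at each point $g\in G$ of $(G,\tau)$, so the union over all $g\in G$ of these families is a pre-base for the original pre-topology $\tau$. Comparing with the previous paragraph, this union is exactly the pre-base of $\sigma$, whence $\sigma=\tau$ and pre-$\rho G=(G,\tau)=G$, as claimed. The only step that genuinely requires care is the first one — the appeal to completeness of locally compact groups, together with the implicit Hausdorffness needed for the Ra\v{i}kov completion to behave as expected; everything after that is a formal unravelling of the definitions, using Proposition~\ref{t1} to identify the two pre-bases.
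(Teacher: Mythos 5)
Your proposal is correct and follows essentially the same route as the paper: the paper's proof simply cites \cite[Theorem 3.6.24]{AT} to get that local compactness of $(G,\tau^{\ast})$ forces $\rho G=(G,\tau^{\ast})$, and then concludes pre-$\rho G=G$. Your additional unwinding of the pre-base $\{gU:\ e\in U\cap G\in\tau,\ g\in\rho G,\ U\in\rho\tau^{\ast}\}$ to recover $\tau$ (via Proposition~\ref{t1}) just makes explicit the final step the paper leaves implicit, and your remark about the implicit Hausdorffness assumption is a fair point but not a deviation in method.
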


\begin{proof}
Since $(G, \tau^{\ast})$ is locally compact, it follows from \cite[Theorem 3..6.24]{AT} that $\rho G$ is just $(G, \tau^{\ast})$. Therefore, pre-$\rho G=G$.
\end{proof}

\begin{corollary}
Each finite pre-topological group is pre-Ra\v{i}kov complete.
\end{corollary}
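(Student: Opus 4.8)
The plan is to reduce the statement to the two preceding results it is built upon. The corollary asserts that each finite pre-topological group is pre-Ra\v{\i}kov complete, i.e.\ pre-$\rho G = G$. First I would invoke Theorem~\ref{t202299}: it suffices to show that for a finite pre-topological group $(G,\tau)$, the co-reflexion group topology $(G,\tau^{\ast})$ is locally compact. This is where the finiteness hypothesis does all the work.

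Next I would observe that $(G,\tau^{\ast})$ is a topological group on a \emph{finite} underlying set. By Proposition~\ref{p20221}, an open neighborhood base at $e$ for $\tau^{\ast}$ is given by $\{\bigcap\mathscr{F}: \mathscr{F}\subseteq\tau_{e},\ |\mathscr{F}|<\omega\}$; since $G$ is finite, every such intersection is itself a finite set, and in particular every point of $(G,\tau^{\ast})$ has a finite — hence compact — neighborhood. Thus $(G,\tau^{\ast})$ is locally compact (indeed, the whole space $G$ is finite, so trivially compact). Applying Theorem~\ref{t202299} then yields pre-$\rho G = G$, which is exactly the assertion that $G$ is pre-Ra\v{\i}kov complete.

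I do not anticipate a genuine obstacle here: the content is entirely a matter of chaining Theorem~\ref{t202299} with the elementary remark that a finite space is compact (hence locally compact) in any topology, so in particular in $\tau^{\ast}$. The only point requiring a word of care is making explicit that $(G,\tau^{\ast})$ is a well-defined topological group — but this is guaranteed by the definition of the co-reflexion group topology, which exists for every pre-topological group and is finer than $\tau$. So the proof is essentially one line: ``Since $G$ is finite, $(G,\tau^{\ast})$ is a (locally) compact topological group, hence by Theorem~\ref{t202299} we have pre-$\rho G = G$.''
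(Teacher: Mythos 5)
Your argument is correct and is exactly the paper's intended route: the corollary follows Theorem~\ref{t202299} directly, with finiteness of $G$ giving that $(G,\tau^{\ast})$ is compact (hence locally compact), so pre-$\rho G=G$. No gaps.
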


It is well known that the closure of each precompact subset $B$ in a topological group $G$ is compact in the Ra\v{i}kov completion $\rho G$ of $G$. However, in the class of pre-topological groups, we have the following example.

\begin{example}
There exists a precompact, non-compact pre-topological group $G$ such that pre-$\rho G=G$.
\end{example}

\smallskip
Indeed, let $G$ be the unit circle, that is, $G=\{|z|=1: Z\in \mathbb{C}\}$. For each $0<\theta<\pi$, let $U_{\theta}=e^{i\theta}$ and $U_{-\theta}=e^{-i\theta}$. Let $\mathscr{A}=\{U_{\theta}: 0<\theta<\pi\}\cup \{U_{-\theta}: 0<\theta<\pi\}$. Then $\mathscr{A}$ satisfies the conditions (1)-(4) of Theorem~\ref{t0}, hence it follows from Theorem~\ref{t2022} that the pre-topology $\tau$, generated by the family $\mathscr{A}$, is a pre-topological group topology on $G$. Clearly, $G$ is precompact and non-compact. However, $(G, \tau^{\ast})$ is discrete, thus it is locally compact, then pre-$\rho G=G$ by Theorem~\ref{t202299}.

\begin{remark}
The pre-topological group $(G, \tau)$ in (2) of Example~\ref{d0} is a second-countable and non-precompact pre-topological group. However, $(G, \tau^{\ast})$ is locally compact, hence it follows from \cite[Theorem 3..6.24]{AT} that $\rho G$ is just $(G, \tau^{\ast})$. Therefore, pre-$\rho G=G$ by Theorem~\ref{t202299}.
\end{remark}

\begin{theorem}
Let $G$ be a pre-topological group and $A$ be a subset of $G$. Then $A$ is a precompact subset of $G$ if and only if the closure of $A$ in the pre-Ra\v{i}kov completion pre-$\rho G$ is precompact.
\end{theorem}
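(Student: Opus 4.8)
The plan is to prove both implications by transferring precompactness back and forth between $(G,\tau)$ and pre-$\rho G$ along the dense pre-topological embedding $G\hookrightarrow\text{pre-}\rho G$ of Theorem~\ref{t20221000}(ii), and then to invoke the two lemmas on precompact subsets established earlier in this section: Lemma~\ref{l2022100}, and the lemma asserting that a set containing a dense precompact subset is itself precompact (equivalently, that the closure of a precompact subset is precompact). This is the natural pre-topological weakening of the classical fact that the closure of a precompact set in the Ra\v{\i}kov completion of a topological group is compact.

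For the implication $(\Rightarrow)$, suppose $A$ is a precompact subset of $G$. First I would check that $A$ is already precompact \emph{as a subset of} pre-$\rho G$: if $W$ is an open neighbourhood of the neutral element $e$ in pre-$\rho G$, then since $\sigma|_{G}=\tau$ the set $W\cap G$ is an open neighbourhood of $e$ in $(G,\tau)$, so precompactness of $A$ in $G$ yields a finite $F\subseteq G\subseteq\text{pre-}\rho G$ with $A\subseteq F(W\cap G)\subseteq FW$ and $A\subseteq(W\cap G)F\subseteq WF$. Thus $A$ is a dense precompact subset of its closure $B:=\overline{A}$ in pre-$\rho G$, and the dense-precompact lemma, applied inside pre-$\rho G$, gives that $B$ is precompact in pre-$\rho G$, which is the desired conclusion.

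For $(\Leftarrow)$, suppose the closure $B:=\overline{A}$ of $A$ in pre-$\rho G$ is precompact there. Since $A$ is dense in $B$, Lemma~\ref{l2022100} (applied in pre-$\rho G$) says that for every open neighbourhood $W$ of $e$ in pre-$\rho G$ there is a finite $K\subseteq A$ with $B\subseteq KW$ and $B\subseteq WK$. Given an arbitrary open neighbourhood $U$ of $e$ in $(G,\tau)$, use $\sigma|_{G}=\tau$ to choose $W\in\sigma$ with $e\in W$ and $W\cap G=U$, and take the corresponding finite $K\subseteq A\subseteq G$. For $a\in A\subseteq B\subseteq KW$ write $a=kw$ with $k\in K$, $w\in W$; then $w=k^{-1}a\in W\cap G=U$ since $k,a\in G$, so $a\in KU$; symmetrically $a\in UK$. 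Hence $G=\,$(well, $A\subseteq KU\cap UK$), so $A$ is precompact in $G$.

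The only point requiring care — and which I regard as the main, if modest, obstacle — is that pre-$\rho G$ need not be a pre-topological group in the strict sense of the definition (cf. Corollary~\ref{cc2022}, where the hypothesis $(\star)$ is needed even to obtain an almost topological group), whereas Lemma~\ref{l2022100} and the dense-precompact lemma are stated for pre-topological groups. This is harmless: the proofs of both lemmas use only that the inverses of neighbourhoods of $e$ are open, that every neighbourhood $U$ of $e$ contains a product $V_{1}V_{2}$ of two neighbourhoods of $e$, and that left translations are pre-homeomorphisms — exactly items (iii) and (iv) of Theorem~\ref{t20221000} together with the homogeneity of pre-$\rho G$. So one either invokes the two lemmas verbatim after this observation, or simply repeats their short arguments directly inside pre-$\rho G$.
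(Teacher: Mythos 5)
Your overall plan (transfer precompactness along the dense embedding $G\hookrightarrow\text{pre-}\rho G$ and refine the finite sets into $A$ via Lemma~\ref{l2022100} and the dense-precompact lemma) is reasonable, and your backward direction is in the spirit of the paper's appeal to Proposition~\ref{p20221011}. The genuine gap is exactly at the point you flag and then wave away: the claim that the two lemmas can be re-proved inside pre-$\rho G$ using only Theorem~\ref{t20221000}(iii), (iv) and the fact that left translations of pre-$\rho G$ are pre-homeomorphisms. That is true for the left-handed halves ($\overline{A}\subseteq FU$, $B\subseteq KW$), whose proofs only test density against sets $bV_2^{-1}$, i.e.\ left translates of open sets. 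But the right-handed halves ($\overline{A}\subseteq UF$ in your forward direction, $B\subseteq WK$ in your backward direction) are obtained by the ``symmetric'' argument, which needs sets of the form $Vb$ (right translates of open neighbourhoods of $e$) to be neighbourhoods of $b$ — in a genuine pre-topological group this is fine because right translations are pre-homeomorphisms, but in pre-$\rho G$ the pre-base of $\sigma$ consists of left translates $gU$ only, and the relevant points $b\in\overline{A}$ (and the finite sets coming from precompactness of $\overline{A}$) lie in $\rho G$, not in $G$. Right translation by an element of $G$ is a $\sigma$-pre-homeomorphism, since $(gU)x=(gx)(x^{-1}Ux)$ is again a pre-base element when $x\in G$, but nothing of the sort is available for arbitrary elements of $\rho G$ (item (v) of Theorem~\ref{t20221000} requires $G$ almost topological, and you cannot repair matters by shrinking a neighbourhood inside finitely many conjugates $xV'x^{-1}$, $x\in F$, because a pre-topology is not closed under finite intersections). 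So ``repeat the short arguments directly inside pre-$\rho G$'' fails precisely for the $UF$-type inclusions, and your forward direction is not complete as written.

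The paper's proof of the necessity avoids this obstacle by a different computation that keeps the finite set inside $G$: choose $\sigma$-open $V_1,V_2$ with $V_1V_2\subseteq U$, show $\overline{V_1}\subseteq U$ as in the proof of Theorem~\ref{t2022111}, apply precompactness of $A$ in $G$ to the $\tau$-open set $V_1\cap G$ to get a finite $F\subseteq G$ with $A\subseteq F(V_1\cap G)$ and $A\subseteq (V_1\cap G)F$, and then take closures of these two inclusions in pre-$\rho G$, obtaining $\overline{A}\subseteq F\overline{V_1}\subseteq FU$ and $\overline{A}\subseteq\overline{V_1}F\subseteq UF$; here only translations by the finitely many elements of $F\subseteq G$ are used, which do act nicely on $\sigma$. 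To rescue your outline you would either have to establish the missing right-translation (or conjugation) control in pre-$\rho G$, or replace the appeal to the dense-precompact lemma for the $UF$-side by this closure computation — at which point you are essentially reproducing the paper's argument.
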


\begin{proof}
By the proof of Proposition~\ref{p20221011}, the sufficiency is obvious. It suffices to prove the necessity. Let $A$ be a precompact subset of $G$. Take any open neighborhood $U$ of the neutral element in pre-$\rho G$. By (iV) of Theorem~\ref{t20221000}, there exist open neighborhoods $V_{1}$ and $V_{2}$ in pre-$\rho G$ such that $V_{1}V_{2}\subseteq U$, then $\overline{V_{1}}\subseteq U$ by the proof of Theorem~\ref{t2022111}. Since $U\cap G$ is open in $G$, there exist finite subset $F\subseteq G$ such that $A\subseteq F(V_{1}\cap G)$ and $A\subseteq (V_{1}\cap G)F$. Then $\overline{A}\subseteq\overline{F(V_{1}\cap G)}=F\overline{V_{1}}$ and $\overline{A}\subseteq\overline{(V_{1}\cap G)F}=F\overline{V_{1}}$. Since $F\overline{V_{1}}\subseteq FU$ and $\overline{V_{1}}F\subseteq UF$, it follows that the closure of $A$ in pre-$\rho G$ is precompact.
\end{proof}

\begin{corollary}
A pre-topological group $G$ is precompact if and only if the pre-Ra\v{i}kov completion pre-$\rho G$ is precompact.
\end{corollary}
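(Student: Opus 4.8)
The plan is to obtain this corollary immediately from the theorem just proved by specialising it to $A=G$. First I would invoke part (ii) of Theorem~\ref{t20221000}, which says that $(G,\tau)$ is a dense pre-topological subgroup of pre-$\rho G$; consequently the closure of $G$ in pre-$\rho G$ is all of pre-$\rho G$. Applying the preceding theorem with $A=G$ then gives: $G$ is a precompact subset of $G$ if and only if pre-$\rho G$ (the closure of $G$ in pre-$\rho G$) is a precompact subset of pre-$\rho G$.

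Next I would reconcile ``precompact subset'' with ``precompact pre-topological group''. For any pre-topological group $H$, any finite $F\subseteq H$ and any open neighborhood $U$ of the neutral element, the inclusions $FU\subseteq H$ and $UF\subseteq H$ hold automatically, so the requirement ``$H\subseteq FU$ and $H\subseteq UF$'' is equivalent to ``$H=FU$ and $H=UF$''. Hence $H$ is a precompact subset of itself exactly when $H$ is a precompact pre-topological group. Using this with $H=G$ and with $H=$ pre-$\rho G$ turns the equivalence of the previous paragraph into the assertion that $G$ is precompact if and only if pre-$\rho G$ is precompact.

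I do not expect a genuine obstacle here; the only point requiring a word of care is that precompactness of $G$ \emph{as a subset of} pre-$\rho G$ is tested against neighborhoods of the neutral element taken in pre-$\rho G$, not in $(G,\tau)$. But by part (ii) of Theorem~\ref{t20221000} we have $\sigma|_{G}=\tau$, so the traces on $G$ of the open neighborhoods of $e$ in pre-$\rho G$ are cofinal among the open neighborhoods of $e$ in $(G,\tau)$; therefore ``$G$ is a precompact subset of $G$'' (in the ambient group $G$) and ``$G$ is a precompact subset of pre-$\rho G$'' coincide, and likewise ``$G$ is precompact as a pre-topological group'' matches both. This legitimises the use of the preceding theorem and completes the argument.
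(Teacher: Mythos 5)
Your proof is correct and is exactly the intended derivation: the paper states this corollary without proof, as an immediate consequence of the preceding theorem applied with $A=G$, using the density of $G$ in pre-$\rho G$ (Theorem~\ref{t20221000}(ii)) and the observation that being a precompact subset of itself is the same as being a precompact group. The cofinality remark in your final paragraph is unnecessary --- the theorem itself already mediates between neighborhoods in $G$ and in pre-$\rho G$ --- but it is harmless.
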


\begin{question}
Let $G$ be a pre-topological group. If the pre-Ra\v{i}kov completion pre-$\rho G$ of $G$ is compact, is pre-$\rho G=G$?
\end{question}

\begin{theorem}
Each pre-topological product $G=\prod_{i\in I}G_{i}$ of pre-Ra\v{i}kov complete pre-topological spaces is pre-Ra\v{i}kov complete.
\end{theorem}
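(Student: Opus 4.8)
The plan is to reduce the assertion to two facts: that forming co-reflexion group topologies commutes with pre-topological products, and that a product of Ra\v{i}kov complete topological groups (with the product topology) is again Ra\v{i}kov complete. Throughout, write $\tau_{i}$ for the pre-topology of $G_{i}$ and $\tau_{i}^{\ast}$ for its co-reflexion group topology, and let $\tau$, $\tau^{\ast}$ denote the analogous objects on $G=\prod_{i\in I}G_{i}$. Note first that, by Theorem~\ref{t20221000}(ii) applied to each factor, ``$G_{i}$ is pre-Ra\v{i}kov complete'' means exactly that $\rho G_{i}=G_{i}$ as sets, i.e.\ that $(G_{i},\tau_{i}^{\ast})$ is Ra\v{i}kov complete and $\rho\tau_{i}^{\ast}=\tau_{i}^{\ast}$.

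The first and main step is to prove $(G,\tau^{\ast})=\prod_{i\in I}(G_{i},\tau_{i}^{\ast})$ as topological groups. Write $\mu:=\prod_{i\in I}\tau_{i}^{\ast}$ for the topological product topology, which is a topological group topology on $G$. For the inclusion $\tau^{\ast}\subseteq\mu$: since $\tau_{i}\subseteq\tau_{i}^{\ast}$ for each $i$ and the canonical projections $p_{i}\colon(G,\tau)\to(G_{i},\tau_{i})$ are pre-continuous, one has $\tau\subseteq\mu$, so by minimality of the co-reflexion topology $\tau^{\ast}\subseteq\mu$. For the reverse inclusion I would use Proposition~\ref{p20221}: a neighbourhood base at $e$ for $\tau^{\ast}$ consists of the finite intersections of $\tau$-open neighbourhoods of $e$. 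Given a basic $\mu$-neighbourhood $V=\prod_{i\in F}V_{i}\times\prod_{i\notin F}G_{i}$ of $e$ with $F$ finite, apply Proposition~\ref{p20221} inside each $G_{i}$ ($i\in F$) to obtain finitely many $\tau_{i}$-open neighbourhoods $U_{i,1},\dots,U_{i,n_{i}}$ of $e_{i}$ with $\bigcap_{j}U_{i,j}\subseteq V_{i}$; then $\bigcap_{i\in F}\bigcap_{j\le n_{i}}p_{i}^{-1}(U_{i,j})$ is a finite intersection of $\tau$-open neighbourhoods of $e$ contained in $V$, so $V$ is a $\tau^{\ast}$-neighbourhood of $e$. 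Since $(G,\tau^{\ast})$ and $\prod_{i}(G_{i},\tau_{i}^{\ast})$ are both homogeneous, this gives $\mu\subseteq\tau^{\ast}$, hence $\tau^{\ast}=\mu$.

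Next, since each $(G_{i},\tau_{i}^{\ast})$ is Ra\v{i}kov complete, so is the product $\prod_{i\in I}(G_{i},\tau_{i}^{\ast})$ (the two-sided uniformity of a product group is the product of the two-sided uniformities, and a product of complete uniform spaces is complete; see \cite{AT}). Combining this with the previous step, $(G,\tau^{\ast})$ is Ra\v{i}kov complete, i.e.\ $\rho G=G$ and $\rho\tau^{\ast}=\tau^{\ast}$. Finally, by Theorem~\ref{t20221000}(ii), $(G,\tau)$ is a dense pre-topological subgroup of pre-$\rho G$ and the pre-topology of pre-$\rho G$ restricts to $\tau$ on $G$; as $\rho G=G$, this restriction is the whole pre-topology, so pre-$\rho G=(G,\tau)=G$, which is the desired conclusion.

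The step I expect to be the main obstacle is the inclusion $\mu\subseteq\tau^{\ast}$ in the first paragraph above, where the finite-intersection description of the co-reflexion topology (Proposition~\ref{p20221}) does the essential work; the rest is routine bookkeeping once one observes (via Theorem~\ref{t20221000}(ii)) that pre-Ra\v{i}kov completeness of a factor is the same as Ra\v{i}kov completeness of its co-reflexion group topology.
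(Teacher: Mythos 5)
Your proposal is correct and follows essentially the same route as the paper: both reduce pre-Ra\v{i}kov completeness of $G$ to Ra\v{i}kov completeness of the topological product $\prod_{i\in I}(G_{i},\tau_{i}^{\ast})$ and then invoke the completeness of products of Ra\v{i}kov complete topological groups. The only difference is that you spell out, via Proposition~\ref{p20221}, the identification $\tau^{\ast}=\prod_{i\in I}\tau_{i}^{\ast}$, a step the paper's proof uses implicitly.
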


\begin{proof}
For each $i\in I$, let $\tau_{i}$ be the pre-topology of $G_{i}$; since $G_{i}$ is pre-Ra\v{i}kov complete, it follows that $G_{i}$=pre-$\rho G_{i}$, hence $(G, \tau^{\ast})$ is Ra\v{i}kov complete. Therefore, it follows from \cite[Theorem 3.6.22]{AT} that the topological product $G=\prod_{i\in I}(G_{i}, \tau_{i}^{\ast})$ is Ra\v{i}kov complete, then $G=\prod_{i\in I}G_{i}$ is pre-Ra\v{i}kov complete.
\end{proof}

\begin{corollary}
Let $G=\prod_{i\in I}G_{i}$ be a product of pre-topological groups. Then pre-$\rho G$ is pre-topologically isomorphic to the pre-product pre-topological space $\prod_{i\in I}$pre-$\rho G_{i}$.
\end{corollary}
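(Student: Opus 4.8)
The plan is to realise $\prod_{i\in I}$pre-$\rho G_{i}$, with the pre-product pre-topology, as the pre-Ra\v{i}kov completion of $G$, and then let the identity map do the rest. The starting point is that the co-reflexion operation commutes with pre-products: if $(G,\tau)=\prod_{i\in I}(G_{i},\tau_{i})$, then by Proposition~\ref{p20221} a neighbourhood base of the neutral element in $(G,\tau^{\ast})$ is formed by the finite intersections of basic $\tau$-neighbourhoods $\prod_{i\in C}U_{i}\times\prod_{i\notin C}G_{i}$ (with $C\subseteq I$ finite and $U_{i}\in\tau_{i}$), each such intersection is again a box whose finitely many non-trivial coordinates lie in $\tau_{i}^{\ast}$, and conversely every such box is such an intersection; hence $(G,\tau^{\ast})=\prod_{i\in I}(G_{i},\tau_{i}^{\ast})$ as topological groups, and by \cite[Theorem 3.6.22]{AT} the Ra\v{i}kov completion satisfies $\rho G=\prod_{i\in I}\rho G_{i}$ with $\rho\tau^{\ast}=\prod_{i\in I}\rho\tau_{i}^{\ast}$. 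Running the same argument for the pre-topologies $\sigma_{i}$ of the groups pre-$\rho G_{i}$ and using Theorem~\ref{t20221000}(i) (so that the co-reflexion of $\sigma_{i}$ is $\rho\tau_{i}^{\ast}$) shows that the co-reflexion of the pre-product pre-topology on $\prod_{i\in I}\rho G_{i}$ is again $\rho\tau^{\ast}$, which is Ra\v{i}kov complete; so pre-$\rho G$ and $\prod_{i\in I}$pre-$\rho G_{i}$ have the same underlying abstract group and the same associated Ra\v{i}kov completion $(\rho G,\rho\tau^{\ast})$.

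Next I would record that $G=\prod_{i\in I}G_{i}$ is a dense pre-topological subgroup of $\prod_{i\in I}$pre-$\rho G_{i}$ and that the latter is pre-Ra\v{i}kov complete. Density is immediate from Theorem~\ref{t20221000}(ii), since each $G_{i}$ is dense in pre-$\rho G_{i}$, a pre-product of dense subspaces is dense in the pre-product, and subspace pre-topologies commute with pre-products. For completeness, each pre-$\rho G_{i}$ is itself pre-Ra\v{i}kov complete: its co-reflexion is $\rho\tau_{i}^{\ast}$, which is already Ra\v{i}kov complete, and re-running the construction returns the same pre-topology, because the new pre-base is obtained from the old one by left translations, and these are pre-homeomorphisms for this pre-topology irrespective of condition ($\star$) (a left translation sends a pre-basic set $hU$ to $ghU$, again of the same form). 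Then the theorem immediately preceding this corollary gives that $\prod_{i\in I}$pre-$\rho G_{i}$ is pre-Ra\v{i}kov complete.

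By the first paragraph it now suffices to show that the pre-product pre-topology on $\prod_{i\in I}\rho G_{i}$ coincides with the pre-topology $\sigma$ of pre-$\rho G$, whose pre-base is $\{gU:g\in\rho G,\ U\in\rho\tau^{\ast},\ e\in U\cap G\in\tau\}$; the identity map will then be simultaneously a group isomorphism and a pre-homeomorphism, hence the desired pre-topological isomorphism. One containment is routine: a basic open box of the pre-product, say $\prod_{i\in C}h_{i}V_{i}\times\prod_{i\notin C}\rho G_{i}$ with $V_{i}\in\rho\tau_{i}^{\ast}$ and $e_{i}\in V_{i}\cap G_{i}\in\tau_{i}$, equals $h\cdot W$ for $W=\prod_{i\in C}V_{i}\times\prod_{i\notin C}\rho G_{i}\in\rho\tau^{\ast}$, with $e\in W$ and $W\cap G=\prod_{i\in C}(V_{i}\cap G_{i})\times\prod_{i\notin C}G_{i}\in\tau$, so it is a basic open set of pre-$\rho G$. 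The reverse containment — that every $W\in\rho\tau^{\ast}$ whose trace $W\cap G$ is open in $\prod_{i\in I}\tau_{i}$ is open in the pre-product of the $\sigma_{i}$ — is the technical core and the step I expect to be the main obstacle: one must exploit the density of $G$ in $(\rho G,\rho\tau^{\ast})$ to pass from a box decomposition of $W\cap G$ inside $\prod_{i\in I}\tau_{i}$ to a decomposition of $W$ into translates of boxes with $\sigma_{i}$-open coordinates, and the delicate point is that the chosen $\rho\tau_{i}^{\ast}$-open ``thickenings'' of the $\tau_{i}$-open coordinate sets have to be coordinated so that their union reconstitutes $W$ exactly, rather than some larger $\rho\tau^{\ast}$-open set with the same trace on $G$.
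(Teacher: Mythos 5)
Your first two paragraphs are fine as far as they go: the co-reflexion of the product pre-topology has the finite boxes with $\tau_i^{\ast}$-open coordinates as a neighbourhood base at $e$, so $(G,\tau^{\ast})=\prod_{i\in I}(G_i,\tau_i^{\ast})$ and, by the cited completeness theorem for topological groups, $\rho G=\prod_{i\in I}\rho G_i$ with $\rho\tau^{\ast}=\prod_{i\in I}\rho\tau_i^{\ast}$; the containment ``pre-product open $\Rightarrow$ $\sigma$-open'' is also correct under the finite-box reading of the pre-product. But the proof stops exactly where all the content lies: you never show that a set $W\in\rho\tau^{\ast}$ with $e\in W\cap G\in\tau$ is open in the pre-product of the $\sigma_i$; you only announce it as ``the technical core and the step I expect to be the main obstacle.'' Since the desired isomorphism in your plan is the identity map, this unproved inclusion is precisely the statement to be established, so the proposal does not prove the corollary.

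Worse, this step is not a routine density-and-thickening argument that can be filled in: it can fail. Take $G_1=G_2=\mathbb{Q}$ with the ray pre-topology of Example~\ref{d0}(2) restricted to $\mathbb{Q}$ (pre-base all sets $(-\infty,a)\cap\mathbb{Q}$, $(b,+\infty)\cap\mathbb{Q}$). Then $\tau_i^{\ast}$ is the usual topology, $\rho G_i=\mathbb{R}$, and a $\sigma_i$-pre-base set is $g+V$ with $V$ Euclidean-open and $V\cap\mathbb{Q}$ a union of rational rays containing $0$; in particular every such set contains a translated rational ray, hence is dense in a real half-line. Now let $W=\{(x,y)\in\mathbb{R}^{2}: x+y\neq\sqrt{2},\ x-y\neq\sqrt{3}\}$. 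Then $W\in\rho\tau^{\ast}$ and $W\cap\mathbb{Q}^{2}=\mathbb{Q}^{2}\in\tau$ contains $e$, so $W$ is a pre-base element of pre-$\rho G$. But no admissible box $(g_1+V_1)\times(g_2+V_2)$ (and no cylinder) is contained in $W$: pick translated rational rays $T_i\subseteq g_i+V_i$ dense in half-lines; if the two half-lines point the same way, then for a suitable $t_0\in T_1$ the set $t_0-T_2$ is dense in a half-line having $\sqrt{3}$ in its interior, so choosing $s\in T_2$ with $|t_0-s-\sqrt{3}|$ smaller than the radius of an interval around $t_0$ inside $g_1+V_1$ produces a point of the box on the line $x-y=\sqrt{3}$; if they point opposite ways, the same argument with $t_0+T_2$ and the line $x+y=\sqrt{2}$ applies. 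Hence $W$ is $\sigma$-open but not open in the pre-product of pre-$\rho G_1$ and pre-$\rho G_2$, so the identity is not a pre-homeomorphism in this example. Consequently the strategy you outline cannot be completed as stated; any correct treatment must either impose extra hypotheses, use a different (non-identity) isomorphism, or rely on a different reading of the product/completion definitions, and your proposal gives no indication of how to do any of these.
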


Finally, we discuss some applications of the precompactness in pre-topological groups. First, we say that
a pre-topological space $X$ is {\it resolvable} if there exist dense disjoint subsets $A$ and $B$.

\begin{proposition}\label{p202288}
If a subgroup $H$ of a pre-topological group $G$ is resolvable, then so is $G$.
\end{proposition}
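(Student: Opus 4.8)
The statement is that resolvability of a subgroup $H$ lifts to the whole pre-topological group $G$. The natural strategy is to use the coset decomposition of $G$ by $H$ together with the fact that left translations are pre-homeomorphisms (Proposition~\ref{t1} and the corollary on pre-homogeneity), so that resolvability — a property witnessed by a pair of dense disjoint sets inside $H$ — can be transported along each coset and then assembled globally. First I would fix dense disjoint subsets $A$ and $B$ of $H$ with $A \cup B = H$ (if $A \cup B \neq H$ we may enlarge $A$ to $H \setminus B$, which stays dense and disjoint from $B$). Then I would pick a set $S \subseteq G$ of representatives for the left cosets of $H$, so that $G$ is the disjoint union $\bigsqcup_{s \in S} sH$, and set $A' = \bigcup_{s \in S} sA$ and $B' = \bigcup_{s \in S} sB$. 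Since $sA \cup sB = sH$ and the union over $s$ is disjoint, $A'$ and $B'$ are disjoint and $A' \cup B' = G$.

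The core of the argument is to show $A'$ and $B'$ are both dense in $G$. For density of $A'$, take any nonempty open $U \subseteq G$ and any $x \in U$; let $s$ be the representative with $x \in sH$. The left translation $L_s$ is a pre-homeomorphism of $G$ onto itself, so $L_{s^{-1}}(U) = s^{-1}U$ is an open subset of $G$ containing $s^{-1}x \in H$; hence $s^{-1}U \cap H$ is a nonempty relatively open subset of $H$. Since $A$ is dense in $H$, we get $(s^{-1}U \cap H) \cap A \neq \emptyset$, and applying $L_s$ back, $U \cap sA \neq \emptyset$, so $U \cap A' \neq \emptyset$. The same argument with $B$ in place of $A$ shows $B'$ is dense. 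Thus $A'$ and $B'$ are dense, disjoint subsets of $G$, which proves $G$ is resolvable.

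**The main obstacle.** The one genuinely delicate point is the interaction between the subspace pre-topology on $H$ and the ambient pre-topology on $G$: I need that if $W$ is open in $G$ and $x \in W \cap H$, then $W \cap H$ contains a set that is open in $H$ and contains $x$ — which is immediate from the definition of the subspace pre-topology — and, conversely, that density of $A$ in $H$ is exactly the statement that every nonempty relatively open subset of $H$ meets $A$. Here one must be slightly careful because "dense" for pre-topological spaces is closure-density, so I would either phrase density in terms of meeting every nonempty open set (which is equivalent once we observe that $\overline{A} = H$ iff every open $V$ with $V \cap H \neq \emptyset$ also satisfies $V \cap A \neq \emptyset$, using the description of closure in pre-topological spaces from \cite{LCL}), or invoke that characterization directly. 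No pre-continuity of multiplication or inversion is needed beyond the fact that each $L_s$ is a pre-homeomorphism, so the argument in fact works for any left pre-topological group, not merely pre-topological groups; I would remark on this only if space permits.
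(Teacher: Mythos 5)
Your proof is correct and is essentially the paper's own argument: the paper also decomposes $G$ into the left cosets $x_{\alpha}H$ and takes $D_{1}=\bigcup_{\alpha}x_{\alpha}A$, $D_{2}=\bigcup_{\alpha}x_{\alpha}B$, leaving the disjointness and density check (which you supply correctly via the fact that each left translation is a pre-homeomorphism and the subspace pre-topology on $H$) as ``easily checked.'' Your preliminary enlargement to $A\cup B=H$ is harmless but unnecessary, since resolvability only requires two disjoint dense subsets.
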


\begin{proof}
Let $A$ and $B$ be two dense disjoint subsets in $H$, and let $\mathscr{C}=\{x_{\alpha}H: x_{\alpha}\in G, \alpha\in I\}$ be all the cosets of $H$ in $G$ such that $x_{\alpha}H\cap x_{\beta}=\emptyset$ for any $\alpha\neq\beta$. Put $D_{1}=\bigcup\{x_{\alpha}A: \alpha\in I\}$ and $D_{2}=\bigcup\{x_{\alpha}B: \alpha\in I\}$.  Then it is easily checked that $D_{1}$ and $D_{2}$ are disjoint dense subsets of $G$.
\end{proof}

The following proposition is obvious, so we omit the proof.

\begin{proposition}\label{p202277}
If a pre-topological group $G$ contains a proper dense subgroup, then $G$ is resolvable.
\end{proposition}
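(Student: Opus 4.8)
The plan is to exhibit two disjoint dense subsets of $G$ directly, using cosets of $H$. Since $H$ is a \emph{proper} subgroup of $G$, I would fix an element $g\in G\setminus H$ and set $A:=H$ and $B:=gH$.

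First, $A$ and $B$ are disjoint: if $gh_{1}=h_{2}$ for some $h_{1},h_{2}\in H$, then $g=h_{2}h_{1}^{-1}\in H$, contradicting the choice of $g$. Next, $A$ is dense in $G$ by hypothesis. Finally, $B$ is dense: the left translation $L_{g}\colon G\to G$ is a pre-homeomorphism of $G$ onto itself, and a pre-homeomorphism commutes with the closure operator (cf. \cite[Theorem 7]{LCL}), so $\overline{B}=\overline{L_{g}(H)}=L_{g}(\overline{H})=L_{g}(G)=G$. Hence $A$ and $B$ are disjoint dense subsets of $G$, which is precisely what is required for $G$ to be resolvable.

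There is essentially no obstacle to overcome here; the only point worth keeping in mind is that the definition of resolvability asks merely for two disjoint dense subsets, not for a partition of $G$ into dense pieces, so no covering argument is needed. One could alternatively attempt to deduce the statement from Proposition~\ref{p202288}, but that route would first require verifying that the dense subgroup $H$ is itself resolvable, which is not obviously simpler than the coset construction above; I would therefore carry out the direct argument.
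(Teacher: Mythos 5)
Your argument is correct: the two cosets $H$ and $gH$ (with $g\in G\setminus H$) are disjoint, $H$ is dense by hypothesis, and $gH$ is dense because the left translation $L_{g}$ is a pre-homeomorphism and hence carries $\overline{H}=G$ onto $\overline{gH}$. The paper omits the proof of this proposition as obvious, and your direct coset-translation argument is exactly the intended justification, so there is nothing to add.
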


\begin{proposition}
If a pre-topological group $G$ contains a non-closed subgroup, then $G$ is resolvable.
\end{proposition}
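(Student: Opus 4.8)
The plan is to reduce the statement to Propositions~\ref{p202277} and~\ref{p202288} by passing to the closure of the given non-closed subgroup. So suppose $H$ is a subgroup of the pre-topological group $G$ with $H\neq\overline{H}$. By the corollary following Proposition~\ref{p202266}, the closure $\overline{H}$ is again a subgroup of $G$; equipped with the subspace pre-topology it is a pre-topological group, since the restrictions to $\overline{H}$ of the (pre-continuous) multiplication and inversion of $G$ remain pre-continuous with respect to the induced pre-topologies.

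Next I would check that $H$ is a \emph{proper dense} subgroup of $\overline{H}$. Properness is immediate from $H\neq\overline{H}$. For density, recall that in any pre-topological space an arbitrary intersection of closed sets is closed (its complement is a union of open sets), so $\overline{H}$ is genuinely closed in $G$; moreover, for a subset $A$ of a subspace $Y$ one has $\overline{A}^{\,Y}=Y\cap\overline{A}^{\,G}$, and taking $A=H$, $Y=\overline{H}$ shows that the closure of $H$ inside $\overline{H}$ is all of $\overline{H}$. Hence $H$ is dense in $\overline{H}$.

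With this in hand, Proposition~\ref{p202277} applies to the pre-topological group $\overline{H}$ and yields that $\overline{H}$ is resolvable. Since $\overline{H}$ is a resolvable subgroup of $G$, Proposition~\ref{p202288} then gives that $G$ itself is resolvable, which completes the proof.

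The argument is a short chaining of three earlier results, so I do not expect a serious obstacle; the only point needing a word of care is the behaviour of the closure operator under passage to the subspace pre-topology (together with the routine fact that a subgroup of a pre-topological group is a pre-topological group in the induced pre-topology), which is handled in the second paragraph above.
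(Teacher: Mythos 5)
Your argument is correct and follows essentially the same route as the paper: pass to $\overline{H}$, which is a subgroup by Proposition~\ref{p202266}, note that $H$ is a proper dense subgroup of it, then apply Proposition~\ref{p202277} and Proposition~\ref{p202288}. The only (harmless) difference is that the paper splits into the cases $\overline{H}=G$ and $\overline{H}\neq G$, whereas you treat both uniformly by invoking Proposition~\ref{p202288} even when $\overline{H}=G$, which is trivially valid.
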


\begin{proof}
Let $H$ be a non-closed subgroup of $G$. Then $\overline{H}$ is a pre-topological subgroup by Proposition~\ref{p202266}. If $\overline{H}=G$, then $G$ is resolvable by Proposition~\ref{p202277} since $H$ is non-closed. Now we assume that $\overline{H}\neq G$. Since $H\neq\overline{H}$, it follows that $\overline{H}$ is resolvable by Proposition~\ref{p202277}. Then, from Proposition~\ref{p202288}, it follows that $G$ is resolvable.
\end{proof}

\begin{lemma}\label{l2022110}\cite{MP1996}
For any infinite group $G$, there exists a disjoint family $\mathscr{A}$ of cardinality $|G|$ of subsets of $G$ such that, for each $A\in\mathscr{A}$ and any finite subset $K$ of $G$, $AK\neq G$ and $(G\setminus A)K\neq G$.
\end{lemma}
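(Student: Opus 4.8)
The plan is to recast the statement in terms of \emph{thick} sets and then build $\mathscr{A}$ by a transfinite recursion of length $\kappa:=|G|$. For a finite $K\subseteq G$ and an arbitrary $S\subseteq G$ one checks immediately that $g\notin SK$ if and only if $gK^{-1}\subseteq G\setminus S$; consequently $SK\neq G$ for every finite $K\subseteq G$ precisely when $G\setminus S$ contains a left translate of each finite subset of $G$, i.e. $G\setminus S$ is \emph{left thick}. Applying this to $S=A$ and to $S=G\setminus A$, the family $\mathscr{A}$ we seek is exactly a disjoint family of cardinality $\kappa$ in which every member $A$ and its complement $G\setminus A$ are left thick. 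Since a disjoint family with at least two members has the feature that $G\setminus A$ always contains another member $A'$ (and a superset of a left thick set is left thick), it suffices to construct a disjoint family of $\kappa$ many left thick subsets of $G$.

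To do this I would first fix an index set $I$ with $|I|=\kappa$ and, using the fact that $[G]^{<\omega}$ has cardinality $\kappa$ for infinite $G$, enumerate $I\times[G]^{<\omega}=\{(i_{\alpha},F_{\alpha}):\alpha<\kappa\}$ with every pair occurring. Then I recursively pick $g_{\alpha}\in G$, $\alpha<\kappa$, so that the translate $g_{\alpha}F_{\alpha}$ is disjoint from $g_{\beta}F_{\beta}$ for every $\beta<\alpha$ with $i_{\beta}\neq i_{\alpha}$. This is possible because $g_{\alpha}F_{\alpha}\cap g_{\beta}F_{\beta}\neq\emptyset$ is equivalent to $g_{\alpha}\in g_{\beta}F_{\beta}F_{\alpha}^{-1}$, a set of size at most $|F_{\alpha}|\,|F_{\beta}|<\omega$, so the union of forbidden values over the $<\kappa$ earlier stages has cardinality $<\kappa=|G|$ and a legal choice of $g_{\alpha}$ remains. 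Putting $A_{i}=\bigcup\{g_{\alpha}F_{\alpha}:\alpha<\kappa,\ i_{\alpha}=i\}$ for $i\in I$, the sets $A_{i}$ are pairwise disjoint --- a common point of $A_{i}$ and $A_{j}$ with $i\neq j$ would lie in two translates $g_{\alpha}F_{\alpha}$ and $g_{\beta}F_{\beta}$ with $i_{\alpha}\neq i_{\beta}$, contradicting the recursion at the later of the two stages --- and each $A_{i}$ is left thick, since for any finite $F\subseteq G$ the pair $(i,F)$ equals $(i_{\alpha},F_{\alpha})$ for some $\alpha$, whence $g_{\alpha}F\subseteq A_{i}$.

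It then remains only to read the first reduction backwards: each $A_{i}$ is left thick, so $(G\setminus A_{i})K\neq G$ for all finite $K\subseteq G$; and each $G\setminus A_{i}$ contains some $A_{j}$ with $j\neq i$ and is therefore left thick, so $A_{i}K\neq G$ for all finite $K\subseteq G$. As the $A_{i}$ are nonempty and pairwise disjoint, $\mathscr{A}=\{A_{i}:i\in I\}$ has cardinality $\kappa=|G|$, which completes the argument.

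The part that needs care is the cardinal arithmetic: one must verify that $|[G]^{<\omega}|=|G|$ (so that $I\times[G]^{<\omega}$ can be enumerated in order type $\kappa$) and that at each stage $\alpha<\kappa$ the set of forbidden values $\bigcup_{\beta<\alpha,\ i_{\beta}\neq i_{\alpha}} g_{\beta}F_{\beta}F_{\alpha}^{-1}$ has size strictly less than $\kappa$, so that $G$ still offers a valid $g_{\alpha}$. Everything else --- disjointness, left thickness, and the equivalence between ``$SK\neq G$ for all finite $K$'' and left thickness of $G\setminus S$ --- is a routine check once the recursion is in place.
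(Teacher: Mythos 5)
Your proof is correct. Note that the paper itself contains no argument for this lemma: it is simply quoted from Malykhin and Protasov \cite{MP1996}, so there is no internal proof to compare with; your reduction and recursion are essentially the standard argument behind the cited result. Concretely, each step checks out: the equivalence ``$SK\neq G$ for every finite $K$'' $\Leftrightarrow$ ``$G\setminus S$ contains a left translate of every finite set'' (via $g\notin SK \Leftrightarrow gK^{-1}\subseteq G\setminus S$), the reduction to producing $|G|$ pairwise disjoint left thick sets (disjointness makes each complement contain another thick member, hence thick), the cardinality computation $|I\times[G]^{<\omega}|=|G|$ allowing an enumeration of length $|G|$, and the fact that at stage $\alpha$ the forbidden set $\bigcup_{\beta<\alpha,\ i_{\beta}\neq i_{\alpha}}g_{\beta}F_{\beta}F_{\alpha}^{-1}$ is a union of fewer than $|G|$ finite sets and so has size $<|G|$, leaving a legal choice of $g_{\alpha}$. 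Since the resulting $A_{i}$ are nonempty and pairwise disjoint, the family has cardinality $|G|$, and both $A_{i}K\neq G$ and $(G\setminus A_{i})K\neq G$ follow as you state.
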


\begin{theorem}\label{t202224}
For any infinite group $G$, there exists a disjoint family $\mathscr{A}$ of cardinality $|G|$
of subsets of $G$ which are dense in any precompact pre-topological group on $G$.
\end{theorem}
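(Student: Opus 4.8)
The plan is to let $\mathscr{A}$ be precisely the disjoint family of cardinality $|G|$ supplied by Lemma~\ref{l2022110}, and then to verify that each member of $\mathscr{A}$ is dense in an arbitrary precompact pre-topological group topology on $G$. Since $\mathscr{A}$ is produced before any topology enters the picture, a single family then serves all precompact pre-topological groups on $G$ at once; disjointness and the cardinality $|G|$ are already part of the lemma, so only the density needs to be checked.

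Fix a precompact pre-topological group $(G,\tau)$ and some $A\in\mathscr{A}$. A subset is dense exactly when it meets every nonempty open set, and every nonempty open set contains a set of the form $Ux$ with $x\in G$ and $U$ an open neighbourhood of the neutral element $e$ (by Proposition~\ref{t1}); hence it suffices to show $Ux\cap A\neq\emptyset$ for every such $x$ and $U$. Suppose not: then $Ux\subseteq G\setminus A$ for some open neighbourhood $U$ of $e$ and some $x\in G$.

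Now I would invoke the right precompactness of $G$: there is a finite set $F\subseteq G$ with $UF=G$. Since $Uf=(Ux)(x^{-1}f)\subseteq(G\setminus A)(x^{-1}f)$ for each $f\in F$, we obtain
$$G=\bigcup_{f\in F}Uf\subseteq(G\setminus A)K,\qquad K:=\{x^{-1}f:f\in F\},$$
and $K$ is finite. Thus $(G\setminus A)K=G$, contradicting Lemma~\ref{l2022110}. Hence every $A\in\mathscr{A}$ is dense in $(G,\tau)$, and $\mathscr{A}$ has all the stated properties.

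The computation is short and there is no real obstacle; the only point that requires attention is the bookkeeping of sides. One must translate on the right — using the right translations of $G$, which are pre-homeomorphisms, together with the $G=UF$ form of precompactness — so that the finite set $K$ multiplies $G\setminus A$ from the right, matching the inequality $(G\setminus A)K\neq G$ of Lemma~\ref{l2022110}; the mirror computation with left translations would only yield $K(G\setminus A)=G$, which the lemma does not forbid, so the choice of side is genuinely needed.
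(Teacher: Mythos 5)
Your proof is correct and follows essentially the same route as the paper: take the family supplied by Lemma~\ref{l2022110} and, assuming some member is not dense, use precompactness of the group to cover $G$ by finitely many translates of an open set missing that member, contradicting the lemma. Your side bookkeeping is in fact more careful than the paper's own write-up, which asserts a covering of the form $K(G\setminus A)=G$ with the finite set on the left although Lemma~\ref{l2022110} only forbids $AK=G$ and $(G\setminus A)K=G$; your use of basic open sets $Ux$ together with $G=UF$ places the finite set $K=x^{-1}F$ on the correct (right) side.
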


\begin{proof}
Let $G$ be a precompact pre-topological group on $G$. By Lemma~\ref{l2022110}, there exists a disjoint family $\mathscr{A}$ of cardinality $|G|$ of subsets of $G$ such that, for each $A\in\mathscr{A}$ and any finite subset $K$ of $G$, $AK\neq G$ and $(G\setminus A)K\neq G$. Fix any $A\in\mathscr{A}$. We claim that $A$ and $G\setminus A$ are dense in $G$. Indeed, if $A$ is not dense in $G$, then $Int(G\setminus A)\neq\emptyset$; since $G$ is precompact, there exists a finite subset $K$ of $G$ such that $K (G\setminus A)=G$, which is a contradiction. If $G\setminus A$ is not dense in $G$, then we can obtain a contradiction by a similar method. Therefore, $A$ and $G\setminus A$ are dense in $G$.
\end{proof}

\begin{corollary}
Each infinite precompact pre-topological group is resolvable; in particular, each infinite precompact topological group is also resolvable.
\end{corollary}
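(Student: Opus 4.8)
The plan is to deduce the statement immediately from Theorem~\ref{t202224}. First I would apply Theorem~\ref{t202224} to the underlying infinite group $G$: this yields a pairwise disjoint family $\mathscr{A}$ of subsets of $G$ with $|\mathscr{A}|=|G|\geq\omega$, each member of which is dense in $(G,\tau)$ whenever $(G,\tau)$ is a precompact pre-topological group on $G$. Since $|\mathscr{A}|\geq 2$, I would pick two distinct members $A,B\in\mathscr{A}$; they are disjoint because $\mathscr{A}$ is a disjoint family, and both are dense in $G$ by the cited theorem. Hence $G$ possesses two disjoint dense subsets, that is, $G$ is resolvable.

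For the second assertion, I would observe that every topological group is in particular a pre-topological group (indeed, an almost topological group), and that precompactness of a topological group in the usual sense (total boundedness: for every neighbourhood $U$ of $e$ there is a finite $F$ with $G=FU$) coincides verbatim with precompactness in the sense used here. Therefore an infinite precompact topological group is an infinite precompact pre-topological group, and the first part applies to it.

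There is essentially no obstacle left at this stage: all the substantive work has already been carried out, namely the purely combinatorial splitting of an infinite group supplied by Lemma~\ref{l2022110} (from \cite{MP1996}) and its topological consequence Theorem~\ref{t202224}, whose proof uses precompactness only to convert the condition ``$AK\neq G$ for every finite $K$'' into non-density of the complement of $A$. The one point worth a line of care is the elementary remark that resolvability requires merely two disjoint dense sets, not a partition of $G$ into dense pieces, so that any two members of $\mathscr{A}$ suffice; together with the trivial check in the ``in particular'' clause that the two notions of precompactness agree. Thus the corollary follows with no further computation.
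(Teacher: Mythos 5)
Your argument is correct and is exactly the intended deduction: the paper states this corollary without proof as an immediate consequence of Theorem~\ref{t202224}, and your step of choosing two distinct members of the disjoint family (plus the routine observation that an infinite precompact topological group is an infinite precompact pre-topological group) is all that is needed.
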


Indeed, by \cite[Corollary 10]{MP1996}, we have the following more general theorem by a similar proof of Theorem~\ref{t202224}.

\begin{theorem}
Each uncountable $\omega$-narrow pre-topological group is resolvable; in particular, each uncountable $\omega$-narrow topological group is also resolvable.
\end{theorem}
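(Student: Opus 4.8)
The plan is to transcribe the proof of Theorem~\ref{t202224}, substituting $\omega$-narrowness for precompactness and \cite[Corollary 10]{MP1996} for Lemma~\ref{l2022110}. Recall that \cite[Corollary 10]{MP1996} provides, for every uncountable group $G$, a disjoint family $\mathscr{A}$ of subsets of $G$ (indeed of cardinality $|G|$) such that $AK\neq G$ and $(G\setminus A)K\neq G$ for every $A\in\mathscr{A}$ and every \emph{countable} subset $K$ of $G$. Given an uncountable $\omega$-narrow pre-topological group $(G,\tau)$, fix such a family $\mathscr{A}$ for the underlying group. It then suffices to show that for each $A\in\mathscr{A}$ both $A$ and $G\setminus A$ are dense in $(G,\tau)$: choosing any single $A$ then exhibits two disjoint dense subsets of $G$, so $G$ is resolvable (and the whole family $\mathscr{A}$ gives the stronger statement used in the proof of Theorem~\ref{t202224}).

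So fix $A\in\mathscr{A}$ and suppose, toward a contradiction, that $A$ is not dense. Then $W:=G\setminus\overline{A}$ is a nonempty open set contained in $G\setminus A$. Pick $x\in W$. Since $G$ is a pre-topological group, the left translation $L_{x^{-1}}$ is a pre-homeomorphism of $G$, so $x^{-1}W$ is an open neighbourhood of the neutral element $e$. By $\omega$-narrowness there is a countable set $F\subseteq G$ with $G=(x^{-1}W)F$; multiplying on the left by $x$ gives $G=xG=WF\subseteq(G\setminus A)F$, that is, $(G\setminus A)F=G$ with $F$ countable, contradicting the choice of $\mathscr{A}$. Hence $A$ is dense. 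Applying the same argument to the nonempty open set $G\setminus\overline{G\setminus A}\subseteq A$ produces a countable $F$ with $AF=G$, again impossible; thus $G\setminus A$ is dense too. Therefore $G$ is resolvable. The ``in particular'' clause is immediate, since every topological group is a pre-topological group and an $\omega$-narrow topological group is $\omega$-narrow as a pre-topological group.

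The only point requiring care is the side on which the translating set is applied: \cite[Corollary 10]{MP1996} forbids $(G\setminus A)K=G$ for countable $K$ acting on the right, which is why the step above uses the right half of $\omega$-narrowness (namely $G=UF$); had the cited statement been phrased with left translates, one would use $G=FU$ instead. Everything else --- the passage from ``$A$ not dense'' to a nonempty open subset of $G\setminus A$, and from such an open set to an open neighbourhood of $e$ via a translation pre-homeomorphism --- is exactly as in the proof of Theorem~\ref{t202224}, the sole change of substance being that $\omega$-narrowness licenses countable (rather than finite) translating sets.
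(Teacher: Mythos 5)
Your proof is correct and is essentially the paper's intended argument: the paper simply states that the theorem follows from \cite[Corollary 10]{MP1996} by ``a similar proof'' to that of Theorem~\ref{t202224}, with countable translating sets replacing finite ones, which is exactly what you have written out. Your attention to which side the translating set acts on (using $G=UF$ from right $\omega$-narrowness to contradict $(G\setminus A)K\neq G$) is, if anything, slightly more careful than the paper's own wording of the precompact case.
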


\end{document}